\numberwithin{equation}{section} 
\def\cE{\mathcal{E}}
\def\cF{\mathcal{F}}
\def\cG{\mathcal{G}}
\def\cI{\mathcal{I}}
\def\cO{\mathcal{O}}
\def\cZ{\mathcal{Z}}
\def\bC{\mathbb{C}}
\def\bP{\mathbb{P}}
\def\bQ{\mathbb{Q}}
\def\bR{\mathbb{R}}
\newcommand{\DF}[2]{{\displaystyle\frac{#1}{#2}}} 
\theoremstyle{plain}
\newtheorem{theorem}{Theorem}[section]
\newtheorem{lemma}[theorem]{Lemma}
\newtheorem{proposition}[theorem]{Proposition}
\newtheorem{corollary}[theorem]{Corollary}
\theoremstyle{definition}
\newtheorem{definition}[theorem]{Definition}
\newtheorem{example}[theorem]{Example}
\theoremstyle{remark}
\newtheorem{remark}[theorem]{Remark}
\newcommand{\bbm}{\begin{bmatrix}} 
\newcommand{\ebm}{\end{bmatrix}} 
\title[Canonical Models of Adjoint Foliated Structures on Surfaces]{Canonical Models of Adjoint Foliated Structures on Surfaces} 
\author{Jun Lu}
\author{Xiaohang Wu}
\author{Shi Xu$^{*}$}
\address[Jun Lu]{School of Mathematical Sciences, Key Laboratory of MEA(Ministry of Education) \& Shanghai Key Laboratory of PMMP, East China Normal University, Shanghai 200241, China}
\email{jlu@math.ecnu.edu.cn}
\address[Xiaohang Wu]{
School of Mathematics and Systems Science,  and Center for Mathematical Sciences,
Wuhan University of Science and Technology, Wuhan 430065, China  
}
\email{wuxiaoh001@foxmail.com}
\address[Shi Xu]{Yau Mathematical Sciences Center, Tsinghua University, Beijing, 100084, China}
\email{572463515@qq.com, shixumath@163.com}
\date{} 
\begin{document}

\footnotetext[1]{The corresponding author is Shi Xu.} 
\footnotetext[2]{
This work is supported by NSFC. The first author is also supported by the School of Mathematical Sciences, the Key Laboratory of MEA (Ministry of Education) $\&$ the Shanghai Key Laboratory of PMMP (No.~22DZ2229014), East China Normal University, Shanghai 200241, China.
}
\footnotetext[3]{{\itshape 2020 Mathematics Subject Classification.} 14C20, 32S65, 37F75} 
\footnotetext[4]{{\itshape Key words and phrases.} foliation, adjoint divisor, canonical model, Zariski decomposition, boundedness.} 

\begin{abstract}
In this paper, we study adjoint foliated structures of the form $K_{\cF}+D$ on algebraic surfaces and their minimal and canonical models. 
We investigate the effective behavior of the linear systems $|m(K_{\cF}+D)|$ for sufficiently divisible $m>0$. 
As an application, we obtain an effective answer to a boundedness problem for foliated surfaces of general type posed by Hacon and Langer.
\end{abstract}

\maketitle 

\section{Introduction}
Let $(X,\cF)$ be a foliated surface over $\bC$, and let $K_{\cF}$ denote the canonical divisor of $\cF$.
If $K_{\cF}$ is not pseudo-effective, then $(X,\cF)$ is birationally equivalent to a rational fibration by a result of Miyaoka (cf.~\cite{Miya87,Bru15}).
If $K_{\cF}$ is pseudo-effective, McQuillan (cf.~\cite{MMcQ00,MMcQ08}) showed that $(X,\cF)$ admits a \emph{canonical model} $(X_{\rm c},\cF_{\rm c})$, constructed as follows:
\[
\xymatrix{
&(X',\cF')\ar^-{\sigma}[dr]\ar_-{\pi}[ld]&\\
(X,\cF)\ar@{-->}[rr]&&(X_{\rm c},\cF_{\rm c})
}
\]
where  
\begin{itemize}
\item $\pi$ is a resolution of $(X,\cF)$ such that $X'$ is a smooth projective surface and the induced foliation $\cF'$ has at most reduced (hence canonical) singularities;
\item $\sigma$ is a $K_{\cF'}$-non-positive birational contraction such that $(X_{\rm c},\cF_{\rm c})$ is a normal foliated surface with at most canonical singularities of $\cF_{\rm c}$ and $K_{\cF_{\rm c}}$ is nef.
Moreover, any irreducible curve $C\subset X_{\rm c}$ with $K_{\cF_{\rm c}}\cdot C=0$ satisfies $C^2\geq0$.
\end{itemize}

In particular, if $K_{\cF_{\rm c}}$ is big, then it is  \emph{numerically ample}, i.e., $K_{\cF_{\rm c}}^2>0$ and $K_{\cF_{\rm c}}\cdot C>0$ for any irreducible curve $C\subset X_{\rm c}$. 
However, $K_{\cF_{\rm c}}$ is not necessarily ample.
It is known that $K_{\cF_{\rm c}}$ fails to be $\bQ$-Cartier at \emph{cusp singularities}, whose resolutions are \emph{elliptic Gorenstein leaves} (egl's) (cf.~\cite[Theorem~IV.2.2]{MMcQ08}).
Moreover, $K_{\cF_{\rm c}}$ is ample if and only if $X_{\rm c}$ contains no cusp singularities (cf.~\cite[Corollary~IV.2.3]{MMcQ08}).

To address this issue, we study adjoint divisors of the form $K_{\cF}+D$, where $D$ is one of the following:
\begin{itemize}
\item[\rm (i)] $D = \epsilon K_X$ for some $\epsilon \in (0,1]$;
\item[\rm (ii)] $D = \Delta$, where $\Delta = \sum_{i=1}^l a_i C_i$ with $a_i \in [0,1]$ and each $C_i$ is not $\cF$-invariant.
\end{itemize}

Assume that $K_{\cF}+D$ is pseudo-effective, and write its Zariski decomposition as $K_{\cF}+D=P(D)+N(D)$. 
Following \cite[p.~886]{Sak84}, we have birational morphisms
\begin{equation}\label{seq:canonicalmodel}
\xymatrix{
(X,\cF,D)\ar^-{f}[r]&(X',\cF',D_{X'})\ar^-{h}[r]&(Y,\cG,D_Y),
}
\end{equation} 
where $f$ is a $(K_{\cF}+D)$-negative contraction and $g:=h\circ f$ is a $(K_{\cF}+D)$-non-positive contraction. 
We set $D_{X'}:=f_*D$ and $D_Y:=g_*D$, and call $(X',\cF',D_{X'})$ (resp.~$(Y,\cG,D_Y)$) the \emph{minimal model} (resp.~\emph{canonical model}) of $(X,\cF,D)$.
\medskip

We now state our main results.

\begin{theorem}[= Theorem~\ref{mainthm:<1/4} + Theorem~\ref{thm:Null} + Corollary~\ref{coro:null}(2)]\label{Thm:main1eKX}
Let $(X,\cF)$ be a log minimal foliated surface (cf.~Definition~\ref{def:logminfoliation}).
Assume $\epsilon\in(0,\frac{1}{4})\cap\bQ$ and that $K_{\cF}+\epsilon K_X$ is pseudo-effective. 
Let $K_{\cF}+\epsilon K_X = P(\epsilon)+N(\epsilon)$ be its Zariski decomposition. 
Then the following hold:
\begin{itemize}
\item[(1)] The negative part $N(\epsilon)$ is a disjoint union of all maximal $(\epsilon K_X,\cF)$-chains on $X$. 
In particular, $\lfloor N(\epsilon)\rfloor=0$. 

\item[(2)] Let $(Y,\cG,D_Y)$ be the canonical model of $(X,\cF,\epsilon K_X)$. 
Then  $\cG$ has at most log canonical singularities and 
 $Y$ has at most rational quotient singularities.
 In particular, $D_Y = \epsilon K_Y$, and the singularities of $Y$ are of the following types:
 \begin{itemize}
    \item[(a)] \emph{Cyclic quotient singularities}, whose minimal resolution is a chain of smooth rational $\cF$-invariant curves with self-intersection at most $-2$:
    \begin{equation}
    \xymatrix@=0.4cm{\mathop{\bigcirc}\limits_{-e_1} \ar@{-}[r]&\mathop{\bigcirc}\limits_{-e_2}\ar@{-}[r]&\cdots\cdots\ar@{-}[r]&\mathop{\bigcirc}\limits_{-e_r}}\notag
    \end{equation}
    Such chains are either $\cF$-chains (cf.~Theorem~\ref{thm:Null}(1)), or chains of $\cF$-invariant $(-2)$-curves $\Gamma$ with ${\rm Z}(\cF,\Gamma) = 2$, or chains of three $\cF$-invariant $(-2)$-curves $\Gamma_1 + \Gamma_2 + \Gamma_3$ where ${\rm Z}(\cF,\Gamma_1) = {\rm Z}(\cF,\Gamma_3) = 1$ and ${\rm Z}(\cF,\Gamma_2) = 3$.

\item[(b)] \emph{Dihedral quotient singularities}, whose minimal resolution has the following dual graph: 
  \begin{equation}
  \xymatrix@=0.4cm{
  \mathop{\bigcirc}\limits_{-2}\ar@{-}[rd]&&&\\
  &\mathop{\bigcirc}\limits_{-2}\ar@{-}[r]&\mathop{\bigcirc}\limits_{-2}\ar@{-}[r]&\cdots\cdots\ar@{-}[r]&\mathop{\bigcirc}\limits_{-2}\\
  \mathop{\bigcirc}\limits_{-2}\ar@{-}[ru]&&&
  }\notag
  \end{equation}
  Here, each component is an $\cF$-invariant $(-2)$-curve. 
  Among them, the two leftmost curves $\Gamma$ satisfy ${\rm Z}(\cF,\Gamma) = 1$, the central curve $\Gamma'$ satisfies ${\rm Z}(\cF,\Gamma') = 3$, and each of the remaining curves $\Gamma''$ satisfies ${\rm Z}(\cF,\Gamma'') = 2$.
  \end{itemize}
\end{itemize}
\end{theorem}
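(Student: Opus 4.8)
The statement amalgamates Theorems~\ref{mainthm:<1/4} and~\ref{thm:Null} with Corollary~\ref{coro:canmod0<e<1/4}; I would prove the three in turn, all built on one numerical observation. For a smooth $\cF$-invariant rational curve $\Gamma$ on a smooth surface one has $K_{\cF}\cdot\Gamma=-2+{\rm Z}(\cF,\Gamma)$ with ${\rm Z}(\cF,\Gamma)\in\mathbb{Z}_{\ge0}$, and ordinary adjunction gives $K_X\cdot\Gamma=-2-\Gamma^2$, so
$(K_{\cF}+\epsilon K_X)\cdot\Gamma=({\rm Z}(\cF,\Gamma)-2)+\epsilon(-\Gamma^2-2)$.
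Since $(X,\cF)$ is log minimal, $K_{\cF}$ is nef and $\cF$ has reduced singularities, so every $\cF$-invariant rational curve already satisfies ${\rm Z}(\cF,\Gamma)\ge2$ on $X$; plugging this in, together with $\epsilon<\tfrac14$ and the integrality of ${\rm Z}$, I would extract the following ``backbone'': (i)~the only $\cF$-invariant curves with $(K_{\cF}+\epsilon K_X)\cdot\Gamma<0$ are $(-1)$-curves with ${\rm Z}(\cF,\Gamma)=2$; (ii)~every irreducible component of the exceptional locus of the canonical model is a smooth rational curve of self-intersection $\le-2$ or one of these $(-1)$-curves; (iii)~an $\cF$-invariant curve of arithmetic genus $\ge1$ satisfies $(K_{\cF}+\epsilon K_X)\cdot\Gamma>0$, and (the null locus of a nef and big divisor on a surface being negative definite) no connected exceptional configuration is a cycle of rational curves, so no elliptic Gorenstein leaf is contracted; and (iv)~a parallel, easier computation disposes of non-$\cF$-invariant curves.

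For part~(1) I would invoke the characterization of the Zariski decomposition on a surface: $N(\epsilon)$ is the unique effective divisor with negative definite support for which $K_{\cF}+\epsilon K_X-N(\epsilon)$ is nef and orthogonal to every component of $N(\epsilon)$. Thus it is enough to take $N$ to be the disjoint union of the maximal $(\epsilon K_X,\cF)$-chains (as in Theorem~\ref{thm:Null}), verify that each connected component has negative definite intersection form, solve the tridiagonal system $\bigl(K_{\cF}+\epsilon K_X-\sum_i a_i\Gamma_i\bigr)\cdot\Gamma_j=0$ along each chain, and then check that the $a_i$ are $\ge0$ (so $N$ is effective); that $K_{\cF}+\epsilon K_X-N$ is nef --- a curve on which it were negative would, by the backbone, prolong one of the chains and contradict maximality; and that every $a_i<1$. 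For the last point, the explicit solution of the tridiagonal system along a chain anchored at its unique $(K_{\cF}+\epsilon K_X)$-negative component comes out as small integer multiples of $\epsilon$, and it is precisely $\epsilon<\tfrac14$, matched against the maximal admissible length of an $(\epsilon K_X,\cF)$-chain, that keeps all of them below $1$. Uniqueness of the Zariski decomposition then gives $N=N(\epsilon)$ and $\lfloor N(\epsilon)\rfloor=0$.

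For part~(2) I would factor the canonical contraction $g=h\circ f$ through the minimal resolution as $X\xrightarrow{\ \mu\ }\widetilde Y\xrightarrow{\ \rho\ }Y$, set $A:=g_*(K_{\cF}+\epsilon K_X)=K_{\cG}+\epsilon K_Y$ (ample on $Y$, assuming $K_{\cF}+\epsilon K_X$ big; otherwise $Y$ is not a surface and there is nothing to describe), and note that $\rho^*A$ is the positive part of the Zariski decomposition of $K_{\cF_{\widetilde Y}}+\epsilon K_{\widetilde Y}$, hence is trivial on every $\rho$-exceptional curve. By~(iii) each connected $\rho$-exceptional configuration $E\subset\widetilde Y$ is a negative definite tree of $\cF$-invariant smooth rational curves of self-intersection $\le-2$; for each component $\Gamma$ of $E$ the relation $\rho^*A\cdot\Gamma=0$ determines ${\rm Z}(\cF,\Gamma)$ from $\Gamma^2$ and from the $N(\epsilon)$-coefficients of the neighbours of $\Gamma$, once one has recorded how $\Gamma^2$ and ${\rm Z}(\cF,\cdot)$ change under the blow-downs performed by $\mu$ (an $\cF$-invariant $(-1)$-curve with ${\rm Z}=2$ carries two infinitely near $\cF$-singularities, one of which is absorbed into a neighbour's ${\rm Z}$). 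Since every intersection point of two invariant curves is an $\cF$-singularity, ${\rm Z}(\cF,\Gamma)$ is at least the valence of $\Gamma$ in $E$; combining this with integrality, negative-definiteness and $\epsilon<\tfrac14$ in the linear relations forces $E$ to have at most one branch point, that branch point (if present) to have valence exactly $3$ with two of its arms a single $(-2)$-curve each, and pins down all the ${\rm Z}$-values --- so $E$ is either a Hirzebruch--Jung string (cyclic quotient singularity) or the $D_r$-fork of $(-2)$-curves (dihedral quotient singularity), and in particular no cycle, no nodal curve, and no $E_6,E_7,E_8$ configuration occurs. Finally, cyclic and dihedral quotient singularities are rational and $\mathbb{Q}$-Gorenstein, so $Y$ has rational singularities, $K_Y$ is $\mathbb{Q}$-Cartier, and $D_Y=g_*(\epsilon K_X)=\epsilon K_Y$; writing the foliated discrepancies of $\cG$ along the $\Gamma_i$ as $a_i-\epsilon b_i$ with $a_i\ge0$ the coefficients of $N(\epsilon)$ and $b_i\le0$ the surface discrepancies of $Y$, these are $\ge0$, so $\cG$ has at most log canonical singularities.

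The step I expect to be the main obstacle is the classification in part~(2): ruling out every remaining negative definite $\cF$-invariant configuration. The numerical inequalities by themselves leave a short but nonempty list of candidate decorated graphs --- a few small forks, and the cycles or nodal curves that would produce an elliptic Gorenstein leaf --- and eliminating them genuinely needs the Camacho--Sad index relation (the sum of the Camacho--Sad indices along an invariant rational curve equals its self-intersection) in tandem with the bound $\epsilon<\tfrac14$. Equally delicate, and the likeliest source of error, is keeping track of ${\rm Z}(\cF,\cdot)$ under the successive $(-1)$-curve blow-downs $X\to\widetilde Y$; this accounting is exactly what the auxiliary lemmas preceding Theorem~\ref{thm:Null} are designed to carry out.
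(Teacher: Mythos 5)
Your ``backbone'' rests on a false premise, and this is a genuine gap rather than a presentational one. Log minimality (Definition \ref{def:logminfoliation}) does \emph{not} say that $K_{\cF}$ is nef or that every $\cF$-invariant rational curve satisfies ${\rm Z}(\cF,\Gamma)\ge 2$; it only excludes $(-1)$-curves $E$ with $K_{\cF}E\le\iota(E)$. The first component $\Gamma_1$ of an $\cF$-chain is a curve with $\Gamma_1^2\le-2$ and ${\rm Z}(\cF,\Gamma_1)=1$, hence $(K_{\cF}+\epsilon K_X)\cdot\Gamma_1=-1+\epsilon(-\Gamma_1^2-2)<0$ for $\epsilon<\tfrac14$; such curves are exactly what populate $N(\epsilon)$, so under your premise the theorem would degenerate to $N(\epsilon)=0$. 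Consequently your claims (i) and (ii) fail, and the real work of part (1) --- showing that \emph{no} $\cF$-invariant curve outside the maximal $(\epsilon K_X,\cF)$-chains has $W\cdot C<0$, where $W=K_{\cF}+\epsilon K_X-\sum_i M(\epsilon K_X,\Theta_i)$ --- is missing. The paper does this via the index formula $K_{\cF}C=2g(C)-2+k+h(\cF,C)$ together with the estimate $V\cdot C\le\sum 1/n_i$, reducing to a $(-1)$-curve with $K_{\cF}C=1$ meeting two chains of determinants $(n_1,n_2)\in\{(2,2),(2,3)\}$; the configuration $(2,3)$ is precisely where the inequality $WC\ge\tfrac{1-4\epsilon}{6}$ and hence the bound $\epsilon\le\tfrac14$ enters. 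Your proposal locates the role of $\epsilon<\tfrac14$ in ``keeping the chain coefficients below $1$,'' but that is automatic for any $(D,\cF)$-chain (Proposition \ref{prop:DF-subchain}(1) gives $\gamma_i\le\lambda_i/n<1$ independently of $\epsilon$); the sharpness of $\tfrac14$ is instead about excluding the $(2,3)$-configuration (case (G) of Lemma \ref{lem:P(D)C=0-C-Finv} and Example \ref{ex:e=1/4}), which your argument never encounters.

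For part (2) your plan of passing to the minimal resolution and classifying decorated negative definite graphs is workable in outline and close in spirit to the paper's Lemma \ref{lem:P(D)C=0-C-Finv} (which classifies the irreducible curves with $P(D)\cdot C=0$ into cases (A)--(G) directly on $X$ and then assembles components via the separatrix theorem), and your exclusion of elliptic Gorenstein leaves via $\epsilon K_X\cdot Z>0$ is correct. But the classification again depends on first knowing which invariant curves can appear, i.e., on the corrected version of the backbone; and note that the canonical model is a birational contraction even when $K_{\cF}+\epsilon K_X$ is not big (the exceptional locus is then a negative definite subset of curves with $P(\epsilon)\cdot C=0$, not the full null locus), so the non-big case is not vacuous as you assert.
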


In Theorem~\ref{Thm:main1eKX}, the morphism $X\to Y$ is the minimal resolution of $Y$. 
A $(D,\cF)$-chain means an $\cF$-chain satisfying additional conditions depending on $D$ (see Definition~\ref{def:DFchain}).
\medskip

Define
\begin{equation}
  \alpha(D,\cF):=\left\lfloor\frac{\left(K_X\cdot A+2\right)^2}{4A^2}+\frac{9A^2+6K_X\cdot A}{4}\right\rfloor
  \end{equation}
  where $A:=i(D,\cF)\, P(D)$ and $i(D,\cF)$ is the smallest positive integer $m$ such that $m P(D)$ is integral.
  For convenience, when $D=0$ we write $i(\cF):=i(0,\cF)$ and $\alpha(\cF):=\alpha(0,\cF)$.

\begin{corollary}[= Theorem~\ref{them:boundednessofD} + Corollary~\ref{coro:boundednessofeKX}]\label{coro:main2eKX}
Under the assumption and notation in Theorem~\ref{Thm:main1eKX}, if $K_{\cF}+\epsilon K_X$ is big, then $K_{\cG}+\epsilon K_Y$ is ample.
More precisely, the divisor
$$\big(i(\epsilon K_X,\cF)\left(\alpha(\epsilon K_X,\cF)+3\right)\big)\, (K_{\cG}+\epsilon K_Y)$$
is very ample.
Furthermore, if $K_{\cF}$ is pseudo-effective, then there exists a positive integer $\mathfrak{n}$, depending only on $\epsilon$, $i(\cF)$ and ${\rm Vol}(K_{\cF}+\epsilon K_X)$, such that
the divisor $\mathfrak{n} (K_{\cG}+\epsilon K_Y)$ is very ample.
\end{corollary}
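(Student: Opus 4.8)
The plan is to establish Corollary \ref{coro:main2eKX} by bootstrapping from the structural results of Theorem \ref{Thm:main1eKX} together with the effective linear-system estimates packaged into the function $\alpha(D,\cF)$. First I would reduce the ampleness of $K_{\cG}+\epsilon K_Y$ on the canonical model to a Nakai--Moishezon--type verification: since $K_{\cF}+\epsilon K_X$ is big and pseudo-effective, its Zariski decomposition $K_{\cF}+\epsilon K_X=P(\epsilon)+N(\epsilon)$ has $P(\epsilon)^2>0$, and the canonical model $Y$ is by construction obtained by contracting precisely the curves in $N(\epsilon)$ (equivalently $X\to Y$ is the minimal resolution, by the remark following Theorem \ref{Thm:main1eKX}). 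On $Y$ the pushforward of $P(\epsilon)$ is $K_{\cG}+\epsilon K_Y$ (using $D_Y=\epsilon K_Y$ from part (2)), so $(K_{\cG}+\epsilon K_Y)^2=P(\epsilon)^2>0$ and $(K_{\cG}+\epsilon K_Y)\cdot C>0$ for every irreducible curve $C\subset Y$ because any such curve pulls back to a curve on which $P(\epsilon)$ is positive (the only curves on which $P(\epsilon)$ is trivial are the contracted ones). The subtlety here is $\bQ$-Cartierness of $K_{\cG}+\epsilon K_Y$, which is why the classification of the singularities of $Y$ into cyclic and dihedral quotient types in Theorem \ref{Thm:main1eKX}.(2) is used: quotient singularities are $\bQ$-factorial, so $K_{\cG}+\epsilon K_Y$ is $\bQ$-Cartier and Nakai--Moishezon applies, giving ampleness.

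Next I would upgrade ampleness to the explicit very-ampleness bound $\big[i(\epsilon K_X,\cF)\cdot(\alpha(\epsilon K_X,\cF)+3)\big]\cdot(K_{\cG}+\epsilon K_Y)$. The idea is that $m(K_{\cG}+\epsilon K_Y)$ for $m=i(\epsilon K_X,\cF)$ is an honest integral Cartier (away from the quotient singularities one can still make sense of it, but more cleanly) divisor whose class equals $m\cdot P(\epsilon)$ pushed forward, and $A:=i(\epsilon K_X,\cF)\cdot P(\epsilon)$ is nef and big with $A^2>0$. From the definition of $\alpha(D,\cF)$ — which is engineered precisely so that $|(\alpha+3)A|$ (and already $|\alpha A|$, $|(\alpha+1)A|$, $|(\alpha+2)A|$) is basepoint-free and separates points and tangents — Theorem \ref{them:boundednessofD} should supply that $(\alpha(\epsilon K_X,\cF)+3)A$ is very ample on $Y$; the numerics in $\alpha$ come from a Reider-type / Bombieri-type argument applied to $K_Y+(\text{ample})$ on the surface $Y$, using $K_XA$, $A^2$, and the fact that $Y$ has only rational singularities (so Riemann--Roch and Kawamata--Viehweg vanishing behave as expected). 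So the second half is essentially a citation of Theorem \ref{them:boundednessofD} with $D=\epsilon K_X$, after checking that the hypotheses ($Y$ has rational singularities, $\cG$ log canonical, $i(\epsilon K_X,\cF)P(\epsilon)$ integral nef big) are met — all of which are outputs of Theorem \ref{Thm:main1eKX}.

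Finally, for the last sentence — existence of $\mathfrak{n}=\mathfrak{n}(\epsilon,i(\cF),\mathrm{Vol}(K_{\cF}+\epsilon K_X))$ — I would argue that $i(\epsilon K_X,\cF)$ and $\alpha(\epsilon K_X,\cF)$ are themselves bounded in terms of these data. The key point is that $i(\epsilon K_X,\cF)$ divides a fixed multiple controlled by $\epsilon$ and $i(\cF)$: writing $\epsilon=p/q$ in lowest terms, $q\,i(\cF)\cdot(K_{\cF}+\epsilon K_X)=i(\cF)(qK_{\cF}+pK_X)$ is integral, and one checks that the denominators appearing in the Zariski decomposition $P(\epsilon)$ are bounded because $N(\epsilon)$ is a sum of the very restricted chains classified in Theorem \ref{Thm:main1eKX}.(1) (all curves $\cF$-invariant with bounded self-intersection and bounded $\mathrm{Z}(\cF,-)$), so the intersection matrices whose inverses enter $N(\epsilon)$ have bounded size and bounded entries, hence bounded determinant; thus $i(\epsilon K_X,\cF)$ is bounded by a function of $\epsilon$ and $i(\cF)$ alone. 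Then $A^2=i(\epsilon K_X,\cF)^2 P(\epsilon)^2=i(\epsilon K_X,\cF)^2\,\mathrm{Vol}(K_{\cF}+\epsilon K_X)$ is bounded, and $K_X A=K_X\cdot i(\epsilon K_X,\cF)P(\epsilon)$ is bounded above once $A^2$ and the volume are bounded (via $K_{\cF}\cdot P(\epsilon)\le P(\epsilon)^2$ type inequalities on a log minimal surface, using $\epsilon K_X\cdot P(\epsilon)\ge 0$ and $K_{\cF}=K_X+(\text{effective }\cF\text{-invariant part})$-type comparisons, together with the Hodge index theorem), so $\alpha(\epsilon K_X,\cF)$ is bounded, and we may take $\mathfrak{n}:=i(\epsilon K_X,\cF)\cdot(\alpha(\epsilon K_X,\cF)+3)$, or any common multiple thereof that works uniformly. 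The main obstacle I anticipate is exactly this last boundedness of $i(\epsilon K_X,\cF)$ and of $K_X A$ purely in terms of the volume: it requires that the chains in $N(\epsilon)$ cannot proliferate or grow without bound, which should follow from the classification but needs the volume to cap the number of such chains (each contributes a definite positive amount, or their total is controlled by $(K_{\cF}+\epsilon K_X)\cdot(\text{something})$), and one must be careful that $K_XA$ is bounded \emph{above} — a priori $K_X$ could be very negative, but pseudo-effectivity of $K_{\cF}$ plus the log minimality hypothesis should prevent this.
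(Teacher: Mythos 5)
Your overall route matches the paper's: the explicit very-ampleness bound is obtained by citing Theorem \ref{them:boundednessofD} with $D=\epsilon K_X$ (together with the fact that for $\epsilon>0$ the canonical model $Y$ has only rational quotient singularities and no elliptic Gorenstein singularities, so the map of Theorem \ref{them:boundednessofD} is an isomorphism everywhere), and the final uniform $\mathfrak{n}$ is obtained by bounding the three inputs $i(\epsilon K_X,\cF)$, $A^2$ and $K_XA$ in terms of $\epsilon$, $i(\cF)$ and ${\rm Vol}(K_\cF+\epsilon K_X)$. For $K_XA$ the paper's argument is the one you gesture at but do not pin down: $\epsilon K_XP(\epsilon)=P(\epsilon)\cdot(P(\epsilon)+N(\epsilon)-K_{\cF})=P(\epsilon)^2-P(\epsilon)K_{\cF}\le P(\epsilon)^2$, using $P(\epsilon)N(\epsilon)=0$ and pseudo-effectivity of $K_{\cF}$; one also needs the lower bound $K_XA\ge-3A^2$ (from nefness of $3A+K_X$, the Claim in Theorem \ref{thm:eKXcanonical}) to control $(K_XA+2)^2/4A^2$, which you do not mention.

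The genuine gap is in your justification that $i(\epsilon K_X,\cF)$ is bounded by $\epsilon$ and $i(\cF)$. You claim the intersection matrices entering $N(\epsilon)$ "have bounded size and bounded entries, hence bounded determinant" because of Theorem \ref{Thm:main1eKX}.(1). This is false: a maximal $(\epsilon K_X,\cF)$-chain is a Hirzebruch--Jung string $\Gamma_1+\cdots+\Gamma_r$ constrained only by $\sum_k\mu_k\,\epsilon K_X\Gamma_k<1$, which bounds $\sum_k\mu_k(e_k-2)$ but places no bound on the length $r$ nor on the determinant $n=[e_1,\dots,e_r]$ --- e.g.\ a chain of $(-2)$-curves of arbitrary length $r$ is an $(\epsilon K_X,\cF)$-chain for every $\epsilon$, with $n=r+1$. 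The bounded self-intersections and bounded ${\rm Z}(\cF,-)$ in Theorem \ref{Thm:main1eKX}.(2) describe the \emph{other} exceptional configurations, not the chains in $N(\epsilon)$. The correct source of the bound (which the paper itself only asserts as "clear") is the hypothesis that $K_{\cF}$ is pseudo-effective: each maximal $\cF$-chain $\Theta$ contributes $M(\Theta)=\sum(\lambda_i/n)\Gamma_i$ to $N$, so its determinant $n$ divides $i(\cF)$; the maximal $(\epsilon K_X,\cF)$-chains are subchains of maximal $\cF$-chains, so their determinants are at most $i(\cF)$, and together with the denominator of $\epsilon$ this controls the denominators of $N(\epsilon)$ and hence $i(\epsilon K_X,\cF)$. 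Without routing the argument through $i(\cF)$ in this way, your step (and hence the dependence of $\mathfrak{n}$ on only $\epsilon$, $i(\cF)$, ${\rm Vol}$) does not go through. Your auxiliary suggestion that the volume caps the number of chains is also unnecessary for the paper's argument and would not by itself bound the determinants.
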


\begin{remark}
\begin{itemize}
\item[(1)] The bound $0<\epsilon<\tfrac{1}{4}$ is sharp. 
Indeed, when $\epsilon=\tfrac{1}{4}$, the foliation $\cG$ may fail to be log canonical (see Example~\ref{ex:e=1/4}).
\item[(2)] Theorems~\ref{Thm:main1eKX} and Corollary~\ref{coro:main2eKX} can be compared with \cite[Theorems~1.1(2) and~1.2]{SS23}, where the $(K_{\cF}+\epsilon K_X)$-MMP is studied for $\epsilon \in (0,\tfrac{1}{5})$. 
Our results differ slightly. 
A natural extension is to the case where $(X,\cF)$ is a minimal foliated surface (cf.~Definition~\ref{def:minfoliation}); however, in this setting the negative part $N(\epsilon)$ becomes more involved and will be treated elsewhere.
\end{itemize}
\end{remark}

\begin{theorem}[= Theorem~\ref{mainthm:<1/4} + Theorem~\ref{thm:Null} + Corollary~\ref{coro:null}(1)]\label{Thm:main3Delta}
Let $(X,\cF,\Delta)$ be a foliated triple, where 
\begin{itemize}
\item[(i)] $\cF$ is a foliation on $X$ with at most canonical singularities;
\item[(ii)] $\Delta=\sum_{i=1}^la_iC_i$, where $a_i\in[0,1)$ and each $C_i$ is not $\cF$-invariant;  
\item[(iii)] $X$ is a smooth projective surface containing no $\cF$-invariant $(-1)$-curves $E$ satisfying
$K_{\cF}\cdot E\leq0$ and $K_{\cF}\cdot E+2\Delta\cdot E\leq1.$
\end{itemize}
Assume that $K_{\cF}+\Delta$ is pseudo-effective, with the Zariski decomposition $K_{\cF}+\Delta=P(\Delta)+N(\Delta)$.
Then the following hold:
\begin{itemize}
\item[(1)] The negative part $N(\Delta)$ is a disjoint union of all maximal $(\Delta,\cF)$-chains on $X$. 
In particular, $\lfloor N(\Delta)\rfloor=0$.

\item[(2)] Let $(Y,\cG,\Delta_Y)$ be the canonical model of $(X,\cF,\Delta)$. 
Then  $\cG$ has at most canonical singularities and the singularities of $Y$ are of the following types:
\begin{itemize}
  \item[(a)] \emph{Cyclic quotient singularities}, whose minimal resolution is a chain of smooth rational $\cF$-invariant curves with self-intersection at most $-2$:
  \begin{equation}
    \xymatrix@=0.4cm{\mathop{\bigcirc}\limits_{-e_1} \ar@{-}[r]&\mathop{\bigcirc}\limits_{-e_2}\ar@{-}[r]&\cdots\cdots\ar@{-}[r]&\mathop{\bigcirc}\limits_{-e_r}}\notag
    \end{equation}
    Such chains are either $\cF$-chains (cf.~Theorem~\ref{thm:Null}(1)),
chains of smooth rational $\cF$-invariant curves with self-intersection at most $-2$ and disjoint from $\Delta$,
or chains of three smooth rational $\cF$-invariant curves $\Gamma_1 + \Gamma_2 + \Gamma_3$, also disjoint from $\Delta$,
where $\Gamma_1^2 = \Gamma_3^2 = -2$,
${\rm Z}(\cF, \Gamma_1) = {\rm Z}(\cF, \Gamma_3) = 1$,
$\Gamma_2^2 \leq -2$,
and ${\rm Z}(\cF, \Gamma_2) = 3$.

  \item[(b)] \emph{Dihedral quotient singularities}, whose resolution has the following dual graph:
  \begin{equation}
    \xymatrix@=0.4cm{
    \mathop{\bigcirc}\limits_{-2}\ar@{-}[rd]&&&\\
    &\mathop{\bigcirc}\limits_{-e_3}\ar@{-}[r]&\mathop{\bigcirc}\limits_{-e_4}\ar@{-}[r]&\cdots\cdots\ar@{-}[r]&\mathop{\bigcirc}\limits_{-e_r}\\
    \mathop{\bigcirc}\limits_{-2}\ar@{-}[ru]&&&
    }\notag
    \end{equation}
    Here, each component is a smooth rational $\cF$-invariant curve of self-intersection at most $-2$ and disjoint from $\Delta$. 
    Among them, the two leftmost curves $\Gamma$ satisfy ${\rm Z}(\cF,\Gamma) = 1$ and $\Gamma^2=-2$, the central curve $\Gamma'$ satisfies ${\rm Z}(\cF,\Gamma') = 3$, and each of the remaining curves $\Gamma''$ satisfies ${\rm Z}(\cF,\Gamma'') = 2$.
    
  \item[(c)] \emph{Cusp singularities disjoint from $\Delta$}, whose resolution is a cycle of smooth rational $\cF$-invariant curves with self-intersection at most $-2$, all disjoint from $\Delta$
\begin{equation}
\xymatrix@=0.4cm{
&\mathop{\bigcirc}\ar@{-}[r]&\mathop{\bigcirc}\ar@{-}[rd]&\\
\mathop{\bigcirc}\ar@{-}[ur]\ar@{-}[dr]&&&\mathop{\bigcirc}\ar@{--}[dl]\\
&\mathop{\bigcirc}\ar@{-}[r]&\mathop{\bigcirc}&
}\notag
\end{equation}
or a nodal rational $\cF$-invariant curve of negative self-intersection, disjoint from $\Delta$.
\end{itemize}
\end{itemize}
\end{theorem}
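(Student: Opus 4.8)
The plan is to prove Theorem~\ref{Thm:main3Delta} by reducing it, as far as possible, to the already-established structure theorems for the case $D=\epsilon K_X$ (Theorem~\ref{mainthm:<1/4} and Theorem~\ref{thm:Null}), and then handling the genuinely new phenomenon in the $\Delta$-setting, namely the appearance of cusp singularities disjoint from $\Delta$. First I would run the $(K_{\cF}+\Delta)$-MMP as set up in \eqref{seq:canonicalmodel}: the contraction $f$ removes all $\cF$-invariant $(-1)$-curves $E$ with $(K_{\cF}+\Delta)\cdot E<0$, and hypothesis (iii) guarantees that on $(X,\cF,\Delta)$ itself there are no such curves with the ``bad'' numerics that would obstruct the argument, so $f$ is an isomorphism on a Zariski-open set and $X'$ (equivalently $X$, after replacing) has only the mild $(-1)$-curves one expects. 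The contraction $h$ then contracts precisely the curves $C$ with $(K_{\cG}+\Delta_Y)\cdot C=0$; since each such $C$ must be $\cF$-invariant (if $C$ were not $\cF$-invariant, $K_{\cF}\cdot C\ge -\mathrm{Z}(\cF,C)\ge \ldots$ forces $\Delta\cdot C=0$ and a contradiction with pseudo-effectivity of the boundary part), we conclude all contracted curves are $\cF$-invariant smooth rational curves disjoint from $\Delta$, exactly as claimed.

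\smallskip
For part~(1), the statement that $N(\Delta)$ is a disjoint union of maximal $(\Delta,\cF)$-chains is, by the bracketed labels, literally Theorem~\ref{thm:Null}.(1) applied with $D=\Delta$; so I would simply invoke it, checking that the hypotheses of that theorem (canonical singularities of $\cF$, the curves in $\Delta$ not $\cF$-invariant, and the absence of the exceptional $(-1)$-curves in (iii)) are exactly what is needed. The vanishing $\lfloor N(\Delta)\rfloor=0$ is then immediate from the explicit description of $(\Delta,\cF)$-chains in Definition~\ref{def:DFchain}, since all coefficients in such chains lie in $(0,1)$.

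\smallskip
For part~(2), I would analyze the connected components of the exceptional locus of $g:X\to Y$. Each component is a connected configuration of $\cF$-invariant smooth rational curves of self-intersection $\le -2$, all disjoint from $\Delta$ (so locally the triple is $(X,\cF,0)$ near the exceptional locus), contracted by a $(K_{\cF}+\Delta)$-trivial — hence, locally, $K_{\cF}$-trivial — morphism. Now McQuillan's classification of $K_{\cF}$-trivial contractions of reduced foliated surfaces (\cite{MMcQ08}) gives exactly three possibilities for the dual graph: a chain (cyclic quotient), a chain with a fork at one end (dihedral quotient), or a cycle / nodal rational curve (cusp). The constraints on $\mathrm{Z}(\cF,\Gamma)$ in cases (a), (b) come from the foliated adjunction/index computation on each curve $\Gamma$ in the chain: writing $K_{\cF}\cdot\Gamma=-2+Z(\cF,\Gamma)$ for an $\cF$-invariant rational curve and imposing $K_{\cF}\cdot\Gamma\le 0$ with the chain/fork incidence relations pins down the values $1,2,3$ as stated; this is the same bookkeeping already carried out in Theorem~\ref{thm:Null}, now with the extra flexibility that interior curves may have self-intersection strictly below $-2$ because the divisor $\Delta$ no longer forces $(-2)$ (contrast case (a) of Theorem~\ref{Thm:main1eKX}, where $D=\epsilon K_X$ does). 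Finally, ``$\cG$ has at most canonical singularities'': since $g$ is $(K_{\cF}+\Delta)$-nonpositive and contracts only $K_{\cF}$-trivial $\cF$-invariant curves, discrepancies are preserved, so canonicity of $\cF$ is inherited by $\cG$; and $\Delta_Y=g_*\Delta$ still has each component non-$\cF$-invariant because $g$ contracts no component of $\Delta$.

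\smallskip
The main obstacle I expect is the cusp case (c): one must show that a $K_{\cF}$-trivial contraction of a cycle (or nodal rational curve) of $\cF$-invariant $(-2)$-or-lower curves \emph{can} occur here — it is ruled out in the $\epsilon K_X$ setting of Theorem~\ref{Thm:main1eKX} precisely because $\epsilon K_X$ makes such a contraction $(K_{\cF}+\epsilon K_X)$-negative on the cycle — and to verify that when $\Delta$ meets none of the cycle components the perturbation by $\Delta$ does not destroy $K_{\cF}$-triviality, so the egl/cusp really survives to the canonical model $Y$. Concretely this requires checking that the cycle is disjoint from $\Delta$ is not merely a conclusion but is forced: if some $C_i$ met the cycle, the intersection number $(K_{\cF}+\Delta)\cdot\Gamma$ on a cycle component $\Gamma$ would become strictly positive, so $\Gamma$ would not be contracted — this contradiction is what yields the ``disjoint from $\Delta$'' clause, and making that argument airtight (including the nodal-rational-curve degeneration of the cycle) is the delicate point.
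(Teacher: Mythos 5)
Your overall strategy -- specialize Theorem \ref{mainthm:<1/4} and Theorem \ref{thm:Null} to $D=D_{\Delta,0}=\Delta$ and read off the singularities -- is exactly what the paper does (the theorem is literally labelled as the sum of those results), and your treatment of part (1) is essentially correct (modulo the misattribution: the description of $N(\Delta)$ is Theorem \ref{mainthm:<1/4}, not Theorem \ref{thm:Null}.(1); one checks that condition (1) and (2b) of that theorem are vacuous when $\epsilon=0$ and $\Delta$ has no $\cF$-invariant components, so hypothesis (iii) is all that is needed). The genuine gap is in your description of the exceptional locus in part (2). You characterize the contracted curves as those with $(K_{\cF}+\Delta)\cdot C=0$ and then assert that every connected component of the exceptional locus is a $(K_{\cF}+\Delta)$-trivial, hence locally $K_{\cF}$-trivial, configuration disjoint from $\Delta$, to which McQuillan's classification applies. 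This is false: the canonical-model contraction contracts $\mathrm{Null}\,P(\Delta)$, which contains the support of $N(\Delta)$, i.e.\ the maximal $(\Delta,\cF)$-chains. On the first curve $\Gamma_1$ of such a chain one has $K_{\cF}\Gamma_1=-1$ and $(K_{\cF}+\Delta)\Gamma_1=-1+\Delta\Gamma_1<0$, so the contraction is $(K_{\cF}+\Delta)$-\emph{negative} there, not trivial; and by cases (1-2) and (1-3) of Theorem \ref{thm:Null} these chains may very well meet $\Delta$ (e.g.\ $D\Gamma_1=1$). So your reduction to a local $(X,\cF,0)$ situation and to McQuillan's $K_{\cF}$-trivial classification simply does not cover the first -- and most basic -- sub-case of (a), and your sentence ``all contracted curves are \dots disjoint from $\Delta$'' contradicts the statement you are proving.

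What is actually needed, and what the paper does, is the case analysis of Lemma \ref{lem:P(D)C=0-C-Finv}: for an $\cF$-invariant curve $C$ with $P(\Delta)\cdot C=0$ one combines the index formula $K_{\cF}C=2g(C)-2+k+h(\cF,C)$ with the bound $N(\Delta)\cdot C\le k/2$ coming from Proposition \ref{prop:DF-subchain} to pin down cases (A)--(F); the constraints $\Delta\cdot C=0$ in (B), (D), (F) and $\Delta\cdot C=1$ in (C), etc., are outputs of this computation involving $\Delta$ and cannot be obtained from the $D=0$ classification. (Your parenthetical argument that non-$\cF$-invariant curves are never contracted is also garbled -- ${\rm Z}(\cF,C)$ is not defined for non-invariant $C$; the correct statement is Lemma \ref{lem:FnoninvCsuchthatP(Delta)C=0}.(2), which uses ${\rm tang}(\cF,C)=K_{\cF}C+C^2\ge 0$ together with $\lfloor\Delta\rfloor=0$ and the vanishing of the index $\theta(C)$.) Your identification of the cusp case (c) as the new phenomenon at $\epsilon=0$, and the reason $\Delta$ must avoid the cycle, is correct in spirit and matches case (B)/(4) of the paper's analysis.
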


\begin{remark}
\begin{itemize}
\item[(1)] When $\Delta=0$, Theorem~\ref{Thm:main3Delta} recovers \cite[Proposition~III.2.1 and Theorem~1.III.3.2]{MMcQ08}.

\item[(2)] One can choose a divisor $\Delta$ as in Theorem~\ref{Thm:main3Delta} such that $\Delta$ intersects every elliptic Gorenstein leaf.
In this case, the divisor $K_{\cG} + \Delta_Y$ is ample.

\item[(3)] The condition $0 \le \mathrm{Coeff}(\Delta) < 1$ is sharp. 
Indeed, let $\Delta=C$ be an irreducible non-$\cF$-invariant curve with 
$C^2<0$ and $\mathrm{tang}(\cF,C)=0$. 
Then the contraction produces a point $p \in Y$ which is not canonical (but log canonical) for $\cG$. 
Moreover, the minimal resolution of $p$ corresponds to case~(5) in Theorem~\ref{thm:NullCtang=0}; 
in particular, it also appears as case~6 or~7 in \cite[Theorem~1.1]{YAChen23}.

\end{itemize}
\end{remark}

\begin{proposition}[= Theorem \ref{them:boundednessofD} + Proposition \ref{prop:P(D)H1}]
  Under the assumptions and notation in Theorem~\ref{Thm:main3Delta}, assume further that $K_{\cF}+\Delta$ is big.
  Suppose $m$ is divisible by $i(\Delta,\cF)$. Then we have the following results.
\begin{itemize}
\item[(1)]If $m\geq i(\Delta,\cF)\left(\alpha(\Delta,\cF)+1\right)$, then
$H^i(X,mP(\Delta))=0$ for $i=1,2$, and 
\begin{equation}
\dim H^0(m(K_{\cF}+\Delta))=\frac{m^2}{2}{\rm Vol}(K_\cF+\Delta)-\frac{m}{2}K_X\cdot P(\Delta)+\chi(\cO_X).\notag
\end{equation} 
\item[(2)] If $m\geq i(\Delta,\cF)\left(\alpha(\Delta,\cF)+3\right)$, 
then $|m(K_{\cG}+\Delta_Y)|$ induces a birational map which is an isomorphism on the complement of the cusp singularities.
\end{itemize}
\end{proposition}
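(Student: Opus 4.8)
\emph{Overview of the plan.}
I would package the whole statement as an effective Kawamata--Viehweg-type vanishing for the nef and big $\bQ$-divisor $P(\Delta)$ on the smooth surface $X$: part (1) then drops out of Riemann--Roch, and part (2) out of a Reider-type separation argument. Throughout, write $m=i(\Delta,\cF)k$ and $A=i(\Delta,\cF)P(\Delta)$, so that $mP(\Delta)=kA$ is integral, nef and big, with $A^2=i(\Delta,\cF)^2\,\mathrm{Vol}(K_\cF+\Delta)>0$.

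\emph{Part (1).}
For $i=2$ I would use Serre duality $H^2(X,kA)\cong H^0(X,K_X-kA)^\vee$: since $P(\Delta)$ is nef, $K_X-kA$ is not pseudo-effective once $k\,A^2>K_X\cdot A$, and $k\ge\alpha(\Delta,\cF)+1$ suffices --- indeed the first summand $\tfrac{(K_XA+2)^2}{4A^2}$ of $\alpha(\Delta,\cF)$ is tuned (using Hodge index and $(K_XA-2)^2\ge 0$) so that $kA-K_X$ is then even big. For $i=1$ I would write $kA=K_X+(kA-K_X)$ and apply a version of Kawamata--Viehweg vanishing valid in the log canonical range to $kA-K_X$. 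The curves $C$ along which $kA-K_X$ fails to be nef are exactly the $g$-exceptional curves with $K_X\cdot C>0$; by Theorems~\ref{Thm:main3Delta} and~\ref{thm:Null} these are the non-$(-2)$ components of the cyclic/dihedral chains and the components of the elliptic Gorenstein leaves. Along a chain (the minimal resolution of a rational, hence klt, quotient singularity of $Y$) a discrepancy computation shows the fractional boundary one must absorb has coefficients in $[0,1)$, so --- after one further blow-up to make its support simple normal crossing --- it slots into the boundary of the vanishing input. Along an elliptic Gorenstein leaf $C$ the key point is that on a neighbourhood of $C$ one has $P(\Delta)=K_\cF$ (the leaf is disjoint from $\Delta$ and from $N(\Delta)$), so $\cO_X(kA)|_C$ is a degree-zero line bundle on the arithmetic-genus-one curve $C$ which is non-torsion in $\mathrm{Pic}^0(C)$ --- equivalently, $P(\Delta)_Y$ fails to be $\bQ$-Cartier at the corresponding cusp of $Y$ (\cite{MMcQ08}); hence $H^0(C,\cO_X(kA)|_C)=H^1(C,\cO_X(kA)|_C)=0$, which is precisely what lets the leaf enter as an lc centre without obstructing the vanishing (and also yields $R^1g_*\cO_X(kA)=0$ over the cusps). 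This gives $H^1(X,kA)=0$. The dimension formula is then Riemann--Roch: by Theorem~\ref{Thm:main3Delta}(1), $\lfloor N(\Delta)\rfloor=0$, so the defining property of the Zariski decomposition gives $H^0(m(K_\cF+\Delta))=H^0(mP(\Delta))$, and $\chi(mP(\Delta))=\chi(\cO_X)+\tfrac12 mP(\Delta)\cdot(mP(\Delta)-K_X)$ together with $P(\Delta)^2=\mathrm{Vol}(K_\cF+\Delta)$ and the two vanishings yields the displayed equality.

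\emph{Part (2).}
I would use that $K_\cG+\Delta_Y=P(\Delta)_Y=g_*P(\Delta)$ and, for such $m$, $H^0(Y,m(K_\cG+\Delta_Y))=H^0(X,mP(\Delta))$, so the rational map of $|m(K_\cG+\Delta_Y)|$ pulled back by $g$ is $\phi_{|mP(\Delta)|}$; since $mP(\Delta)$ is $g$-trivial it contracts exactly the $g$-exceptional curves, and it is enough to show $\phi_{|mP(\Delta)|}$ is injective with injective differential off $g^{-1}(\{\text{cusps}\})$. Fixing a length-two subscheme $Z$ supported away from the cusp fibres, this reduces to $H^1(X,mP(\Delta)\otimes\cI_Z)=0$, which I would obtain by rerunning the Part~(1) argument on the blow-up(s) of $\mathrm{Supp}\,Z$; the Reider-type bookkeeping replaces the condition ``$(mP(\Delta)-K_X)\cdot C\ge 1$ on every non-contracted curve $C$'' behind the ``$+1$'' of Part~(1) by the very-ampleness threshold ``$(mP(\Delta)-K_X)\cdot C\ge 3$'', turning the bound into $k\ge\alpha(\Delta,\cF)+3$. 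One also checks that these blow-ups, which by hypothesis on $Z$ never lie on a cusp cycle, force boundary corrections along the (klt, negative-definite) chain configurations that stay strictly below $1$, so the argument applies to every such $Z$ and $\phi_{|mP(\Delta)|}$ descends to a closed embedding of $Y\setminus\{\text{cusps}\}$; the failure to be an isomorphism at the cusps is the same non-$\bQ$-Cartier obstruction, in line with \cite[Corollary IV.2.3]{MMcQ08}.

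\emph{Where the difficulty lies.}
The hard part is the elliptic Gorenstein leaves in Part~(1): showing that $H^1(X,mP(\Delta))=0$ survives the presence of the non-rational cusp singularities on $Y$, which hinges on the $\mathrm{Pic}^0$-non-torsion input together with the exact description of the zero-locus of $K_\cF+\Delta$ from Theorems~\ref{Thm:main3Delta} and~\ref{thm:Null}. After that, pinning down the cusps as the precise locus where $|m(K_\cG+\Delta_Y)|$ fails to embed, and extracting the explicit constant $\alpha(\Delta,\cF)$ --- especially the $\tfrac{9A^2+6K_XA}{4}$ term, which I expect to come from the Reider estimates combined with a bound on the coefficients of the Zariski negative part of $mP(\Delta)-K_X$ along the contracted chains --- is the remaining bookkeeping.
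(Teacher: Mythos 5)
Your treatment of the elliptic Gorenstein leaves (non-torsion of $K_{\cF}|_{Z_Q}=P(\Delta)|_{Z_Q}$, Serre duality with $\omega_{Z_Q}\cong\cO_{Z_Q}$, hence $H^0=H^1=0$ on the cycle), your $H^2$ computation, and the Riemann--Roch step are all consistent with the paper (Lemma \ref{lem:H1(O(mA))=0}, Propositions \ref{prop:H1vanishing} and \ref{prop:P(D)H1}). But the core of your Part (1) has a genuine gap. You write $kA=K_X+(kA-K_X)$ and assert that ``the curves $C$ along which $kA-K_X$ fails to be nef are exactly the $g$-exceptional curves with $K_X\cdot C>0$.'' This is false in general: for a non-exceptional curve $C$ one only knows $A\cdot C\geq 1$, while $K_X\cdot C$ can be arbitrarily large (a high-genus curve with small $P(\Delta)$-degree), so $(kA-K_X)\cdot C<0$ for $k$ of size $\alpha(\Delta,\cF)+1$, a bound depending only on $A^2$ and $K_XA$. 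Consequently $kA-K_X$ cannot be written as (nef and big) $+$ (fractional boundary supported on the exceptional locus), and the Kawamata--Viehweg/lc-centre machinery cannot be run with the stated effective threshold. This is precisely the obstruction that Reider-type arguments exist to circumvent: the paper's engine is Tan's theorem (Theorem \ref{them:Tan}), a Bogomolov-instability statement which never requires $kA-K_X$ to be nef, and instead shows that any failure of $(k-1)$-very ampleness of $|nA+T|$ is witnessed by a divisor $D$ with $D\cdot A=0$ and bounded numerics; the twist $T=-2\cZ_1$ then absorbs the elliptic cycles. Your Part (2), which reruns Part (1) on blow-ups of the length-two subscheme, inherits the same defect (this is the classical pre-Reider approach to very ampleness and does not yield effective bounds of this shape).

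A secondary but real shortfall: the statement to be proved is an effective one, and you never derive the constant $\alpha(\Delta,\cF)$. In the paper it is not a Reider bookkeeping term on blow-ups: $\mathfrak{M}(A,0)=\frac{(K_XA+2)^2}{4A^2}-\frac{K_X^2}{4}$ comes straight out of Tan's theorem, and the summand $\frac{9A^2+6K_XA}{4}$ arises from the nefness of $3A+K_X$ (proved via the cone theorem and the description of ${\rm Null}\,P(\Delta)$ in Theorem \ref{thm:Null}), which gives $-K_X^2\leq 9A^2+6K_XA$ and hence $\alpha(\Delta,\cF)>\mathfrak{M}(A,0)-1$. Finally, the descent statement in Part (2) --- that $\phi_m$ is an isomorphism (not merely injective) onto its image at the rational singular points of $Y$ --- requires the argument of Proposition \ref{prop:nA-2Z1}(5) identifying $(\Phi_n)_*\cO_X$ with $\cO_{\Sigma_n}$ via $H^0(\cI_{Z_Q}/\cI_{Z_Q}^2)\cong\mathfrak{m}'/(\mathfrak{m}')^2$; this step is absent from your sketch.
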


\begin{corollary}\label{coro:HLconjecture}
  Let $(X,\cF)$ be a minimal foliated surface (cf.~Definition \ref{def:minfoliation}).  
  Assume $K_{\cF}$ is big with the Zariski decomposition $K_{\cF}=P+N$.
    Let  $(X_{\rm c},\cF_{\rm c})$ be the canonical model of $(X,\cF)$.
    Suppose $m$ is divisible by $i(\cF)$.
    Then we have:
    \begin{itemize}
    \item[\rm(1)] 
      If $m\geq i(\cF)\left(\alpha(\cF)+1\right)$, then 
      $H^1(mP)=H^2(mP)=0$ and 
      \begin{equation}
      \dim H^0(mK_{\cF})=\frac{m^2}{2}{\rm Vol}(K_\cF)-\frac{m}{2}K_X\cdot P+\chi(\cO_X).\notag
      \end{equation}
    \item[\rm(2)] 
    If $m\geq i(\cF) \left(\alpha(\cF)+3\right)$, then $|mK_{\cF_{\rm c}}|$ induces a birational map which is an isomorphism on the complement of the cusp singularities.
    \end{itemize}
    \end{corollary}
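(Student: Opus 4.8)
The plan is to derive Corollary \ref{coro:HLconjecture} from the preceding Proposition (that is, from Theorem \ref{them:boundednessofD} together with Proposition \ref{prop:P(D)H1}) by specialising its boundary to $\Delta=0$. Only two points need comment: that a minimal foliated surface $(X,\cF)$ with $K_\cF$ big fits the hypotheses of Theorem \ref{Thm:main3Delta} when $\Delta=0$, and that the canonical model of the triple $(X,\cF,0)$ constructed in \eqref{seq:canonicalmodel} coincides with McQuillan's canonical model $(X_{\rm c},\cF_{\rm c})$ of $(X,\cF)$.

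For the first point, by Definition \ref{def:minfoliation} the foliation $\cF$ has at most canonical singularities, so hypothesis (i) of Theorem \ref{Thm:main3Delta} holds and (ii) is vacuous. Hypothesis (iii) asks that $X$ carry no $\cF$-invariant $(-1)$-curve $E$ with $K_\cF\cdot E\le 0$ (the inequality $K_\cF\cdot E+2\Delta\cdot E\le 1$ being automatic once $\Delta=0$ and $K_\cF\cdot E\le 0$). If this is not already built into Definition \ref{def:minfoliation}, I would first contract all such curves; this is a composition of $K_\cF$-non-positive blow-downs under each of which the positive part $P$ of the Zariski decomposition $K_\cF=P+N$ is pulled back, so that ${\rm Vol}(K_\cF)=P^2$, $K_X\cdot P$, $i(\cF)$ and $\chi(\cO_X)$ are all left unchanged; we may therefore assume $(X,\cF)$ satisfies (iii).

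For the second point, since $\cF$ already has canonical (equivalently, reduced) singularities, in the construction recalled in the Introduction one may take the resolution $\pi$ to be the identity, so that $(X_{\rm c},\cF_{\rm c})$ is obtained from $(X,\cF)$ by the $K_\cF$-non-positive contraction $\sigma$. On the other hand, the morphism $g$ in \eqref{seq:canonicalmodel} with $D=0$ is the $K_\cF$-non-positive contraction dictated by the Zariski decomposition $K_\cF=P+N$, and these two contractions agree. Hence $(Y,\cG)=(X_{\rm c},\cF_{\rm c})$, $\Delta_Y=g_*0=0$, and $K_{\cG}+\Delta_Y=K_{\cF_{\rm c}}$; moreover, as in Theorem \ref{them:boundednessofD}, the pluricanonical spaces on $Y$ are computed on $X$ by $H^0(Y,mK_{\cF_{\rm c}})=H^0(X,mP)=H^0(X,mK_\cF)$ whenever $i(\cF)\mid m$, the last equality being the defining property of the positive part of the Zariski decomposition.

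Granting these identifications, both parts of the corollary are immediate. For (1): Proposition \ref{prop:P(D)H1}(1) with $\Delta=0$ gives $H^1(X,mP)=H^2(X,mP)=0$ for $m\ge i(\cF)(\alpha(\cF)+1)$, and then Riemann--Roch for the Cartier divisor $mP$ on the smooth surface $X$, together with $P^2={\rm Vol}(K_\cF)$ and $H^0(mK_\cF)=H^0(mP)$, yields the stated formula for $\dim H^0(mK_\cF)$. For (2): Theorem \ref{them:boundednessofD} and Proposition \ref{prop:P(D)H1}(2) with $\Delta=0$ say exactly that, for $m\ge i(\cF)(\alpha(\cF)+3)$, the linear system $|m(K_{\cG}+\Delta_Y)|=|mK_{\cF_{\rm c}}|$ defines a birational map which restricts to an isomorphism over the complement of the cusp singularities. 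The only step requiring genuine care is the matching of the two canonical-model constructions and the bookkeeping that the preliminary blow-downs (if any) leave ${\rm Vol}(K_\cF)$, $K_X\cdot P$, $i(\cF)$ and $\chi(\cO_X)$ untouched, so that the bounds phrased in terms of these invariants of $X$ are indeed the correct ones; everything else is a direct transcription of the earlier results.
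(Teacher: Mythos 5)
Your proposal is correct and follows exactly the route the paper intends: Corollary \ref{coro:HLconjecture} is the $\Delta=0$ specialization of Proposition \ref{prop:P(D)H1} (for part (1), combined with Riemann--Roch and $H^0(mK_{\cF})=H^0(mP)$) and of Theorem \ref{them:boundednessofD} (for part (2)). Note only that your fallback contraction step is unnecessary, since hypothesis (iii) with $\Delta=0$ is verbatim condition (ii) of Definition \ref{def:minfoliation}, and that the identification of $(Y,\cG)$ with $(X_{\rm c},\cF_{\rm c})$ is essentially definitional via \eqref{seq:canonicalmodel}.
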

\begin{remark}
  Serrano  proved that a fibred surface (i.e., algebraically integrable foliation) has no egl's (cf.~\cite{Ser92,Ser93}). 
  In this case, the divisor $mK_{\cF_{\rm c}}$ is very ample for $m\geq i(\cF)\left(\alpha(\cF)+3\right)$ divisible by $i(\cF)$.
  Moreover, when the foliation has no cusp singularities (or egl's), Corollary \ref{coro:HLconjecture} essentially coincides with \cite[Theorem 7.2]{Tan24}, up to minor modifications.
\end{remark}

Let $i_{\bQ}(\cF_{\rm c})$ be the smallest positive integer $m$ such that $mK_{\cF_{\rm c}}$ is Cartier at all $\bQ$-Gorenstein points of $\cF_{\rm c}$. Then
\[
i(\cF)= i_{\bQ}(\cF_{\rm c}),\quad 
K_X\cdot P=K_{X_{\rm c}}\cdot K_{\cF_{\rm c}},\quad 
P^2=K_{\cF_{\rm c}}^2,
\]
where the first equality follows from \cite[Theorem~(4.2)]{Sak84} and \cite[Theorem~(1.7)]{Artin62}. 
Hence $\mathfrak{m}:=i(\cF)\big(\alpha(\cF)+3\big)$ depends only on the Hilbert function $P(m)=\chi(mK_{\cF_{\rm c}})$ (cf.~\cite[Proposition~4.1]{HA21}). 
Therefore, Corollary~\ref{coro:HLconjecture}(2) gives an effective answer to \cite[Conjecture~1]{HA21}, whose existence statement was previously established in \cite{YAChen21}.
Moreover, \cite{Pas24} shows that the set of Hilbert functions $P(m)=\chi(mK_{\cF_{\rm c}})$ of $2$-dimensional foliated canonical models with fixed 
$i_{\bQ}(\cF_{\rm c})$, ${\rm Vol}(\cF_{\rm c})$, and $K_{X_{\rm c}}\cdot K_{\cF_{\rm c}}$ is finite.
\medskip

Finally, we present the following variant of Theorem~\ref{Thm:main1eKX}.

\begin{proposition}[= Corollary~\ref{cor:eKXcanonical}]\label{prop:maineKXDuval}
Let $(X,\cF)$ be a minimal foliated surface (cf.~Definition~\ref{def:minfoliation}).
Assume that $K_{\cF}$ is big with the Zariski decomposition $K_{\cF}=P+N$, and 
$$\epsilon\in\left(0,\frac{1}{3\cdot i(\cF)}\right)\cap\bQ.$$
Then $K_{\cF}+\epsilon K_X$ is big with the Zariski decomposition $P(\epsilon)+N(\epsilon)$, where
\begin{itemize}
\item[(1)] ${\rm Supp}\,N(\epsilon)={\rm Supp}\,N$ is the disjoint union of all maximal $\cF$-chains on $X$.
\item[(2)] ${\rm Null}\,P(\epsilon)$ consists of the following curves:
\begin{itemize}
\item[(i)] components of $\cF$-chains;
\item[(ii)] $(-2)$-curves $C$ with $P\cdot C=0$ that are not contained in any $\cF$-chain.
\end{itemize}
\end{itemize}
\end{proposition}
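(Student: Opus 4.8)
The plan is to exhibit the Zariski decomposition of $K_{\cF}+\epsilon K_X$ explicitly as the pullback of an adjoint class from the surface obtained by contracting all the $\cF$-chains, and then to reduce the required nef-ness to a single index-type positivity estimate in which the threshold $\frac{1}{3\,i(\cF)}$ is exactly what is needed; the verification of that estimate is the one delicate point. This runs parallel to, but is logically independent of, the proof of Theorem~\ref{Thm:main1eKX}.

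First I would introduce the birational morphism $\mu\colon X\to\bar X$ contracting every maximal $\cF$-chain to a cyclic quotient singularity, and set $\bar\cF:=\mu_*\cF$. Since the components of $N$ are precisely the $\mu$-exceptional curves and $P\cdot C=0$ for each such component, the negativity lemma gives $P=\mu^*(\mu_*P)$ with $\mu_*P=K_{\bar\cF}$ nef and big; by definition of $i(\cF)$ the class $i(\cF)\,P$ is integral on $X$, hence $i(\cF)\,K_{\bar\cF}\cdot\bar C=i(\cF)\,P\cdot C\in\mathbb{Z}$ for every curve, so $i(\cF)\,K_{\bar\cF}\cdot\bar C\ge1$ whenever $K_{\bar\cF}\cdot\bar C>0$; moreover $i(\cF)\,N$ is then integral too, so the coefficients of $N$ are $\ge\frac{1}{i(\cF)}$. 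Writing the discrepancy of $\mu$ as $K_X=\mu^*K_{\bar X}+\Delta_\mu$ with ${\rm Supp}\,\Delta_\mu={\rm Exc}(\mu)$ and coefficients in $(-1,0]$, we obtain
\begin{equation}
K_{\cF}+\epsilon K_X=\mu^*\big(K_{\bar\cF}+\epsilon K_{\bar X}\big)+\big(N+\epsilon\Delta_\mu\big),\notag
\end{equation}
and I would take $N(\epsilon):=N+\epsilon\Delta_\mu$ and $P(\epsilon):=\mu^*(K_{\bar\cF}+\epsilon K_{\bar X})$ as the candidate Zariski data. Because the coefficients of $N$ along each chain are $\ge\frac{1}{i(\cF)}$ and those of $\Delta_\mu$ have absolute value $<1$, the divisor $N(\epsilon)$ is effective with $\lfloor N(\epsilon)\rfloor=0$ and ${\rm Supp}\,N(\epsilon)={\rm Supp}\,N$, a disjoint union of the maximal $\cF$-chains; its intersection matrix is negative definite and $P(\epsilon)\cdot C=0$ for every component $C$ of $N(\epsilon)$ by construction. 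Hence, once $P(\epsilon)$ is shown to be nef and $P(\epsilon)^2>0$, uniqueness of Zariski decompositions yields simultaneously that $K_{\cF}+\epsilon K_X$ is big and that $P(\epsilon)+N(\epsilon)$ is its Zariski decomposition, and part (1) is then immediate. Positivity of $P(\epsilon)^2$ follows from continuity ($P(\epsilon)^2$ is quadratic in $\epsilon$ and equals $P^2>0$ at $\epsilon=0$) together with $P(\epsilon)\cdot P=P^2+\epsilon\,K_X\cdot P$ and nef-ness of $P$ and $P(\epsilon)$.

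The heart of the argument is the nef-ness of $P(\epsilon)$, equivalently of $K_{\bar\cF}+\epsilon K_{\bar X}$ on $\bar X$. Writing $N^*_{\bar\cF}:=K_{\bar X}-K_{\bar\cF}$, one needs $(1+\epsilon)\,K_{\bar\cF}\cdot\bar C+\epsilon\,N^*_{\bar\cF}\cdot\bar C\ge0$ for every curve $\bar C$; the only nontrivial case is $N^*_{\bar\cF}\cdot\bar C<0$, where it suffices to bound $-N^*_{\bar\cF}\cdot\bar C\le\frac{1+\epsilon}{\epsilon}\,K_{\bar\cF}\cdot\bar C$. If $K_{\bar\cF}\cdot\bar C>0$, then $\frac{1+\epsilon}{\epsilon}K_{\bar\cF}\cdot\bar C>3\,i(\cF)\,K_{\bar\cF}\cdot\bar C\ge3$, so it is enough to prove the index inequality
\begin{equation}
-N^*_{\bar\cF}\cdot\bar C\ \le\ 3\,i(\cF)\,K_{\bar\cF}\cdot\bar C;\notag
\end{equation}
I would establish this by lifting $\bar C$ to its strict transform $C$ on $X$, using $N^*_{\bar\cF}\cdot\bar C=K_X\cdot C-P\cdot C-\Delta_\mu\cdot C=2p_a(C)-2-C^2-P\cdot C-\Delta_\mu\cdot C$, and then invoking canonicity of $\cF$ to bound the local contributions of the foliation singularities along $C$ (equivalently the tangency of $\bar C$ with $\bar\cF$), together with the Hodge index theorem applied to the nef and big class $P$, which forces $C^2$ — and the way $C$ meets the contracted chains — to be tightly controlled whenever $K_{\bar\cF}\cdot\bar C$ is small. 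If instead $K_{\bar\cF}\cdot\bar C=0$, then $P\cdot C=0$; from ${\rm tang}(\cF,C)=K_{\cF}\cdot C+C^2=N\cdot C+C^2\ge0$ and $C^2<0$ (as $C$ lies in ${\rm Null}\,P$) one sees $C$ is $\cF$-invariant, minimality of $(X,\cF)$ together with negative-definiteness of ${\rm Null}\,P$ excludes $C^2=-1$, and then $N^*_{\bar\cF}\cdot\bar C=-2-C^2-\Delta_\mu\cdot C\ge0$, with equality precisely for $(-2)$-curves disjoint from the $\cF$-chains. For (2) one reads off that $C\in{\rm Null}\,P(\epsilon)$ iff $C$ is $\mu$-exceptional (a component of an $\cF$-chain) or its image satisfies $(K_{\bar\cF}+\epsilon K_{\bar X})\cdot\bar C=0$, and by the previous sentence the latter means exactly a $(-2)$-curve $C$ with $P\cdot C=0$ that is not a component of an $\cF$-chain; elliptic Gorenstein leaves and curves with $C^2<-2$ acquire strictly positive intersection with $K_{\bar\cF}+\epsilon K_{\bar X}$ and therefore leave ${\rm Null}$.

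The main obstacle is the index inequality $-N^*_{\bar\cF}\cdot\bar C\le 3\,i(\cF)\,K_{\bar\cF}\cdot\bar C$ for curves with $K_{\bar\cF}\cdot\bar C>0$: one must control the tangency of $\bar C$ with $\bar\cF$ and the quotient-singularity corrections linearly in $i(\cF)\,K_{\bar\cF}\cdot\bar C$, using canonicity of $\cF$ and Hodge index, and it is precisely here that the constant $3$ — hence the threshold $\frac{1}{3\,i(\cF)}$ — is forced. All the remaining steps are routine perturbation and negative-definiteness arguments.
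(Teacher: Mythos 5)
Your overall architecture --- descending to the surface $\bar X$ obtained by contracting the maximal $\cF$-chains, taking $P(\epsilon)=\mu^*(K_{\bar\cF}+\epsilon K_{\bar X})$ and $N(\epsilon)=N+\epsilon\Delta_\mu$, and reducing everything to nefness of $K_{\bar\cF}+\epsilon K_{\bar X}$ --- is a reasonable repackaging of what the paper does upstairs (it writes $P(\epsilon)=(1-3i\epsilon)P+\epsilon(3iP+K_X)+(N-N(\epsilon))$), and your treatment of the effectivity of $N(\epsilon)$, of part (1), and of part (2) is essentially sound. The problem lies in the step you yourself flag as the heart of the argument, which is left unproved and, as stated, is not provable.

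First, the ``index inequality'' you reduce to, $-N^*_{\bar\cF}\cdot\bar C\le 3\,i(\cF)\,K_{\bar\cF}\cdot\bar C$, is false. Since $\tfrac{1+\epsilon}{\epsilon}>3i(\cF)+1$, the inequality you actually need is only $-N^*_{\bar\cF}\cdot\bar C\le(3i(\cF)+1)K_{\bar\cF}\cdot\bar C$, equivalently $-K_{\bar X}\cdot\bar C\le 3i(\cF)\,K_{\bar\cF}\cdot\bar C$; by discarding the ``$+1$'' you lose exactly the margin that makes the threshold $\tfrac{1}{3i(\cF)}$ work. Concretely, for a generic degree-$2$ foliation on $\bP^2$ one has $i(\cF)=1$, $K_{\bar\cF}=H$, $N^*_{\bar\cF}=-4H$, and for a line $C$ your inequality reads $4\le 3$. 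Second, and more seriously, even the corrected inequality cannot be established by the tools you invoke. Canonicity of $\cF$, the genus formula and the Hodge index theorem give at best $-K_X\cdot C\le 2+C^2\le 2+(P\cdot C)^2/P^2$, which is quadratic in $P\cdot C$ and is not dominated by $3i(\cF)\,P\cdot C$ once $P\cdot C$ is large relative to $P^2$ (e.g.\ a smooth rational curve with $C^2=9$ has $-K_X\cdot C=11$, while Hodge index only guarantees $i(\cF)P\cdot C\ge 3$). The indispensable missing ingredient is the cone theorem: nefness of $3i(\cF)P+K_X$ need only be tested on $K_X$-negative extremal rays, which are rational curves with $-K_X\cdot C\le 3$; there either $P\cdot C>0$, whence $i(\cF)P\cdot C\ge1$ by integrality and $3i(\cF)P\cdot C\ge3\ge-K_X\cdot C$, or $P\cdot C=0$, in which case the classification of ${\rm Null}(P)$ (Lemmas \ref{lem:FnoninvCsuchthatP(Delta)C=0} and \ref{lem:P(D)C=0-C-Finv}) forces $K_X\cdot C\ge0$. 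This is exactly the Claim in the proof of Theorem \ref{thm:eKXcanonical}, and without it your argument does not close. (Your derivation of $P(\epsilon)^2>0$ from ``continuity'' is likewise too weak for the whole interval $\left(0,\tfrac{1}{3i(\cF)}\right)$; the convex decomposition $(1-3i\epsilon)K_{\bar\cF}+\epsilon(3iK_{\bar\cF}+K_{\bar X})$ gives it immediately once the nefness claim is in place.)
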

  
Let $(Y,\cG,\epsilon K_Y)$ be the canonical model of $(X,\cF,\epsilon K_X)$. 
Then $(Y,\cG)$ coincides with the \emph{minimal partial du Val resolution} of $(X_{\rm c},\cF_{\rm c})$ introduced in \cite{YAChen21}.
By Corollary~\ref{cor:eKXcanonical}(3), ${\rm Vol}(K_{\cF}+\epsilon K_X)$ is bounded in terms of ${\rm Vol}(K_{\cF})$, $i(\cF)$, $\epsilon$, and $K_X\cdot P$.
Arguing as in Corollary~\ref{coro:main2eKX}, there exists $\mathfrak{N}>0$, depending only on $i(\cF)$, $\epsilon$, $K_X\cdot P$, and ${\rm Vol}(K_{\cF})$, such that $\mathfrak{N}(K_{\cG}+\epsilon K_Y)$ is very ample. 
Moreover, $\mathfrak{N}$ depends only on $\epsilon$ and the Hilbert function $P(m)=\chi(mK_{\cF_{\rm c}})$.      
  
\section{Preliminaries}
\subsection{Definition of foliations}
A \emph{foliation} $\cF$ on a normal surface $X$ is a rank $1$ saturated subsheaf
$T_{\cF}\subset T_X$ of the tangent sheaf.
Since $T_X\simeq{\rm Hom}_{\cO_X}(\Omega_X,\cO_X)$ is reflexive, $T_{\cF}$ is also reflexive.
We define the \emph{canonical divisor} $K_{\cF}$ of $\cF$ by
$\cO_X(-K_{\cF})\cong T_{\cF}$.

Let $f:Y\to X$ be a proper birational morphism and let $\cF$ be a foliation on $X$.
The \emph{pullback foliation} $f^*\cF$ is defined as the saturation of the kernel of the composition
\[
T_Y\to f^*T_X\to f^*(T_X/T_{\cF}).
\]
Conversely, if $\cG$ is a foliation on $Y$, we define the \emph{pushforward foliation} $f_*\cG$ 
as the saturation of the image of the composition
\[
f_*T_{\cG}\to f_*T_Y\to (f_*T_Y)^{**}=T_X.
\]
Note that $f^*f_*\cG=\cG$ and $f_*f^*\cF=\cF$ by \cite[Lemma~1.8]{HA21}.
\medskip

In this paper, we mainly work on a smooth surface $X$, unless stated otherwise.

\subsection{Singularities of foliations}
A point $p\in X$ is a \emph{singular point} of $\cF$ if either $p$ is a singular point of $X$ 
or the quotient sheaf $T_X/T_{\cF}$ is not locally free at $p$.
We denote by ${\rm Sing}(\cF)$ the set of singular points of $\cF$.

\subsubsection{Reduced singularities}
Let $p$ be a singular point of $\cF$ such that $X$ is smooth at $p$.
In local coordinates $(x,y)$ centered at $p$, the foliation is locally defined by
\begin{equation}
\nu
=
a(x,y)\frac{\partial}{\partial x}
+
b(x,y)\frac{\partial}{\partial y},
\quad
\text{equivalently}\quad
\omega
=
a(x,y)\,{\rm d}y
-
b(x,y)\,{\rm d}x .
\end{equation}

Let $\lambda_1,\lambda_2$ be the eigenvalues of the linear part $(D\nu)(p)$ of $\nu$ at $p$.
The singularity $p$ is called \emph{non-degenerate} 
if both eigenvalues $\lambda_1,\lambda_2$ are nonzero.
It is called  \emph{reduced} 
if one of the two eigenvalues, say $\lambda_2$, is nonzero 
and the quotient 
$\lambda=\lambda_1/\lambda_2$
is not a positive rational number. 
In particular, if $\lambda=0$, we call $p$ a \emph{saddle-node}.
A foliation $\cF$ is said to be \emph{reduced} 
if all its singular points are reduced.

\begin{remark}
If $p$ is a saddle-node, then after a suitable change of coordinates,
\[
\nu=
\left(x+axy^k+yF(x,y)\right)\tfrac{\partial}{\partial x}
+
y^{k+1}\tfrac{\partial}{\partial y},
\]
where $a\in\bC$, $k\in\mathbb{Z}^{>0}$, and 
$F$ is a holomorphic function which vanishes at $(0,0)$ up to order $k$.
The curve $(y=0)$ is the \emph{strong separatrix}.
If $F=0$, then $(x=0)$ is called the \emph{weak separatrix}. (See \cite[p.~3]{Bru15}.)
\end{remark}

Let $\sigma:X'\to X$ be the blow-up of $X$ at $p$ with exceptional divisor $E$, and let $\cF'=\sigma^*\cF$. 
Then
\begin{equation}\label{equ:KF-blow-up}
K_{\cF'} = \sigma^*K_{\cF} + (1-l(p))E,
\end{equation}
where $l(p)$ denotes the vanishing order of $\pi^*(\omega)$ along $E$. 
Let $a(p)$ denote the vanishing order of $\omega$ at $p$. 
Then $l(p)=a(p)$ if $E$ is $\cF$-invariant, and $l(p)=a(p)+1$ otherwise.
(See \cite[p.~16--17]{Bru15})

\begin{theorem}[Seidenberg]\label{thm:Seidenberg}
Given any foliated surface $(X, \cF)$, 
there exists a sequence of blow-ups
$\sigma:X'\to X$ 
such that the induced foliation $\cF'$ on $X'$ is reduced.
(See \cite{Sei68} or \cite[Theorem 1.1]{Bru15}.)
\end{theorem}

\subsection{Index theorems}
We recall several index theorems for foliations on surfaces following \cite{Bru15}.

Let $\cF$ be a foliation on a smooth surface $X$.
A curve $C\subseteq X$ is said to be \emph{$\cF$-invariant} if 
the inclusion $T_{\cF}|_C\to T_X|_C$ factors through $T_C$, where $T_C$ is the tangent bundle of $C$.

\subsubsection{Non-invariant curves}

For a non-$\cF$-invariant curve $C$ and $p\in C$, define
\[
{\rm tang}(\cF,C,p)
:=
\dim_{\bC}\frac{\cO_p}{\langle f,\nu(f)\rangle},
\]
where $\nu$ is the local generator of $\cF$ at $p$ and $f=0$ is the local equation of $C$.

\begin{lemma}
  We have
  \begin{equation}\label{tang}
  {\rm tang}(\cF,C):=\sum_{p\in C}{\rm tang}(\cF,C,p)=K_{\cF}\cdot C+C^2\, (\geq0).
  \end{equation} 
  (See \cite[Proposition 2.2]{Bru15}.)
\end{lemma}    

\subsubsection{Invariant curves}
Let $\omega$ be a local $1$-form defining $\cF$ in a neighborhood of $p\in C$,
where $C$ is an $\cF$-invariant curve locally defined by $f=0$.  
We can write
\[
g\,\omega=h\,{\rm d}f+f\,\eta,
\]
where $\eta$ is a holomorphic $1$-form, $g$ and $h$ are holomorphic functions, and $h$ and $f$ are relatively prime. 

We define 
\begin{align}
  {\rm Z}(\cF,C,p)&:=\text{vanishing order of $\left.\frac{h}{g}\right|_C$ at $p$}\\
  {\rm CS}(\cF,C,p)&:={\rm Res}_{p}\left\{\left.-\frac{\eta}{h}\right|_C\right\}.
\end{align}
The index ${\rm Z}$ is the Gomez-Mont--Seade--Verjovsky index
and ${\rm CS}$ is the residue-type index (cf.~\cite{Bru15,Bru97,CS82,GSV91}). 
Both indices vanish if $p$ is not a singularity of $\cF$.

\begin{lemma}\label{lem:csformula}
For an $\cF$-invariant curve $C$, we have
\begin{align}
{\rm Z}(\cF,C)&:= \sum_{p\in C} {\rm Z}(\cF,C,p)=K_{\cF}\cdot C+2-2p_a(C),\label{equ:Z}\\
{\rm CS}(\cF,C)& := \sum_{p\in C} {\rm CS}(\cF,C,p)=C^2,\label{equ:CS}
\end{align} 
where $p_a(C)$ denotes the arithmetic genus of $C$, and the second equality is the \emph{Camacho-Sad formula}.
 (See \cite[Proposition~2.3 and Theorem~3.2]{Bru15}.)
\end{lemma}

\begin{lemma}\label{lem:Z-CS-index-for-reduced}
Let $p$ be a reduced singularity of the foliation $\cF$.
\begin{itemize}
  \item[(1)] If $p$ is non-degenerate and $\cF$ locally around $p$ defined by $\omega=\lambda y(1+o(1)){\rm d}x-x(1+o(1)){\rm d}y$, then
  \begin{equation}
  \begin{cases}
  {\rm CS}(\cF,x=0,p)=\tfrac{1}{\lambda},\quad   {\rm CS}(\cF,y=0,p)=\lambda,\quad   {\rm CS}(\cF,xy=0,p)=\lambda+\tfrac{1}{\lambda}+2,\\
  {\rm Z}(\cF,x=0,p)={\rm Z}(\cF,y=0,p)=1,\quad   {\rm Z}(\cF,xy=0,p)=0.
  \end{cases}\notag
  \end{equation}
  \item[(2)] If $p$ is a saddle-node and $\cF$ locally around $p$ defined by $\omega=y^{k+1}{\rm d}x-(x(1+ay^k)+y\,o(k)){\rm d}y$, where $k\in\mathbb{Z}^{>0}$ and $a\in\mathbb{C}$,
   then 
  \[
  {\rm CS}(\cF,y=0,p)=0,\quad\text{and}\quad {\rm Z}(\cF,y=0,p)=1.
  \]
  If furthermore there exists a weak separatrix $(x=0)$, then 
    \[
  {\rm CS}(\cF,x=0,p)=a,\quad\text{and}\quad {\rm Z}(\cF,x=0,p)=k+1\,(\geq2).
  \]
\end{itemize}
(See \cite[p.~30--31]{Bru15}.)
\end{lemma}

\begin{theorem}[Separatrix theorem]\label{thm:separatrix}
Let $\cF$ be a foliation on a smooth projective surface $X$ and let $C\subset X$ be a connected compact $\cF$-invariant curve such that:
\begin{itemize}
\item[(i)] All the singularities of $\cF$ on $C$ are reduced (in particular, $C$ has only normal crossing singularities);
\item[(ii)] If $C_1,\cdots,C_n$ are the irreducible components of $C$, then the intersection matrix $(C_i\cdot C_j)_{1\leq i,j\leq n}$ is negative definite and the dual graph $\Gamma$ is a tree.
\end{itemize}
Then there exists at least one point $p\in C\cap {\rm Sing}(\cF)$ and a separatrix through $p$ not contained in $C$.
(See \cite[Theorem 3.4]{Bru15}.)  
\end{theorem}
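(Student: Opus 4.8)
\emph{Proposed approach.} The plan is to argue by contradiction: assume that for every $p\in C\cap\mathrm{Sing}(\cF)$ every separatrix of $\cF$ through $p$ is contained in $C$. First I would record the local structure at such a point, using that all singularities of $\cF$ on $C$ are reduced. A reduced singularity has exactly one or two separatrices, all of which are analytic except possibly the weak separatrix of a saddle-node. If $p$ lies on a single component $C_i$ (so $p$ is a smooth point of $C$, since $C$ is normal crossing), the branch of $C_i$ at $p$ is a separatrix; a non-degenerate reduced singularity would in addition carry the transverse eigen-separatrix, which is analytic and not contained in $C$, and is therefore excluded by hypothesis. Hence $p$ must be a saddle-node whose strong separatrix is the branch of $C_i$ (its weak separatrix being then divergent), and by the residue computation recalled before Lemma \ref{lem:csformula} one gets $\mathrm{CS}(\cF,C_i,p)=0$; by Lemma \ref{lem:hp(F,C)} and Corollary \ref{coro:CS<0hp(F,C)=1} such a point contributes $h_p(\cF,C_i)=1$. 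If instead $p=C_i\cap C_j$ is a node of $C$, the two branches exhaust the separatrices, and at a non-degenerate node one has the Camacho--Sad reciprocity $\mathrm{CS}(\cF,C_i,p)\cdot\mathrm{CS}(\cF,C_j,p)=1$. Plugging this classification into Lemma \ref{lem:csformula} yields, for every component, $C_i^2=\sum_{C_j\ \mathrm{adjacent\ to}\ C_i}\mathrm{CS}(\cF,C_i,C_i\cap C_j)$, and Lemma \ref{lem:C-LNformula} controls the genera and the local indices $h_p(\cF,C_i)$.

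Next I would convert this into a contradiction with negative-definiteness. The route I favour uses contraction: since $(C_iC_j)$ is negative definite, by Grauert's criterion $C$ can be contracted by a proper modification $\pi\colon X\to \overline X$ to a single normal point $q=\pi(C)$ of an analytic surface, with $\pi$ an isomorphism away from $C$; because $C$ is $\cF$-invariant, the foliation descends to a saturated foliation $\overline\cF$ on $\overline X$ with $\cF=\pi^{*}\overline\cF$. Since the dual graph $\Gamma$ is a tree (this is where that hypothesis is indispensable), $q$ is a normal surface singularity with tree-shaped resolution, and the germ $(\overline\cF,q)$ admits a separatrix $\overline S$ by the separatrix theorem extending Camacho--Sad to such surface singularities (e.g.\ for the quotient singularities that occur here, lift to the smooth cover, apply Camacho--Sad \cite{CS82} there, and take a finite union invariant under the group). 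Its strict transform $S$ under $\pi$ is an $\cF$-invariant curve meeting $C$ but not contained in it; since two $\cF$-invariant curves can only cross at a singularity of $\cF$, the point $S\cap C$ lies in $C\cap\mathrm{Sing}(\cF)$ and $S$ is a separatrix through it not contained in $C$ --- contradicting our assumption.

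\emph{Main obstacle.} The hard part is exactly this last step, i.e.\ producing a separatrix that escapes $C$: it is the genuine content of the classical separatrix theorem and cannot be extracted from the index identities by a naive positivity argument, because a leaf component of the tree forces $\mathrm{CS}(\cF,C_i,C_i\cap C_j)=C_i^{2}\le-1<0$, so the intersection form evaluated on positive vectors need not be positive. The ``tree'' hypothesis is essential and this is where it enters: for a cycle of $\cF$-invariant rational curves --- a cusp singularity, whose exceptional divisor is an elliptic Gorenstein leaf --- there is genuinely no separatrix leaving $C$. Consequently, after the local classification above, the proof is a reduction to the Camacho--Sad theorem \cite{CS82} and its foliated-surface versions in \cite{Bru15,MMcQ08}; alternatively one can run the classical argument directly, tracking separatrices through the sequence of blow-ups that makes all singularities reduced and bookkeeping via Lemmas \ref{lem:csformula} and \ref{lem:C-LNformula}.
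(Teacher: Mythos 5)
The paper offers no argument for this statement: it is quoted from the literature and the entire proof is the citation \cite[Theorem 3.4]{Bru15} (Camacho's generalization of the Camacho--Sad separatrix theorem). Since your proposal, after the local classification of reduced singularities, explicitly defers the essential step to that same classical result (Camacho--Sad \cite{CS82} and its versions in \cite{Bru15}), in substance you take the same route as the paper.

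One caveat is worth recording. Your suggested self-contained justification of the key step --- contract $C$ to a normal point $q$ and produce a separatrix of the descended foliation at $q$ by treating $q$ as a quotient singularity and lifting to a smooth cover --- does not cover the general case. A negative-definite tree of $\cF$-invariant curves with reduced foliation singularities contracts to an arbitrary ``tree-like'' normal surface singularity: the components $C_i$ may have positive genus and the weighted graph need not be that of a quotient singularity, so the smooth-cover trick is unavailable in general. The existence of a separatrix for a foliation at a general tree-like normal singularity is precisely Camacho's theorem, i.e.\ the resolved form of the statement you are proving, so the contraction detour is circular rather than a genuine reduction. As a pointer to the literature the proposal is adequate and matches the paper; as an independent proof it is not.
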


\subsection{(Log) canonical singularities of foliations}
Let $f:Y \to X$ be a proper birational morphism between normal surfaces, 
and let $\cF$ be a foliation on $X$ with pullback foliation $f^*\cF$ on $Y$.  
We define the \emph{discrepancy} of $\cF$ along a prime divisor $E$ on $Y$ 
as in \cite[Definition~I.1.2, Definition~I.1.5, and Fact~I.2.15]{MMcQ08} by
\[
a(E,\cF) := {\rm ord}_E\bigl(K_{f^*\cF}-f^*K_{\cF}\bigr).
\]

We say that $(X,\cF)$ is \emph{terminal} (resp. \emph{canonical}) 
if $a(E,\cF) > 0$ (resp. $\geq 0$) for every exceptional divisor $E$ over $X$, 
and $(X,\cF)$ is \emph{log terminal} (resp. \emph{log canonical}) 
if $a(E,\cF) > -\iota(E)$ (resp. $\geq -\iota(E)$) for every exceptional divisor $E$ over $X$, 
where
\begin{equation}
\iota(E):=
\begin{cases}
0,& \text{if $E$ is $f^*\cF$-invariant},\\[1mm]
1,& \text{otherwise}.
\end{cases}
\end{equation}

\begin{definition}\label{def:minfoliation}
A foliated surface $(X,\cF)$ is called \emph{minimal} if
\begin{itemize}
\item[(i)] $\cF$ has at most canonical singularities, and
\item[(ii)] $X$ is a smooth projective surface containing no $\cF$-invariant $(-1)$-curve $E$ such that $K_{\cF}\cdot E \le 0$.
\end{itemize}
\end{definition}

\begin{definition}\label{def:logminfoliation}
A foliated surface $(X,\cF)$ is called \emph{log minimal} if
\begin{itemize}
\item[(i)] $\cF$ has at most log canonical singularities, and
\item[(ii)] $X$ is a smooth projective surface containing no $(-1)$-curve $E$ such that $K_{\cF}\cdot E \le \iota(E)$.
\end{itemize}
\end{definition}

\subsection{A generalization of $Z$-index}

Let $C$ be an irreducible $\cF$-invariant curve on a smooth surface $X$.
If $C$ is singular at a point $p$, the index ${\rm Z}(\cF,C,p)$ may be negative (cf.~\cite[p.~15]{Bru15}). 
To remedy this, we introduce a non-negative invariant $h_p(\cF,C)$.

Let $\nu = a(x,y)\tfrac{\partial}{\partial x} + b(x,y)\tfrac{\partial}{\partial y}$ be a local generator of $\cF$ at $p=(0,0)$, 
and let $B$ be an $\cF$-invariant analytic branch through $p$ with minimal Puiseux parametrization $\varphi(t)=(\varphi_x(t),\varphi_y(t))$.
We define the multiplicity of $\cF$ along $B$ at $p$ by
\begin{equation}\label{eq:mupdefi}
\mu_p(\cF,B) :=
\begin{cases}
\nu_0(a(\varphi_x(t),\varphi_y(t)))-\nu_0(\varphi_x(t))+1, & \varphi_x(t)\neq0,\\
\nu_0(b(\varphi_x(t),\varphi_y(t)))-\nu_0(\varphi_y(t))+1, & \varphi_y(t)\neq0,
\end{cases}
\end{equation}
where $\nu_0(h)$ denotes the order of the zero of $h$ at $t=0$.

Let $\sigma:X'\to X$ be the blow-up at $p$ with exceptional divisor $E$, let $\cF'=\sigma^*\cF$ and $\bar{B}$ be the strict transform of $B$, and set $p':=E\cap \bar{B}$. Then (see \cite[p.~291]{Car94})
\begin{equation}\label{equ:mu-blowup}
\mu_p(\cF,B) = \mu_{p'}(\cF',\bar{B}) + m_p(B)(l(p)-1).
\end{equation}

\begin{lemma}\label{lem:mu_p(F,C)>=0}
The following statements hold.
\begin{itemize}
\item[\rm(1)] $\mu_p(\cF,B)\ge0$, and $\mu_p(\cF,B)=0$ if and only if $p\notin {\rm Sing}(\cF)$.
\item[\rm(2)] $\mu_p(\cF,B)=1$ if and only if either $p$ has two non-zero eigenvalues or $p$ is a saddle-node with $B$ as its strong separatrix.  
In particular, if $m_p(B)\ge2$, then $p$ is a dicritical singularity with local generator 
$\nu=x\tfrac{\partial}{\partial x}+\lambda y\tfrac{\partial}{\partial y}$ for some $\lambda\in\bQ^+$.
\item[\rm(3)] If $\mu_p(\cF,B)\ge2$ and $p$ is canonical, then $m_p(B)=1$ and $p$ is a saddle-node with $B$ as its weak separatrix.
\end{itemize}
\end{lemma}

\begin{proof}
(1) It follows from \cite[p.~291]{Car94}. 

\noindent(2) Assume $\mu_p(\cF,B)=1$. If $m_p(B)=1$, then in suitable coordinates
\[
B=(y=0),\qquad 
\nu=(x^m w(x)+y\,u(x,y))\tfrac{\partial}{\partial x}+y\,v(x,y)\tfrac{\partial}{\partial y}, 
\quad w(0)\neq0.
\]
Taking the minimal parametrisation $\varphi(t)=(t,0)$ yields $\mu_p(\cF,B)=m$. 
Hence $\mu_p(\cF,B)=1$ if and only if $p$ has two non-zero eigenvalues or $p$ is a saddle-node whose strong separatrix is $B$.

Assume now $m_p(B)\ge2$. If $l(p)\ge2$, then \eqref{equ:mu-blowup} gives $\mu_p(\cF,B)\ge2$, a contradiction. 
Thus $l(p)=1$. We distinguish cases as in \cite[p.~2--8]{Bru15}.

\begin{itemize}
\item[\bf(a)] $p$ is a nilpotent singularity. 
In suitable coordinates,
\[
\nu=(y+u(x,y))\tfrac{\partial}{\partial x}+v(x,y)\tfrac{\partial}{\partial y},
\qquad u,v \text{ vanish to order }\ge2 .
\]
Let $\varphi(t)=(\varphi_x(t),\varphi_y(t))$ be a minimal parametrisation of $B$ at $p$, where $\varphi_x\varphi_y\neq0$.
By \eqref{eq:mupdefi}, $\mu_p(\cF,B)=1$ yields
\[
\nu_0(\varphi_y+u(\varphi_x,\varphi_y))=\nu_0(\varphi_x),\qquad
\nu_0(v(\varphi_x,\varphi_y))=\nu_0(\varphi_y),
\]
where $\nu_0$ denotes the vanishing order at $t=0$. 
Hence $\nu_0(\varphi_y)=\nu_0(\varphi_x)$ and $2\nu_0(\varphi_x)\le\nu_0(\varphi_y)$, which forces
$\nu_0(\varphi_x)=\nu_0(\varphi_y)=0$, a contradiction.

\item[\bf(b)] $p$ is a non-degenerate reduced singularity. Then $p$ has exactly two smooth separatrices, so $m_p(B)=1$, a contradiction.

\item[\bf(c)] $p$ is a saddle-node. Both the strong and weak separatrices (if it exists) are smooth, hence $m_p(B)=1$, a contradiction.

\item[\bf(d)] $p$ is dicritical with
$\nu=x\tfrac{\partial}{\partial x}+\lambda y\tfrac{\partial}{\partial y}$ for $\lambda\in\bQ^+$.

\item[\bf(e)] $p$ is a canonical but not reduced singularity with
$\nu=x\tfrac{\partial}{\partial x}+(ny+x^n)\tfrac{\partial}{\partial y}$,
where $n=\lambda$ or $1/\lambda$. Then $(x=0)$ is the unique separatrix at $p$, so $m_p(B)=1$, a contradiction.
\end{itemize}
This proves the “only if” part. The converse is immediate.
\smallskip

\noindent(3) 
Since $p$ is canonical, only cases (b), (c), and (e) need to be considered. 
As $\mu_p(\cF,B)\ge 2$, cases (b) and (e) are excluded. 
Moreover, $B$ cannot be a strong separatrix, so $p$ is a saddle-node with $B$ as its weak separatrix and $m_p(B)=1$.
\end{proof}

\begin{remark}\label{remk:CS}
In case (d), if $B=(x=0)$ or $(y=0)$, then ${\rm CS}(\cF,B,p)=1/\lambda$ or $\lambda$, both of which are positive.
If $B$ is a separatrix through $p$ but different from $(x=0)$ and $(y=0)$, then by considering the minimal resolution $\pi:(X',\cF')\to (X,\cF)$ of $p$ (cf.~Theorem~\ref{thm:Seidenberg}), we see that
$\bar{B}\cap \pi^{-1}(p)$ does not contain singularities of $\cF'$ (cf.~\cite[p.~7--8]{Bru15}), where $\bar{B}$ is the strict transform of $B$.
Let $p_1,\ldots,p_r$ denote the blow-up points of $\pi$.
Then by \cite[p.~30]{Bru15},
\[
{\rm CS}(\cF,B,p)
= \sum_{q\in\pi^{-1}(p)\cap \bar{B}}{\rm CS}(\cF',\bar{B},q)
+ \sum_{i=1}^r m_{p_i}(B)^2
= \sum_{i=1}^r m_{p_i}(B)^2 > 0.
\]

In case (e), by definition we have
${\rm CS}(\cF,B,p)={\rm CS}(\cF,x=0,p)=1/n>0$.
\end{remark}

\begin{definition}
Define
\begin{equation}\label{equ:defhp(F,C)}
h_p(\cF,C) := \sum_{B\in C(p)} \mu_p(\cF,B),
\end{equation}
where $C(p)$ is the set of branches of $C$ at $p$. 
\end{definition}

\begin{proposition}\label{prop:hp(F,C)}
We have $h_p(\cF,C)\ge0$, with the following properties:
\begin{itemize}
\item[(1)] $h_p(\cF,C)=0$ if and only if $p\notin {\rm Sing}(\cF)$; in this case, $C$ is smooth at $p$.
\item[(2)] $h_p(\cF,C)=1$ if and only if $C$ has a unique branch at $p$ and either $p$ has two non-zero eigenvalues or $p$ is a saddle-node with $C$ as its strong separatrix.  
In particular, if $m_p(C)\ge2$, then $p$ is a dicritical singularity with a local generator $v=x\tfrac{\partial}{\partial x}+\lambda y\tfrac{\partial}{\partial y}$ for $\lambda\in\bQ^+$ after a suitable choice of coordinates.
\item[(3)] If $h_p(\cF,C)=1$ and either ${\rm CS}(\cF,C,p)<0$ or $p$ is canonical, then $C$ is smooth at $p$.
Moreover, under this assumption, if ${\rm CS}(\cF,C,p)<0$, then $p$ is a non-degenerate reduced singularity of $\cF$.
\item[(4)] If $h_p(\cF,C)=2$ and $p$ is canonical, then one of the following cases occurs:
\begin{itemize}
\item[(i)]  $p$ is a node of $C$ and a non-degenerate reduced singularity of $\cF$.
\item[(ii)]  $p$ is a smooth point of $C$ and a saddle-node of $\cF$ with $C$ as its weak separatrix.
\end{itemize}
\end{itemize}
\end{proposition}

\begin{proof}
(1) and (2) follow from Lemma~\ref{lem:mu_p(F,C)>=0}.  
(3) follows from (2), since the case $m_p(C)\ge 2$ is excluded when either ${\rm CS}(\cF,C,p)<0$ or $p$ is canonical. 
The final statement follows from Lemma~\ref{lem:Z-CS-index-for-reduced} and Remark~\ref{remk:CS}.

For (4), write $C(p)=\{B_1,\dots,B_k\}$. Since $h_p(\cF,C)=2$,  $k\le 2$.  
If $k=1$, then $\mu_p(\cF,B_1)=2$, so by Lemma~\ref{lem:mu_p(F,C)>=0}(3), $C$ is the weak separatrix of a saddle-node. 
If $k=2$, then $p$ is a non-degenerate reduced singularity of $\cF$, hence a node of $C$.
\end{proof}

Motivated by the Cerveau--Lins Neto formula for foliations on $\bP^2$ (see \cite[p.~885]{CLN91}), we obtain the following generalization (see also \cite{LW24}). 
\begin{lemma} \label{lem:C-LNformula}
For an irreducible $\cF$-invariant curve $C$, we have
\begin{equation}\label{eq:C-LNformula}
2-2g(C)+K_{\cF}\cdot C=\sum\limits_{p\in C} h_p(\cF,C), 
\end{equation}
where $g(C)$ denotes the geometric genus of $C$. 
\end{lemma}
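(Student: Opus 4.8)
The plan is to reduce the formula to the known Cerveau--Lins Neto identity on $\bP^2$ by a standard local-to-global argument, glued together with the residue/index machinery for invariant branches. First I would treat the smooth case as a warm-up: if $C$ is smooth then $h_p(\cF,C) = {\rm Z}(\cF,C,p)$ is the Gomez--Mont--Seade--Verjovsky index, and the identity $2-2g(C)+K_{\cF}\cdot C = \sum_p {\rm Z}(\cF,C,p)$ is exactly Brunella's version of the GSV index theorem (cf. \cite[Chapter 3]{Bru15}); here one uses that $N_{\cF}|_C \cong N_C \otimes \cO_C(-\sum {\rm Z}(\cF,C,p))$ as a consequence of the local normal form of $\cF$ near each singularity on $C$, then takes degrees and invokes adjunction $\deg N_C = C^2 = K_{\cF}\cdot C + 2 - 2g(C) - (\text{correction})$ — more precisely one compares $\deg T_{\cF}|_C$ with $\deg T_C$ via the factorization $T_{\cF}|_C \hookrightarrow T_C$, whose cokernel has length $\sum_p \mu_p(\cF,C)$.

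For the general (singular $C$) case, the key step is to pass to the normalization $n\colon \widetilde{C}\to C$ and to bookkeep the contributions of each analytic branch separately. Concretely, $T_{\cF}$ restricted to $C$ maps into $T_C$ only generically; pulling back along $n$, the composite $n^*T_{\cF}\to n^*T_C \to T_{\widetilde C}$ is a nonzero map of line bundles on the smooth curve $\widetilde C$, and its vanishing order at a point $\tilde p$ over $p$ is precisely $\mu_p(\cF, B)$ for the corresponding branch $B$, by the very definition \eqref{eq:mupdefi} in terms of the order of $\varphi^*(\nu)$. Taking degrees on $\widetilde C$:
\begin{equation}
\deg n^*T_{\cF} = \deg T_{\widetilde C} - \sum_{\tilde p} \mu_{\tilde p} = 2 - 2g(C) - \sum_{p\in C}\sum_{B\in C(p)}\mu_p(\cF,B) = 2-2g(C)-\sum_{p\in C}h_p(\cF,C).
\end{equation}
On the other hand $\deg n^*T_{\cF} = \deg(T_{\cF}|_C) = -K_{\cF}\cdot C$, since $\cO_X(-K_{\cF})\cong T_{\cF}$ is a line bundle in a neighborhood of $C$ (it is reflexive of rank $1$, hence locally free near the curve on a smooth surface, so degree is preserved under $n^*$). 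Rearranging gives \eqref{eq:C-LNformula}.

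The main obstacle I anticipate is the precise identification, at a singular point $p$ of $C$ with several branches or with higher multiplicity, of the vanishing order of the pulled-back vector field with $h_p(\cF,C) = \sum_{B} \mu_p(\cF,B)$ — one must make sure that the Puiseux parametrizations of the distinct branches can be handled simultaneously (working branch by branch on the normalization resolves this, so the subtlety is really in checking that the length of the cokernel of $n^*T_{\cF}\to T_{\widetilde C}$ splits as the sum over branches of the individual orders $\nu_0(\varphi^*\nu)$) and that no correction term appears from the map $n^*T_C \to T_{\widetilde C}$ at non-reduced or singular points of $C$, which would double-count; the clean way around this is to factor through $n^*T_{\cF}\hookrightarrow T_{\widetilde C}$ directly, never passing through $n^*T_C$, so that the only invariant seen is $\mu_p(\cF,B)$. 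A secondary point to verify is the degree equality $\deg(T_{\cF}|_C) = -K_{\cF}\cdot C$ when $K_{\cF}$ is only a Weil divisor a priori, but since $X$ is smooth here and $T_{\cF}$ is a reflexive rank-one sheaf, it is a line bundle, so this is immediate. Finally, one should note that the formula is compatible with, and in the smooth rational case specializes to, the relations used in Lemma \ref{lem:hp(F,C)}, providing a consistency check.
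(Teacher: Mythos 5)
Your argument is correct, and it is the standard proof of this identity: the paper itself gives no proof of Lemma \ref{lem:C-LNformula}, only citing \cite{CLN91} and \cite{LW24}, and the normalization-and-degree-count you describe is precisely the argument one finds in the latter. The key points are all in place: by definition \eqref{eq:mupdefi}, $\mu_p(\cF,B)$ is exactly the vanishing order at $\tilde p$ of the section of $T_{\widetilde C}$ obtained by pulling back a local generator $\nu$ along the Puiseux parametrization (holomorphic since $\mu_p\geq 0$ by invariance), so the sheaf map $n^*T_{\cF}\to T_{\widetilde C}$ is well defined without passing through $T_C$, its cokernel has length $\sum_p h_p(\cF,C)$, and comparing degrees $\deg n^*T_{\cF}=-K_{\cF}\cdot C$ (legitimate because $T_{\cF}$ is invertible on the smooth surface $X$ and $n$ is birational) with $\deg T_{\widetilde C}=2-2g(C)$ yields \eqref{eq:C-LNformula}.
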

\begin{proof}
Let $\sigma:X'\to X$ be the blow-up at $p\in C$ with exceptional divisor $E$, and let $\bar{C}$ be the strict transform of $C$, meeting $E$ at points $q_1,\dots,q_r$. Denote by $\cF'$ the pullback foliation of $\cF$.
 By \eqref{equ:KF-blow-up}, \eqref{equ:mu-blowup} and \eqref{equ:defhp(F,C)},
\[
K_{\cF'}\cdot \bar{C} = K_{\cF}\cdot C + m_p(C)(1-l(p)),\qquad
h_p(\cF,C) = \sum_{i=1}^r h_{q_i}(\cF',\bar{C}) + m_p(C)(l(p)-1).
\]
It follows that
\[
\sum_{p\in C} h_p(\cF,C) - K_{\cF}\cdot C
=
\sum_{q\in \bar{C}} h_q(\cF',\bar{C}) - K_{\cF'}\cdot \bar{C},
\]
so the quantity
$\sum_{p\in C} h_p(\cF,C) - K_{\cF}\cdot C$
is invariant under blow-ups.
By Theorem~\ref{thm:Seidenberg}, we may assume that $\cF$ is reduced and $C$ is smooth. 
Then $p_a(C)=g(C)$ and $h_p(\cF,C) = Z(\cF,C,p)$ for every singularity $p$ of $\cF$ on $C$. 
By \eqref{equ:Z}, we obtain
\[
\sum_{p\in C} h_p(\cF,C) - K_{\cF}\cdot C = 2 - 2g(C).
\]
\end{proof}

\begin{remark}
If $C$ is smooth, then $h_p(\cF,C)={\rm Z}(\cF,C,p)$ for any $p\in C$, and \eqref{eq:C-LNformula} coincides with \eqref{equ:Z}. 
In contrast, for singular points, $h_p(\cF,C)\ge0$ while ${\rm Z}(\cF,C,p)$ may be negative (cf.~\cite[p.~15]{Bru15}).
\end{remark}

\begin{corollary}\label{cor:KFC=-1}
Assume that $C$ is an irreducible $\cF$-invariant curve with $C^2<0$. Then $K_{\cF}\cdot C\ge -1$.
Moreover, equality holds if and only if $C$ is a smooth rational $\cF$-invariant curve 
with exactly one non-degenerate reduced singularity of $\cF$.
\end{corollary}

\begin{proof}
By Proposition~\ref{prop:hp(F,C)} and Lemma~\ref{lem:C-LNformula}, 
\[
K_{\cF}\cdot C = \sum_{p\in C} h_p(\cF,C) - 2 + 2g(C) \ge -2 + 2g(C) \ge -2.
\]

If $K_{\cF}\cdot C=-2$, then $\sum_{p\in C} h_p(\cF,C)=0$, so $\cF$ has no singularities on $C$ by Proposition~\ref{prop:hp(F,C)}(1). 
Hence $C^2=0$ by \eqref{equ:CS}, contradicting $C^2<0$. Thus $K_{\cF}\cdot C\ge -1$.

If $K_{\cF}\cdot C=-1$, then $\sum_{p\in C} h_p(\cF,C)=1$ and $g(C)=0$, so $C$ has a unique singular point $p$. 
Since $C^2\le -1$, we have ${\rm CS}(\cF,C,p)=C^2\le -1$. 
By Proposition~\ref{prop:hp(F,C)}(3), it follows that $m_p(C)=1$ and that $p$ is a non-degenerate reduced singularity of $\cF$. 
In particular, $p_a(C)=g(C)=0$.

Conversely, if $C$ is a smooth rational $\cF$-invariant curve with exactly one non-degenerate reduced singularity $p$ of $\cF$, then $g(C)=p_a(C)=0$ and $h_p(\cF,C)=1$, 
so \eqref{eq:C-LNformula} gives $K_{\cF}\cdot C=-1$.
\end{proof}

\subsection{Technical tools for algebraic surfaces}

\begin{lemma}\label{lem:Hodgeindex}
Let $D_1$ and $D_2$ be divisors on a smooth projective surface $X$.
If $(\lambda D_1+\mu D_2)^2>0$ for some $\lambda,\mu\in\bR$, then
\[
D_1^2 D_2^2 \le (D_1\cdot D_2)^2,
\]
with equality if and only if a nonzero rational linear combination of $D_1$ and $D_2$ is numerically trivial.
(See \cite[Corollary~3.5]{Reid}.)
\end{lemma}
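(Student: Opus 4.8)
The plan is to deduce the statement from the classical form of the Hodge index theorem: on a smooth projective surface, if $H$ is a divisor with $H^2>0$, then $D\cdot H=0$ forces $D^2\le 0$, with equality if and only if $D$ is numerically trivial. (Equivalently, the intersection pairing on $\mathrm{NS}(X)_{\bR}$ has signature $(1,\rho-1)$, so the orthogonal complement of a class of positive self-intersection is negative definite.) I will use this as a black box.

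First I would normalize the hypothesis: I claim one may assume $\lambda,\mu\in\bQ$ with $\lambda\mu\ne 0$. Indeed $\{(s,t)\in\bR^2:(sD_1+tD_2)^2>0\}$ is the locus where a fixed quadratic form with rational coefficients is positive, hence a nonempty open subset of $\bR^2$; it therefore contains a rational point $(\lambda,\mu)$ with $\lambda\mu\ne 0$. Fix such a pair and set $H:=\lambda D_1+\mu D_2$, so $H^2>0$. Let $G:=\left(\begin{smallmatrix}D_1^2 & D_1\cdot D_2\\ D_1\cdot D_2 & D_2^2\end{smallmatrix}\right)$, so that the quantity to be controlled is $\det G=D_1^2D_2^2-(D_1\cdot D_2)^2$. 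Passing from the pair $(D_1,D_2)$ to the pair $(D_1,H)$ is a $\bQ$-linear change of the two divisors with transition matrix of determinant $\mu$, and it turns $G$ into $\left(\begin{smallmatrix}D_1^2 & D_1\cdot H\\ D_1\cdot H & H^2\end{smallmatrix}\right)$, whose determinant is $\mu^2\det G$. Hence it is equivalent to prove
\begin{equation}
D_1^2 H^2-(D_1\cdot H)^2\le 0 .
\end{equation}

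Then the core of the argument is one line. Set $E:=H^2\,D_1-(D_1\cdot H)\,H$. Then $E\cdot H=0$, so the Hodge index theorem gives $E^2\le 0$; expanding,
\begin{equation}
E^2=H^2\bigl(D_1^2 H^2-(D_1\cdot H)^2\bigr),
\end{equation}
and since $H^2>0$ we obtain the desired inequality, hence $D_1^2D_2^2\le(D_1\cdot D_2)^2$. For the equality case I would note that $\det G=0$ if and only if $E^2=0$, i.e. $E\equiv 0$; substituting $H=\lambda D_1+\mu D_2$ rewrites this as the numerical relation
\begin{equation}
\bigl(H^2-\lambda(D_1\cdot H)\bigr)D_1-\mu(D_1\cdot H)D_2\equiv 0,
\end{equation}
which has rational coefficients (here one uses $\lambda,\mu\in\bQ$) and is nontrivial: if its $D_2$-coefficient $-\mu(D_1\cdot H)$ vanishes, then $D_1\cdot H=0$ and the $D_1$-coefficient equals $H^2\ne 0$. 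Conversely, if $aD_1+bD_2\equiv 0$ for some $(a,b)\in\bQ^2\setminus\{0\}$, then $aD_1^2+b(D_1\cdot D_2)=0$ and $a(D_1\cdot D_2)+bD_2^2=0$, so $(a,b)$ is a nonzero kernel vector of $G$ and $\det G=0$, which is precisely the equality $D_1^2D_2^2=(D_1\cdot D_2)^2$.

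I do not expect any genuine obstacle here: the substance of the lemma is exactly the classical Hodge index theorem, which I am invoking directly. The only point that needs a little care is producing a \emph{rational} nontrivial numerical relation in the equality case rather than merely a real one, and this is the sole reason for the rationality normalization carried out in the first step.
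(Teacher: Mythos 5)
Your proof is correct. The paper offers no argument of its own here --- it simply cites the determinant form of the Hodge index theorem from Reid's notes (Corollary 3.5) --- and your derivation (reducing to $H=\lambda D_1+\mu D_2$ with $\lambda,\mu\in\bQ$, applying the signature form of the index theorem to $E=H^2D_1-(D_1\cdot H)H$, and tracking the equality case) is exactly the standard argument behind that reference, including the correct care taken to produce a \emph{rational} numerical relation in the equality case.
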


\begin{lemma}\label{lem:neg-def}
Let $D=\sum_{i=1}^n a_i C_i$ be a $\bQ$-divisor such that the intersection matrix $(C_i\cdot C_j)_{1\le i,j\le n}$ is negative definite.
\begin{itemize}
\item[(1)] If $D\cdot C_i\le 0$ for all $i$, then $D\ge 0$.
\item[(2)] If $E$ is an effective $\bQ$-divisor and $(E-D)\cdot C_j\le 0$ for all $j$, then $E-D\ge 0$.
\end{itemize}
(See \cite[Lemmas~14.9 and~14.15]{Luc01}.)
\end{lemma}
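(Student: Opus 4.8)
The plan is to derive part (2) from part (1), so I would first establish part (1) by the standard ``negativity'' argument for contractible configurations, and then reduce the general case to it.

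First, for part (1), I would write $D=D_{+}-D_{-}$ as the difference of its effective positive and negative parts, so that $D_{+}$ and $D_{-}$ are effective $\bQ$-divisors supported on disjoint subsets of $\{C_{1},\dots,C_{n}\}$, and I would aim for a contradiction from the assumption $D_{-}\neq 0$. On the one hand, since $X$ is smooth and the $C_{i}$ are distinct prime divisors, $C_{i}\cdot C_{j}\geq 0$ for $i\neq j$, hence $D_{+}\cdot D_{-}\geq 0$; and since the intersection form restricted to the components appearing in $D_{-}$ is still negative definite and $D_{-}\neq 0$, one has $(D_{-})^{2}<0$. Therefore
\[
D\cdot D_{-}=D_{+}\cdot D_{-}-(D_{-})^{2}>0.
\]
On the other hand, writing $D_{-}=\sum_{i\in I}b_{i}C_{i}$ with all $b_{i}>0$, the hypothesis $D\cdot C_{i}\leq 0$ gives $D\cdot D_{-}=\sum_{i\in I}b_{i}\,(D\cdot C_{i})\leq 0$, a contradiction. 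Hence $D_{-}=0$ and $D=D_{+}\geq 0$.

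Next, for part (2), I would decompose $E=E'+E''$, where $E'$ collects the components of $E$ lying in $\{C_{1},\dots,C_{n}\}$ and $E''$ collects the remaining components; both are effective. Writing $E'=\sum_{i=1}^{n}e_{i}C_{i}$, we get $E-D=E''+(E'-D)$ with $E'-D=\sum_{i=1}^{n}(e_{i}-a_{i})C_{i}$ supported on $\{C_{1},\dots,C_{n}\}$. Since $E''$ shares no component with any $C_{j}$, we have $E''\cdot C_{j}\geq 0$, so
\[
(E'-D)\cdot C_{j}=(E-D)\cdot C_{j}-E''\cdot C_{j}\leq 0 \qquad\text{for all } j.
\]
Applying part (1) to the $\bQ$-divisor $E'-D$, whose support has negative-definite intersection matrix (a principal submatrix of a negative-definite matrix), yields $E'-D\geq 0$, and therefore $E-D=E''+(E'-D)\geq 0$.

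I expect the whole argument to be routine; the only point requiring a little care is the bookkeeping in part (2), since $E$ need not be supported on the curves $C_{i}$, so one must first peel off the ``transverse'' part $E''$ (using that it meets each $C_{j}$ non-negatively) before invoking part (1). No serious obstacle is anticipated.
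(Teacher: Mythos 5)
Your proof is correct. The paper itself gives no argument for this lemma, deferring entirely to \cite[Lemmas 14.9, 14.15]{Luc01}; your write-up is exactly the standard negativity argument underlying those references (decompose into positive and negative parts with disjoint supports, use $D_+\cdot D_-\geq 0$ and $(D_-)^2<0$, then reduce (2) to (1) by peeling off the components of $E$ not among the $C_i$), and both steps are carried out correctly.
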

Although (2) follows from (1), we state it separately since this is the comparison form used later.

\section{\texorpdfstring{$(D,\cF)$}{(D,F)}-chains}

\subsection{\texorpdfstring{$\cF$}{F}-chains}
\begin{definition}[$\cF$-chain]\label{def:F-chains}
Let $\cF$ be a foliation on a surface $X$. A compact curve $\Theta\subset X$ is called an \emph{$\cF$-chain} if 
  \begin{itemize}
  \item[(1)] $\Theta$ is a Hirzebruch--Jung string, $\Theta=\Gamma_1+\cdots+\Gamma_r$;
  \item[(2)] each irreducible component $\Gamma_j$ is $\cF$-invariant;
  \item[(3)] ${\rm Sing}(\cF)\cap \Theta$ are reduced and non-degenerate;
  \item[(4)] ${\rm Z}(\cF,\Gamma_1)=1$ and  ${\rm Z}(\cF,\Gamma_i)=2$ for all $i\geq2$. 
  \end{itemize}
An $\cF$-chain is called \emph{maximal} if it cannot be contained in other $\cF$-chains.
(See \cite[Definition~8.1]{Bru15}.)
\end{definition}

Let $\Theta=\Gamma_1+\cdots+\Gamma_r$ be an $\cF$-chain. 
Then its irreducible components $\Gamma_i$ satisfy
\[
\Gamma_i^2=-e_i\le -2,\quad 
\Gamma_i\cdot \Gamma_{i+1}=1,\quad
\Gamma_i\cdot \Gamma_j=0 \ \text{for } j\ge i+2.
\]
It is associated with two sequences of integers (cf.~\cite[Ch.~III, Sec.~5]{BPV04}):
\begin{equation}\label{ineq:u_k>=k}
n=\lambda_0>\cdots>\lambda_{r+1}=0,\qquad 
0=\mu_0<\cdots<\mu_{r+1}=n,
\end{equation}
uniquely determined by $\mu_1=\lambda_r=1$ and
\begin{equation}\label{eq1}
\lambda_{i-1}-e_i\lambda_i+\lambda_{i+1}=0,\quad
\mu_{i-1}-e_i\mu_i+\mu_{i+1}=0 \quad (i=1,\dots,r).
\end{equation}
Moreover,
\[
\frac{n}{\lambda_1}
= e_1-\DF{1}{e_2-\DF{1}{\ddots-\DF{1}{e_r}}},
\qquad
\frac{n}{\mu_r}
= e_r-\DF{1}{e_{r-1}-\DF{1}{\ddots-\DF{1}{e_1}}}.
\]
By induction on $j$, $\lambda_i\mu_j-\lambda_j\mu_i$ is a positive multiple of $n$ for $i<j$. 
In particular, $\lambda_i\mu_{i+1}-\lambda_{i+1}\mu_i=n$.

\begin{remark}
Let $[e_k,\dots,e_l]$ denote the determinant of the intersection matrix $(-\Gamma_i\cdot\Gamma_j)_{k\le i,j\le l}$. 
Then $\mu_0=\lambda_{r+1}=0$, $\mu_1=\lambda_r=1$, and
\[
\lambda_{i-1}=[e_i,\dots,e_r],\quad 
\mu_{i+1}=[e_1,\dots,e_i],\quad (i=1,\dots,r).
\]
In particular, $n=\mu_{r+1}=\lambda_0=[e_1,\dots,e_r]$.
\end{remark}

\begin{lemma}\label{lem:CS=-uk+1uk}
Let $p_k=\Gamma_k\cap \Gamma_{k+1}$ for $k=1,\cdots, r-1$, and let $p_r$ denote  another singularity of $\cF$ on $\Gamma_r$. 
Then 
\begin{equation}\label{eq:CS=-uk+1uk}
{\rm CS}(\cF,\Gamma_k,p_k)=-\frac{\mu_{k+1}}{\mu_k},\quad k=1,\cdots,r.
\end{equation}
\end{lemma}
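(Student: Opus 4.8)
The plan is to compute the Camacho--Sad indices by induction on $k$, using the Camacho--Sad formula (Lemma \ref{lem:csformula}) applied to each component $\Gamma_k$ of the $\cF$-chain. Since $\Theta$ is an $\cF$-chain, all singularities of $\cF$ lying on $\Theta$ are reduced and non-degenerate by condition (3), and each component $\Gamma_k$ contains exactly the singular points $p_{k-1} = \Gamma_{k-1}\cap\Gamma_k$ and $p_k = \Gamma_k\cap\Gamma_{k+1}$ (with the obvious modification at the two ends: $\Gamma_1$ meets only $p_1$ among the nodes, and $\Gamma_r$ meets only $p_{r-1}$, plus the extra singularity $p_r$ required by condition (4) since ${\rm Z}(\cF,\Gamma_1)=1$ forces an additional separatrix leaving the chain through $\Gamma_r$). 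At each node $p_k$ the two branches $\Gamma_k$ and $\Gamma_{k+1}$ are the two separatrices of a reduced singularity, so if $\lambda_k$ denotes the eigenvalue of the singularity $p_k$ along $\Gamma_k$, then ${\rm CS}(\cF,\Gamma_k,p_k) = 1/\lambda_k$ and ${\rm CS}(\cF,\Gamma_{k+1},p_k) = \lambda_k$; that is, the two Camacho--Sad indices at a node are mutually reciprocal.

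First I would establish the base case. Apply Lemma \ref{lem:csformula} to $\Gamma_1$: the only singularities of $\cF$ on $\Gamma_1$ are $p_1$ (since $\Gamma_1$ is the first component and ${\rm Z}(\cF,\Gamma_1)=1$ corresponds to the single separatrix leaving $\Gamma_1$ out of the chain, but that separatrix is a non-invariant-to-$\Gamma_1$ branch — more precisely the chain is arranged so that $\Gamma_1$ carries only the one node $p_1$). Thus $-e_1 = \Gamma_1^2 = {\rm CS}(\cF,\Gamma_1,p_1)$, giving ${\rm CS}(\cF,\Gamma_1,p_1) = -e_1 = -\mu_2/\mu_1$, using $\mu_1 = 1$ and $\mu_2 = [e_1] = e_1$. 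For the inductive step, suppose ${\rm CS}(\cF,\Gamma_{k},p_{k}) = -\mu_{k+1}/\mu_k$ has been shown. Since $p_k$ is a reduced singularity with $\Gamma_k,\Gamma_{k+1}$ its separatrices, the reciprocity at the node gives ${\rm CS}(\cF,\Gamma_{k+1},p_k) = -\mu_k/\mu_{k+1}$. Now apply Lemma \ref{lem:csformula} to $\Gamma_{k+1}$, whose singularities are exactly $p_k$ and $p_{k+1}$:
\[
-e_{k+1} = \Gamma_{k+1}^2 = {\rm CS}(\cF,\Gamma_{k+1},p_k) + {\rm CS}(\cF,\Gamma_{k+1},p_{k+1}) = -\frac{\mu_k}{\mu_{k+1}} + {\rm CS}(\cF,\Gamma_{k+1},p_{k+1}),
\]
so ${\rm CS}(\cF,\Gamma_{k+1},p_{k+1}) = -e_{k+1} + \mu_k/\mu_{k+1} = -(e_{k+1}\mu_{k+1} - \mu_k)/\mu_{k+1} = -\mu_{k+2}/\mu_{k+1}$ by the recursion \eqref{eq1}. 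This closes the induction and also handles the final component $\Gamma_r$, where $p_r$ plays the role of $p_{k+1}$ and $\mu_{r+1} = n$.

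The main obstacle is bookkeeping at the two ends of the chain: one must justify carefully that $\Gamma_1$ meets no singularity of $\cF$ other than $p_1$ (this uses ${\rm Z}(\cF,\Gamma_1)=1$ together with the genus formula for $\Gamma_1 \cong \bP^1$ and Lemma \ref{lem:hp(F,C)}, since a $(-e_1)$-curve with $e_1\geq 2$ has $K_\cF\cdot\Gamma_1 = e_1 - 2 \geq 0$ and the Cerveau--Lins Neto formula \eqref{eq:C-LNformula} gives $\sum_p h_p(\cF,\Gamma_1) = e_1$, so the singularity structure is pinned down), and similarly that the only singularities of $\cF$ on $\Gamma_r$ are $p_{r-1}$ and $p_r$, with no further separatrices. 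I would also need to confirm that at each node $p_k$ the eigenvalue is genuinely nonzero (equivalently $p_k$ is not a saddle-node), so that the reciprocity ${\rm CS}(\cF,\Gamma_k,p_k)\cdot{\rm CS}(\cF,\Gamma_{k+1},p_k) = 1$ is valid; this follows from the inductive formula itself since $-\mu_{k+1}/\mu_k \neq 0$, but one should check the saddle-node case is excluded by non-degeneracy in condition (3) of the $\cF$-chain definition, or argue it away directly.
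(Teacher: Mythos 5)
Your proof is correct and follows essentially the same route as the paper: the base case is the Camacho--Sad formula applied to $\Gamma_1$ (whose only singularity is $p_1$), and the inductive step combines the reciprocity ${\rm CS}(\cF,\Gamma_{k+1},p_k)=1/{\rm CS}(\cF,\Gamma_k,p_k)$ at the reduced non-degenerate node with Camacho--Sad on $\Gamma_{k+1}$ and the recursion $\mu_{k+2}=e_{k+1}\mu_{k+1}-\mu_k$. One small slip in your closing paragraph: for the $\cF$-invariant curve $\Gamma_1$ one has $K_{\cF}\cdot\Gamma_1={\rm Z}(\cF,\Gamma_1)-2=-1$ (not $e_1-2$, which is the tangency formula for non-invariant curves), and the fact that $p_1$ is the only singularity on $\Gamma_1$ follows directly from ${\rm Z}(\cF,\Gamma_1)=1$ since each reduced non-degenerate singularity contributes $1$ to ${\rm Z}$.
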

\begin{proof}
We argue by induction on $k$. 
For $k=1$, \eqref{equ:CS} gives 
\[
{\rm CS}(\cF,\Gamma_1,p_1)=\Gamma_1^2=-e_1=-\mu_2/\mu_1.
\] 
Assume ${\rm CS}(\cF,\Gamma_k,p_k)=-\mu_{k+1}/\mu_k$. Then
\begin{align*}
{\rm CS}(\cF,\Gamma_{k+1},p_{k+1})
&=\Gamma_{k+1}^2-{\rm CS}(\cF,\Gamma_{k+1},p_k) \quad\text{(by \eqref{equ:CS})}\\
&=\Gamma_{k+1}^2-\frac{1}{{\rm CS}(\cF,\Gamma_k,p_k)} \quad\text{(Lemma~\ref{lem:Z-CS-index-for-reduced}(1))}\\
&=-e_{k+1}+\frac{\mu_k}{\mu_{k+1}}
=-\frac{e_{k+1}\mu_{k+1}-\mu_k}{\mu_{k+1}}\\
&=-\frac{\mu_{k+2}}{\mu_{k+1}} \quad\text{(by \eqref{eq1})}.
\end{align*}
\end{proof}

Note that the intersection matrix $(\Gamma_i\cdot \Gamma_j)_{1\leq i,j\leq r}$ is negative definite. 
For each $i$, there exists a unique effective $\bQ$-divisor $M_i(\Theta)$ supported on $\Theta$ such that
$$M_i(\Theta)\cdot\Gamma_i=-1 \quad\text{and}\quad M_i(\Theta)\cdot\Gamma_j=0\quad \text{for $j\neq i$}.$$ 
  By a straightforward  computation, one has
  \begin{equation}\label{eq:M_i(Theta)}
     M_i(\Theta)=\frac{\lambda_i}{n}\sum_{k=1}^i\mu_k\Gamma_k+\frac{\mu_i}{n}\sum_{k=i+1}^r\lambda_k\Gamma_k. 
  \end{equation}  
  
   Let $A$ be a $\bQ$-divisor. We  define a $\bQ$-divisor 
   \begin{equation} 
    E(A,\Theta):=\sum\limits_{i=1}^r (A\cdot\Gamma_i) M_i(\Theta).
   \end{equation}
   Obviously, $E(A,\Theta)\cdot \Gamma_i=-A\cdot \Gamma_i$ for each $i$.  
   
\begin{lemma}
$E(A,\Theta)=\sum\limits_{i=1}^r\gamma_i\Gamma_i$, where 
\begin{equation}\label{eq:gamma_iECA}
  \gamma_i=\frac{\lambda_i}{n}\sum_{k=1}^i\mu_k A\cdot \Gamma_k+\frac{\mu_i}{n}\sum_{k=i+1}^r\lambda_k A\cdot\Gamma_k,\quad\text{for $i=1,\cdots,r$}.
  \end{equation}  
\end{lemma}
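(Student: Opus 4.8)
The plan is to prove the identity by a direct substitution: plug the explicit formula \eqref{eq:M_i(Theta)} for each $M_j(\Theta)$ into the definition $E(A,\Theta)=\sum_{j=1}^r (A\Gamma_j)\,M_j(\Theta)$, and then reorganize the resulting double sum as a linear combination of the $\Gamma_i$'s. No conceptual input beyond the material already assembled above is required; the content is purely bookkeeping.

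Concretely, I would first write
\[
E(A,\Theta)=\sum_{j=1}^r (A\Gamma_j)\left(\frac{\lambda_j}{n}\sum_{k=1}^j\mu_k\Gamma_k+\frac{\mu_j}{n}\sum_{k=j+1}^r\lambda_k\Gamma_k\right),
\]
and then exchange the order of summation to read off the coefficient $\gamma_i$ of a fixed component $\Gamma_i$. The curve $\Gamma_i$ enters the $j$-th summand through the first inner sum precisely when $i\le j$, contributing $\tfrac{\lambda_j}{n}\mu_i\,(A\Gamma_j)$, and through the second inner sum precisely when $j\le i-1$, contributing $\tfrac{\mu_j}{n}\lambda_i\,(A\Gamma_j)$. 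Summing these over the relevant ranges of $j$ yields
\[
\gamma_i=\frac{\mu_i}{n}\sum_{j=i}^r\lambda_j\cdot A\Gamma_j+\frac{\lambda_i}{n}\sum_{j=1}^{i-1}\mu_j\cdot A\Gamma_j,
\]
which becomes the asserted formula once the diagonal term $j=i$, whose coefficient $\tfrac{\lambda_i\mu_i}{n}=\tfrac{\mu_i\lambda_i}{n}$ is the same whichever sum it sits in, is transferred from the first sum into the second.

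The only point requiring a moment's care is the treatment of the boundary index $k=i$ (equivalently $j=i$): it must be counted exactly once, and the symmetry $\lambda_i\mu_i=\mu_i\lambda_i$ shows it may be attached to either of the two sums, which is why the range conventions in the statement ($k$ from $1$ to $i$ in the first sum, from $i+1$ to $r$ in the second) are both valid. As a cross-check one may note that the coefficients produced this way satisfy $E(A,\Theta)\cdot\Gamma_i=-A\Gamma_i$ for every $i$, so that by the negative-definiteness of the intersection matrix and Lemma \ref{lem:neg-def} they are forced; but since the substitution already delivers the explicit coefficients, this verification is optional. I expect no genuine obstacle here: the lemma is a routine consequence of \eqref{eq:M_i(Theta)}.
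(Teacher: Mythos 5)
Your proposal is correct and follows exactly the route the paper intends: the paper's proof consists of the single sentence that the claim is clear from the definition of $E(A,\Theta)$ and \eqref{eq:M_i(Theta)}, and your substitution, exchange of summation order, and careful handling of the diagonal term $j=i$ (using $\lambda_i\mu_i=\mu_i\lambda_i$) simply make that bookkeeping explicit.
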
 
\begin{proof}
  By the definition of $E(A,\Theta)$ and the equality \eqref{eq:M_i(Theta)}, it is clear.
\end{proof}

   \begin{lemma}\label{lem:EACgeq0}
      Assume that $A\cdot \Gamma_i\leq 0$ for   $i\geq2$.  Then $E(A,\Theta)\geq 0$ and $\gamma_r>0$ hold if and only if
  $$\sum_{k=1}^r\mu_kA\cdot\Gamma_k>0.$$
      In this case, $0<\gamma_i<A\cdot\Gamma_1$ for all $i$.
   \end{lemma}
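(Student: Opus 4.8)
\textbf{Proof plan for Lemma \ref{lem:EACgeq0}.}

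The plan is to argue using the negative-definiteness of the intersection matrix together with the explicit formula \eqref{eq:gamma_iECA}. First I would observe that $E(A,\Theta)$ is characterized, as a $\bQ$-divisor supported on $\Theta$, by the relations $E(A,\Theta)\cdot\Gamma_i=-A\Gamma_i$ for each $i$; equivalently, $E(A,\Theta)=\sum_i (A\Gamma_i) M_i(\Theta)$ where the $M_i(\Theta)$ form the ``dual basis'' with respect to the (negative) intersection form. To prove $E(A,\Theta)\ge 0$, I would split off the first component: write $A' := A - (A\Gamma_1)\cdot(\text{something})$ is not quite the move; rather, compare $E(A,\Theta)$ with $(A\Gamma_1)\cdot M_1(\Theta)$. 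Note $M_1(\Theta)\ge 0$ by \eqref{eq:M_i(Theta)}, so $(A\Gamma_1)M_1(\Theta)$ contributes nonnegatively precisely when $A\Gamma_1\ge 0$; but we do not yet know the sign of $A\Gamma_1$. The cleaner route is to work directly with \eqref{eq:gamma_iECA}: each $\gamma_i$ is a sum of two terms, $\frac{\lambda_i}{n}\sum_{k=1}^i \mu_k A\Gamma_k$ and $\frac{\mu_i}{n}\sum_{k=i+1}^r \lambda_k A\Gamma_k$. The second term is $\le 0$ since $\mu_i,\lambda_k,n>0$ and $A\Gamma_k\le 0$ for $k\ge i+1\ge 2$. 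So the sign of $\gamma_i$ hinges on the first (partial) sum $S_i:=\sum_{k=1}^i\mu_k A\Gamma_k$, and one expects monotonicity of the $\gamma_i$ to be governed by $S_i$ versus $S_r$.

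Next I would establish the key recursion relating consecutive $\gamma_i$. Using \eqref{eq1} and the identity $\lambda_{i}\mu_{i+1}-\lambda_{i+1}\mu_{i}=n$ recorded just after it, a direct computation from \eqref{eq:gamma_iECA} should give a three-term relation
\begin{equation}
\gamma_{i-1}-e_i\gamma_i+\gamma_{i+1}=-A\Gamma_i\ge 0\quad(i\ge 2),\qquad \gamma_0:=0,\ \gamma_{r+1}:=0,\notag
\end{equation}
i.e. $(E(A,\Theta)+A)\cdot\Gamma_i\ge 0$ for $i\ge 2$ — which is immediate from $E(A,\Theta)\Gamma_i=-A\Gamma_i$ once one reads it correctly, so really the content is just $E(A,\Theta)\Gamma_i=-A\Gamma_i$. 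From this I would deduce, via a discrete maximum-principle / convexity argument on the string (the $\Gamma_i$ form a chain, $e_i\ge 2$), that the sequence $\gamma_1,\dots,\gamma_r$ cannot have an interior local minimum that is $\le$ both its $\gamma$-neighbours unless it is forced there; concretely, if some $\gamma_{i_0}\le 0$ were minimal, the relation $\gamma_{i_0-1}+\gamma_{i_0+1}=e_{i_0}\gamma_{i_0}-A\Gamma_{i_0}\le 2\gamma_{i_0}+|A\Gamma_{i_0}|$ combined with boundary values $\gamma_0=\gamma_{r+1}=0$ and the sign of the outer contributions pins down the behaviour. The equivalence ``$\gamma_r>0\iff S_r>0$'' then follows by specializing \eqref{eq:gamma_iECA} at $i=r$: there $\gamma_r=\frac{\lambda_r}{n}\sum_{k=1}^r\mu_k A\Gamma_k=\frac{1}{n}S_r$ since $\lambda_r=1$, which makes that equivalence essentially a tautology and simultaneously shows $\gamma_r=S_r/n$.

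For the final clause — under $S_r>0$ one has $0<\gamma_i<A\Gamma_1$ for all $i$ — I would argue as follows. Positivity of all $\gamma_i$: apply Lemma \ref{lem:neg-def}(1) to $-E(A,\Theta)$, or directly use that $E(A,\Theta)\Gamma_i=-A\Gamma_i\ge 0$ for $i\ge 2$ while the assumption via $\gamma_r>0$ propagates leftward through the three-term recursion, giving strict positivity of each $\gamma_i$ (an interior zero would, with $e_i\ge 2$ and $-A\Gamma_i\ge 0$, force a neighbour to be negative, contradicting what we just proved, or force the chain to terminate prematurely). The upper bound $\gamma_i<A\Gamma_1$: consider the divisor $D':=(A\Gamma_1)\Gamma_1^{\vee}$-type comparison — more precisely, set $F:=\sum_i(A\Gamma_1-\gamma_i)\Gamma_i$ and compute $F\cdot\Gamma_j$ for each $j$; using $E(A,\Theta)\Gamma_j=-A\Gamma_j$ one gets $F\Gamma_1=(A\Gamma_1)\Gamma_1^2+A\Gamma_1=(A\Gamma_1)(\Gamma_1^2+1)\le 0$ (as $\Gamma_1^2\le -2$), and $F\Gamma_j=(A\Gamma_1)\Gamma_1\Gamma_j + A\Gamma_j\le 0$ for $j\ge 2$ since $\Gamma_1\Gamma_j\ge 0$, $A\Gamma_1>0$ forces... wait, $A\Gamma_1>0$ makes $(A\Gamma_1)\Gamma_1\Gamma_j\ge 0$, so I would instead pick the comparison divisor so that Lemma \ref{lem:neg-def}(2) applies; the correct auxiliary divisor is $E\big((A\Gamma_1)\Gamma_1^{*}-A,\Theta\big)$ or simply note $(A\Gamma_1)M_1(\Theta)-E(A,\Theta)$ has nonpositive intersection with every $\Gamma_j$ and is supported on the negative-definite $\Theta$, hence is $\ge 0$ by Lemma \ref{lem:neg-def}(1); since $M_1(\Theta)=\sum_i\frac{\lambda_i}{n}\mu_1\Gamma_1\text{-pattern}$ has $i$-th coefficient $\le 1$ (indeed the coefficient of $\Gamma_i$ in $M_1(\Theta)$ is $\lambda_i/n<1$ for $i\ge1$, strictly, as $\lambda_i<\lambda_0=n$), this yields $\gamma_i<A\Gamma_1\cdot(\lambda_i/n)<A\Gamma_1$.

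\textbf{Main obstacle.} The genuinely delicate point is the propagation/maximum-principle step: converting the local relations $E(A,\Theta)\Gamma_i=-A\Gamma_i$ into the global sign and bound statements for every $\gamma_i$. This is where the arithmetic of the Hirzebruch–Jung string (the inequalities $e_i\ge 2$, the monotonicity $0<\lambda_i<n$, $0<\mu_i<n$, and $\lambda_i\mu_{i+1}-\lambda_{i+1}\mu_i=n$) must be used carefully rather than quoted; getting the auxiliary comparison divisor exactly right so that Lemma \ref{lem:neg-def} applies (with the correct signs, given that $A\Gamma_1$ may a priori be positive) is the crux. Everything else — the explicit formula \eqref{eq:gamma_iECA}, the tautological identity $\gamma_r=S_r/n$, and the sign of the ``tail'' sums — is routine bookkeeping.
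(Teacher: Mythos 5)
Your proposal is correct in substance but reaches the conclusions by a different mechanism than the paper. The paper works entirely with the closed-form expression \eqref{eq:gamma_iECA}: it rewrites $\gamma_i=\frac{\lambda_i}{n}\sum_{k=1}^r\mu_k\, A\Gamma_k-\sum_{k=i+1}^r\frac{\lambda_i\mu_k-\lambda_k\mu_i}{n}A\Gamma_k$ and invokes the fact (recorded just before the lemma) that $\lambda_i\mu_k-\lambda_k\mu_i\geq n$ for $k>i$, so that $A\Gamma_k\leq 0$ for $k\geq 2$ gives $\gamma_i\geq\lambda_i\gamma_r$ in one line; the upper bound $\gamma_i\leq\frac{\lambda_i}{n}\mu_1\,A\Gamma_1<A\Gamma_1$ is read off the same formula by discarding the nonpositive terms. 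You instead get positivity by propagating $\gamma_r>0$ leftward through the three-term relations $\gamma_{i-1}-e_i\gamma_i+\gamma_{i+1}=-A\Gamma_i\geq 0$ ($i\geq 2$) using $e_i\geq 2$, and you get the upper bound by showing $(A\Gamma_1)M_1(\Theta)-E(A,\Theta)$ has nonpositive intersection with every $\Gamma_j$ and applying Lemma \ref{lem:neg-def}, then using that the coefficients of $M_1(\Theta)$ are $\lambda_i/n<1$. Both of your final arguments are valid: the propagation, once phrased as the backward induction ``$\gamma_{i-1}\geq 2\gamma_i-\gamma_{i+1}\geq\gamma_i>0$'', is routine (so the step you flag as the ``main obstacle'' is less delicate than you fear, and the paper avoids it entirely), and the comparison-divisor computation checks out. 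Your identification $\gamma_r=S_r/n$ (since $\lambda_r=1$) correctly reduces the equivalence to the implication $S_r>0\Rightarrow E(A,\Theta)\geq 0$. What your route buys is independence from the identity $\lambda_i\mu_k-\lambda_k\mu_i\geq n$; what the paper's route buys is the sharper intermediate estimate $\gamma_i\geq\lambda_i\gamma_r$ (which reappears in Proposition \ref{prop:DF-subchain}) and a two-line proof.

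Two small corrections. First, your parenthetical suggestion to ``apply Lemma \ref{lem:neg-def}(1) to $-E(A,\Theta)$'' does not work: that would require $A\Gamma_i\leq 0$ for \emph{all} $i$ including $i=1$ (which fails, since $S_r>0$ forces $A\Gamma_1>0$), and even then it would yield $E(A,\Theta)\leq 0$, the wrong sign; only your alternative propagation argument is viable there. Second, you should record explicitly that $S_r>0$ together with $A\Gamma_k\leq 0$ for $k\geq 2$ forces $A\Gamma_1\geq S_r>0$, which is needed both for the strict inequality $\gamma_i<A\Gamma_1$ to be meaningful and for the final step $\gamma_i\leq (A\Gamma_1)\lambda_i/n<A\Gamma_1$.
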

   \begin{proof} 
   By the equality \eqref{eq:gamma_iECA}, we have
   $$\gamma_i=\frac{\lambda_i}{n}\sum\limits_{k=1}^r\mu_kA\cdot \Gamma_k-\sum\limits_{k=i+1}^r\frac{\lambda_i\mu_k-\lambda_k\mu_i}{n}A\cdot \Gamma_k. $$ 
   Since $\lambda_i\mu_k-\lambda_k\mu_i\geq n$ for $k>i$ and $A\cdot \Gamma_k\leq 0$ for $k\geq2$, we have
   \begin{equation}\label{ineq:gamma_i>=lambda_igamma_r}
  \gamma_i\geq \frac{\lambda_i}{n}\sum\limits_{k=1}^r\mu_k A\cdot\Gamma_k=\lambda_i\gamma_r.
   \end{equation}
   Hence $E(A,\Theta)\geq 0$ and $\gamma_r>0$ if and only if $\sum_{k=1}^r\mu_kA\cdot\Gamma_k>0$. 

   On the other hand, by the equality \eqref{eq:gamma_iECA} and our assumptions, we obtain
   \[
\gamma_i
\le \frac{\lambda_i}{n}\,\mu_1\, A\cdot\Gamma_1
< A\cdot\Gamma_1,
\qquad i=1,\dots,r.
\]
\end{proof}

\subsection{\texorpdfstring{$(D,\cF)$}{(D,F)}-chains}
Let $\Theta=\Gamma_1+\cdots+\Gamma_r$ be an $\cF$-chain and let $D$ be a $\bQ$-divisor. 
In the case $A:=-(K_\cF+D)$, we write $E(A,\Theta)$ as $M(D,\Theta)$.

\begin{definition}[$(D,\cF)$-chain]\label{def:DFchain}  
  An $\cF$-chain $\Theta$ is said to be a \emph{$(D,\cF)$-chain} if $M(D,\Theta)=\sum_{i=1}^r\gamma_i\Gamma_i$ is effective with $\gamma_r>0$.  
  Such a $(D,\cF)$-chain  is called  \emph{maximal} if it cannot be contained in  other  $(D,\cF)$-chains. 
  \end{definition} 
  
  We denote $M(\Theta):=M(0,\Theta)$ for convenience when $D=0$. 
In this case, each $\cF$-chain $\Theta$ is a $(0,\cF)$-chain. 
When $D=0$, the equality \eqref{eq:gamma_iECA} yields the following:
  \begin{lemma}
  $M(\Theta)=\sum\limits_{i=1}^r\gamma_i\Gamma_i$, where
  \begin{equation}
  \gamma_i=\frac{\lambda_i}{n},\quad i=1,\cdots,r.
  \end{equation}
\end{lemma}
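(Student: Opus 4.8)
The plan is to recognize the statement as the $D=0$ specialization of the general formula \eqref{eq:gamma_iECA} for the coefficients of $E(A,\Theta)$: by definition $M(\Theta)=M(0,\Theta)=E(-K_{\cF},\Theta)$, so the only work is to identify $A=-K_{\cF}$ and its intersection numbers $A\cdot\Gamma_k=-K_{\cF}\cdot\Gamma_k$ with the components of the $\cF$-chain $\Theta=\Gamma_1+\cdots+\Gamma_r$, and then plug in.

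First I would pin down these intersection numbers. Since $\Gamma_1^2=-e_1<0$ and $\Gamma_1$ is the first component of an $\cF$-chain, Corollary \ref{cor:KFC=-1} gives $K_{\cF}\cdot\Gamma_1=-1$, i.e. $A\cdot\Gamma_1=1$. For $k\geq2$, each $\Gamma_k$ is a smooth rational curve, and the singularities of $\cF$ lying on it are exactly the two nodes it meets in the chain (for $k=r$, the node $\Gamma_{r-1}\cap\Gamma_r$ together with the extra singularity $p_r$ of Lemma \ref{lem:CS=-uk+1uk}) --- this is forced by the definition of an $\cF$-chain and is consistent with the Camacho--Sad computation of Lemma \ref{lem:CS=-uk+1uk}, in which the indices at the two adjacent nodes of $\Gamma_k$ already add up to $\Gamma_k^2$. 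Each such node is a reduced non-degenerate singularity, hence contributes $h_p(\cF,\Gamma_k)=1$ by Lemma \ref{lem:hp(F,C)}, so $\sum_p h_p(\cF,\Gamma_k)=2$; the Cerveau--Lins Neto formula \eqref{eq:C-LNformula} then gives $K_{\cF}\cdot\Gamma_k=\sum_p h_p(\cF,\Gamma_k)-2+2g(\Gamma_k)=0$, i.e. $A\cdot\Gamma_k=0$, for every $k\geq2$. (These values also appear in the standard description of the components of an $\cF$-chain, cf. \cite{Bru15}.)

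Finally I would substitute into \eqref{eq:gamma_iECA}: in $\gamma_i=\frac{\lambda_i}{n}\sum_{k=1}^i\mu_k\,A\Gamma_k+\frac{\mu_i}{n}\sum_{k=i+1}^r\lambda_k\,A\Gamma_k$, the second sum vanishes identically, and since $\mu_1=1$ the first sum reduces to $\mu_1 A\Gamma_1=1$; hence $\gamma_i=\frac{\lambda_i}{n}$ for all $i=1,\dots,r$, as claimed. Equivalently, one reads this off directly from \eqref{eq:M_i(Theta)}: $M(\Theta)=(-K_{\cF}\cdot\Gamma_1)\,M_1(\Theta)=M_1(\Theta)=\frac{1}{n}\sum_{k=1}^r\lambda_k\Gamma_k$. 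The argument is mechanical once the numbers $K_{\cF}\cdot\Gamma_k$ are in hand; the only point requiring any care --- and the only place I would expect friction --- is correctly locating the singularities of $\cF$ on each $\Gamma_k$ so as to apply the non-negativity and classification in Lemma \ref{lem:hp(F,C)}, which is nonetheless an immediate consequence of the definition of an $\cF$-chain.
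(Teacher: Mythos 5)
Your proof is correct and follows the paper's own route: the paper obtains this lemma by simply specializing \eqref{eq:gamma_iECA} to $D=0$, and you supply exactly the intersection numbers $-K_{\cF}\Gamma_1=1$ and $-K_{\cF}\Gamma_k=0$ for $k\geq2$ that make the substitution (equivalently, $M(\Theta)=M_1(\Theta)$) immediate. The only caveat is that $K_{\cF}\Gamma_k=0$ for $k\geq2$ rests on reading condition (4) of the $\cF$-chain definition as ${\rm Z}(\cF,\Gamma_i)=2$ for $i\geq2$ (as in Brunella's Definition 8.1 and as used elsewhere in the paper, e.g.\ Theorem \ref{thm:Null}(2)), which you do implicitly and correctly --- the literal ``${\rm Z}(\cF,\Gamma_i)=0$'' in the paper's stated definition is evidently a typo, since the nodes of the chain are necessarily singularities of $\cF$.
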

\medskip

\noindent{\bf Assumption.} In the following, we assume that $D\cdot \Gamma\geq 0$ for every component $\Gamma$ of a $(D,\cF)$-chain. 
This holds, for example, when $D=aK_X$ with $a\geq 0$, or when $D$ is an effective divisor whose components are not $\cF$-invariant.
  
\begin{proposition}\label{prop:FchainisDF}
Let $\Theta=\Gamma_1+\dots+\Gamma_r$ be an $\cF$-chain.  
Then  $\Theta$ is  a $(D,\cF)$-chain if and only if
$$\sum_{k=1}^r\mu_kD\cdot \Gamma_k<1.$$ 
In particular, $D\cdot \Gamma_k<\tfrac{1}{\mu_k}\leq\tfrac{1}{k}$   for $k=1,\dots,r$.
\end{proposition}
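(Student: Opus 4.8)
The plan is to analyze the $\bQ$-divisor $M(D,\Theta) = \sum_{i=1}^r \gamma_i \Gamma_i$ via the explicit formula \eqref{eq:gamma_iECA} applied with $A = -(K_\cF + D)$, and to combine it with the Camacho--Sad computation of Lemma \ref{lem:CS=-uk+1uk} and the $\cF$-chain conditions ${\rm Z}(\cF,\Gamma_1)=1$, ${\rm Z}(\cF,\Gamma_i)=0$ for $i\geq 2$. First I would use the adjunction-type formula $2-2p_a(\Gamma_i)+K_\cF\Gamma_i = {\rm Z}(\cF,\Gamma_i)$ (the remark after Lemma \ref{lem:C-LNformula}); since each $\Gamma_i$ is a smooth rational curve, this gives $K_\cF\Gamma_i = {\rm Z}(\cF,\Gamma_i) - 2$, so $A\Gamma_i = -(K_\cF+D)\Gamma_i = 2 - {\rm Z}(\cF,\Gamma_i) - D\Gamma_i$. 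Hence $A\Gamma_1 = 1 - D\Gamma_1$ and $A\Gamma_i = 2 - D\Gamma_i$ for $i\geq 2$. (I should double check whether the relevant normalization is via $e_i = -\Gamma_i^2$ and the Camacho--Sad relation at the internal nodes, which forces $\Gamma_i^2 = -2$ for $i\geq 2$ when ${\rm Z}(\cF,\Gamma_i)=0$; this is where the interior self-intersections get pinned down, and it feeds into whether $A\Gamma_i$ is $\geq 0$ or could be slightly negative because of $D\Gamma_i > 0$.)

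Next I would invoke Lemma \ref{lem:EACgeq0}: since by the standing Assumption $D\Gamma\geq 0$ for every component, we have $A\Gamma_i = 2 - D\Gamma_i$, and I need $A\Gamma_i \leq 0$ for $i\geq 2$ to apply the lemma directly — but that is false in general (typically $D\Gamma_i$ is small, so $A\Gamma_i > 0$). So the honest route is to not apply Lemma \ref{lem:EACgeq0} verbatim but to redo its short computation with the actual values. Writing $\gamma_i = \frac{\lambda_i}{n}\sum_{k=1}^r \mu_k A\Gamma_k - \sum_{k=i+1}^r \frac{\lambda_i\mu_k - \lambda_k\mu_i}{n} A\Gamma_k$ and plugging in $A\Gamma_1 = 1 - D\Gamma_1$, $A\Gamma_k = 2 - D\Gamma_k$ for $k\geq 2$, the quantity $S := \sum_{k=1}^r \mu_k A\Gamma_k$ becomes $S = \mu_1(1 - D\Gamma_1) + \sum_{k=2}^r \mu_k(2 - D\Gamma_k) = \mu_1 + 2\sum_{k=2}^r \mu_k - \sum_{k=1}^r \mu_k D\Gamma_k$. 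Using the recursion $\mu_{k-1} - e_k\mu_k + \mu_{k+1} = 0$ with $e_k = 2$ for $k\geq 2$ (so $\mu_{k+1} - \mu_k = \mu_k - \mu_{k-1}$, i.e. the $\mu_k$ are in arithmetic progression with common difference $\mu_1 = 1$ once we reach the $(-2)$-tail), one simplifies $\mu_1 + 2\sum_{k=2}^r \mu_k$ and finds it telescopes to exactly $1$; thus $S = 1 - \sum_{k=1}^r \mu_k D\Gamma_k$. I expect the cleanest way to see the telescoping is to write $\sum_{k=1}^r(\mu_{k+1}-2\mu_k+\mu_{k-1}) \cdot(\text{stuff})$ or simply to use the identity $\lambda_k\mu_{k+1} - \lambda_{k+1}\mu_k = n$ together with $\lambda_r = \mu_1 = 1$; I'd carry out that bookkeeping carefully.

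Given $S = 1 - \sum_{k=1}^r \mu_k D\Gamma_k$, the condition $\gamma_r > 0$ — which is the defining requirement of a $(D,\cF)$-chain beyond $M(D,\Theta)\geq 0$ — reads $\gamma_r = \frac{\lambda_r}{n} S = \frac{1}{n} S > 0$, i.e. $\sum_{k=1}^r \mu_k D\Gamma_k < 1$, giving the stated equivalence (and simultaneously, when $S>0$, the effectivity $M(D,\Theta)\geq 0$ follows from $\gamma_i \geq \lambda_i \gamma_r > 0$ exactly as in Lemma \ref{lem:EACgeq0}, since $\lambda_i\mu_k - \lambda_k\mu_i \geq n > 0$ and $A\Gamma_k = 2 - D\Gamma_k > 0$ makes the subtracted terms nonpositive — wait, here I need $A\Gamma_k \geq 0$, which holds as $D\Gamma_k < 1/\mu_k \leq 1 \leq 2$; so this is consistent). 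Finally, from $\sum_{k=1}^r \mu_k D\Gamma_k < 1$ and $D\Gamma_k \geq 0$ for all $k$, each individual term satisfies $\mu_k D\Gamma_k < 1$, hence $D\Gamma_k < 1/\mu_k$; and since $\mu_k \geq k$ (immediate from $\mu_1 = 1$, $\mu_{k+1} - \mu_k = \mu_k - \mu_{k-1} \geq \mu_1 = 1$, or more carefully $\mu_{k+1}-\mu_k \geq 1$ because the $\mu$'s are strictly increasing integers), we get $D\Gamma_k < 1/k$. The main obstacle I anticipate is the telescoping identity $S = 1 - \sum \mu_k D\Gamma_k$: it hinges on correctly using ${\rm Z}(\cF,\Gamma_i) = 0$ for $i\geq 2$ to force $e_i = 2$ (equivalently $\Gamma_i^2 = -2$) via Camacho--Sad, and then on a clean manipulation of the $\mu$-recursion; a small slip in the boundary terms ($\mu_0 = 0$, $\mu_{r+1} = n$, $\lambda_r = 1$) would throw off the final "$< 1$". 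Everything else is routine once that is in place.
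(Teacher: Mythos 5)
Your overall strategy --- plug $A=-(K_\cF+D)$ into the explicit formula \eqref{eq:gamma_iECA} and read off when $\gamma_r>0$ --- is exactly the paper's route (the paper simply cites Lemma \ref{lem:EACgeq0}). But your execution breaks at the first numerical input. The correct values are $K_\cF\Gamma_1=-1$ and $K_\cF\Gamma_i=0$ for $i\geq 2$: each $\Gamma_i$ with $i\geq 2$ carries exactly two reduced singularities (the nodes $p_{i-1}$ and $p_i$ used in Lemma \ref{lem:CS=-uk+1uk}), so ${\rm Z}(\cF,\Gamma_i)=2$, not $0$; condition (4) of the definition of an $\cF$-chain is a misprint, as you can confirm against Brunella's Definition 8.1 and against the paper's own use of ${\rm Z}=2$ for the $K_\cF$-trivial curves in Theorem \ref{thm:Null}. (Taken literally, ${\rm Z}(\cF,\Gamma_i)=0$ is impossible for a connected chain: the node $\Gamma_{i-1}\cap\Gamma_i$ is a singularity of $\cF$, and Camacho--Sad over an empty singular set would give $\Gamma_i^2=0$, contradicting negative definiteness.) With the correct values, $A\Gamma_1=1-D\Gamma_1$ and $A\Gamma_i=-D\Gamma_i\leq 0$ for $i\geq 2$ by the standing Assumption, so Lemma \ref{lem:EACgeq0} applies verbatim, and $\sum_k\mu_k A\Gamma_k=\mu_1(1-D\Gamma_1)-\sum_{k\geq 2}\mu_k D\Gamma_k=1-\sum_k\mu_k D\Gamma_k$; the stated equivalence and the bound $D\Gamma_k<1/\mu_k\leq 1/k$ (your argument that $\mu_k\geq k$ is fine) follow at once.

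Your attempted repair of this nonexistent obstacle introduces real errors. First, ${\rm Z}(\cF,\Gamma_i)=0$ does not force $e_i=2$, and $\cF$-chains do not have $e_i=2$ for $i\geq 2$ in general --- they resolve arbitrary cyclic quotient singularities, so the $e_i\geq 2$ are unconstrained and you cannot run the $\mu$-recursion with $e_k=2$. Second, the claimed telescoping $\mu_1+2\sum_{k=2}^r\mu_k=1$ is false for every $r\geq 2$, since all $\mu_k\geq 1$; the identity $S=1-\sum_k\mu_k D\Gamma_k$ drops out of $A\Gamma_i=-D\Gamma_i$, not of $A\Gamma_i=2-D\Gamma_i$, and no bookkeeping will reconcile the two. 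Third, your closing consistency check is circular (it invokes the conclusion $D\Gamma_k<1/\mu_k$ to justify a step of the proof) and has the sign reversed: for the inequality $\gamma_i\geq\lambda_i\gamma_r$ one needs the subtracted terms $-\frac{\lambda_i\mu_k-\lambda_k\mu_i}{n}A\Gamma_k$ to be nonnegative, i.e.\ $A\Gamma_k\leq 0$ for $k\geq 2$, which is precisely the hypothesis of Lemma \ref{lem:EACgeq0}.
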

\begin{proof}
By Lemma~\ref{lem:EACgeq0} and the definition of a $(D,\cF)$-chain, $\Theta$ is a $(D,\cF)$-chain if and only if
\[
-\sum_{k=1}^r \mu_k (K_{\cF}+D)\cdot \Gamma_k > 0.
\]
Since $K_{\cF}\cdot \Gamma_1=-1$ and $K_{\cF}\cdot \Gamma_k=0$ for $k\ge 2$, this is equivalent to
\[
\sum_{k=1}^r \mu_k D\cdot \Gamma_k < \mu_1 = 1.
\]
Under the standing assumption that $D\cdot \Gamma_k \ge 0$ for all $k$, we then have
$D\cdot \Gamma_k < \tfrac{1}{\mu_k} \le \tfrac{1}{k}$, 
where the second inequality follows from \eqref{ineq:u_k>=k}.
\end{proof}
   
\begin{example}
The following examples of $(D,\cF)$-chains follow from Proposition~\ref{prop:FchainisDF}.
\begin{itemize}
\item[(1)] If $C$ is an irreducible non-$\cF$-invariant curve, then a $(C,\cF)$-chain is an $\cF$-chain disjoint from $C$.

\item[(2)] If $D=\sum_{i=1}^l a_i C_i$, where $a_i\in[\tfrac{1}{2},1]$ and each $C_i$ is not $\cF$-invariant,
then a $(D,\cF)$-chain is an $\cF$-chain $\Theta=\Gamma_1+\cdots+\Gamma_r$ satisfying
\[
D\cdot \Gamma_1<1,\qquad D\cdot \Gamma_i=0 \quad \text{for } i\geq 2.
\]

\item[(3)] If $D=K_X$, then a $(D,\cF)$-chain is an $\cF$-chain $\Theta=\Gamma_1+\cdots+\Gamma_r$ such that each $\Gamma_i$ is a $(-2)$-curve.
\end{itemize}
\end{example}

\begin{proposition}\label{prop:DF-subchain}
Let $\Theta=\Gamma_1+\dots+\Gamma_r$ be a $(D,\cF)$-chain. 
Then:
\begin{itemize}
\item [(1)]  $\lfloor M(D,\Theta)\rfloor=0$. More precisely,  $0<\lambda_i\gamma_r\leq \gamma_i\leq \frac{\lambda_i}{n}$ and 
\begin{equation}\label{eq:gamma_r}
\gamma_r=\frac{1}{n}\left(1-\sum_{i=1}^{r}\mu_i D\cdot\Gamma_{i}\right).
\end{equation}
In particular, $n\gamma_r\,M(\Theta)\leq M(D,\Theta)\leq M(\Theta).$

\item[(2)] Let $\Gamma$ be an irreducible $\cF$-invariant curve not contained in $\Theta$. 
Then $\Theta+\Gamma$ is a $(D,\cF)$-chain if and only if it is an $\cF$-chain and
$D\cdot \Gamma<M(D,\Theta)\cdot \Gamma$.
  
\item[(3)] Each sub-chain $\Theta_t:=\Gamma_1+\dots+\Gamma_t$ ($t\leq r$) is a $(D,\cF)$-chain, and  
\[
0<M(D,\Theta_1)<M(D,\Theta_2)<\dots <M(D,\Theta_{r-1})<M(D,\Theta).
\] 
\end{itemize}   
\end{proposition}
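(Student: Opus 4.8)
\textbf{Proof proposal for Proposition \ref{prop:DF-subchain}.}

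The three parts all flow from the explicit formula \eqref{eq:gamma_iECA} for the coefficients $\gamma_i$ of $M(D,\Theta)$, together with the sign assumptions in force: $D\Gamma\geq 0$ for every component $\Gamma$ of a $(D,\cF)$-chain, and $K_\cF\Gamma_i = -\Gamma_i^2 - {\rm tang} = \ldots$ — more precisely, since $\Theta$ is an $\cF$-chain with invariant components, $A\Gamma_i = -(K_\cF+D)\Gamma_i$ satisfies $A\Gamma_i = -D\Gamma_i \leq 0$ for $i\geq 2$ (because $K_\cF\Gamma_i = 0$ for $i\geq 2$, as follows from ${\rm Z}(\cF,\Gamma_i)=0$, Lemma \ref{lem:C-LNformula} and $g(\Gamma_i)=0$) and $A\Gamma_1 = -D\Gamma_1 + (\text{something} \le 0)$; in any case $A\Gamma_i\le 0$ for $i\ge 2$, so Lemma \ref{lem:EACgeq0} applies. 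For part (1), I would first note that $\Theta$ being a $(D,\cF)$-chain means exactly $\gamma_r > 0$ and $M(D,\Theta)\geq 0$, and then invoke the chain of inequalities $\gamma_i \geq \lambda_i\gamma_r$ proved inside Lemma \ref{lem:EACgeq0}, together with $\gamma_i \leq \frac{\lambda_i}{n}\mu_1 A\Gamma_1 \le \frac{\lambda_i}{n}$ (using $\mu_1=1$ and $A\Gamma_1 = -(K_\cF+D)\Gamma_1$; here I need $-(K_\cF+D)\Gamma_1 \le 1$, which holds since $M(D,\Theta)\Gamma_1 = -A\Gamma_1$ and the recursion forces $A\Gamma_1 = n\gamma_r - \sum_{k\ge 2}(\cdots) $ — cleanest is to derive \eqref{eq:gamma_r} directly). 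The formula \eqref{eq:gamma_r} for $n\gamma_r$ comes from setting $i=r$ in \eqref{eq:gamma_iECA}: $\gamma_r = \frac{\lambda_r}{n}\sum_{k=1}^r \mu_k A\Gamma_k = \frac{1}{n}\sum_k \mu_k A\Gamma_k = \frac{1}{n}(\mu_1 A\Gamma_1 + \sum_{k\ge 2}\mu_k A\Gamma_k)$, and then one substitutes $A\Gamma_1 = -K_\cF\Gamma_1 - D\Gamma_1$ with $-K_\cF\Gamma_1 = 1$ (from ${\rm Z}(\cF,\Gamma_1)=1$) and $A\Gamma_k = -D\Gamma_k$ for $k\ge 2$ to get $n\gamma_r = 1 - \sum_{k=1}^r \mu_k D\Gamma_k$. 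The sandwiching $n\gamma_r M(\Theta) \le M(D,\Theta) \le M(\Theta)$ is then just the coefficientwise inequality $\lambda_i\gamma_r \le \gamma_i \le \frac{\lambda_i}{n}$ combined with the known $M(\Theta) = \sum \frac{\lambda_i}{n}\Gamma_i$; and $\lfloor M(D,\Theta)\rfloor = 0$ follows since $0 < \gamma_i \le \frac{\lambda_i}{n} < 1$ (as $\lambda_i \le \lambda_0 - 1 < n$ for $i\ge 1$, or more carefully $\lambda_1 < \lambda_0 = n$).

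For part (2), the key identity is $M(D,\Theta)\Gamma_i = -A\Gamma_i = (K_\cF+D)\Gamma_i$ for each component $\Gamma_i$ of $\Theta$, so when we adjoin a new $\cF$-invariant curve $\Gamma$ meeting $\Theta$ in the Hirzebruch–Jung pattern (necessarily $\Gamma\cdot\Gamma_r = 1$ and $\Gamma\cdot\Gamma_i = 0$ for $i<r$, forcing $\Gamma$ to attach at the end), the new bottom coefficient $\gamma'_{r+1}$ of $M(D,\Theta+\Gamma)$ is governed — via the recursion in Lemma \ref{lem:EACgeq0} or a direct $2\times 2$ computation on the last two rows — by the sign of $M(D,\Theta)\Gamma - D\Gamma$. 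Concretely: $\Theta+\Gamma$ is a $(D,\cF)$-chain iff it is an $\cF$-chain (so that the structural conditions (1)–(4) hold, in particular ${\rm Z}(\cF,\Gamma)=0$ and the intersection matrix stays negative-definite) and the new $\gamma'_{r+1} > 0$, and the latter unwinds to $D\Gamma < M(D,\Theta)\Gamma$ because $\gamma'_{r+1}$ is a positive multiple of $\big(M(D,\Theta)\Gamma - D\Gamma\big)$ after one step of the tridiagonal elimination. I would present this by writing $A' = -(K_\cF+D)$ on the longer chain, observing $A'\Gamma_i = A\Gamma_i$ for $i\le r$ except that now there is an extra contribution, and tracking how \eqref{eq:gamma_r} changes: $(r+1)$-version gives $n' \gamma'_{r+1} = 1 - \sum_{k=1}^{r+1}\mu'_k D\Gamma_k$, and relating $\mu'_k, n'$ to $\mu_k, n$ via the HJ-string determinant recursions yields the stated criterion; alternatively one verifies directly that the divisor $M(D,\Theta) + c\,M_{r+1}(\Theta+\Gamma)$ with the right $c$ does the job. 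For part (3), the sub-chain $\Theta_t = \Gamma_1 + \cdots + \Gamma_t$ is again an $\cF$-chain (conditions (1)–(4) are inherited), and I would apply the $r=t$ instance of \eqref{eq:gamma_r} to see $n_t \gamma_{t,t} = 1 - \sum_{k=1}^t \mu^{(t)}_k D\Gamma_k$ where $\mu^{(t)}_k$ are the $\mu$-sequence for $\Theta_t$; since $\mu^{(t)}_k \le \mu_k$ (the $\mu$'s for a sub-chain are dominated by those of the full chain — this is the determinant formula $\mu^{(t)}_{k+1} = [e_1,\ldots,e_k]$, independent of $t$, so actually $\mu^{(t)}_k = \mu_k$ for $k\le t$!) and $\sum_{k=1}^t \mu_k D\Gamma_k \le \sum_{k=1}^r \mu_k D\Gamma_k < 1$, positivity of $\gamma_{t,t}$ follows from Proposition \ref{prop:FchainisDF}. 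The strict monotonicity $M(D,\Theta_1) < \cdots < M(D,\Theta)$ I would get from part (2) applied repeatedly: passing from $\Theta_t$ to $\Theta_{t+1}$ adds a curve $\Gamma_{t+1}$ with $D\Gamma_{t+1} < M(D,\Theta_t)\Gamma_{t+1}$, and the resulting $M(D,\Theta_{t+1})$ dominates $M(D,\Theta_t)$ coefficientwise with strict inequality on the newly added component (and hence, by negative-definiteness and Lemma \ref{lem:neg-def}, strictly on all of them).

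The main obstacle I anticipate is part (2): getting the clean criterion ``$D\Gamma < M(D,\Theta)\Gamma$'' requires carefully bookkeeping how the auxiliary integer sequences $(\lambda_i),(\mu_i),n$ transform when one lengthens the chain by one curve at the end — the $\mu$-sequence is stable ($\mu^{(t)}_k = \mu_k$ for the leading entries) but $n$ and the $\lambda$'s rescale, and one must be careful that the new curve really can only attach at $\Gamma_r$ (not in the middle), which uses that an $\cF$-chain is a \emph{string} and that ${\rm Z}(\cF,\Gamma_1)=1$ pins down the orientation. Everything else reduces to substituting $-K_\cF\Gamma_1 = 1$, $K_\cF\Gamma_i = 0$ ($i\ge 2$), and $D\Gamma_i \ge 0$ into the already-established formula \eqref{eq:gamma_iECA} and the inequalities packaged in Lemma \ref{lem:EACgeq0}.
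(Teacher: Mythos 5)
Your proposal is correct and follows essentially the same route as the paper: all three parts reduce to the explicit coefficient formula \eqref{eq:gamma_iECA} together with Lemma \ref{lem:EACgeq0}, the criterion of Proposition \ref{prop:FchainisDF} (using that the $\mu$-sequence of a sub-chain or an extension agrees with that of $\Theta$ in its leading entries and that $\mu_{r+1}=n$), and Lemma \ref{lem:neg-def} for the strict monotonicity in (3). The only blemish is your parenthetical derivation of $K_{\cF}\Gamma_i=0$ for $i\geq 2$ from ``${\rm Z}(\cF,\Gamma_i)=0$'' — the formula $K_{\cF}C={\rm Z}(\cF,C)-2+2p_a(C)$ would then give $-2$; the value $K_{\cF}\Gamma_i=0$ you actually use (and which \eqref{eq:gamma_r} requires) corresponds to each $\Gamma_i$, $i\geq2$, carrying two reduced singularities, i.e.\ ${\rm Z}(\cF,\Gamma_i)=2$.
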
 
\begin{proof}
(1) follows from a direct computation using \eqref{eq:gamma_iECA} and \eqref{ineq:gamma_i>=lambda_igamma_r}.
\smallskip

\noindent(2) By Proposition~\ref{prop:FchainisDF}, an $\cF$-chain $\Theta+\Gamma$ is a $(D,\cF)$-chain if and only if 
\[
\sum_{k=1}^r \mu_k\, D\cdot \Gamma_k + \mu_{r+1} D\cdot \Gamma < 1.
\]
Since $\mu_{r+1}=n$ (see \eqref{ineq:u_k>=k}), this is equivalent to
\[
n D\cdot \Gamma < 1 - \sum_{k=1}^r \mu_k D\cdot \Gamma_k.
\]
By \eqref{eq:gamma_r}, the right-hand side equals $n\gamma_r$, and hence
$D\cdot \Gamma < \gamma_r = M(D,\Theta)\cdot \Gamma$.
\smallskip

\noindent(3) For $t\le r$, the chain $\Theta_t$ has the same sequence $\mu_0<\mu_1<\cdots<\mu_t$ satisfying \eqref{eq1}. 
Thus $\sum_{i=1}^t \mu_i D\cdot \Gamma_i<1$, and $\Theta_t$ is a $(D,\cF)$-chain by Proposition~\ref{prop:FchainisDF}.

Moreover,
\[
(M(D,\Theta_{t+1})-M(D,\Theta_t))\cdot \Gamma_i=
\begin{cases}
0,& i\le t,\\
D\cdot \Gamma_{t+1}-M(D,\Theta_t)\cdot \Gamma_{t+1},& i=t+1.
\end{cases}
\]
By (2), we have $D\cdot \Gamma_{t+1}-M(D,\Theta_t)\cdot \Gamma_{t+1}<0$.
Hence $M(D,\Theta_{t+1})-M(D,\Theta_t)>0$ by Lemma~\ref{lem:neg-def}.
\end{proof}

\begin{corollary}\label{coro:eKXF-chainisF-chain}
Let $\Theta=\Gamma_1+\cdots+\Gamma_r$ be an $\cF$-chain and $\epsilon\in[0,\frac{1}{n}]$, 
where $n=[e_1,\cdots,e_r]$ is defined as above. 
Then $\Theta$ is an $(\epsilon K_X,\cF)$-chain.
\end{corollary}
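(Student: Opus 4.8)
The plan is to reduce the claim to the combinatorial criterion supplied by Proposition \ref{prop:FchainisDF}, which says that an $\cF$-chain $\Theta = \Gamma_1 + \cdots + \Gamma_r$ is a $(D,\cF)$-chain if and only if $\sum_{k=1}^r \mu_k \cdot D\Gamma_k < 1$. Here the relevant divisor is $D = \epsilon K_X$ with $\epsilon \in [0, \tfrac{1}{n}]$, so I need to estimate $\sum_{k=1}^r \mu_k \cdot (\epsilon K_X)\Gamma_k = \epsilon \sum_{k=1}^r \mu_k \cdot K_X\Gamma_k$ and show it is strictly less than $1$. Note first that since $\Gamma_k$ is a smooth rational $\cF$-invariant curve with $\Gamma_k^2 = -e_k \leq -2$, adjunction gives $K_X\Gamma_k = -2 - \Gamma_k^2 = e_k - 2 \geq 0$, so all terms are non-negative and the quantity to bound is $\epsilon \sum_{k=1}^r \mu_k(e_k - 2)$.

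The key step is the identity for $\sum_{k=1}^r \mu_k(e_k - 2)$ in terms of $n$. Using the recursion \eqref{eq1}, namely $\mu_{k-1} - e_k\mu_k + \mu_{k+1} = 0$, i.e. $e_k\mu_k = \mu_{k-1} + \mu_{k+1}$, I compute
\begin{equation}
\sum_{k=1}^r \mu_k(e_k - 2) = \sum_{k=1}^r (\mu_{k-1} + \mu_{k+1} - 2\mu_k) = \sum_{k=1}^r (\mu_{k+1} - \mu_k) - \sum_{k=1}^r (\mu_k - \mu_{k-1}),\notag
\end{equation}
which telescopes to $(\mu_{r+1} - \mu_1) - (\mu_r - \mu_0) = (n - 1) - (\mu_r - 0) = n - 1 - \mu_r$. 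Since $\mu_r \geq \mu_1 = 1 > 0$, this gives $\sum_{k=1}^r \mu_k(e_k - 2) = n - 1 - \mu_r \leq n - 2 < n$. Therefore
\begin{equation}
\sum_{k=1}^r \mu_k \cdot (\epsilon K_X)\Gamma_k = \epsilon(n - 1 - \mu_r) \leq \frac{n - 1 - \mu_r}{n} < 1,\notag
\end{equation}
and Proposition \ref{prop:FchainisDF} yields that $\Theta$ is an $(\epsilon K_X,\cF)$-chain. I should also check that the standing Assumption of the section (that $D\Gamma \geq 0$ for every component $\Gamma$ of every $(D,\cF)$-chain) is satisfied here; indeed it holds for $D = aK_X$ with $a \geq 0$ as noted in that Assumption, and the computation $K_X\Gamma_k = e_k - 2 \geq 0$ confirms it directly.

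I do not expect a genuine obstacle here: the only subtlety is making sure the telescoping is done over the correct index range and that one invokes the boundary conventions $\mu_0 = 0$, $\mu_{r+1} = n$, $\mu_1 = 1$ correctly. One small point worth stating explicitly is that the hypothesis only requires $\epsilon \in [0, \tfrac{1}{n}]$ rather than a strict inequality, and the argument still goes through because the telescoped sum is $n - 1 - \mu_r \leq n - 2$, strictly less than $n$, so even $\epsilon = \tfrac{1}{n}$ gives a strict inequality $< 1$. This also explains, retrospectively, why the bound $\tfrac{1}{n}$ rather than $\tfrac{1}{n-1}$ or similar is the natural threshold appearing in the statement.
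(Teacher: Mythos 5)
Your proof is correct and follows the same route as the paper: reduce to the criterion of Proposition \ref{prop:FchainisDF} via adjunction ($K_X\Gamma_k=e_k-2$) and then bound $\sum_{k=1}^r\mu_k(e_k-2)$ by $n$. The only difference is that you obtain the exact telescoping identity $\sum_{k=1}^r\mu_k(e_k-2)=n-1-\mu_r$ from the recursion $e_k\mu_k=\mu_{k-1}+\mu_{k+1}$, whereas the paper proves the weaker inductive estimate $N_k<\mu_{k+1}$; your computation is a clean (and slightly sharper) variant of the same argument.
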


\begin{proof}
Since $\epsilon K_X\cdot \Gamma_i=\epsilon(-2-\Gamma_i^2)\leq \frac{1}{n}(e_i-2)$, 
Proposition~\ref{prop:FchainisDF} reduces the proof to showing that
\[
1-\frac{1}{n}\sum_{i=1}^{r}\mu_i(e_i-2)>0.
\]
Since $n=\mu_{r+1}$, it suffices to prove
\[
N_k:=\sum_{i=1}^{k}\mu_i(e_i-2)<\mu_{k+1},\qquad k=1,\cdots,r.
\]

We argue by induction on $k$.
For $k=1$,  $N_1=\mu_1(e_1-2)=e_1-2<e_1=\mu_2$.
Assume $N_k<\mu_{k+1}$. Then
\begin{align*}
N_{k+1}&=N_k+\mu_{k+1}(e_{k+1}-2)\\
&<\mu_{k+1}+\mu_{k+1}(e_{k+1}-2)=(e_{k+1}\mu_{k+1}-\mu_k)-(\mu_{k+1}-\mu_k)\\
&<\mu_{k+2},
\end{align*}
where we use the fact that  $\mu_{k+2}=e_{k+1}\mu_{k+1}-\mu_k$ and $\mu_{k+1}>\mu_k$.
\end{proof}

\subsection{Curves outside \texorpdfstring{$(D,\cF)$}{(D,F)}-chains}
Let $C$ be an irreducible curve that is not contained in any $(D,\cF)$-chain 
(for example, when $C$ is not $\cF$-invariant).

Let $\Theta = \Gamma_1 + \cdots + \Gamma_r$ be a $(D,\cF)$-chain, and define
\begin{equation}\label{eqdef:E(C,Theta)}
E(C,\Theta) = \sum_{i=1}^r \beta_i \Gamma_i
\end{equation}
to be the effective $\bQ$-divisor satisfying $E(C,\Theta)\cdot \Gamma_i = -C \cdot \Gamma_i$ for all $i$, as introduced earlier.

  Let $x\geq0$. By Proposition \ref{prop:FchainisDF}, there exists a $(D+xC,\cF)$-chain contained in $\Theta$  if and only if $(D+x C)\cdot \Gamma_1<1$. 
If this condition holds, let $\Theta'=\Gamma_1+\dots+\Gamma_t$ be the maximal $(D+xC,\cF)$-chain contained in $\Theta$. 
Otherwise, set $\Theta'=0$ and $M(D+xC,\Theta')=0$. 

\begin{lemma}\label{lem:xE+V'-V>=0}
Under the notation above, we have
\begin{equation}
  x E(C,\Theta)+M(D+xC,\Theta')-M(D,\Theta)\geq0. 
\end{equation}
\end{lemma}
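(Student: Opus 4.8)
The plan is to reduce everything to the negative-definiteness criterion of Lemma \ref{lem:neg-def}(2), applied on the ambient $\cF$-chain $\Theta=\Gamma_1+\cdots+\Gamma_r$, by showing that the $\bQ$-divisor
\[
W:=x\cdot E(C,\Theta)+M(D+xC,\Theta')-M(D,\Theta)
\]
has non-positive intersection with every component $\Gamma_i$ of $\Theta$. Since $\Theta'=\Gamma_1+\cdots+\Gamma_t$ is a sub-chain, the three pieces interact in a controlled way: $E(C,\Theta)\cdot\Gamma_i=-C\Gamma_i$ for all $i$ by definition, $M(D,\Theta)\cdot\Gamma_i=-(K_\cF+D)\Gamma_i$ for all $i$, and $M(D+xC,\Theta')\cdot\Gamma_i=-(K_\cF+D+xC)\Gamma_i$ for $i\le t-1$, while for $i=t$ one only knows $M(D+xC,\Theta')\cdot\Gamma_t\le -(K_\cF+D+xC)\Gamma_t$ (with a genuine gap coming from the ``boundary'' component $\Gamma_{t+1}$ of $\Theta$ not lying in $\Theta'$), and $M(D+xC,\Theta')\cdot\Gamma_i=0$ for $i>t$.

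The computation then splits into three ranges. For $i\le t-1$: the three contributions to $W\cdot\Gamma_i$ are $-xC\Gamma_i$, $+(K_\cF+D+xC)\Gamma_i$ and $+(K_\cF+D)\Gamma_i$ with signs as dictated above, and they telescope to $0$. (One must be a little careful at $i=1$ if $t\ge 2$ because of the extra first-component term in $M(\cdot)$, but the same bookkeeping applies.) For $i=t$: the same cancellation leaves $W\cdot\Gamma_t=\big(M(D+xC,\Theta')\cdot\Gamma_t\big)+(K_\cF+D+xC)\Gamma_t$, and this is $\le 0$ precisely because $\Theta'$ is a $(D+xC,\cF)$-chain inside the larger $\cF$-chain $\Theta$: the defining linear system for $M(D+xC,\Theta')$ on $\Theta'$ forces $M(D+xC,\Theta')\cdot\Gamma_t=-(K_\cF+D+xC)\Gamma_t-(\text{a nonnegative multiple of }\Gamma_{t+1}\cdot\Gamma_t)$, whence $W\cdot\Gamma_t\le 0$. (When $\Theta'=0$, i.e.\ $(D+xC)\Gamma_1\ge 1$, this case degenerates and must be handled separately: then $W=xE(C,\Theta)-M(D,\Theta)$ and one checks $W\cdot\Gamma_i\le 0$ directly, using $(D+xC)\Gamma_1\ge1$ together with Proposition \ref{prop:FchainisDF}, which says $M(D,\Theta)\cdot\Gamma_1=\gamma_1\le \lambda_1/n<1$.) For $i>t$: here $M(D+xC,\Theta')\cdot\Gamma_i=0$, so $W\cdot\Gamma_i=-xC\Gamma_i+(K_\cF+D)\Gamma_i$; since $\Gamma_i$ is $\cF$-invariant we have $(K_\cF+\Gamma_i)\Gamma_i={\rm CS}$-type data — more usefully, since $i\ge t+1\ge 2$ we get $(K_\cF+D)\Gamma_i = \gamma_i^{-1}$-type quantities that are $\le 0$ by $M(D,\Theta)\ge 0$ combined with the running Assumption $D\Gamma_i\ge 0$; and $-xC\Gamma_i\le 0$ because $C$ is irreducible, not a component of $\Theta$, hence $C\Gamma_i\ge0$.

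Once $W\cdot\Gamma_i\le 0$ for all $i$ is established, Lemma \ref{lem:neg-def}(1) (negative-definiteness of the intersection matrix of $\Theta$) gives $W\ge 0$, which is exactly the claim. The main obstacle I anticipate is the bookkeeping at the two ``seams'': the first component $\Gamma_1$, where the explicit formula \eqref{eq:gamma_iECA} for $M(\cdot,\cdot)$ carries an asymmetric leading term, and the last component $\Gamma_t$ of $\Theta'$, where the inequality (rather than equality) enters and one must correctly identify the sign of the correction term coming from $\Gamma_{t+1}$. I would also need to treat carefully the degenerate case $\Theta'=0$, and the possibility $t=r$ (where there is no range $i>t$ and no seam at $\Gamma_{t+1}$ inside $\Theta$, so the argument only simplifies). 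Everything else is the routine telescoping indicated above, using only the defining properties of $E(C,\Theta)$, $M(D,\Theta)$, Proposition \ref{prop:DF-subchain}, and the running Assumption.
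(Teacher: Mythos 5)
Your overall strategy is exactly the paper's: show that $Z:=x\,E(C,\Theta)+M(D+xC,\Theta')-M(D,\Theta)$ has non\nobreakdash-positive intersection with every component of $\Theta$ and conclude by negative-definiteness (Lemma \ref{lem:neg-def}). However, the execution contains a genuine error in where the one non-trivial inequality lives. First, the signs: since $M(D,\Theta)=E(A,\Theta)$ with $A=-(K_\cF+D)$ and $E(A,\Theta)\cdot\Gamma_i=-A\Gamma_i$, one has $M(D,\Theta)\cdot\Gamma_i=+(K_\cF+D)\Gamma_i$, not $-(K_\cF+D)\Gamma_i$; similarly $M(D+xC,\Theta')\cdot\Gamma_i=+(K_\cF+D+xC)\Gamma_i$ for \emph{every} component $\Gamma_i$ of $\Theta'$, \emph{including} $i=t$. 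There is no ``gap'' at $\Gamma_t$: the defining linear system of $M(D+xC,\Theta')$ is imposed on all of $\Gamma_1,\dots,\Gamma_t$, so $Z\Gamma_i=0$ exactly for all $i\le t$ (the seam you worry about at $\Gamma_1$ and $\Gamma_t$ is illusory).

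The place where an inequality genuinely enters is $i=t+1$, which your case ``$i>t$'' handles incorrectly. For $i=t+1$ one has $M(D+xC,\Theta')\cdot\Gamma_{t+1}=\gamma'_t\,\Gamma_t\Gamma_{t+1}=\gamma'_t>0$ (the coefficient of $\Gamma_t$ in $M(D+xC,\Theta')$), not $0$ as you claim, so
\begin{equation}
Z\Gamma_{t+1}=M(D+xC,\Theta')\cdot\Gamma_{t+1}-(D+xC)\Gamma_{t+1},\notag
\end{equation}
and its non-positivity is precisely the statement that $\Theta'+\Gamma_{t+1}$ is \emph{not} a $(D+xC,\cF)$-chain, i.e.\ Proposition \ref{prop:DF-subchain}(2) applied to the maximality of $\Theta'$. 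Your proposal never invokes this. Moreover, for $i\ge t+2$ the correct expression is $Z\Gamma_i=-xC\Gamma_i-(K_\cF+D)\Gamma_i=-xC\Gamma_i-D\Gamma_i\le 0$ by the running Assumption; your version $-xC\Gamma_i+(K_\cF+D)\Gamma_i$ has the wrong sign and the justification ``$(K_\cF+D)\Gamma_i\le 0$'' contradicts the Assumption $D\Gamma_i\ge 0$ (for $i\ge2$ one has $(K_\cF+D)\Gamma_i=D\Gamma_i\ge0$). The degenerate cases $\Theta'=0$ (where $(D+xC)\Gamma_1\ge1$ gives $Z\Gamma_1=1-(D+xC)\Gamma_1\le0$) and $\Theta'=\Theta$ (where $Z=0$) are fine in spirit. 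To repair the proof, drop the claimed correction at $\Gamma_t$, compute $Z\Gamma_i=0$ for $i\le t$, and supply the maximality argument at $\Gamma_{t+1}$.
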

\begin{proof}
Set $Z:=x E(C,\Theta)+M(D+x C,\Theta')-M(D,\Theta)$.
By Lemma~\ref{lem:neg-def}, it suffices to show that $Z\cdot\Gamma_i\le 0$ for all $i$.

If $\Theta'=\Theta$, then $Z\cdot \Gamma_i=0$ for all $i$, hence $Z=0$. 
If $\Theta'=0$, then $(D+xC)\cdot \Gamma_1 \ge 1$ by Proposition~\ref{prop:FchainisDF}, and
$Z=x E(C,\Theta)-M(D,\Theta)$.
Thus,
\[
Z\cdot \Gamma_i=
\begin{cases}
1-(D+xC)\cdot \Gamma_1,&\text{if } i=1,\\
-(D+xC)\cdot\Gamma_i,&\text{if } i\ge 2.
\end{cases}
\]
Hence $Z\cdot \Gamma_i \le 0$ for all $i$.

Now assume $\Theta'\ne 0,\Theta$, i.e., $1\le t<r$. 
A direct computation shows that
\[
Z\cdot \Gamma_i=
\begin{cases}
0,&\text{if } i\le t,\\
-(D+xC)\cdot\Gamma_{t+1}+M(D+xC,\Theta')\cdot\Gamma_{t+1},&\text{if } i=t+1,\\
-(D+xC)\cdot\Gamma_i,&\text{if } i\ge t+2.
\end{cases}
\]
By Proposition~\ref{prop:DF-subchain}(2), $Z\cdot \Gamma_{t+1}\leq0$.
Thus $Z\cdot\Gamma_i\le 0$ for all $i$.
\end{proof}

\begin{corollary}\label{coro:theta(C,T)}
Assume that $C$ meets a $(D,\cF)$-chain 
$\Theta = \sum_{i=1}^r \Gamma_i$ 
but is not contained in $\Theta$. 
Then there exists a minimal $x_0 \ge 0$ such that, for all $x \ge x_0$, the maximal $(D + x C, \cF)$-chain $\Theta'$ contained in $\Theta$ is disjoint from $C$. 
We denote this number $x_0$ by $\theta(C,\Theta)$.
In particular, we have $0 < \theta(C,\Theta) \le 1$.
\end{corollary}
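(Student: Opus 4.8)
The statement asserts the existence of a minimal nonnegative $x_0$ such that the maximal $(D+xC,\cF)$-chain $\Theta'$ contained in $\Theta$ is disjoint from $C$ whenever $x\geq x_0$, and that $\theta(C,\Theta)<1$. The key idea is to track, as the coefficient $x$ grows, how the maximal $(D+xC,\cF)$-chain $\Theta'=\Gamma_1+\cdots+\Gamma_t$ contained in $\Theta$ shrinks. First I would invoke Proposition \ref{prop:FchainisDF}: $\Theta_t=\Gamma_1+\cdots+\Gamma_t$ is a $(D+xC,\cF)$-chain precisely when $\sum_{k=1}^t\mu_k(D+xC)\Gamma_k<1$, i.e.\ when $x\cdot\sum_{k=1}^t\mu_k C\Gamma_k < 1-\sum_{k=1}^t\mu_k D\Gamma_k$. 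Since $\Theta$ is already a $(D,\cF)$-chain, the right-hand side is positive for every $t\leq r$ (by Proposition \ref{prop:DF-subchain}.(3)), so for $x=0$ all of $\Theta$ qualifies; and each coefficient $C\Gamma_k\geq 0$ under the standing Assumption. Because $C$ meets $\Theta$, there is at least one index $k$ with $C\Gamma_k>0$; let $j$ be the smallest such index. Then for $t\geq j$ the quantity $\sum_{k=1}^t\mu_k C\Gamma_k$ is strictly positive, so the defining inequality fails once $x$ is large enough, while for $t<j$ it holds for all $x\geq 0$.

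\textbf{Constructing $x_0$.} For each $t$ with $j\leq t\leq r$, set
\begin{equation}
x_t:=\frac{1-\sum_{k=1}^t\mu_k D\Gamma_k}{\sum_{k=1}^t\mu_k C\Gamma_k},\notag
\end{equation}
which is a well-defined positive rational number. Then $\Theta_t$ is a $(D+xC,\cF)$-chain iff $x<x_t$. The maximal $(D+xC,\cF)$-chain $\Theta'$ contained in $\Theta$ is $\Theta_t$ for the largest $t$ with $x<x_t$ (and $\Theta'$ could be $0$ if even $x<x_1$ fails, but here $\Theta_{j-1}$ always qualifies since $C\Gamma_k=0$ for $k<j$, so in fact $\Theta'\supseteq\Theta_{j-1}$, possibly with $j-1=0$). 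I would then define
\begin{equation}
x_0:=\min_{j\leq t\leq r}x_t.\notag
\end{equation}
For $x\geq x_0$, the inequality $x<x_t$ fails for at least one $t\in\{j,\dots,r\}$, hence it fails for $t=r$... more carefully, I should argue that $\Theta'$ has length $<j$, so $\Theta'\subseteq\Theta_{j-1}$ and thus $C\Gamma_i=0$ for every component $\Gamma_i$ of $\Theta'$, i.e.\ $\Theta'$ is disjoint from $C$. The point is: if $\Theta'=\Theta_s$ with $s\geq j$, then $x<x_s$, contradicting $x\geq x_0=\min_t x_t$ provided the minimum is attained at some $t\leq s$; to make this clean I would instead note that the sequence of "thresholds" governing when $\Theta_t$ ceases to be a chain is such that $\Theta'$ disjoint from $C$ is equivalent to $\Theta'$ having length $<j$, and the smallest $x$ forcing this is exactly $\min_{j\leq t\leq r}x_t$ — here I'd double-check monotonicity of the relevant partial sums to confirm the min is the right expression, or simply replace $x_0$ by whatever threshold the monotonicity dictates. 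Minimality of $x_0$ then follows because at $x=x_0-\delta$ for small $\delta>0$ the chain $\Theta_{t^*}$ (with $t^*$ achieving the min) is still a $(D+xC,\cF)$-chain meeting $C$.

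\textbf{The bound $\theta(C,\Theta)<1$.} This is where I'd apply Lemma \ref{lem:xE+V'-V>=0}, or rather its underlying inequality directly. It suffices to show $x_0<1$, i.e.\ that $\min_{j\leq t\leq r}x_t<1$; equivalently, that for some $t\geq j$ we have $\sum_{k=1}^t\mu_k C\Gamma_k > 1-\sum_{k=1}^t\mu_k D\Gamma_k$, i.e.\ $\sum_{k=1}^t\mu_k(C+D)\Gamma_k>1$. Take $t=r$: since $\Theta$ is an $\cF$-chain and $C$ is not contained in any $(D,\cF)$-chain — in particular $\Theta+\Gamma$ considerations aside, the relevant fact is that $C\Gamma_k$ and $D\Gamma_k$ are integers (intersection numbers of curves on a smooth surface, with $C\not\subseteq\Theta$) with $\sum_k C\Gamma_k\geq 1$ because $C$ meets $\Theta$ — hmm, but $\mu_k$ could be $1$, so I need $\sum_{k=1}^r\mu_k(C+D)\Gamma_k\geq 1$ with the $\mu_k\geq 1$. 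Actually $\mu_k\geq 1$ for $k\geq 1$ and $C\Gamma_k,D\Gamma_k\geq 0$ are nonnegative integers with $\sum_k C\Gamma_k\geq 1$, giving $\sum_{k=1}^r\mu_k(C+D)\Gamma_k\geq\sum_{k=1}^r\mu_k C\Gamma_k\geq\sum_{k=1}^r C\Gamma_k\geq 1$, and the inequality is \emph{strict} unless $\mu_k=1$ for the unique $k$ with $C\Gamma_k=1$ and all other intersections vanish — I'd handle this boundary case separately (it forces $x_0=1$ only if additionally $D\Gamma_k=0$ for all $k$, but then one checks using $C\not\subseteq$ any $(D,\cF)$-chain, or that $\Theta'$ at $x_0=1$ meets $C$... ).

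\textbf{Main obstacle.} The genuine difficulty is not the construction of $x_0$ — that is bookkeeping with Proposition \ref{prop:FchainisDF} — but the \emph{strict} inequality $\theta(C,\Theta)<1$. One must rule out the degenerate configuration where $C$ meets $\Theta$ transversally in a single point of $\Gamma_1$, $D$ is disjoint from $\Theta$, and $n=\mu_{r+1}$ is such that $x_0$ would equal $1$. I expect this is excluded by the hypothesis that $C$ is \emph{not contained in any $(D,\cF)$-chain} together with the observation that at $x=1$ the divisor $M(D+C,\Theta')\Gamma_1 = \gamma_1' \le M(D,\Theta)\Gamma_1 < 1$ forces $(D+C)\Gamma_1 < 1$ only in borderline cases; alternatively one uses that intersection numbers on a smooth surface force $(C+D)\Gamma_1 \ge 1$ to be, in the relevant situation, \emph{strictly} greater than the threshold. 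I would resolve this by a short case analysis on whether $C$ passes through the "free" singularity of $\Gamma_r$ versus meeting $\Gamma_1$, invoking Lemma \ref{lem:xE+V'-V>=0} at $x=1$ to get $E(C,\Theta)+M(D+C,\Theta')\geq M(D,\Theta)$ and comparing leading coefficients.
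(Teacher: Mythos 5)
Your reduction via Proposition \ref{prop:FchainisDF} is the right starting point and matches the paper's, but your formula for $x_0$ is wrong. Let $j$ be the smallest index with $C\Gamma_j>0$. The maximal $(D+xC,\cF)$-chain $\Theta'\subseteq\Theta$ is an initial segment $\Theta_s$, and it is disjoint from $C$ if and only if $s<j$, i.e.\ if and only if $\Theta_j$ fails to be a $(D+xC,\cF)$-chain, i.e.\ $x\geq x_j=\frac{1}{\mu_jC\Gamma_j}\bigl(1-\sum_{k=1}^{j}\mu_kD\Gamma_k\bigr)$. Only this single threshold matters; the thresholds $x_t$ for $t>j$ are irrelevant. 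Your candidate $\min_{j\leq t\leq r}x_t$ equals $x_r$ (by the very monotonicity you flag: the numerators of the $x_t$ weakly decrease and the denominators weakly increase in $t$), and $x_r$ can be strictly smaller than $x_j$. Concretely, take $D=0$, $\Theta=\Gamma_1+\Gamma_2$ with $\Gamma_1^2=\Gamma_2^2=-2$ (so $\mu_1=1$, $\mu_2=2$) and $C\Gamma_1=2$, $C\Gamma_2=1$: then $x_1=\tfrac12$, $x_2=\tfrac14$, but at $x=\tfrac14$ one has $(xC)\Gamma_1=\tfrac12<1$, so $\Theta'=\Gamma_1$ still meets $C$; the correct answer is $x_1=\tfrac12$. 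This is exactly the paper's formula $\theta(C,\Theta)=\frac{1}{\mu_{t+1}C\Gamma_{t+1}}\bigl(1-\sum_{k=1}^{t+1}\mu_k D\Gamma_k\bigr)$, where $\Theta_t$ is the largest initial segment disjoint from $C$. For $x\in[x_r,x_j)$ your $\Theta'$ contains $\Gamma_j$ and hence meets $C$, so your minimality argument ("the inequality fails for $t=r$") does not give what you need.

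On the bound: with the correct threshold one gets $\theta(C,\Theta)\leq\frac{1}{\mu_jC\Gamma_j}\leq\frac{1}{\mu_j}$, which is $<1$ as soon as $j\geq2$ (then $\mu_j\geq e_1\geq2$), or $C\Gamma_j\geq2$, or $D\Gamma_k>0$ for some $k\leq j$. You are right that the remaining configuration ($j=1$, $C\Gamma_1=1$, $D\Gamma_1=0$) is the genuine sticking point: there one computes $\theta(C,\Theta)=1$ exactly, and nothing in the hypotheses excludes it. The paper's own proof asserts $\frac{1}{\mu_{t+1}}<1$ without noting that $\mu_1=1$, so the conclusion should really read $\theta(C,\Theta)\leq1$ (which is all that is used downstream, e.g.\ in Lemma \ref{lem:Dtang-index} via $\alpha_C=1$). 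Your proposed fixes for this case (invoking Lemma \ref{lem:xE+V'-V>=0} at $x=1$, or the fact that $C$ lies in no $(D,\cF)$-chain) are not carried out and cannot succeed as stated, since the configuration is realizable. In sum: correct diagnosis of the delicate point, but an incorrect identification of $x_0$ and an unfinished argument for the inequality.
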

\begin{proof}
Let $\Theta_t=\Gamma_1+\cdots +\Gamma_t$ ($0\leq t<r$) be a sub-chain such that 
$C\cdot \Gamma_k=0$ for all $1\leq k\leq t$, and $C\cdot \Gamma_{t+1}>0$, 
where we set $\Theta_0=0$. 
By Proposition~\ref{prop:FchainisDF}, the above condition is equivalent to the existence of $x\geq0$ such that
\[
\sum_{k=1}^t \mu_k(D+xC)\cdot \Gamma_k < 1
\quad \text{and} \quad
\sum_{k=1}^{t+1} \mu_k(D+xC)\cdot \Gamma_k \geq 1.
\]
Thus,
\[
x \geq \theta(C,\Theta)
:= \frac{1}{\mu_{t+1}  C\cdot\Gamma_{t+1}}
\left(1 - \sum_{k=1}^{t+1} \mu_k D\cdot\Gamma_k\right).
\]
Note that $\theta(C,\Theta)$ depends only on $C$, $\Theta$, and $D$.
By Proposition~\ref{prop:FchainisDF}, we have
$0 < \theta(C,\Theta) \leq \tfrac{1}{\mu_{t+1}} \leq 1$.
\end{proof} 

For convenience, we define $\theta(C,\Theta) := 0$ whenever $\Theta$ is disjoint from $C$.

\begin{lemma}\label{lem:theta<alpha(C)}
Let $\Theta = \Gamma_1 + \cdots + \Gamma_r$ be a $(D, \cF)$-chain, and let $C$ be an irreducible non-$\cF$-invariant curve.
\begin{itemize}
\item[\rm(1)] If $D=\sum_{i=1}^l a_i C_i$ is an effective $\bQ$-divisor with $a_i \in [0,1]$ for all $i$, and each $C_i$ is not $\cF$-invariant, then 
\[
\theta(C,\Theta) \le \alpha_C, \quad \text{where } 
\alpha_C = 
\begin{cases}
1 - a_i,& \text{if } C = C_i \text{ for some } i,\\
1,& \text{otherwise}.
\end{cases}
\]
\item[\rm(2)] If $D = \epsilon K_X$ with $\epsilon \in [0,\tfrac{1}{2})$ and $\Gamma_1^2 \le -3$, then 
$\theta(C,\Theta) \le 1 - \epsilon$.
\end{itemize}
\end{lemma}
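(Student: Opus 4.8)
The plan is to reduce both assertions to the closed formula for $\theta(C,\Theta)$ obtained in the proof of Corollary \ref{coro:theta(C,T)}. Write $t+1$ for the smallest index with $C\Gamma_{t+1}>0$, so that $\Gamma_1,\dots,\Gamma_t$ are disjoint from $C$; if $\Theta$ is itself disjoint from $C$ then $\theta(C,\Theta)=0$ and both bounds are trivial, since $\alpha_C\ge 0$ and $1-\epsilon>0$. In the remaining case that proof yields
\[
\theta(C,\Theta)=\frac{1}{\mu_{t+1}\,C\Gamma_{t+1}}\Bigl(1-\sum_{k=1}^{t+1}\mu_k\,D\Gamma_k\Bigr).
\]
Throughout I would exploit three elementary facts: $C\Gamma_{t+1}\ge 1$ (distinct irreducible curves), $\mu_{t+1}\ge\mu_1=1$ with moreover $\mu_{t+1}\ge\mu_2\ge 2$ once $t\ge 1$, and $D\Gamma_k\ge 0$ for every $k$ --- the last being the standing Assumption of this section, which permits discarding (non-negative) terms from the numerator.

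For part (1), I would first dispose of the case $C\ne C_i$ for all $i$: then $\alpha_C=1$, and dropping the whole sum gives $\theta(C,\Theta)\le 1/(\mu_{t+1}C\Gamma_{t+1})\le 1$. When $C=C_{i_0}$ with coefficient $a:=a_{i_0}$, split $D=aC+D'$ with $D':=\sum_{i\ne i_0}a_iC_i\ge 0$; since $\Gamma_{t+1}$ is $\cF$-invariant it is not a component of $D'$, whence $D\Gamma_{t+1}\ge a\,C\Gamma_{t+1}$, and retaining only the $k=t+1$ term in the numerator gives
\[
\theta(C,\Theta)\le\frac{1-\mu_{t+1}\,a\,C\Gamma_{t+1}}{\mu_{t+1}\,C\Gamma_{t+1}}=\frac{1}{\mu_{t+1}\,C\Gamma_{t+1}}-a\le 1-a=\alpha_C .
\]

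For part (2), since $\Theta$ is a Hirzebruch--Jung string each $\Gamma_k$ is a smooth rational curve, so $K_X\Gamma_k=e_k-2$ (with $\Gamma_k^2=-e_k$) and $D\Gamma_k=\epsilon(e_k-2)$. Writing $N_{t+1}:=\sum_{k=1}^{t+1}\mu_k(e_k-2)$ as in the proof of Corollary \ref{coro:eKXF-chainisF-chain}, one has $\theta(C,\Theta)=(1-\epsilon N_{t+1})/(\mu_{t+1}C\Gamma_{t+1})$, and using $C\Gamma_{t+1}\ge 1$ it suffices to verify
\[
1-\mu_{t+1}\le\epsilon\,(N_{t+1}-\mu_{t+1}).
\]
The hypothesis $\Gamma_1^2\le -3$ forces $e_1\ge 3$, hence $N_{t+1}\ge\mu_1(e_1-2)=e_1-2\ge 1$. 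If $t=0$ this reads $0\le\epsilon(e_1-3)$, which holds; if $t\ge 1$ then $\mu_{t+1}\ge 2$, so the left-hand side is $\le -1<0$, and one concludes by the dichotomy $N_{t+1}\ge\mu_{t+1}$ (right-hand side $\ge 0$) versus $N_{t+1}<\mu_{t+1}$, in which case $\mu_{t+1}-N_{t+1}\le\mu_{t+1}-1$ together with $\epsilon<1$ gives $\epsilon(\mu_{t+1}-N_{t+1})<\mu_{t+1}-1$, which is the required inequality.

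All the estimates are elementary substitutions into the formula for $\theta(C,\Theta)$, so I do not anticipate a genuine obstacle; the only step that needs a moment's care is the case distinction $\mu_{t+1}=1$ versus $\mu_{t+1}\ge 2$ in part (2), where the bound $N_{t+1}\ge 1$ --- and hence precisely the hypothesis $\Gamma_1^2\le -3$ --- is what makes the argument close.
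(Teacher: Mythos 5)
Your proposal is correct and follows exactly the route the paper intends: the paper's proof is a one-line appeal to the formula for $\theta(C,\Theta)$ from Corollary \ref{coro:theta(C,T)} together with Proposition \ref{prop:FchainisDF}, and your argument simply carries out the substitutions and elementary estimates that this appeal leaves implicit. The details check out (in particular the use of $D\Gamma_{t+1}\geq a\,C\Gamma_{t+1}$ in part (1) and the dichotomy on $N_{t+1}$ versus $\mu_{t+1}$ in part (2)), and your part (2) in fact only needs $\epsilon<1$.
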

\begin{proof}
This follows from the definition of $\theta(C,\Theta)$ and Proposition~\ref{prop:FchainisDF}.
\end{proof}

Next, we introduce an index of $C$ that depends only on $C$ and the $(D,\cF)$-chains.

\begin{definition}\label{def:DFindexofC}
The \emph{$(D,\cF)$-index} of $C$ is defined as
\begin{equation}
\theta(C) := \max_{\Theta} \bigl\{ \theta(C,\Theta) \bigr\},
\end{equation}
where the maximum is taken over all maximal $(D,\cF)$-chains $\Theta$.
\end{definition}

By Corollary \ref{coro:theta(C,T)}, we have $0 \le \theta(C) \le 1$. 
Moreover, $\theta(C) = 0$ if and only if $C$ is disjoint from all maximal $(D,\cF)$-chains.

\medskip

We now define
\begin{equation}\label{eq:defcE(C)}
\mathcal{E}(C) := C + \sum_{\Theta} E(C,\Theta), 
\quad 
W_C := K_{\cF} + D - \sum_{\Theta} M(D,\Theta),
\end{equation}
where the sum runs over all maximal $(D,\cF)$-chains $\Theta$ intersecting $C$. 
Clearly, for any component $\Gamma$ of each $\Theta$, we have
$\mathcal{E}(C) \cdot \Gamma = W_C \cdot \Gamma = 0$. 

\begin{lemma}\label{lem:(W+cC)cC>=0}
Let $C$ be an irreducible non-$\cF$-invariant curve such that 
\[
(K_{\cF}+D)\cdot C + x  C^2 \ge 0
\]
for some $x \ge \theta(C)$. Then
\begin{equation}\label{eq:((W+xcC)cC>=0)}
(W_C + x  \mathcal{E}(C)) \cdot \mathcal{E}(C) \ge 0.
\end{equation}
\end{lemma}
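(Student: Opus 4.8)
The plan is to expand the left-hand side of \eqref{eq:((W+xcC)cC>=0)} using the definitions in \eqref{eq:defcE(C)} and reduce everything to the hypothesis $(K_{\cF}+D)C+x\cdot C^2\geq0$ together with the non-negativity results established for $(D,\cF)$-chains. Write $\cE(C)=C+\sum_{\Theta}E(C,\Theta)$ and $W_C=(K_{\cF}+D)-\sum_{\Theta}M(D,\Theta)$, where $\Theta$ runs over all maximal $(D,\cF)$-chains meeting $C$. Since distinct maximal $(D,\cF)$-chains are disjoint (being Hirzebruch–Jung strings that are maximal) and each $E(C,\Theta)$, $M(D,\Theta)$ is supported on $\Theta$, the cross terms between different chains vanish, so it suffices to handle one chain at a time plus the "$C$ against itself" term.

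First I would compute $W_C\cdot\cE(C)$. Using $W_C\Gamma=0$ and $\cE(C)\Gamma=0$ for every component $\Gamma$ of every $\Theta$ (noted right after \eqref{eq:defcE(C)}), the only surviving contribution is $W_C\cdot C=(K_{\cF}+D)C-\sum_{\Theta}M(D,\Theta)\cdot C$. Next, $x\,\cE(C)^2 = x\big(C+\sum_{\Theta}E(C,\Theta)\big)\cdot\cE(C)$; again using $\cE(C)\Gamma=0$ this collapses to $x\,C\cdot\cE(C)=x\,C^2+x\sum_{\Theta}C\cdot E(C,\Theta)$. Now invoke the defining property $E(C,\Theta)\Gamma_i=-C\Gamma_i$: by the symmetry of the intersection form and the expansion $C\cdot E(C,\Theta)$, one gets $C\cdot E(C,\Theta)=E(C,\Theta)\cdot C$, and more usefully $E(C,\Theta)^2 = -\,C\cdot E(C,\Theta)$ is \emph{not} quite what I want; instead I should keep $C\cdot E(C,\Theta)=\sum_i\beta_i\,C\Gamma_i$ with the $\beta_i\ge 0$ from \eqref{eqdef:E(C,Theta)}. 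Adding the two pieces:
\begin{equation}
(W_C+x\,\cE(C))\cdot\cE(C) = \big[(K_{\cF}+D)C+x\,C^2\big] + \sum_{\Theta}\Big(x\,C\cdot E(C,\Theta)-M(D,\Theta)\cdot C\Big).\notag
\end{equation}
The bracketed term is $\ge 0$ by hypothesis, so it remains to show each summand $x\,C\cdot E(C,\Theta)-M(D,\Theta)\cdot C\ge 0$.

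For that last inequality I would apply Lemma \ref{lem:xE+V'-V>=0}: intersecting the effective divisor $x\,E(C,\Theta)+M(D+xC,\Theta')-M(D,\Theta)\ge 0$ with the (effective, non-$\cF$-invariant) curve $C$ gives $x\,C\cdot E(C,\Theta)+C\cdot M(D+xC,\Theta')-C\cdot M(D,\Theta)\ge 0$ — but this has the wrong sign on the middle term unless $C\cdot M(D+xC,\Theta')\le 0$, which holds precisely because $x\ge\theta(C)\ge\theta(C,\Theta)$ forces the maximal $(D+xC,\cF)$-subchain $\Theta'$ to be disjoint from $C$ (Corollary \ref{coro:theta(C,T)} and Definition \ref{def:DFindexofC}), hence $M(D+xC,\Theta')=0$ or at worst $C\cdot M(D+xC,\Theta')=0$ since $\Theta'$ and $C$ share no components and meet in no points. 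Thus $x\,C\cdot E(C,\Theta)-C\cdot M(D,\Theta)\ge 0$, and summing over $\Theta$ finishes the proof. The main obstacle is bookkeeping the middle term $C\cdot M(D+xC,\Theta')$: one must check carefully that $x\ge\theta(C)$ makes $\Theta'$ genuinely disjoint from $C$ (not merely sharing no components), so that this term vanishes rather than being merely bounded — this is exactly the content of how $\theta(C,\Theta)$ was defined in Corollary \ref{coro:theta(C,T)}, so the argument goes through, but it is the step that requires the precise choice $x\ge\theta(C)$ and cannot be weakened.
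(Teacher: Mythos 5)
Your proposal is correct and follows essentially the same route as the paper: reduce $(W_C+x\cE(C))\cdot\cE(C)$ to $(W_C+x\cE(C))\cdot C$, split off the hypothesis term $(K_{\cF}+D)C+xC^2$, and handle each chain via Lemma \ref{lem:xE+V'-V>=0} together with the vanishing $C\cdot M(D+xC,\Theta')=0$ forced by $x\geq\theta(C)\geq\theta(C,\Theta)$. The step you flag as the delicate one — that $\Theta'$ is genuinely disjoint from $C$ so the middle term is exactly zero — is precisely how the paper argues it as well.
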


\begin{proof}
Let $\Theta = \Gamma_1 + \cdots + \Gamma_r$ be a maximal $(D,\cF)$-chain intersecting $C$, and let 
$\Theta' = \Gamma_1 + \cdots + \Gamma_t$ be the maximal $(D+xC,\cF)$-chain contained in $\Theta$, 
setting $\Theta'=0$ if $(D+xC)\cdot \Gamma_1 \ge 1$.  
Since $x \ge \theta(C)$, we have $C \cdot \Theta' = 0$, which implies
\[
C \cdot \bigl(x E(C,\Theta) - M(D,\Theta)\bigr)
=
C \cdot \bigl(x E(C,\Theta) + M(D+xC,\Theta') - M(D,\Theta)\bigr) \ge 0,
\]
where the last inequality follows from Lemma \ref{lem:xE+V'-V>=0}.  
Therefore, we obtain
\begin{align*}
&(W_C + x \mathcal{E}(C)) \cdot \mathcal{E}(C)
= (W_C + x  \mathcal{E}(C)) \cdot C \\
=& (K_{\cF} + D)\cdot C + x  C^2 
   + \sum_{\Theta} C \cdot \bigl(x E(C,\Theta) - M(D,\Theta)\bigr)
  \ge 0.
\end{align*}
\end{proof}

\begin{remark}
In the case $D=0$, the assumption of Lemma~\ref{lem:(W+cC)cC>=0} is automatically satisfied by taking $x=1$. 
Thus, Lemma~\ref{lem:(W+cC)cC>=0} recovers \cite[Proposition~2.13]{CF18}, although our proof is different.
\end{remark}

\begin{lemma}\label{lem:CSqi<0}
Let $C$ be an irreducible $\cF$-invariant curve not contained in any $(D,\cF)$-chain, and let $q_1,\dots,q_h$ be the singularities of $\cF$ on $C$ outside the nodes of $\mathcal{E}(C)$.
Then    
\begin{equation}
\cE(C)^2=\sum_{i=1 }^h{\rm CS}(\cF,C,q_i). 
\end{equation}
Furthermore, if the intersection matrix of  all components of $\mathcal{E}(C)$ is negative definite, then the following hold:
\begin{itemize}
\item[(1)] $\sum_{i=1 }^h{\rm CS}(\cF,C,q_i)<0$.
\item[(2)] If $\sum_{i=1}^h h_{q_i}(\cF,C)=1$, then $h=1$ and $q_1$ is a non-degenerate reduced singularity of $\cF$,
and so $C$ is smooth.
(Here $h_{q_i}(\cF,C)$ is defined in \eqref{equ:defhp(F,C)}.)
\end{itemize}
\end{lemma}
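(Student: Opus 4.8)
The identity $\cE(C)^2=\sum_{i}{\rm CS}(\cF,C,q_i)$ is the Camacho–Sad formula (Lemma \ref{lem:csformula}) applied to the $\cF$-invariant curve $C$, once we isolate the contributions of the singularities lying on the chains $\Theta$. Write $\cE(C)=C+\sum_\Theta E(C,\Theta)$, where the sum runs over maximal $(D,\cF)$-chains $\Theta$ meeting $C$. The plan is to evaluate $\cE(C)^2=\cE(C)\cdot C$ (using $\cE(C)\cdot\Gamma=0$ for every component $\Gamma$ of any such $\Theta$) and then compare with the Camacho–Sad sum $C^2=\sum_{p\in C}{\rm CS}(\cF,C,p)$. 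The points $p\in C$ split into: the nodes of $\cE(C)$ (the points $\Gamma_k\cap C$ and $\Gamma_k\cap\Gamma_{k+1}$ inside the chains), and the remaining singularities $q_1,\dots,q_h$. First I would show that for each chain $\Theta$ the total contribution $\sum_{p\in C\cap\Theta}{\rm CS}(\cF,C,p)$ to $C^2$ is exactly cancelled by the cross-term $2\,C\cdot E(C,\Theta)$ together with $E(C,\Theta)^2$ appearing in $\cE(C)^2$; concretely, $E(C,\Theta)^2+2\,C\cdot E(C,\Theta)=-C\cdot E(C,\Theta)$ since $E(C,\Theta)\cdot\Gamma_i=-C\cdot\Gamma_i$, and one checks via Lemma \ref{lem:CS=-uk+1uk} (the explicit Camacho–Sad values $-\mu_{k+1}/\mu_k$ along the chain) that this matches $-\sum_{p\in C\cap\Theta}{\rm CS}(\cF,C,p)$, leaving precisely $\sum_{i=1}^h{\rm CS}(\cF,C,q_i)$.

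For (1), assume the intersection matrix of the components of $\cE(C)$ is negative definite. Then $\cE(C)^2<0$ (a negative-definite matrix applied to any nonzero vector, and $\cE(C)\ne0$ since it contains $C$ with coefficient $1$). Combined with the identity just proved, this gives $\sum_{i=1}^h{\rm CS}(\cF,C,q_i)=\cE(C)^2<0$.

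For (2), suppose in addition $\sum_{i=1}^h h_{q_i}(\cF,C)=1$. Since each $h_{q_i}(\cF,C)\ge 0$ is an integer and vanishes exactly when $q_i$ is not a singularity (Lemma \ref{lem:hp(F,C)}), exactly one $q_i$, say $q_1$, is an honest singularity with $h_{q_1}(\cF,C)=1$, and $h=1$ (the other $q_i$ would have $h_{q_i}=0$, i.e.\ would not be singularities, so they are not on the list). By (1), ${\rm CS}(\cF,C,q_1)<0$. Now apply Corollary \ref{coro:CS<0hp(F,C)=1}: $h_{q_1}(\cF,C)=1$ and ${\rm CS}(\cF,C,q_1)<0$ force $q_1$ to be a smooth point of $C$. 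Then Lemma \ref{lem:hp(F,C)}(2) applies: $h_{q_1}(\cF,C)=1$ with $C$ unibranch at $q_1$ means either $q_1$ has a non-zero eigenvalue along $C$ or $q_1$ is a saddle-node with strong separatrix $C$; but the saddle-node case gives ${\rm CS}(\cF,C,q_1)=0$, contradicting ${\rm CS}<0$, so $q_1$ is non-degenerate with an eigenvalue $\lambda$ along $C$, and ${\rm CS}(\cF,C,q_1)=1/\lambda<0$ forces $\lambda<0$. Since $q_1$ is smooth on $C$ and it is the only singularity on $C$ outside the nodes of $\cE(C)$, $C$ is smooth.

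**Main obstacle.** The delicate point is the bookkeeping in the first step: verifying that the chain contributions to the Camacho–Sad sum for $C^2$ are exactly absorbed into $\cE(C)^2$. This requires tracking, for each chain $\Theta$ meeting $C$, which component $\Gamma_k$ the curve $C$ attaches to, rewriting the local ${\rm CS}$ values at the nodes $\Gamma_j\cap\Gamma_{j+1}$ using Lemma \ref{lem:CS=-uk+1uk}, and confirming these telescope against the coefficients $\beta_i$ of $E(C,\Theta)=\sum\beta_i\Gamma_i$ determined by $E(C,\Theta)\cdot\Gamma_i=-C\cdot\Gamma_i$. Once this combinatorial identity is in hand, (1) and (2) are immediate from negative-definiteness and the earlier corollaries.
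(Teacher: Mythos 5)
Your approach is essentially the paper's: establish $C\cdot E(C,\Theta)=-{\rm CS}(\cF,C,p)$ at the point $p$ where $C$ meets each chain $\Theta$ (via Lemma \ref{lem:CS=-uk+1uk} and \eqref{eq:gamma_iECA}), then invoke Camacho--Sad; parts (1) and (2) are argued exactly as in the paper. One sign slip: from $E(C,\Theta)\cdot\Gamma_i=-C\cdot\Gamma_i$ one gets $E(C,\Theta)^2=-C\cdot E(C,\Theta)$, hence $E(C,\Theta)^2+2\,C\cdot E(C,\Theta)=+C\cdot E(C,\Theta)$, not $-C\cdot E(C,\Theta)$; it is the value $+C\cdot E(C,\Theta)=\mu_r/n$ that equals $-{\rm CS}(\cF,C,p)$, so your final cancellation claim is correct but the intermediate identity as written would make the check fail. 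The paper avoids this expansion altogether by computing $\cE(C)^2=\cE(C)\cdot C$ directly from $\cE(C)\cdot\Gamma=0$ for every chain component $\Gamma$; also note the internal nodes $\Gamma_k\cap\Gamma_{k+1}$ do not lie on $C$, so they never enter the Camacho--Sad sum for $C$.
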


\begin{proof} 
Let $\Theta=\Gamma_1+\cdots+\Gamma_r$ be a maximal $(D,\cF)$-chain intersecting $C$ at the point $p=C\cap\Gamma_r$.
By Lemma~\ref{lem:CS=-uk+1uk}, we have
\[
{\rm CS}(\cF,C,p)=\frac{1}{{\rm CS}(\cF,\Gamma_r,p)}=-\frac{\mu_r}{n}.
\]
On the other hand, by \eqref{eq:gamma_iECA}, 
\[
E(C,\Theta)\cdot C=\gamma_r=\frac{\mu_r}{n},
\]
where $E(C,\Theta)=\sum_{i=1}^r\gamma_i\Gamma_i$. Therefore,
\begin{equation}\label{equ:E(C,T)=-CS}
E(C,\Theta)\cdot C=-{\rm CS}(\cF,C,p).
\end{equation}

Now let $\Theta_1,\dots,\Theta_k$ be the maximal $(D,\cF)$-chains intersecting $C$, and set $p_i=\Theta_i\cap C$. 
By the Camacho--Sad formula (cf.~\eqref{equ:CS}) and \eqref{equ:E(C,T)=-CS}, we obtain
\[
\mathcal{E}(C)^2
= C\cdot \mathcal{E}(C)
= C^2 - \sum_{i=1}^k {\rm CS}(\cF,C,p_i)
= \sum_{i=1}^h {\rm CS}(\cF,C,q_i).
\]

For the second part, (1) follows immediately from the above identity, and (2) follows from (1) together with Proposition~\ref{prop:hp(F,C)}.
\end{proof} 

\section{Zariski decomposition of \texorpdfstring{$K_{\cF}+D$}{KF+D}}

\begin{definition}\label{def:D(Delta,epsilon)}
We define the $\bQ$-divisor $D_{\Delta,\epsilon}$ as:
\begin{equation}\label{eq:defD}
D_{\Delta,\epsilon}:=\Delta+\epsilon K_X,
\end{equation}
where  $\epsilon\in[0,1]\cap\bQ$, and  $\Delta = \sum_{i=1}^l a_i C_i$ is an effective $\mathbb{Q}$-divisor such that
\begin{itemize}
\item    $a_i \in [0,1]$ if $C_i$ is not $\cF$-invariant, and 
\item $\Delta\cdot\Gamma\geq0,$ for any irreducible $\cF$-invariant curve $\Gamma$.
\end{itemize}
\end{definition} 
In particular, for any irreducible $\cF$-invariant curve $\Gamma$ with $\Gamma^2<0$, we have
$D_{\Delta,\epsilon}\cdot \Gamma \ge -\epsilon$.
Moreover, if $D_{\Delta,\epsilon}\cdot \Gamma<0$, then $\epsilon>0$ and $\Gamma$ is a $(-1)$-curve.

\begin{theorem}\label{mainthm:<1/4}
Let $\cF$ be a foliation on a smooth projective surface $X$, and let $D=D_{\Delta,\epsilon}$ as in Definition~\ref{def:D(Delta,epsilon)} with $\epsilon\in[0,\frac{1}{4}]$. Assume that there are no $(-1)$-curves of the following types:
\begin{itemize}
\item[(1)] $E$ is a non-$\cF$-invariant $(-1)$-curve with $K_{\cF}\cdot E=1$ and $\epsilon K_X\cdot E<0$;
\item[(2)] $\Gamma$ is an $\cF$-invariant $(-1)$-curve such that either
\begin{itemize}
    \item[(2a)] $D\cdot \Gamma \ge 0$, $K_{\cF}\cdot \Gamma \le 0$, and $K_{\cF}\cdot \Gamma + 2 D\cdot \Gamma \le 1$, or
    \item[(2b)] $-\frac{1}{4} \le D\cdot \Gamma <0$ and $K_{\cF}\cdot \Gamma \le 0$.
\end{itemize}
\end{itemize}

If $K_{\cF}+D$ is pseudo-effective with Zariski decomposition $K_{\cF}+D=P(D)+N(D)$, then
\[
N(D)=\sum_{i=1}^s M(D,\Theta_i),
\]
where $\{\Theta_1,\dots,\Theta_s\}$ is the set of all maximal $(D,\cF)$-chains on $X$. In particular, $\lfloor N(D)\rfloor=0$.
\end{theorem}

\begin{remark}
\begin{itemize}
\item[(1)] The range of $\epsilon$ in Theorem~\ref{mainthm:<1/4} controls the singularities of $\cF$: one may assume $\cF$ is canonical when $\epsilon=0$, and log canonical when $0<\epsilon\le \tfrac{1}{4}$.

\item[(2)] In \cite{LLTX}, the authors consider the case $\epsilon = 0$, with $\Delta_{\rm red}$ non-$\mathcal{F}$-invariant and $\tfrac{1}{2} \le \mathrm{Coeff}(\Delta) < 1$.

\item[(3)] With minor modifications, our method also applies when $D$ is an effective $\bQ$-divisor with $D \wedge \mathbf{B}_-(K_{\cF}+D)=0$ and $\cF$ is canonical on a smooth surface without $\cF$-exceptional curves, as in \cite[Sec.~5]{JhL}.
\end{itemize}
\end{remark}

By relaxing the assumptions on the singularities of $\cF$, we obtain the following generalization of \cite[Theorem~1.2]{LW24}, which treats the case $\Delta=0$ and $\epsilon=1$.

\begin{theorem}\label{mainthm:1/4<e<1}
Let $\cF$ be a foliation on a smooth projective surface $X$, and let $D=D_{\Delta,\epsilon}$ as in Definition~\ref{def:D(Delta,epsilon)} with $\epsilon\in[0,1]$. 
Assume that there are no $(-1)$-curves of the following types:
\begin{itemize}
\item[(1)] $E$ is a non-$\cF$-invariant $(-1)$-curve with $K_{\cF}\cdot E=1$ and $\epsilon K_X\cdot E<0$;
\item[(2)] $\Gamma$ is an $\cF$-invariant $(-1)$-curve such that either
\begin{itemize}
    \item[(2a)] $D\cdot \Gamma \ge 0$, $K_{\cF}\cdot \Gamma \le 0$, and $K_{\cF}\cdot \Gamma + 2 D\cdot \Gamma \le 1$, or
    \item[(2b)] $-1 \le D\cdot \Gamma <0$ and $K_{\cF}\cdot \Gamma \le 1$.
\end{itemize}
\end{itemize}

If $K_{\cF}+D$ is pseudo-effective with Zariski decomposition $K_{\cF}+D=P(D)+N(D)$, then
\[
N(D)=\sum_{i=1}^s M(D,\Theta_i),
\]
where $\{\Theta_1,\dots,\Theta_s\}$ is the set of all maximal $(D,\cF)$-chains on $X$. In particular, $\lfloor N(D)\rfloor=0$.
\end{theorem}
\begin{proof}
The proof is similar to that of Theorem~\ref{mainthm:<1/4}.
\end{proof}

\subsection{Proof of Theorem \ref{mainthm:<1/4}}

Let $\Theta_1,\dots,\Theta_s$ be all maximal $(D,\cF)$-chains. 
By the separatrix theorem (cf.~Theorem~\ref{thm:separatrix}), these chains are pairwise disjoint.

Set
\[
V := \sum_{i=1}^s M(D,\Theta_i), \qquad W := K_{\cF}+D-V.
\]
By definition, $W\cdot \Gamma = 0$ for any component $\Gamma$ of $V$.

\begin{lemma}\label{lem:N(D)geqV}
We have $N(D)\geq V$. 
In particular, $W$ is pseudo-effective and its Zariski decomposition has negative part $N(D)-V$.
\end{lemma}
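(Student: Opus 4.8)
The plan is to prove $N(D)\geq V$ by exploiting the defining property of the Zariski decomposition: $N(D)$ is the unique effective $\bQ$-divisor such that $P(D)=K_{\cF}+D-N(D)$ is nef and $P(D)\cdot\Gamma=0$ for every component $\Gamma$ of $N(D)$, and moreover the intersection matrix of the components of $N(D)$ is negative-definite. Since each $V=\sum_i M(D,\Theta_i)$ is supported on the disjoint union of the $(D,\cF)$-chains $\Theta_i$, whose intersection matrices are negative-definite, the strategy is to show that $V$ is ``absorbed'' into $N(D)$ component by component, using the negativity lemma (Lemma \ref{lem:neg-def}(2)) applied to the divisor $N(D)-V$ restricted to the support of $V$.

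First I would fix a maximal $(D,\cF)$-chain $\Theta=\Theta_i=\Gamma_1+\cdots+\Gamma_r$ and examine the intersection numbers $(N(D)-V)\cdot\Gamma_j$ for each component $\Gamma_j$ of $\Theta$. Writing $N(D)-V = (K_{\cF}+D-P(D)) - V = W - P(D)$ where $W=K_{\cF}+D-V$, and recalling that $W\cdot\Gamma_j=0$ for every component $\Gamma_j$ of $V$ (this is immediate from the definition of $M(D,\Theta_i)$, as noted just before the lemma), we get $(N(D)-V)\cdot\Gamma_j = -P(D)\cdot\Gamma_j \leq 0$ since $P(D)$ is nef and $\Gamma_j$ is effective. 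Now the components of $V$ span a negative-definite sublattice, so Lemma \ref{lem:neg-def}(2) — with $E := N(D)$ (effective) and $D := V$ in the notation of that lemma, applied to the sublattice generated by the $\Gamma_j$'s — would yield $N(D)-V\geq 0$ on that support, hence $N(D)\geq V$ as divisors.

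There is a subtlety I would need to address: Lemma \ref{lem:neg-def}(2) requires $(E-D)\cdot C_j\leq 0$ for all generators $C_j$ of the negative-definite system, but it concludes $E-D\geq 0$ only as a statement about the divisor $E-D$ supported on those $C_j$; I must make sure that $N(D)-V$, while a priori supported on $\mathrm{Supp}\,N(D)\cup\mathrm{Supp}\,V$, has the right sign on all of $\mathrm{Supp}\,V$. The point is that $V$ is supported precisely on $\bigsqcup\Theta_i$, and on each such chain the argument above shows $(N(D)-V)\cdot\Gamma_j\leq 0$; writing $N(D)-V = N' - N''$ with $N',N''\geq 0$ having no common components and $N''$ supported on $\mathrm{Supp}\,V$, the negativity lemma forces $N''=0$. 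The ``in particular'' clause then follows formally: $W = P(D) + (N(D)-V)$ exhibits $W$ as pseudo-effective with nef part $P(D)$ and negative part $N(D)-V\geq 0$, and since $P(D)\cdot\Gamma=0$ for every component of $N(D)\supseteq N(D)-V$, and the components of $N(D)-V$ form a negative-definite system (being a subsystem of that of $N(D)$), this is genuinely the Zariski decomposition of $W$ by uniqueness.

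The main obstacle I anticipate is not the negativity-lemma bookkeeping but rather verifying cleanly that every component of $V$ actually appears in $N(D)$, or at least that the above sign computation is valid — i.e.\ that $P(D)$ being nef really does give $P(D)\cdot\Gamma_j\geq 0$ with the components $\Gamma_j$ being the \emph{correct} curves; here one must use that the $\Gamma_j$ are irreducible $\cF$-invariant curves with $\Gamma_j^2\leq -2$ (components of Hirzebruch--Jung strings), so they are genuinely effective divisors and the nefness inequality applies. A secondary point is ensuring the chains $\Theta_1,\dots,\Theta_s$ really are pairwise disjoint, so that the intersection matrix of $\mathrm{Supp}\,V$ is block-diagonal and hence negative-definite — but this is exactly the consequence of the Separatrix Theorem (Theorem \ref{thm:separatrix}) recorded at the start of this subsection, so I may invoke it directly.
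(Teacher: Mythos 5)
Your proposal is correct and follows essentially the same route as the paper: compute $(N(D)-V)\cdot\Gamma = W\cdot\Gamma - P(D)\cdot\Gamma = -P(D)\cdot\Gamma \le 0$ for every component $\Gamma$ of $V$ using nefness of $P(D)$ and the defining property $W\cdot\Gamma=0$, then apply Lemma \ref{lem:neg-def}(2) on the negative-definite support of $V$ (disjointness of the chains coming from the separatrix theorem). The extra bookkeeping you add about the sign of $N(D)-V$ off the support of $V$ and about the ``in particular'' clause is harmless and consistent with the paper's (more terse) argument.
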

\begin{proof}
For any component $\Gamma$ of $V$, 
we have $(N(D)-V)\cdot \Gamma \le W\cdot \Gamma = 0$.
Since the intersection matrix of components of $V$ is negative definite, Lemma~\ref{lem:neg-def}(2) implies $N(D)-V \ge 0$.
The remaining assertion follows immediately.
\end{proof}

\begin{lemma}\label{lem:Dtang-index}
If $C$ is an irreducible non-$\cF$-invariant curve with $C^2<0$, then
\[
(K_{\cF}+D)\cdot C + \theta(C) C^2 \ge 0,
\]
where $\theta(C)$ is defined in Definition~\ref{def:DFindexofC}. 
\end{lemma}

\begin{proof}
Suppose $\epsilon=0$. Then $D=\Delta=\sum_{i=1}^l a_i C_i$. So
\[
(K_{\cF}+D)\cdot C + \alpha_C C^2
\ge K_{\cF}\cdot C + C^2 \ge 0,
\]
where 
\[
\alpha_C :=
\begin{cases}
1-a_i, & \text{if } C=C_i \text{ for some } i,\\
1, & \text{otherwise}.
\end{cases}
\]

Suppose $\epsilon>0$. If $C$ is not a $(-1)$-curve, then $K_X\cdot C \ge 0$, which implies
\[
(K_{\cF}+D)\cdot C + \alpha_C C^2
\ge (K_{\cF}+\Delta)\cdot C + \alpha_C C^2 \ge 0.
\]
If $C$ is a $(-1)$-curve, then $C^2=-1$ and $K_X\cdot C=-1$, which implies $K_{\cF}\cdot C \ge 2$ by condition~(1). So
\[
(K_{\cF}+D)\cdot C + \alpha_C C^2
\ge K_{\cF}\cdot C + C^2 + \epsilon K_X\cdot C
\ge 1 - \epsilon \ge 0.
\]

Finally, since $C^2<0$ and $\alpha_C \ge \theta(C)$ (by Proposition~\ref{prop:FchainisDF} and Definition~\ref{def:DFindexofC}), we conclude that
$(K_{\cF}+D)\cdot C + \theta(C) C^2 \ge 0$.
\end{proof}

\begin{corollary}\label{cor:(KF+D)C>0-C-nonFinv}
Under the assumption and notation of Lemma \ref{lem:Dtang-index}, 
if $\lfloor\Delta\rfloor=0$,
then $(K_{\cF}+D)\cdot C>0$.
\end{corollary}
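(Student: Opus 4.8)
The plan is to upgrade the inequality of Lemma~\ref{lem:Dtang-index} to a strict one, the new input being the hypothesis $\lfloor\Delta\rfloor=0$. The key observation is that the proof of Lemma~\ref{lem:Dtang-index} in fact produces the sharper estimate
\[
(K_{\cF}+D)\cdot C+\alpha_C\cdot C^2\ \geq\ 0,
\]
where $\alpha_C:=1-a_i$ if $C=C_i$ is a component of $\Delta$ and $\alpha_C:=1$ otherwise; the index $\theta(C)$ enters there only at the very end via $\theta(C)\leq\alpha_C$. So the first step is to re-read that proof and record this intermediate bound, keeping track of its three cases: $\epsilon=0$; $\epsilon>0$ with $C$ not a $(-1)$-curve (so that $K_XC\geq0$ contributes nonnegatively); and $\epsilon>0$ with $C$ a $(-1)$-curve, where condition (1) of Theorem~\ref{mainthm:<1/4} forces $K_{\cF}C\geq2$.

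The second step is purely arithmetic. Since $\lfloor\Delta\rfloor=0$ we have $a_i<1$ for every component of $\Delta$, hence $\alpha_C>0$ in all cases; and since $C$ lies on a smooth projective surface with $C^2<0$, in fact $C^2\leq-1$. Combining these with the displayed inequality gives
\[
(K_{\cF}+D)\cdot C\ \geq\ -\alpha_C\cdot C^2\ =\ \alpha_C\,|C^2|\ \geq\ \alpha_C\ >\ 0,
\]
which is exactly the assertion. Equivalently, one can argue through $\theta(C)$ directly: if $\theta(C)>0$ then the estimate $(K_{\cF}+D)C\geq-\theta(C)C^2$ of Lemma~\ref{lem:Dtang-index} is already strictly positive, and if $\theta(C)=0$ one reruns the same case analysis and notes that the only term that could possibly vanish, $\alpha_C|C^2|$, is in fact positive once $\lfloor\Delta\rfloor=0$.

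I do not expect a genuine obstacle here: the substantive content is already inside Lemma~\ref{lem:Dtang-index}, and the corollary is a bookkeeping refinement of its proof. The only points needing a little care are to phrase the intermediate inequality with $\alpha_C$ rather than merely with $\theta(C)$, and to remember to invoke the exclusion of the bad non-$\cF$-invariant $(-1)$-curve from condition (1) of Theorem~\ref{mainthm:<1/4} in the subcase $\epsilon>0$; both are already in place.
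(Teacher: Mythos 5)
Your proposal is correct and follows essentially the same route as the paper: the paper's own proof likewise extracts from the proof of Lemma \ref{lem:Dtang-index} the intermediate bound $(K_{\cF}+D)C+\alpha_C\,C^2\geq 0$, observes that $\lfloor\Delta\rfloor=0$ forces $\alpha_C>0$, and concludes $(K_{\cF}+D)C\geq-\alpha_C C^2>0$ from $C^2<0$.
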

\begin{proof}
Since $\lfloor \Delta \rfloor = 0$, we have $\alpha_C > 0$.
By the proof of Lemma~\ref{lem:Dtang-index}, we obtain
$(K_{\cF}+D)\cdot C \ge -\alpha_C C^2 > 0$.
\end{proof}

\begin{lemma}\label{lem:forNON-FinvWC>=0}
For any irreducible non-$\cF$-invariant curve $C$, we have $W\cdot C \ge 0$.
\end{lemma}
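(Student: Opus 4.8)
The plan is to prove $WC\ge0$ by a case split according to whether $C$ is a component of the negative part $N(D)$ of $K_{\cF}+D$. Suppose first that $C$ is \emph{not} a component of $N(D)$. By Lemma \ref{lem:N(D)geqV}, $W=P(D)+\bigl(N(D)-V\bigr)$ is the Zariski decomposition of $W$, where $V=\sum_{i=1}^s M(D,\Theta_i)$, so $WC=P(D)\cdot C+\bigl(N(D)-V\bigr)\cdot C$. The first term is $\ge0$ since $P(D)$ is nef; the second is $\ge0$ because $N(D)-V\ge0$ is effective and $C$ — lying outside ${\rm Supp}\,N(D)\supseteq{\rm Supp}\,(N(D)-V)$ — is not one of its components. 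Hence $WC\ge0$ in this case.

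Now suppose $C$ \emph{is} a component of $N(D)$. Since the support of the negative part of a Zariski decomposition is negative definite, we have $C^2<0$. Moreover every component of $V$ lies in ${\rm Supp}\,N(D)$: indeed $N(D)\ge V\ge0$, and every coefficient $\gamma_i$ of each $M(D,\Theta_i)$ is positive by Proposition \ref{prop:DF-subchain}(1), so ${\rm Supp}\,V$ is exactly the union of the chains $\Theta_i$. Consequently every component of $\cE(C)$ — namely $C$ together with the components of the maximal $(D,\cF)$-chains meeting $C$ — lies in the negative-definite configuration ${\rm Supp}\,N(D)$, and since $\cE(C)\ne0$ this forces $\cE(C)^2<0$. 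Next I reduce $WC$ to an intersection with $\cE(C)$: the chains disjoint from $C$ contribute nothing, as $M(D,\Theta_j)$ is supported on $\Theta_j$, so $WC=W_C\cdot C$ with $W_C$ as in \eqref{eq:defcE(C)}; and because $W_C\cdot\Gamma=0$ for every component $\Gamma$ of a chain meeting $C$, while $\cE(C)$ differs from $C$ only by such components, we get $WC=W_C\cdot\cE(C)$. Since $C$ is non-$\cF$-invariant with $C^2<0$, Lemma \ref{lem:Dtang-index} gives $(K_{\cF}+D)C+\theta(C)\,C^2\ge0$, so Lemma \ref{lem:(W+cC)cC>=0} applies with $x=\theta(C)\ge0$ and yields $\bigl(W_C+\theta(C)\cE(C)\bigr)\cE(C)\ge0$, i.e. $W_C\cE(C)\ge-\theta(C)\,\cE(C)^2\ge0$. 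Therefore $WC=W_C\cE(C)\ge0$.

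The main obstacle is the second case: a priori the negative part of $W$ could acquire a non-$\cF$-invariant component $C$ (ruling this out is exactly what will eventually give $N(D)=V$), and the key observation is that $C$ and all $(D,\cF)$-chains attached to it sit inside the single negative-definite configuration ${\rm Supp}\,N(D)$, so that $\cE(C)^2<0$ and the error term $-\theta(C)\cE(C)^2$ produced by Lemma \ref{lem:(W+cC)cC>=0} has the correct sign. The hypotheses on $\epsilon$ and on the $(-1)$-curves enter only indirectly here, through Lemma \ref{lem:Dtang-index}, to make $x=\theta(C)$ an admissible choice in that lemma.
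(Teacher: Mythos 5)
Your proof is correct and follows essentially the same route as the paper's: both reduce $WC$ to $W_C\cdot\cE(C)$ and conclude via Lemma \ref{lem:Dtang-index} and Lemma \ref{lem:(W+cC)cC>=0} with $x=\theta(C)$, using $\cE(C)^2<0$ to control the sign of the error term. The only difference is presentational — you argue directly by splitting on whether $C$ lies in ${\rm Supp}\,N(D)$, whereas the paper argues by contradiction from $WC<0$ and the pseudo-effectivity of $W$; your version in fact makes explicit the justification of $\cE(C)^2<0$ that the paper leaves implicit.
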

\begin{proof}
Suppose $W\cdot C<0$. Then $W\cdot \cE(C)=W\cdot C<0$, where $\cE(C)$ is defined in \eqref{eq:defcE(C)}.

Since $W$ is pseudo-effective, it follows that $C$ is contained in the negative part $N_W$ of $W$.  
By Lemma~\ref{lem:N(D)geqV}, we have $N_W = N(D)-V$, so $C$ is a component of $N(D)$.  
Moreover, all other components of $\cE(C)$ lie in $V$, and hence ${\rm Supp}\,\cE(C)\subseteq {\rm Supp}\,N(D)$, whose intersection matrix is negative definite.  
Thus $\cE(C)^2<0$.

By Lemmas~\ref{lem:(W+cC)cC>=0} and~\ref{lem:Dtang-index}, we have
\[
\big(W+\theta(C)\cE(C)\big)\cdot \cE(C)\ge 0.
\]
Since $\theta(C)\ge 0$, this implies $W\cdot \cE(C)\ge 0$, a contradiction.  
Hence $W\cdot C\ge 0$.
\end{proof}

\begin{proof}[Proof of Theorem \ref{mainthm:<1/4}] 
By the definition of the Zariski decomposition, it suffices to show that $W$ is nef and that $W\cdot \Gamma=0$ for any component $\Gamma$ of $V$. 
The latter follows from the definition of  $(D,\cF)$-chains, so it remains to prove $W\cdot C\ge0$ for every irreducible curve $C$.
If $C$ is not $\cF$-invariant, then $W\cdot C\geq0$ by Lemma~\ref{lem:forNON-FinvWC>=0}.
Hence we may assume that $C$ is $\cF$-invariant.
    
Suppose $C$ is an irreducible $\cF$-invariant curve such that $W\cdot C<0$. 
Then $C$ lies in $N(D)$ but is not contained in $V$.
Hence the intersection matrix of the components of $C+V$ is negative definite, and in particular $C^2<0$.

Assume that $C$ meets the maximal $(D,\cF)$-chains $\Theta_1,\dots,\Theta_k$ transversely at $p_1,\dots,p_k$, and contains $h$ other singularities $q_1,\dots,q_h$ of $\cF$. 
Write $M(D,\Theta_i)=\sum_{j=1}^{r(i)}\gamma_{ij}\Gamma_{ij}$. 
By Proposition~\ref{prop:DF-subchain},  $\gamma_{i,r(i)}\leq \frac{1}{n_i}$, where $n_i=[e_{i1},\cdots,e_{i,r(i)}]$ with $e_{ij}=-\Gamma_{ij}^2$.
Reordering if necessary, assume $2\leq n_1\leq n_2\leq\cdots\leq n_k$.
Then
\begin{equation}\label{ineq:VC}
    V\cdot C=\sum_{i=1}^k\gamma_{i,r(i)}\leq\sum_{i=1}^k\frac{1}{n_i}\leq\frac{k}{2}.
\end{equation}
By Lemmas~\ref{lem:C-LNformula},
    \begin{equation}\label{eq:KFC}
    K_{\cF}\cdot C=2g(C)-2+k+h(\cF,C),
    \end{equation}
    where $h(\cF,C)=\sum_{i=1}^h h_{q_i}(\cF,C)$ and $h_{q_i}(\cF,C)\geq1$ is defined in \eqref{equ:defhp(F,C)}.
Hence,
\begin{equation}\label{ineq:WC}
    0>W\cdot C =K_{\cF}\cdot C-V\cdot C+D\cdot C\geq 2g(C)-2+ h (\cF,C)+\frac{k}{2}+D\cdot C. 
\end{equation}

\begin{itemize}
\item[\rm Claim.] We have $1 \le h(\cF,C) \le 2$ and $g(C)=0$. 
If $h(\cF,C)=1$, then $h=1$, $q_1$ is a non-degenerate reduced singularity of $\cF$, and $C\cong\bP^1$.

Since $D\cdot C\geq -\frac{1}{4}$, it follows from \eqref{ineq:WC} that $h(\cF,C)\leq 2$.  
If $h(\cF,C)=0$, then $h=0$, $C$ is smooth and $C+\Theta_1+\dots+\Theta_k$ forms a tree, which is excluded by the separatrix theorem (cf.~Theorem~\ref{thm:separatrix}).  
Thus $h(\cF,C)\in\{1,2\}$, and $g(C)=0$ by \eqref{ineq:WC}.  
The statement for $h(\cF,C)=1$ follows from Lemma~\ref{lem:CSqi<0}.
\end{itemize}
\medskip

\noindent{\bf Case 1.} $D\cdot C\geq0$. Then \eqref{ineq:WC} implies  that 
$h(\cF,C)=1$ and $k\leq 1$, so $C\cong\bP^1$ and $K_{\cF}\cdot C=k-1\in\{-1,0\}$. 
    
If $k=0$, then $K_{\cF}\cdot C=-1$ and $0\le D\cdot C<1$.  
Proposition~\ref{prop:FchainisDF} gives two possibilities: either $C$ is the first component of a $(D,\cF)$-chain (contradicting $C\not\subset V$), or $C$ is an $\cF$-invariant $(-1)$-curve with $K_{\cF}\cdot C=-1$ and $0\le D\cdot C<1$ (impossible by condition (2a)).

If $k=1$, then $K_{\cF}\cdot C=0$ and 
\[
0>W\cdot C = D\cdot C - M(D,\Theta_1)\cdot C.
\] 
Proposition~\ref{prop:DF-subchain}(2) gives two possibilities: either $\Theta_1+C$ is a $(D,\cF)$-chain (contradicting $C\not\subset V$),
or $C$ is an $\cF$-invariant $(-1)$-curve with $K_{\cF}\cdot C=0$ and $0\le D\cdot C < M(D,\Theta_1)\cdot C \le 1/2$ (impossible by condition (2a)).

\medskip

\noindent{\bf Case 2.} $-\frac{1}{4}\leq D\cdot C<0$.
Then $C$ is an $\cF$-invariant $(-1)$-curve.  
If $K_{\cF}\cdot C\le0$, this is impossible by condition~(2b).  
Assume $K_{\cF}\cdot C\ge1$, so $h(\cF,C)+k\ge3$. On the other hand, \eqref{ineq:WC} gives $h(\cF,C)+k/2<2-D\cdot C\le 9/4$.  
By the claim, $h(\cF,C)\in\{1,2\}$, so the only possibility is
\[
h(\cF,C)=1,\quad k=2,\quad K_{\cF}\cdot C=1 .
\]

Suppose, for contradiction, that this occurs. Then \eqref{ineq:VC} and \eqref{ineq:WC} imply
\[
\frac{1}{n_1}+\frac{1}{n_2}\geq V\cdot C>K_{\cF}\cdot C+D\cdot C\geq\frac{3}{4},
\]
so $(n_1,n_2)=(2,2)$ or $(2,3)$. 

If $(n_1,n_2)=(2,2)$, then $\Theta_i = \Gamma_{i1}$ with $\Gamma_{i1}^2=-2$ for $i=1,2$.  
Then the intersection matrix of $\Gamma_{11}+\Gamma_{21}+C$ is not negative definite, a contradiction.

If  $(n_1,n_2)=(2,3)$, there are two subcases:
\begin{itemize}
\item[\rm (A)] $\Theta_1=\Gamma_{11}$ with $\Gamma_{11}^2=-2$ and $\Theta_2=\Gamma_{21}+\Gamma_{22}$ with $\Gamma_{21}^2=\Gamma_{22}^2=-2$.
\item[\rm (B)] $\Theta_1=\Gamma_{11}$ with $\Gamma_{11}^2=-2$ and $\Theta_2=\Gamma_{21}$ with $\Gamma_{21}^2=-3$.
\end{itemize}
Case (A) is impossible as before.
For case (B), we have $\gamma_{1,r(1)}=\gamma_{11}\leq\frac{1}{2}$ and
$$\gamma_{2,r(2)}=\gamma_{21}=\frac{(K_{\cF}+D)\cdot \Gamma_{21}}{\Gamma_{21}^2}=\frac{1}{3}(1-D\cdot\Gamma_{21})\leq\frac{1}{3}(1-\epsilon),$$
so
$$V\cdot C=\gamma_{1,r(1)}+\gamma_{2,r(2)}\leq \frac{5}{6}-\frac{\epsilon}{3}.$$
Then
$$W\cdot C=K_{\cF}\cdot C+D\cdot C-V\cdot C\geq 1-\epsilon-(\frac{5}{6}-\frac{\epsilon}{3})=\frac{1-4\epsilon}{6}\geq0,$$
where we use the assumption $\epsilon\leq\frac{1}{4}$.
This leads to a contradiction.
\end{proof}

\subsection{Other Variants}
In Theorem \ref{mainthm:<1/4}, the singularities of $\cF$ are generally not canonical when $\epsilon > 0$.
However, if  $K_{\cF}+\Delta$ is big and $0<\epsilon\ll1$, then the singularities of $\cF$ can be canonical. 
More precisely, we have the following result:

\begin{theorem}\label{thm:eKXcanonical}
Let $(X,\cF)$ be a minimal foliated surface (cf.~Definition~\ref{def:minfoliation}).  
Let $D=D_{\Delta,\epsilon}=\Delta+\epsilon K_X$ as in Definition~\ref{def:D(Delta,epsilon)} with $\lfloor\Delta\rfloor=0$ and
$0\leq \epsilon \leq \tfrac{1}{3\cdot i(\Delta,\cF)}$.

If $K_{\cF} + \Delta$ is big, then $K_{\cF} + D$ is pseudo-effective and admits a Zariski decomposition
$K_{\cF} + D = P(D) + N(D)$,
where
\[
N(D)=\sum_{i=1}^s M(D,\Theta_i),
\]
and $\{\Theta_1, \dots, \Theta_s\}$ is the set of all maximal $(D,\cF)$-chains on $X$.

Moreover, if $0\leq\epsilon<\frac{1}{3\cdot i(\Delta,\cF)}$, then $K_{\cF}+D$ is big.
\end{theorem}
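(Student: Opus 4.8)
The plan is to reduce everything to the Zariski decomposition $K_\cF+\Delta=P(\Delta)+N(\Delta)$ of the big divisor $K_\cF+\Delta$ — which exists with $P(\Delta)$ nef and $P(\Delta)^2>0$, hence nef and big — to exploit the integrality of $A:=i(\Delta,\cF)P(\Delta)$, and then to mimic the proof of Theorem \ref{mainthm:<1/4} with sharper numerical bounds coming from the bound on $\epsilon$. For pseudo-effectivity, writing $m:=i(\Delta,\cF)$ and using $\epsilon\le\frac1{3m}$, one has the identity
\[
A+\epsilon m K_X=(1-3\epsilon m)\,A+\epsilon m\,(K_X+3A),\qquad 1-3\epsilon m\ge0,
\]
whose first summand is nef and whose second is effective since, by Kawamata–Viehweg vanishing applied to the nef and big divisor $3A$ together with Riemann–Roch, $h^0(K_X+3A)=\chi(K_X+3A)=\chi(\cO_X)+\tfrac32A\cdot(K_X+3A)>0$. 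Dividing by $m$ shows $P(\Delta)+\epsilon K_X$ is pseudo-effective, hence so is $K_\cF+D=P(\Delta)+N(\Delta)+\epsilon K_X$ because $N(\Delta)\ge0$. If moreover $\epsilon<\frac1{3m}$ then $1-3\epsilon m>0$, so $(1-3\epsilon m)A$ is nef and big; a big divisor plus a pseudo-effective one is big, so $K_\cF+D$ is big.

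Next I would show that the maximal $(D,\cF)$-chains coincide with the maximal $(\Delta,\cF)$-chains. Apply Theorem \ref{mainthm:<1/4} to the boundary $\Delta$ (its hypotheses hold vacuously here: minimality of $(X,\cF)$ rules out all $\cF$-invariant $(-1)$-curves $\Gamma$ with $K_\cF\Gamma\le0$, and $\Delta\cdot\Gamma\ge0$ on every invariant curve by Definition \ref{def:D(Delta,epsilon)}), getting $N(\Delta)=\sum_iM(\Delta,\Theta_i)$ over the maximal $(\Delta,\cF)$-chains $\Theta_i$. Since $\Delta$ is supported on non-$\cF$-invariant curves and $N(\Delta)$ on the invariant chain components, ${\rm Supp}\,\Delta\cap{\rm Supp}\,N(\Delta)=\varnothing$; as $mP(\Delta)=mK_\cF+m\Delta-mN(\Delta)$ is integral with $mK_\cF$ integral, both $m\Delta$ and $mN(\Delta)$ are integral, so every nonzero coefficient of $N(\Delta)$ is $\ge\frac1m$. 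For a maximal $(\Delta,\cF)$-chain $\Theta=\Gamma_1+\cdots+\Gamma_r$ with the standard data $(\mu_k)$, $n$, formula \eqref{eq:gamma_r} gives $1-\sum_k\mu_k\Delta\Gamma_k=n\gamma_r\ge n/m$, while $\sum_k\mu_kK_X\Gamma_k=\sum_k\mu_k(e_k-2)<n$ (the computation in the proof of Corollary \ref{coro:eKXF-chainisF-chain}); hence
\[
\sum_k\mu_kD\Gamma_k=\sum_k\mu_k\Delta\Gamma_k+\epsilon\sum_k\mu_k(e_k-2)<1-n\Big(\tfrac1m-\epsilon\Big)<1,
\]
so $\Theta$ is a $(D,\cF)$-chain by Proposition \ref{prop:FchainisDF}. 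Conversely every $(D,\cF)$-chain is a $(\Delta,\cF)$-chain because $D\Gamma\ge\Delta\Gamma$ on invariant curves, and comparing maximal chains on the two sides forces the families to agree, say $\{\Theta_1,\dots,\Theta_s\}$.

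For the Zariski decomposition, set $V:=\sum_{i=1}^sM(D,\Theta_i)$ and $W:=K_\cF+D-V$; by definition of $(D,\cF)$-chains $W\cdot\Gamma=0$ on ${\rm Supp}\,V$, which is negative definite, so it suffices to prove $W$ is nef. For non-$\cF$-invariant $C$ this follows as in Lemmas \ref{lem:Dtang-index}–\ref{lem:forNON-FinvWC>=0}, the one case not covered there being a non-$\cF$-invariant $(-1)$-curve $E$ with $K_\cF E=1$ (equivalently ${\rm tang}(\cF,E)=0$), which I would treat directly: using $(K_\cF+D)\cdot E=1+\Delta E-\epsilon$ with $\Delta E\ge0$, the explicit shape of $M(D,\Theta_i)$ on the first components of the chains met by $E$, and $\epsilon\le\frac13$, one checks $W\cdot E\ge0$. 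For $\cF$-invariant $C$ with $W\cdot C<0$, $C$ lies in $N(D)\setminus{\rm Supp}\,V$, so $C+V$ is negative definite, $C^2<0$, $C\cong\bP^1$ and $K_\cF C\ge-1$ (Corollary \ref{cor:KFC=-1}); one then reruns the case analysis of Theorem \ref{mainthm:<1/4}, now aided by (i) $C$ is not an invariant $(-1)$-curve with $K_\cF C\le0$ (minimality), (ii) $(K_\cF+\Delta)\cdot C\ge0$ whenever $C\notin{\rm Supp}\,N(\Delta)$ (explicit Zariski decomposition of $K_\cF+\Delta$), and (iii) the tail coefficient $\gamma_r$ of each $\Theta_i$ satisfies $\gamma_r\ge\frac1m$. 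The resulting inequalities all close up precisely under $\epsilon\le\frac1{3m}$, giving a contradiction. Hence $W$ is nef, $K_\cF+D=W+V$ is the Zariski decomposition, and $N(D)=V=\sum_iM(D,\Theta_i)$, with $\lfloor N(D)\rfloor=0$ by Proposition \ref{prop:DF-subchain}(1).

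The genuine difficulty is in the last step: enlarging the range of $\epsilon$ from $\frac14$ (as in Theorem \ref{mainthm:<1/4}) to $\frac1{3m}$ forces a re-examination, using the three sharpenings above, of every exceptional configuration of that proof — in particular the transverse $(-1)$-curves $E$ and the short invariant chains attached to an invariant curve $C$ with $K_\cF C=1$ — and one must verify that the Camacho–Sad and $M(D,\Theta_i)\cdot C$ bookkeeping still yields a contradiction. I expect this case analysis, rather than the pseudo-effectivity argument of the first paragraph, to be the main obstacle; the pseudo-effectivity part hinges only on the clean fact that $K_X+3A$ is effective for a nef and big integral divisor $A$.
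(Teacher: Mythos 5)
Your overall strategy (pass through the Zariski decomposition of $K_{\cF}+\Delta$ and exploit the integrality of $A=i(\Delta,\cF)\cdot P(\Delta)$) is the right one, and your observation that the maximal $(D,\cF)$-chains coincide with the maximal $(\Delta,\cF)$-chains is correct and even sharper than what the paper uses. But there are two genuine gaps. First, your pseudo-effectivity argument rests on the claim $h^0(K_X+3A)=\chi(\cO_X)+\tfrac32A\cdot(K_X+3A)>0$, which is not justified: $\chi(\cO_X)$ can be nonpositive (e.g.\ on surfaces birationally ruled over a curve of genus $\geq 2$) and nothing in your argument bounds $A\cdot(K_X+3A)$ from below, so effectivity of $K_X+3A$ does not follow from Riemann--Roch plus vanishing. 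The paper's key point here is different and indispensable: $3A+K_X$ is \emph{nef}. This is proved by combining the cone theorem (an extremal curve with $K_XC<0$ satisfies $-K_XC\leq 3$, so $(3A+K_X)C<0$ would force $AC<1$, hence $AC=0$ by integrality) with the structural fact, from Lemmas \ref{lem:FnoninvCsuchthatP(Delta)C=0} and \ref{lem:P(D)C=0-C-Finv}, that every irreducible curve $C$ with $P(\Delta)C=0$ satisfies $K_XC=2p_a(C)-2-C^2\geq0$. Nefness (hence pseudo-effectivity) of $P(\Delta)+\epsilon K_X$ for $\epsilon\leq\frac1{3\,i(\Delta,\cF)}$ then follows; you never establish this, and your route around it does not close.

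Second, for the nefness of $W=K_{\cF}+D-V$ you propose to ``rerun the case analysis of Theorem \ref{mainthm:<1/4}'' for $\cF$-invariant curves and to treat non-$\cF$-invariant $(-1)$-curves $E$ with ${\rm tang}(\cF,E)=0$ ad hoc; you acknowledge yourself that this is the main obstacle and you have not carried it out. This is not a harmless omission: hypotheses (1) and (2b) of Theorem \ref{mainthm:<1/4} are not available here, and its case analysis genuinely breaks for $\epsilon\in(\frac14,\frac13]$ (which occurs when $i(\Delta,\cF)=1$; subcase $(n_1,n_2)=(2,3)$ there only yields $WC\geq\frac{1-4\epsilon}{6}$). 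The paper sidesteps the entire case analysis by writing $W=(P(\Delta)+\epsilon K_X)+(N(\Delta)-V)$: the first summand is nef by the claim above and the second is effective, so $WC\geq0$ for every curve $C$ not contained in ${\rm Supp}\,N(\Delta)$ --- which disposes at once of all non-invariant curves and all invariant curves outside the chains --- and the only remaining curves are components of $N(\Delta)$ outside $V$, handled by the elementary bookkeeping of Propositions \ref{prop:FchainisDF} and \ref{prop:DF-subchain}. (Your chain identification in fact shows ${\rm Supp}\,V={\rm Supp}\,N(\Delta)$, so combined with the nefness claim the proof would close immediately; as written, both the nefness and the invariant-curve analysis are missing.) Your bigness argument for $\epsilon<\frac1{3\,i(\Delta,\cF)}$ agrees with the paper's.
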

\begin{proof}
Let
\[
W=K_{\cF}+D-V, \qquad V=\sum_{i=1}^s M(D,\Theta_i).
\]
Recall that $K_{\cF}+\Delta=P(\Delta)+N(\Delta)$.
By the definitions of $(\Delta,\cF)$-chains and $(D,\cF)$-chains and Lemma~\ref{lem:neg-def},  $N(\Delta)-V\ge 0$.
Thus we can write
\[
W=P(\Delta)+\epsilon K_X+\big(N(\Delta)-V\big).
\]

\begin{itemize}
\item[\rm Claim.] $3i(\Delta,\cF) P(\Delta)+K_X$ is nef. Then $P(\Delta)+\epsilon K_X$ is nef for any
$0\leq\epsilon\leq\tfrac{1}{3\cdot i(\Delta,\cF)}$.
(In the case $\Delta=0$, this coincides with \cite[Lemma~6.5]{PS19}.)

\smallskip

Let $A=i(\Delta,\cF)P(\Delta)$, which is an integral divisor.
By the cone theorem for surfaces, it suffices to check non-negativity on all $K_X$-negative extremal curves $C$.
Let $C$ be such a curve. Then $K_X\cdot C\ge -3$ by \cite[Theorem~1.24]{KM98}.

 If $P(\Delta)\cdot C>0$, then $A\cdot C\ge 1$, and hence
$(3A+K_X)\cdot C \ge 0$.

 If $P(\Delta)\cdot C=0$, then $C$ is $\cF$-invariant by Lemma~\ref{lem:FnoninvCsuchthatP(Delta)C=0}(2) together with $\lfloor\Delta\rfloor=0$.
Applying Lemma~\ref{lem:P(D)C=0-C-Finv} to $\Delta$ (i.e.\ $\epsilon=0$), case (F) is excluded.
Hence $C$ is either a component of a $(\Delta,\cF)$-chain or falls into one of the cases (A)--(E), so that either $C^2\le -2$ or $p_a(C)=1$.
In either case, $K_X\cdot C = 2p_a(C)-2-C^2 \ge 0$,
and thus $(3A+K_X)\cdot C\ge 0$.
\end{itemize}

Let $C$ be an irreducible curve on $X$.
If $C$ is a component of $V$, then $W\cdot C=0$.
If $C$ is not a component of $N(\Delta)$, then
\[
W\cdot C=(P(\Delta)+\epsilon K_X)\cdot C+(N(\Delta)-V)\cdot C\ge 0.
\]

Next assume that $C$ is a component of $N(\Delta)$ but not contained in $V$.
Then $P(\Delta)\cdot C=0$ and there exists a maximal $(\Delta,\cF)$-chain $\Theta=\Gamma_1+\cdots+\Gamma_r$ such that
\begin{itemize}
\item[\rm(i)] $\Theta'=\Gamma_1+\cdots+\Gamma_t$ is a maximal $(D,\cF)$-chain, where $0\le t<r$ (with $t=0$ meaning $\Theta'=0$);
\item[\rm(ii)] $C=\Gamma_k$ for some $k\ge t+1$ (hence $C^2\le -2$ and $K_X\cdot C\ge 0$).
\end{itemize}
By Theorem~\ref{thm:separatrix}, $C$ is disjoint from all other $(\Delta,\cF)$-chains.

If $t=0$, then $\Theta'=0$, which implies $V\cdot C=0$ and $D\cdot\Gamma_1\ge 1$ by Proposition~\ref{prop:FchainisDF}.
\begin{itemize}
\item If $C=\Gamma_1$, then
$(N(\Delta)-V)\cdot C=N(\Delta)\cdot C=(K_{\cF}+\Delta)\cdot C=-1+\Delta\cdot C$.
Hence
\[
W\cdot C= \epsilon K_X\cdot C+(-1+\Delta\cdot C)=D\cdot C-1\ge 0.
\]

\item If $C=\Gamma_k$ for $k\ge 2$, then
$(N(\Delta)-V)\cdot C=N(\Delta)\cdot C=\Delta\cdot C\ge 0$,
and so $W\cdot C\ge 0$.
\end{itemize}

If $t\ge 1$ and $k\ge t+2$, then
$(N(\Delta)-V)\cdot C=N(\Delta)\cdot C=\Delta\cdot C\ge 0$,
so $W\cdot C\ge 0$.

If $t\ge 1$ and $k=t+1$, then $N(\Delta)\cdot C=\Delta\cdot C$ and 
\[
V\cdot C=M(D,\Theta')\cdot C\le D\cdot C,
\]
where the last inequality follows from Proposition~\ref{prop:DF-subchain}(2).
Thus,
\[
W\cdot C=\epsilon K_X\cdot C+N(\Delta)\cdot C-V\cdot C
=(\epsilon K_X+\Delta)\cdot C-V\cdot C
=D\cdot C-V\cdot C\ge 0.
\]

Therefore, $K_{\cF}+D$ is pseudo-effective with the positive part $P(D)=W$ and the first statement holds.
\medskip

Now assume that $0\le \epsilon<\frac{1}{3i}$, where $i:=i(\Delta,\cF)$.
Then
\begin{align*}
P(D)
&=P(\Delta)+\epsilon K_X+(N(\Delta)-N(D))\\
&=(1-3i\epsilon)P(\Delta)+\epsilon(3iP(\Delta)+K_X)+(N(\Delta)-N(D))\\
&=(\text{big and nef})+(\text{nef})+(\text{effective}).
\end{align*}
Hence $P(D)$ is big by \cite[Proposition 2.61]{KM98}.
\end{proof}

\begin{corollary}\label{cor:eKXcanonical}
Let $\cF$ be a canonical foliation on a smooth surface $X$ without $\cF$-invariant $(-1)$-curves $E$ such that $K_{\cF}\cdot E\le 0$.
Assume that $K_{\cF}$ is big with Zariski decomposition $K_{\cF}=P+N$, and let
$\epsilon\in\left(0,\tfrac{1}{3\cdot i(\cF)}\right)\cap \bQ$.
Then:
\begin{itemize}
\item[(1)] $K_{\cF}+\epsilon K_X$ is big and admits a Zariski decomposition
$K_{\cF}+\epsilon K_X = P(\epsilon)+N(\epsilon)$,
where
\[
N(\epsilon)=\sum_{i=1}^s M(\epsilon K_X,\Theta_i),
\]
and $\{\Theta_1,\dots,\Theta_s\}$ is the set of all maximal $\cF$-chains on $X$.

\item[(2)] The null locus ${\rm Null}(P(\epsilon))$ consists of the following two types of curves:
\begin{itemize}
\item[(i)] components of $\cF$-chains;
\item[(ii)] $(-2)$-curves $C$ with $P\cdot C=0$ that are not contained in any $\cF$-chain.
\end{itemize}

\item[(3)]
$(1-3\epsilon i(\cF))^2 P^2 \le P(\epsilon)^2 \le (1+\gamma)^2 P^2$,
where
\[
\gamma=\max\left\{\frac{2\epsilon P\cdot K_X}{P^2}+1,\,0\right\}.
\]
In particular, $P(\epsilon)^2$ is bounded in terms of $P^2$, $i(\cF)$, $\epsilon$, and $P\cdot K_X$.
\end{itemize}
\end{corollary}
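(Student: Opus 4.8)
The plan is to read off all three statements from Theorem~\ref{thm:eKXcanonical} applied with $\Delta=0$. Write $i:=i(\cF)$ and $K_{\cF}=P+N$ for the Zariski decomposition. Since $(X,\cF)$ is a minimal foliated surface (Definition~\ref{def:minfoliation}) and $i(0,\cF)=i(\cF)$, Theorem~\ref{thm:eKXcanonical} applied with $\Delta=0$ and $\epsilon\in[0,\tfrac{1}{3i})$ already gives that $K_{\cF}+\epsilon K_X$ is big with Zariski decomposition $P(\epsilon)+N(\epsilon)$, that $N(\epsilon)=\sum_j M(\epsilon K_X,\Theta_j)$ with the sum over all maximal $(\epsilon K_X,\cF)$-chains, and (from its proof) the identity
\[
P(\epsilon)=(1-3i\epsilon)P+\epsilon(3iP+K_X)+\big(N-N(\epsilon)\big)=P+\epsilon K_X+\big(N-N(\epsilon)\big),
\]
where $(1-3i\epsilon)P$ is big and nef, $3iP+K_X$ is nef, and $N-N(\epsilon)=\sum_j E(\epsilon K_X,\Theta_j)\ge 0$ is effective and supported on the $\cF$-chains. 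All three parts are extracted from this.

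For (1), the only remaining point is that maximal $(\epsilon K_X,\cF)$-chains coincide with maximal $\cF$-chains. By Theorem~\ref{mainthm:<1/4} with $\epsilon=\Delta=0$ we have $N=\sum_j M(\Theta_j)$ over the maximal $\cF$-chains. As $K_{\cF}$ is Weil and $iP$ is integral, $iN=iK_{\cF}-iP$ is integral, hence so is each $iM(\Theta_j)$; since the coefficient of the last component of $M(\Theta_j)$ equals $1/n_j$ (with $n_j=[e_{j1},\dots,e_{j,r(j)}]$), this forces $n_j\mid i$, so $n_j\le i(\cF)$. Because $\epsilon<\tfrac{1}{3i(\cF)}\le\tfrac{1}{n_j}$, Corollary~\ref{coro:eKXF-chainisF-chain} shows every maximal $\cF$-chain is an $(\epsilon K_X,\cF)$-chain, and since conversely every $(\epsilon K_X,\cF)$-chain is an $\cF$-chain, the two classes of maximal chains coincide. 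This proves (1).

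For (2), I would compute $P(\epsilon)C$ for an irreducible curve $C$ using the decomposition above. If $C$ is a component of some $\cF$-chain, it is a component of $N(\epsilon)$, so $P(\epsilon)C=0$: this is type (i). If $C$ is not a component of any $\cF$-chain, then $(N-N(\epsilon))C\ge 0$, so $P(\epsilon)C=0$ forces $PC=0$, $(3iP+K_X)C=0$ and $(N-N(\epsilon))C=0$; the first two give $K_XC=0$, and then $PC=0$ together with Lemmas~\ref{lem:FnoninvCsuchthatP(Delta)C=0} and~\ref{lem:P(D)C=0-C-Finv} (which yield $K_XC=2p_a(C)-2-C^2\ge 0$) and the negative-definiteness of ${\rm Null}(P)$ (so $C^2<0$) force $p_a(C)=0$, $C^2=-2$: this is type (ii). Conversely, type (i) curves always lie in ${\rm Null}(P(\epsilon))$, and for a type (ii) curve $C$ one has $PC=K_XC=0$, so $P(\epsilon)C=(N-N(\epsilon))C$; the remaining point is that this vanishes, i.e.\ that $C$ is disjoint from every $\cF$-chain having a component of self-intersection $\le-3$. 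This is the one step that is not formal, and I would deduce it from the structure theorem for ${\rm Null}(P)$ (Theorem~\ref{thm:Null}): an exceptional $(-2)$-curve with $PC=0$ lies in a connected component of ${\rm Null}(P)$ consisting solely of $(-2)$-curves, hence cannot meet the heavy end of an $\cF$-chain.

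For (3), the lower bound follows from $P(\epsilon)=(1-3i\epsilon)P+Q$ with $Q:=\epsilon(3iP+K_X)+(N-N(\epsilon))$ pseudo-effective and $P(\epsilon)$ nef: $P(\epsilon)^2\ge(1-3i\epsilon)(P(\epsilon)\cdot P)\ge(1-3i\epsilon)^2P^2$. For the upper bound, $(N-N(\epsilon))\cdot P=0$ (its support lies in ${\rm Null}(P)$), so $P(\epsilon)\cdot P=P^2+\epsilon PK_X$; the Hodge index theorem (Lemma~\ref{lem:Hodgeindex}, valid as $P^2>0$) gives $P(\epsilon)^2\le(P(\epsilon)\cdot P)^2/P^2=(1+\epsilon PK_X/P^2)^2P^2$, and since $3iP+K_X$ is nef we have $\epsilon PK_X/P^2\ge-3i\epsilon>-1$, so $1+\epsilon PK_X/P^2>0$; a case check on the sign of $PK_X$ then yields $1+\epsilon PK_X/P^2\le 1+\gamma$, hence $P(\epsilon)^2\le(1+\gamma)^2P^2$. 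The final boundedness assertion is then immediate. The main obstacle is precisely the reverse inclusion in (2): it is the only assertion that does not follow formally from the decomposition of $P(\epsilon)$, and it genuinely requires the classification of the $P$-null curves.
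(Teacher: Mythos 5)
Your proposal follows essentially the same route as the paper: parts (1) and (2) are read off from Theorem \ref{thm:eKXcanonical} and the decomposition $P(\epsilon)=P+\epsilon K_X+\bigl(N-N(\epsilon)\bigr)$, and (3) is a direct Hodge-index computation that is equivalent to the paper's argument via Lemma \ref{lem:HA} and volume monotonicity; your observation that $n_j\mid i(\cF)$ (so that Corollary \ref{coro:eKXF-chainisF-chain} applies to every maximal $\cF$-chain) supplies a detail the paper leaves implicit. The one imprecision is in the converse direction of (2): it is \emph{not} true that a $(-2)$-curve $C$ with $PC=0$ outside the $\cF$-chains lies in a connected component of ${\rm Null}(P)$ consisting solely of $(-2)$-curves --- components of types (2)--(4) in Theorem \ref{thm:Null} may contain curves of self-intersection $\leq -3$. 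What the classification actually gives (and what the paper invokes from McQuillan and Cascini--Floris) is that the only maximal $\cF$-chains contained in such a component are the two $(-2)$-curve arms of a dihedral configuration, or none at all; since $M(\Theta)-M(\epsilon K_X,\Theta)$ vanishes exactly when every component of $\Theta$ is a $(-2)$-curve, this weaker statement is precisely what is needed to conclude $\bigl(N-N(\epsilon)\bigr)C=0$. So your conclusion stands, but the intermediate paraphrase of the structure theorem should be corrected to refer only to the $\cF$-chains inside the component, not to the whole component.
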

\begin{proof}
(1) follows directly from Theorem~\ref{thm:eKXcanonical}, 
noting that any $\cF$-chain is also a $(\epsilon K_X,\cF)$-chain by Corollary~\ref{coro:eKXF-chainisF-chain}.

\medskip

\noindent(2) Let $C$ be an irreducible curve with $P(\epsilon)\cdot C=0$.
If $C$ is a component of an $\cF$-chain, there is nothing to prove.
Assume that $C$ is not contained in any $\cF$-chain.  

Recall that
$P(\epsilon)=P+\epsilon K_X+(N-N(\epsilon))$,
so
\begin{equation}\label{equ:P(e)C=0}
0 = P(\epsilon)\cdot C 
= P\cdot C + \epsilon K_X\cdot C + (N-N(\epsilon))\cdot C.
\end{equation}

If $P\cdot C>0$, then by the definition of $i(\cF)$,
$P\cdot C \ge \tfrac{1}{i(\cF)} > 3\epsilon$.
Since $K_X\cdot C \ge -1$ and $(N-N(\epsilon))\cdot C \ge 0$, we get
$P(\epsilon)\cdot C \ge 3\epsilon - \epsilon = 2\epsilon > 0$,
a contradiction. Hence $P\cdot C=0$.

By \cite[Theorem~1.III.3.2]{MMcQ08}, either $C^2 \le -2$ or $p_a(C)=1$, and in particular $K_X\cdot C \ge 0$. 
Since also $(N-N(\epsilon))\cdot C \ge 0$, all terms in the equality~\eqref{equ:P(e)C=0} must vanish. 
In particular, $C$ is a $(-2)$-curve such that $P\cdot C=0$.
\smallskip

Conversely, let $C$ be a $(-2)$-curve with $P\cdot C=0$ which is not contained in any $\cF$-chain.
If $C\cap N=\emptyset$, then clearly $P(\epsilon)\cdot C=(N-N(\epsilon))\cdot C=0$.

Assume $C\cap N\neq\emptyset$. By \cite[Theorem~1.III.3.2]{MMcQ08} or \cite[Theorem~2.16]{CF18}, 
there exist exactly two $\cF$-chains $\Theta_1$ and $\Theta_2$ intersecting $C$.  
Each $\Theta_i$ consists of a single irreducible $\cF$-invariant $(-2)$-curve $\Gamma_i$, and
$M(\Theta_i)=M(\epsilon K_X,\Theta_i)$ for $i=1,2$,
where $M(\Theta_i):=M(0,\Theta_i)$. Hence
\[
P(\epsilon)\cdot C
= (N-N(\epsilon))\cdot C
= \sum_{i=1}^2 \bigl(M(\Theta_i)-M(\epsilon K_X,\Theta_i)\bigr)\cdot C
=0.
\]

\noindent(3)
Set $D_1:=P$ and $D_2:=\epsilon K_X + N - N(\epsilon)$. 
Then $D_1$ and $D_1+D_2$ are big and nef, $D_1\cdot D_2 = \epsilon P\cdot K_X$, and $D_1^2 = P^2$. 
Moreover, 
\[
D_1 + \frac{1}{3\epsilon i(\cF)}D_2
= P+\frac{1}{3i(\cF)}K_X+\frac{1}{3\epsilon i(\cF)}(N-N(\epsilon))
\]
is pseudo-effective, and $\gamma D_1 - D_2$ is pseudo-effective by Lemma~\ref{lem:HA}. 
Therefore,
\begin{align*}
P(\epsilon)^2
&={\rm Vol}(D_1+D_2)
={\rm Vol}\!\left((1-3\epsilon i(\cF))D_1
+3\epsilon i(\cF)\left(D_1+\tfrac{1}{3\epsilon i(\cF)}D_2\right)\right)\\
&\ge {\rm Vol}\big((1-3\epsilon i(\cF))D_1\big)
=(1-3\epsilon i(\cF))^2P^2,\\
P(\epsilon)^2
&={\rm Vol}(D_1+D_2)
={\rm Vol}\big((1+\gamma)D_1-(\gamma D_1-D_2)\big)\\
&\le {\rm Vol}\big((1+\gamma)D_1\big)
=(1+\gamma)^2P^2.
\end{align*}
\end{proof}

\begin{lemma}\label{lem:HA}
  Let $D_1$ be a nef and big $\bQ$-divisor on a smooth projective surface $X$.
Let $D_2$ be another $\bR$-divisor such that $D_1+D_2$ is nef and big. Then the $\bR$-divisor $\beta D_1-D_2$ is pseudo-effective, where 
$$\beta=\frac{2D_1\cdot D_2}{D_1^2}+1.$$
\end{lemma}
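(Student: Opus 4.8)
The plan is to prove more than pseudo-effectivity: I will show that $\xi:=\beta D_1-D_2$ is \emph{big}, by locating it in the positive cone of $X$. Set $a:=D_1^2$, $b:=D_1D_2$, $c:=D_2^2$, so that $a>0$ and $\beta=1+2b/a$.

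First I would record two identities obtained by substituting this value of $\beta$:
\[
\xi\cdot D_1=\beta a-b=a+b=D_1\cdot(D_1+D_2),\qquad
\xi^2=\beta^2a-2\beta b+c=a+2b+c=(D_1+D_2)^2.
\]
Since $D_1+D_2$ is nef and big, the second identity gives $\xi^2=(D_1+D_2)^2>0$. For the first, I would write $D_1+D_2\equiv A+E$ with $A$ an ample $\bQ$-divisor and $E$ effective (possible because $D_1+D_2$ is big), so that $D_1\cdot(D_1+D_2)=D_1\cdot A+D_1\cdot E\ge D_1\cdot A$; and $D_1\cdot A>0$ because $D_1$ is nef (hence $D_1\cdot A\ge0$) while $(D_1\cdot A)^2\ge D_1^2A^2>0$ by Lemma \ref{lem:Hodgeindex}. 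Hence $\xi\cdot D_1>0$.

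Finally I would conclude with the Hodge index theorem. Because $\xi^2>0$, the class $\xi$ lies in one of the two connected components of $\{x\in N^1(X)_{\bR}:x^2>0\}$; the big and nef class $D_1$ lies in the component containing the ample classes. Two classes of positive self-intersection on a surface pair positively precisely when they lie in the same component (the reversed Cauchy--Schwarz inequality underlying Lemma \ref{lem:Hodgeindex}), so $\xi\cdot D_1>0$ forces $\xi$ into that component. Thus $\xi$ is big, in particular pseudo-effective. If one prefers to stay with $\bQ$-divisors, this last step can instead be done by Riemann--Roch: $\chi(m\xi)=\tfrac{m^2}{2}\xi^2+O(m)\to\infty$, so for $m\gg0$ either $m\xi$ or $K_X-m\xi$ is effective, and the latter is impossible since it would give $(K_X-m\xi)\cdot D_1\ge0$ while $m\,\xi\cdot D_1\to\infty$ and $D_1$ is nef.

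I expect no serious obstacle here: the computation is a one-line substitution, and the only genuine ingredient is the Hodge-index dichotomy for classes of positive self-intersection. The single point that needs care is the \emph{strict} inequality $D_1\cdot(D_1+D_2)>0$, which really uses that both $D_1$ and $D_1+D_2$ are big, not merely nef — if $D_1$ were only nef one would get $\xi\cdot D_1\ge0$ and the argument would stall on the boundary of the positive cone.
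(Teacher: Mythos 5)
Your proof is correct. The two identities $\xi\cdot D_1=D_1\cdot(D_1+D_2)$ and $\xi^2=(D_1+D_2)^2$ check out, the strict positivity of $D_1\cdot(D_1+D_2)$ is established soundly (one could even shortcut it: both classes are nef with positive square, so Lemma \ref{lem:Hodgeindex} gives $\bigl(D_1\cdot(D_1+D_2)\bigr)^2\ge D_1^2(D_1+D_2)^2>0$, and nefness fixes the sign), and the Hodge-index dichotomy then places $\xi$ in the big cone. Note, however, that the paper does not prove this lemma at all: it simply cites \cite[Lemma 3.5]{HA21}, whose argument is essentially the computation you carried out. So your write-up supplies the actual content behind the citation, and in fact proves slightly more than is claimed — bigness of $\beta D_1-D_2$ rather than mere pseudo-effectivity — because you use the bigness of $D_1+D_2$; the cited lemma of Hacon--Langer gets by with $D_1+D_2$ nef only, at the cost of concluding only pseudo-effectivity (the degenerate case $\xi\cdot D_1=0$ then forces $\xi\equiv0$ by the index theorem). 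Either version suffices for the application in Corollary \ref{cor:eKXcanonical}.
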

\begin{proof}
This follows from \cite[Lemma 3.5]{HA21}.
\end{proof}

\section{Canonical model of  \texorpdfstring{$K_{\cF}+D$}{KF+D}}

Let $\left(X, \cF, D = D_{\Delta, \epsilon}\right)$ be as in the setting of Theorem \ref{mainthm:<1/4}.
Consider the following $(K_{\cF}+D)$-non-positive birational contraction:
\begin{equation}
\sigma:(X,\cF, D)\to (Y,\cG,D_Y),
\end{equation}
where $K_{\cG}=\sigma_*K_{\cF}$, $D_Y=\sigma_*D$ and for any irreducible curve $C$ over $Y$, 
\[
(K_{\cG} + D_Y) \cdot C \le 0 \;\Longrightarrow\; C^2 \ge 0.
\]
We call $(Y,\cG,D_Y)$ the \emph{canonical model} of the triple $(X,\cF,D)$.

Let ${\rm Exc}(\sigma)$ denote the set of $\sigma$-exceptional curves over $X$. 
It is clear that:
\begin{itemize}
\item For any $C\in{\rm Exc}(\sigma)$, we have $P(D)\cdot C=0$. 
\item The intersection matrix of components of  ${\rm Exc}(\sigma)$ is negative definite.
\item If $K_{\cF}+D$ is big, then ${\rm Exc}(\sigma)={\rm Null}\,P(D)$.
\end{itemize}

In this section, we will discuss the structure of ${\rm Exc}(\sigma)$. 
The goal is to prove the following result:

\begin{theorem}\label{thm:Null}
Let $(X,\cF,D=D_{\Delta,\epsilon})$ be as in the setting of Theorem~\ref{mainthm:<1/4}, 
with the additional assumptions that $\lfloor\Delta\rfloor=0$, $\epsilon\in[0,\frac{1}{4})$, 
and that $\cF$ has at most canonical singularities when $\epsilon=0$.

Let $Z$ be a connected component of ${\rm Exc}(\sigma)$. 
Then every irreducible component of $Z$ is $\cF$-invariant, and $Z$ falls into one of the following cases:

\begin{itemize}
\item[(1)]$Z=\Gamma_1+\cdots+\Gamma_r$ is an $\cF$-chain, and one of the following holds:
\begin{itemize}
\item[\rm(1-1)] $Z$ is a maximal $(D,\cF)$-chain;
\item[\rm(1-2)] $D\cdot \Gamma_1=1$ and $D\cdot \Gamma_i=0$ for all $i\ge 2$;
\item[\rm(1-3)] there exists $t\le r$ such that $\Theta=\Gamma_1+\cdots+\Gamma_t$ is a maximal $(D,\cF)$-chain, 
$D\cdot\Gamma_{t+1}=M(D,\Theta)\cdot\Gamma_{t+1}\ (\le \tfrac{1}{2})$, and $D\cdot\Gamma_i=0$ for all $i\ge t+2$.
\end{itemize}
(See Figure~\ref{fig:Fchain}.)

\begin{figure}[htpt]
\centering
\def\svgwidth{\columnwidth}
\scalebox{0.4}{
\begingroup%
  \makeatletter%
  \providecommand\color[2][]{%
    \errmessage{(Inkscape) Color is used for the text in Inkscape, but the package 'color.sty' is not loaded}%
    \renewcommand\color[2][]{}%
  }%
  \providecommand\transparent[1]{%
    \errmessage{(Inkscape) Transparency is used (non-zero) for the text in Inkscape, but the package 'transparent.sty' is not loaded}%
    \renewcommand\transparent[1]{}%
  }%
  \providecommand\rotatebox[2]{#2}%
  \newcommand*\fsize{\dimexpr\f@size pt\relax}%
  \newcommand*\lineheight[1]{\fontsize{\fsize}{#1\fsize}\selectfont}%
  \ifx\svgwidth\undefined%
    \setlength{\unitlength}{126.14618764bp}%
    \ifx\svgscale\undefined%
      \relax%
    \else%
      \setlength{\unitlength}{\unitlength * \real{\svgscale}}%
    \fi%
  \else%
    \setlength{\unitlength}{\svgwidth}%
  \fi%
  \global\let\svgwidth\undefined%
  \global\let\svgscale\undefined%
  \makeatother%
  \begin{picture}(1,0.12199074)%
    \lineheight{1}%
    \setlength\tabcolsep{0pt}%
    \put(0.56121918,0.0263005){\color[rgb]{0,0,0}\makebox(0,0)[lt]{\lineheight{1.25}\smash{\begin{tabular}[t]{l}$\cdots\cdots$\end{tabular}}}}%
    \put(0,0){\includegraphics[width=\unitlength,page=1]{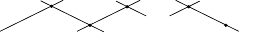}}%
    \put(0.25219959,0.07537052){\color[rgb]{0,0,0}\makebox(0,0)[lt]{\lineheight{1.25}\smash{\begin{tabular}[t]{l}$\Gamma_2$\end{tabular}}}}%
    \put(0.06765842,0.01258831){\color[rgb]{0,0,0}\makebox(0,0)[lt]{\lineheight{1.25}\smash{\begin{tabular}[t]{l}$\Gamma_1$\end{tabular}}}}%
    \put(0.78870066,0.0677606){\color[rgb]{0,0,0}\makebox(0,0)[lt]{\lineheight{1.25}\smash{\begin{tabular}[t]{l}$\Gamma_r$\end{tabular}}}}%
  \end{picture}%
\endgroup%
}
\caption{$\cF$-chains.}
\label{fig:Fchain}
\end{figure}

\item[(2)] $Z=\Gamma_1+\cdots+\Gamma_r$ is a chain of smooth rational $\cF$-invariant  curves such that
${\rm Z}(\cF,\Gamma_i)=2$ and $D\cdot\Gamma_i=0$, for all $i$.

\item[(3)] $Z=\Gamma_1+\cdots+\Gamma_r$ is a tree of smooth rational $\cF$-invariant curves such that
\begin{itemize}
\item[\rm(i)] $D\cdot \Gamma_i=0$ for all $i$;
\item[\rm(ii)] $\Gamma_3+\cdots+\Gamma_r$ is a chain with ${\rm Z}(\cF,\Gamma_3)=3$ and ${\rm Z}(\cF,\Gamma_i)=2$ for all $i\ge 4$, in particular, $\Gamma_i^2\le -2$ for all $i\ge 3$;
\item[\rm(iii)]  $\Gamma_1$ and $\Gamma_2$ are maximal $\cF$-chains with $\Gamma_1^2=\Gamma_2^2=-2$, each meeting $\Gamma_3$ transversely at distinct points, and disjoint from $\Gamma_j$ for all $j\ge 4$.
(See Figure~\ref{fig:Dihedral}.)
\end{itemize}

\begin{figure}[htpt]
\centering
\def\svgwidth{\columnwidth}
\scalebox{0.5}{
\begingroup%
  \makeatletter%
  \providecommand\color[2][]{%
    \errmessage{(Inkscape) Color is used for the text in Inkscape, but the package 'color.sty' is not loaded}%
    \renewcommand\color[2][]{}%
  }%
  \providecommand\transparent[1]{%
    \errmessage{(Inkscape) Transparency is used (non-zero) for the text in Inkscape, but the package 'transparent.sty' is not loaded}%
    \renewcommand\transparent[1]{}%
  }%
  \providecommand\rotatebox[2]{#2}%
  \newcommand*\fsize{\dimexpr\f@size pt\relax}%
  \newcommand*\lineheight[1]{\fontsize{\fsize}{#1\fsize}\selectfont}%
  \ifx\svgwidth\undefined%
    \setlength{\unitlength}{145.34549677bp}%
    \ifx\svgscale\undefined%
      \relax%
    \else%
      \setlength{\unitlength}{\unitlength * \real{\svgscale}}%
    \fi%
  \else%
    \setlength{\unitlength}{\svgwidth}%
  \fi%
  \global\let\svgwidth\undefined%
  \global\let\svgscale\undefined%
  \makeatother%
  \begin{picture}(1,0.1190861)%
    \lineheight{1}%
    \setlength\tabcolsep{0pt}%
    \put(0.10743431,0.09554869){\color[rgb]{0,0,0}\makebox(0,0)[lt]{\lineheight{1.25}\smash{\begin{tabular}[t]{l}$\Gamma_1(-2)$\end{tabular}}}}%
    \put(0.61917962,0.02282636){\color[rgb]{0,0,0}\makebox(0,0)[lt]{\lineheight{1.25}\smash{\begin{tabular}[t]{l}$\cdots\cdots$\end{tabular}}}}%
    \put(0,0){\includegraphics[width=\unitlength,page=1]{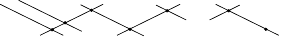}}%
    \put(-0.00072131,0.04560056){\color[rgb]{0,0,0}\makebox(0,0)[lt]{\lineheight{1.25}\smash{\begin{tabular}[t]{l}$\Gamma_2(-2)$\end{tabular}}}}%
    \put(0.35097975,0.0654145){\color[rgb]{0,0,0}\makebox(0,0)[lt]{\lineheight{1.25}\smash{\begin{tabular}[t]{l}$\Gamma_4$\end{tabular}}}}%
    \put(0.25521135,0.04229807){\color[rgb]{0,0,0}\makebox(0,0)[lt]{\lineheight{1.25}\smash{\begin{tabular}[t]{l}$\Gamma_3$\end{tabular}}}}%
    \put(0.8166121,0.05880981){\color[rgb]{0,0,0}\makebox(0,0)[lt]{\lineheight{1.25}\smash{\begin{tabular}[t]{l}$\Gamma_r$\end{tabular}}}}%
  \end{picture}%
\endgroup%
}
\caption{$D\cdot\Gamma_i=0$ for all $i$, and $\Gamma_j^2\le -2$ for $j\ge 3$.}
\label{fig:Dihedral}
\end{figure}

\item[(4)] $Z$ is an elliptic Gorenstein leaf with $D\cdot Z=0$, and one of the following holds:
\begin{itemize}
\item[\rm(4-1)] $Z$ is a rational nodal $\cF$-invariant curve such that $D\cdot Z=0$ and ${\rm Sing}(\cF)\cap Z$ coincides with the singular locus of $Z$;
\item[\rm(4-2)] $Z=\Gamma_1+\cdots+\Gamma_r$ is a cycle of smooth rational $\cF$-invariant curves such that $\Gamma_i^2\le -2$, $D\cdot\Gamma_i=0$ for all $i$, and ${\rm Sing}(\cF)\cap Z$ coincides with the singular locus of $Z$.
\end{itemize}
This case occurs only when $\epsilon=0$.
(See Figure~\ref{fig:egl}.)

\begin{figure}[htpt]
\centering
\def\svgwidth{\columnwidth}
\scalebox{0.4}{
\begingroup%
  \makeatletter%
  \providecommand\color[2][]{%
    \errmessage{(Inkscape) Color is used for the text in Inkscape, but the package 'color.sty' is not loaded}%
    \renewcommand\color[2][]{}%
  }%
  \providecommand\transparent[1]{%
    \errmessage{(Inkscape) Transparency is used (non-zero) for the text in Inkscape, but the package 'transparent.sty' is not loaded}%
    \renewcommand\transparent[1]{}%
  }%
  \providecommand\rotatebox[2]{#2}%
  \newcommand*\fsize{\dimexpr\f@size pt\relax}%
  \newcommand*\lineheight[1]{\fontsize{\fsize}{#1\fsize}\selectfont}%
  \ifx\svgwidth\undefined%
    \setlength{\unitlength}{119.720694bp}%
    \ifx\svgscale\undefined%
      \relax%
    \else%
      \setlength{\unitlength}{\unitlength * \real{\svgscale}}%
    \fi%
  \else%
    \setlength{\unitlength}{\svgwidth}%
  \fi%
  \global\let\svgwidth\undefined%
  \global\let\svgscale\undefined%
  \makeatother%
  \begin{picture}(1,0.35223707)%
    \lineheight{1}%
    \setlength\tabcolsep{0pt}%
    \put(0,0){\includegraphics[width=\unitlength,page=1]{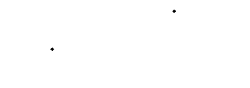}}%
    \put(0.40738535,0.17106308){\color[rgb]{0,0,0}\makebox(0,0)[lt]{\lineheight{1.25}\smash{\begin{tabular}[t]{l}or\end{tabular}}}}%
    \put(0,0){\includegraphics[width=\unitlength,page=2]{egl.pdf}}%
    \put(0.85259636,0.17131562){\color[rgb]{0,0,0}\makebox(0,0)[lt]{\lineheight{1.25}\smash{\begin{tabular}[t]{l}$\vdots$\end{tabular}}}}%
  \end{picture}%
\endgroup%
}
\caption{An elliptic Gorenstein leaf $Z$ with $D\cdot\Gamma=0$ for every irreducible component $\Gamma$ of $Z$.}
\label{fig:egl}
\end{figure}
\end{itemize}

In particular, let $p$ be the contraction point of $Z$. 
Then $p$ is a rational quotient singularity if $Z$ is of type {\rm(1)}--{\rm(3)}, 
and an elliptic Gorenstein singularity if $Z$ is of type {\rm(4)}.
\end{theorem}

\begin{remark}
Theorem~\ref{thm:Null} can be viewed as a generalization of \cite[Theorem~1.III.3.2]{MMcQ08} and \cite[Theorem~2.16]{CF18}, 
which treat the special case where $D=0$ and $(X,\cF)$ is a minimal foliated surface (cf.~Definition~\ref{def:minfoliation}).
\end{remark}
      
\begin{corollary}\label{coro:null}
Let $\left(X, \cF, D = D_{\Delta, \epsilon}\right)$ be as in the setting of Theorem \ref{thm:Null}.
Let $(Y,\cG,D_Y)$ be the canonical model of $(X,\cF,D)$. 
\begin{itemize}
\item[\rm(1)] If $\epsilon=0$, then $\cG$ has at most canonical singularities and $Y$ has at most rational quotient singularities, or elliptic Gorenstein singularities whose minimal resolutions have zero intersection with $D$.
\item[\rm(2)] If $\epsilon>0$ and $\cF$ has at most canonical (resp.\ log canonical) singularities, then $\cG$ has at most canonical (resp.\ log canonical) singularities and  $Y$ has at most rational quotient singularities.
\end{itemize}
\end{corollary} 

In the special case \(D=\epsilon K_X\) with \(\epsilon>0\), every component in the
dihedral configuration appearing in Theorem~5.1(3) is a \((-2)\)-curve. Indeed,
Theorem~5.1(3) gives \(D\cdot\Gamma_i=0\) for every such component \(\Gamma_i\).
Thus \(K_X\cdot\Gamma_i=0\). Since \(\Gamma_i\) is a smooth rational curve,
adjunction gives \(\Gamma_i^2=-2\).

Next, we show that the condition $\epsilon<\tfrac{1}{4}$ in Theorem~\ref{thm:Null} is necessary. 
The following example illustrates that $\cG$ may fail to be log canonical when $\epsilon=\tfrac{1}{4}$.

\begin{example}\label{ex:e=1/4}
Let $(X,\cF)$ be a foliated surface, where $\cF$ is locally defined by 
\[
\omega=x\,{\rm d}x+y^2\,{\rm d}y.
\]
Consider the minimal resolution $\sigma:(X',\cF')\to(X,\cF)$ over $p=(0,0)$ (see Figure~\ref{fig:1/4KX}).

\begin{figure}[htpt]
  \centering
  \def\svgwidth{\columnwidth}
  \scalebox{0.4}{
\begingroup%
  \makeatletter%
  \providecommand\color[2][]{%
    \errmessage{(Inkscape) Color is used for the text in Inkscape, but the package 'color.sty' is not loaded}%
    \renewcommand\color[2][]{}%
  }%
  \providecommand\transparent[1]{%
    \errmessage{(Inkscape) Transparency is used (non-zero) for the text in Inkscape, but the package 'transparent.sty' is not loaded}%
    \renewcommand\transparent[1]{}%
  }%
  \providecommand\rotatebox[2]{#2}%
  \newcommand*\fsize{\dimexpr\f@size pt\relax}%
  \newcommand*\lineheight[1]{\fontsize{\fsize}{#1\fsize}\selectfont}%
  \ifx\svgwidth\undefined%
    \setlength{\unitlength}{86.31763311bp}%
    \ifx\svgscale\undefined%
      \relax%
    \else%
      \setlength{\unitlength}{\unitlength * \real{\svgscale}}%
    \fi%
  \else%
    \setlength{\unitlength}{\svgwidth}%
  \fi%
  \global\let\svgwidth\undefined%
  \global\let\svgscale\undefined%
  \makeatother%
  \begin{picture}(1,0.23187348)%
    \lineheight{1}%
    \setlength\tabcolsep{0pt}%
    \put(0.09522554,0.03615831){\color[rgb]{0,0,0}\makebox(0,0)[lt]{\lineheight{1.25}\smash{\begin{tabular}[t]{l}$\bar{E}_1(-3)$\end{tabular}}}}%
    \put(0,0){\includegraphics[width=\unitlength,page=1]{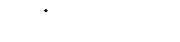}}%
    \put(0.60605719,0.10288551){\color[rgb]{0,0,0}\makebox(0,0)[lt]{\lineheight{1.25}\smash{\begin{tabular}[t]{l}$\bar{E}_2(-2)$\end{tabular}}}}%
    \put(0.34192596,0.12790865){\color[rgb]{0,0,0}\makebox(0,0)[lt]{\lineheight{1.25}\smash{\begin{tabular}[t]{l}$E_3(-1)$\end{tabular}}}}%
    \put(0,0){\includegraphics[width=\unitlength,page=2]{eK4.pdf}}%
  \end{picture}%
\endgroup%
}
  \caption{}
\label{fig:1/4KX}
\end{figure} 

Denote by $q_i$ (resp.\ $E_i$) the blow-up points (resp.\ exceptional curves). Then $l(q_1)=l(q_2)=1$, $l(q_3)=a(q_3)=2$. So each $E_i$ is $\cF'$-invariant and
\[
K_{\cF'}=\sigma^*(K_{\cF})-E_3,\qquad
K_{X'}=\sigma^*(K_X)+\bar{E}_1+2\bar{E}_2+4E_3.
\]
Thus,
\[
K_{\cF'}+\frac{1}{4}K_{X'}
=\sigma^*(K_{\cF}+\frac{1}{4}K_X)
+\frac{1}{4}\bar{E}_1+\frac{1}{2}\bar{E}_2.
\]
By Lemma~\ref{lem:P(D)C=0-C-Finv}, the curves $E_1,E_2,E_3$ are exceptional for the $(K_{\cF'}+\tfrac{1}{4}K_{X'})$-non-positive contraction, corresponding to case~(F). 
In particular, the canonical model $(Y,\cG,\tfrac{1}{4}K_Y)$ has a singularity of $\cG$ which is not log canonical.
\end{example}

Finally, we show that the condition $\lfloor\Delta\rfloor=0$ in Theorem~\ref{thm:Null} is necessary. 
Otherwise, Corollary~\ref{coro:null}(1) may fail: when $\epsilon=0$, $\cG$ need not be canonical (cf.~Remark~\ref{rmk:e=0}).

Consider the special case $D=C$, where $C$ is an irreducible non-$\cF$-invariant curve satisfying
\[
{\rm tang}(\cF,C)=K_{\cF}\cdot C+C^2=0,\qquad C^2<0.
\]
In this situation, a $(C,\cF)$-chain is precisely an $\cF$-chain disjoint from $C$. 

\begin{theorem}\label{thm:NullCtang=0}
Let $(X,\cF,C)$ be a foliated triple, where
\begin{itemize}
\item[\rm (i)] $\cF$ is a foliation on $X$ with at most canonical singularities;
\item[\rm(ii)] $C$ is an irreducible non-$\cF$-invariant curve such that ${\rm tang}(\cF,C)=0$ and $C^2<0$;  
\item[\rm(iii)] $X$ is a smooth projective surface containing no $\cF$-invariant $(-1)$-curves $E$ satisfying 
$K_{\cF}\cdot E\leq0$ and $K_{\cF}\cdot E+2C\cdot E\leq1$.
  \end{itemize}
  Assume that $K_{\cF}+C$ is pseudo-effective, with the Zariski decomposition $P(C)+N(C)$.
Let $Z$ be a connected component of the set ${\rm Exc}(\sigma)$.
Then each irreducible component of $Z$ is $C$ or an $\cF$-invariant curve, and $Z$ falls into one of the following cases:
\begin{itemize}

\item[(1)] $Z=\Gamma_1+\cdots+\Gamma_r$ is an $\cF$-chain disjoint from $C$.

\item[(2)] $Z=\Gamma_1+\cdots+\Gamma_r$ is a chain of smooth rational $\cF$-invariant curves disjoint from $C$, where ${\rm Z}(\cF,\Gamma_i)=2$ for all $i$.

\item[(3)] $Z=\Gamma_1+\Gamma_2+\cdots+\Gamma_r$ is a tree of smooth $\cF$-invariant rational curves disjoint from $C$, where
\begin{itemize}
\item[\rm(i)] $\Gamma_3+\cdots+\Gamma_r$ is a chain with ${\rm Z}(\cF,\Gamma_3)=3$ and ${\rm Z}(\cF,\Gamma_i)=2$ for all $i\ge 4$, in particular $\Gamma_i^2\le -2$ for $i\ge 3$;
\item[\rm(ii)] $\Gamma_1$ and $\Gamma_2$ are maximal $\cF$-chains with $\Gamma_1^2=\Gamma_2^2=-2$, each meeting $\Gamma_3$ transversely at distinct points, and disjoint from $\Gamma_j$ for all $j\ge 4$.
\end{itemize}

\item[(4)] $Z$ is an elliptic Gorenstein leaf disjoint from $C$.

\item[(5)] $Z=C+\sum_{i=1}^s\Theta_i$ ($s\geq0$), where
$\Theta_i=\Gamma_{i1}+\cdots+\Gamma_{i,r(i)}$ is an $\cF$-chain with $C\cdot \Gamma_{i1}=1$ and $C\cdot \Gamma_{ij}=0$ for $j\geq2$, where  $\Gamma_{i1}$ denotes the first curve of $\Theta_i$.     
 (See Figure \ref{fig:CT1T2}.)
\begin{figure}[htpt]
  \centering
  \def\svgwidth{\columnwidth}
  \scalebox{0.4}{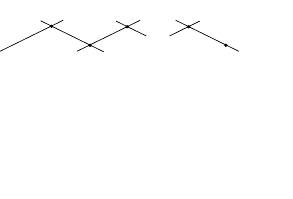}
  \caption{}
\label{fig:CT1T2}
\end{figure} 
\end{itemize}
\end{theorem}
\begin{proof}
Similar to the proof of Theorem \ref{thm:Null}, this result follows from Corollary \ref{coro:KF+Cnoninv} and Lemma \ref{lem:P(D)C=0-C-Finv}.
\end{proof}

\begin{remark}\label{rmk:e=0}
Let $p$ be the point obtained by contracting $Z$. Then cases (1)--(4) correspond to canonical singularities of $\cG$, whereas case (5) corresponds to a log canonical but non-canonical singularity. 
Moreover, case (5) coincides with cases 6 and 7 in \cite[Theorem 1.1]{YAChen23}.
\end{remark}

\subsection{Proof of Theorem \ref{thm:Null}}

\begin{lemma}\label{lem:FnoninvCsuchthatP(Delta)C=0}
Under the assumptions of Theorem~\ref{mainthm:<1/4},
let $C$ be an irreducible non-$\cF$-invariant curve such that the intersection matrix of the components of $C+N(D)$ is negative definite. 
Then the following hold:
\begin{itemize}
\item[\rm(1)] $P(D)\cdot C=0$ if and only if $(K_{\cF}+D)\cdot C=\theta(C)=0$.

\item[\rm(2)] If, in addition, $\lfloor\Delta\rfloor=0$, then $P(D)\cdot C>0$.
\end{itemize}
\end{lemma}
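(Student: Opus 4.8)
The plan is to assemble the statement from the structural description of the Zariski negative part in Theorem \ref{mainthm:<1/4} together with the two inequalities established in Section 3, specializing the free parameter in Lemma \ref{lem:(W+cC)cC>=0} to $x=\theta(C)$. First I would extract from the hypothesis — that the intersection matrix of the components of $C+N(D)$ is negative definite — that $C^2<0$ and, more generally, that every nonzero effective $\bQ$-divisor supported on components of $C+N(D)$ has negative self-intersection. By Theorem \ref{mainthm:<1/4}, $N(D)=\sum_{i=1}^s M(D,\Theta_i)$ with $\Theta_1,\dots,\Theta_s$ the mutually disjoint maximal $(D,\cF)$-chains, so the divisor $\cE(C)=C+\sum_{\Theta}E(C,\Theta)$ (the sum over those maximal $(D,\cF)$-chains meeting $C$) is a nonzero effective divisor supported on components of $C+N(D)$; in particular $\cE(C)^2<0$.

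The key identification is $P(D)\cdot C=W_C\cdot\cE(C)$. On the one hand, $W_C\cdot\Gamma=0$ for every component $\Gamma$ of a maximal $(D,\cF)$-chain meeting $C$ (this is recorded right after the definition of $\cE(C)$ and $W_C$), hence $W_C\cdot\cE(C)=W_C\cdot C$. On the other hand, $W_C=K_{\cF}+D-\sum_{\Theta\ \text{meets}\ C}M(D,\Theta)=P(D)+\sum_{\Theta\ \text{disjoint from}\ C}M(D,\Theta)$ by Theorem \ref{mainthm:<1/4}, and the last term is supported on chains disjoint from $C$, so $W_C\cdot C=P(D)\cdot C$. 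Now, since $C^2<0$, Lemma \ref{lem:Dtang-index} gives $(K_{\cF}+D)\cdot C+\theta(C)\cdot C^2\ge 0$, so Lemma \ref{lem:(W+cC)cC>=0} with $x=\theta(C)$ yields $(W_C+\theta(C)\cE(C))\cdot\cE(C)\ge 0$. Therefore
\[
P(D)\cdot C=W_C\cdot\cE(C)=\big(W_C+\theta(C)\cE(C)\big)\cdot\cE(C)-\theta(C)\,\cE(C)^2\ \ge\ -\theta(C)\,\cE(C)^2\ \ge\ 0,
\]
using $\theta(C)\ge 0$ and $\cE(C)^2<0$.

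For part (1): if $P(D)\cdot C=0$, then both nonnegative terms on the right of the displayed line vanish; from $\theta(C)\,\cE(C)^2=0$ together with $\cE(C)^2<0$ we get $\theta(C)=0$, which (as observed after Definition \ref{def:DFindexofC}) means $C$ is disjoint from every maximal $(D,\cF)$-chain, i.e. disjoint from $N(D)$, whence $(K_{\cF}+D)\cdot C=P(D)\cdot C=0$. Conversely, if $\theta(C)=0$ then $C$ is disjoint from $N(D)$ and $P(D)\cdot C=(K_{\cF}+D)\cdot C$, which is $0$ by hypothesis. For part (2): when $\lfloor\Delta\rfloor=0$, Corollary \ref{cor:(KF+D)C>0-C-nonFinv} gives $(K_{\cF}+D)\cdot C>0$, so the equality characterization of (1) cannot hold, and hence $P(D)\cdot C>0$.

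The argument is in essence a careful assembly, and the step warranting attention is the identity $P(D)\cdot C=W_C\cdot\cE(C)$: it is precisely here that Theorem \ref{mainthm:<1/4} enters, since one needs $N(D)$ to be \emph{exactly} $\sum_{i}M(D,\Theta_i)$ with the chains mutually disjoint (the disjointness itself being a consequence of the separatrix theorem, Theorem \ref{thm:separatrix}), together with the orthogonality relations $W_C\cdot\Gamma=0$ along the chains meeting $C$. Once that identity is in place, everything else follows mechanically from Lemmas \ref{lem:Dtang-index} and \ref{lem:(W+cC)cC>=0} and Corollary \ref{cor:(KF+D)C>0-C-nonFinv}.
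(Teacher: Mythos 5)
Your proof is correct and follows essentially the same route as the paper: establish $\cE(C)^2<0$ from negative definiteness, apply Lemma \ref{lem:Dtang-index} and Lemma \ref{lem:(W+cC)cC>=0} with $x=\theta(C)$, deduce $\theta(C)=0$ from $P(D)\cdot C=0$, and conclude (2) via Corollary \ref{cor:(KF+D)C>0-C-nonFinv}. The only difference is that you make explicit the identification $P(D)\cdot C=W_C\cdot\cE(C)=P(D)\cdot\cE(C)$, which the paper uses implicitly; this is a welcome clarification rather than a deviation.
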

\begin{proof}
By the assumption, we have $C^2\leq\cE(C)^2<0$ (cf. (\ref{eq:defcE(C)})). 

\noindent(1) If $P(D)\cdot C=0$, then we also have $P(D)\cdot \cE(C)=0$.
By Theorem~\ref{mainthm:<1/4},
\[
P(D) = K_{\cF}+D - \sum_{i=1}^s M(D,\Theta_i),
\]
where $\Theta_1,\dots,\Theta_s$ denote all maximal $(D,\cF)$-chains on $X$.
Without loss of generality, assume $\Theta_i\cdot C>0$ for $i\le t$ and $\Theta_i\cdot C=0$ for $i>t$.
By the definitions of $\cE(C)$ and $W_C$ (cf.~\eqref{eq:defcE(C)}), we then have
\[
W_C = K_{\cF}+D - \sum_{i=1}^t M(D,\Theta_i), \qquad
\cE(C)\cdot M(D,\Theta_j) = 0 \quad \text{for all } j.
\]
Hence, 
\[
(P(D) + \theta(C)\,\cE(C))\cdot \cE(C) = (W_C + \theta(C)\,\cE(C))\cdot \cE(C) \ge 0,
\]
where the last inequality follows from Lemmas~\ref{lem:(W+cC)cC>=0} and~\ref{lem:Dtang-index}.
Since $\cE(C)^2 < 0$, we must have $\theta(C)=0$, 
which implies $C\cdot \Theta_i = 0$ for all $i$ and therefore $C\cdot N(D) = 0$.
Consequently,
$(K_{\cF}+D)\cdot C = P(D)\cdot C = 0$.
The converse is immediate.

\noindent(2) If $\lfloor\Delta\rfloor=0$, then $(K_{\cF}+D)\cdot C>0$, by Corollary \ref{cor:(KF+D)C>0-C-nonFinv}.
Therefore, $P(D)\cdot C>0$ by (1).
\end{proof}

\begin{corollary}\label{coro:KF+Cnoninv}
Under the assumption in Theorem \ref{thm:NullCtang=0}, we have $P(C)\cdot C=0$.
\end{corollary}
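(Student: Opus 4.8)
The plan is to reduce the statement to the single numerical identity $(K_{\cF}+C)\cdot C = 0$ together with the orthogonality property of the Zariski decomposition; essentially no foliation geometry beyond the tangency formula enters this particular corollary.

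First I would record that, by hypothesis (ii) of Theorem \ref{thm:NullCtang=0} and the tangency formula \eqref{tang},
\[
(K_{\cF}+C)\cdot C \;=\; K_{\cF}\cdot C + C^2 \;=\; {\rm tang}(\cF,C) \;=\; 0 .
\]
Intersecting the Zariski decomposition $K_{\cF}+C = P(C)+N(C)$ with $C$ then gives $P(C)\cdot C = -\,N(C)\cdot C$. Since $P(C)$ is nef, $P(C)\cdot C\ge 0$, so the whole point is to rule out $P(C)\cdot C>0$.

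Suppose, for contradiction, that $P(C)\cdot C>0$. Then $N(C)\cdot C<0$. As $N(C)$ is an effective divisor, an irreducible curve can meet it negatively only if it is one of its components; hence $C$ is a component of ${\rm Supp}\,N(C)$. But the Zariski decomposition satisfies $P(C)\cdot\Gamma=0$ for every irreducible component $\Gamma$ of $N(C)$, so in particular $P(C)\cdot C=0$, contradicting $P(C)\cdot C>0$. Therefore $P(C)\cdot C=0$, as claimed.

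I do not expect any genuine obstacle here; the only points requiring a little care are (i) applying \eqref{tang} to convert the hypothesis ${\rm tang}(\cF,C)=0$ into $(K_{\cF}+C)\cdot C=0$, and (ii) invoking the standard existence and structure of the Zariski decomposition of the pseudo-effective divisor $K_{\cF}+C$ — positive part nef, negative part effective with negative-definite support, positive part orthogonal to every component of the negative part. It is worth noting that this corollary uses neither the hypothesis $C^2<0$ nor hypothesis (iii) of Theorem \ref{thm:NullCtang=0}: these enter only in the subsequent classification of ${\rm Exc}(\sigma)$, and (as in Remark \ref{rmk:NullCtang=0}) $C^2<0$ can in fact be recovered from $P(C)\cdot C=0$ via the Hodge index theorem (Lemma \ref{lem:Hodgeindex}) whenever $P(C)^2>0$.
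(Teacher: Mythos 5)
Your proof is correct. It differs from the paper's (implicit) derivation: the paper obtains the corollary as the ``converse'' direction of Lemma \ref{lem:FnoninvCsuchthatP(Delta)C=0}.(1) applied with $D=C$, which ultimately rests on the structure theorem (Theorem \ref{mainthm:<1/4}): $N(C)$ is supported on maximal $(C,\cF)$-chains, and a $(C,\cF)$-chain is by definition an $\cF$-chain disjoint from $C$, so $\theta(C)=0$, $N(C)\cdot C=0$, and hence $P(C)\cdot C=(K_{\cF}+C)\cdot C={\rm tang}(\cF,C)=0$. You instead use only the formal properties of the Zariski decomposition (nefness of $P(C)$, effectivity of $N(C)$, and orthogonality of $P(C)$ to the components of $N(C)$), together with the tangency formula \eqref{tang}; the dichotomy ``either $C$ is a component of $N(C)$, whence $P(C)\cdot C=0$ by orthogonality, or it is not, whence $N(C)\cdot C\ge 0$ and the two nonnegative terms summing to zero both vanish'' settles the claim. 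This is more elementary and self-contained: it does not require knowing the classification of $N(C)$, nor the hypotheses on $(-1)$-curves, nor $C^2<0$, and it yields $N(C)\cdot C=0$ as a byproduct just as the paper's route does. The only trade-off is that the paper's route simultaneously records the stronger geometric fact that $C$ is disjoint from ${\rm Supp}\,N(C)$ (not merely orthogonal to $N(C)$), which is what is actually used in the subsequent analysis of ${\rm Exc}(\sigma)$ in Theorem \ref{thm:NullCtang=0}; your argument alone leaves open a priori whether $C$ sits inside the negative part.
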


\begin{lemma}\label{lem:P(D)C=0-C-Finv}
Under the assumptions of Theorem~\ref{mainthm:<1/4}, 
assume moreover that $\cF$ has at most canonical singularities when $\epsilon=0$. 
Let $C$ be an $\cF$-invariant irreducible curve not contained in any maximal $(D,\cF)$-chain. 
If $P(D)\cdot C=0$ and the intersection matrix of $C+N(D)$ is negative definite, 
then one of the following cases occurs:
\begin{itemize}
\item[(A)] $C$ is a rational nodal curve with $D\cdot C=0$ such that 
${\rm Sing}(\cF)\cap C$ consists only of the node of $C$, a non-degenerate reduced singularity of $\cF$.

\item[(B)] $C$ is a smooth rational curve with $K_{\cF}\cdot C=-1$ and $D\cdot C =1$.

\item[(C)] $C$ is a smooth rational curve with $K_{\cF}\cdot C=D\cdot C =0$, disjoint from any maximal $(D,\cF)$-chain.

\item[(D)] $C$ is a smooth rational curve satisfying that there is  a maximal $(D,\cF)$-chain $\Theta$ such that $\Theta+C$ is an $\cF$-chain with $M(D,\Theta)\cdot C=D\cdot C\,(\leq \frac{1}{2})$. 
    
\item[(E)] $C$ is a smooth rational curve with $K_{\cF}\cdot C=1$ and $D\cdot C=0$, satisfying the following: 
    there exist exactly two connected components of ${\rm Supp}(N(D))$, 
    each of which consists of a smooth rational curve $\Gamma_i$ with $\Gamma_i^2=-2$ and $D\cdot \Gamma_i=0$ for $i=1,2$. 
    Moreover, both $\Gamma_1$ and $\Gamma_2$ meet $C$ transversely at the points $p_1$ and $p_2$, and 
    $N(D)|_C=\frac{1}{2}p_1+\frac{1}{2}p_2$. 
    
\item[(F)]  $C$ is a smooth rational curve with $K_{\cF}\cdot C=1$ and $D\cdot C=-\frac{1}{4}$, satisfying the following: 
    there exist exactly two connected components $\Theta_1=\Gamma_1$, $\Theta_2=\Gamma_2$ of ${\rm Supp}(N(D))$,  
    where $\Gamma_1^2=-2,\Gamma_2^2=-3$ and $D\cdot \Gamma_1=0$, $D\cdot \Gamma_2=\frac{1}{4}$. 
 Moreover, both $\Gamma_1$ and $\Gamma_2$ meet $C$ transversely at the points $p_1$ and $p_2$, 
 and $N(D)|_C=\frac{1}{2}p_1+\frac{1}{4}p_2$.  
  \end{itemize} 
In particular, in cases {\rm(A)}--{\rm(E)}, either $C^2 \le -2$ or $p_a(C)=1$. 
In case {\rm(F)}, we have $C^2=-1$, and this occurs only when $\epsilon=1/4$.
\end{lemma}
\begin{proof} 
Let $k$ be the number of points at which $C$ intersects the support of $N(D)$.
Using the notation from the proof of Theorem~\ref{mainthm:<1/4}, we have
\[
h(\cF,C)=\sum_{i=1}^h h_{q_i}(\cF,C)=K_{\cF}\cdot C + 2 - 2g(C) - k \ge 1.
\] 
Since $(K_{\cF}+D)\cdot C = N(D)\cdot C$, it follows that
\begin{equation}\label{equ:N(D)C<=k/2}
\frac{k}{2} \ge N(D)\cdot C = 2g(C)-2 + h(\cF,C) + k + D\cdot C \ge 2g(C)-1 + k + D\cdot C.
\end{equation}

By the definition of $D=D_{\Delta,\epsilon}$ (cf.~Definition~\ref{def:D(Delta,epsilon)}), we have 
\begin{equation}\label{ineq:DC}
D\cdot C \ge \epsilon K_X\cdot C \ge -\epsilon \ge -\tfrac{1}{4},
\end{equation}
and $D\cdot C<0$ can occur only if $C$ is a $(-1)$-curve.  
Hence \eqref{equ:N(D)C<=k/2} and \eqref{ineq:DC} imply that $g(C)=0$ and $h(\cF,C)\in\{1,2\}$.
\medskip

Suppose that $p_a(C)\ge 1$. Then $D\cdot C \ge 0$.  
If $h(\cF,C)=1$, then by Lemma~\ref{lem:CSqi<0}(2), the curve $C$ is smooth, hence $p_a(C)=g(C)=0$, a contradiction.  
Thus we may assume that $h(\cF,C)=2$. 
Then \eqref{equ:N(D)C<=k/2} implies that $k=0$ and $D\cdot C=0$.

If $\epsilon>0$, then $K_X\cdot C\leq0$ by \eqref{ineq:DC}, and hence
$p_a(C)\leq\tfrac{1}{2}(C^2+2)<1$,
a contradiction.  
Thus $\epsilon=0$, and by assumption $\cF$ has at most canonical singularities.

If $h=2$, then $h_{q_1}(\cF,C)=h_{q_2}(\cF,C)=1$. By Proposition~\ref{prop:hp(F,C)}(3), the curve $C$ is smooth at each $q_i$, hence smooth, so $p_a(C)=g(C)=0$, a contradiction.  

If $h=1$, then $h_{q_1}(\cF,C)=2$ and $m_{q_1}(C)\ge 2$. By Proposition~\ref{prop:hp(F,C)}(4), the point $q_1$ is a node of $C$ and a non-degenerate reduced singularity of $\cF$.  
Hence $p_a(C)=1$, corresponding to case~(A).
\medskip

Next, assume $p_a(C)=0$. Then 
$N(D)\cdot C = -2 + h(\cF,C) + k + D\cdot C$.
Since $0 \le N(D)\cdot C \le k/2$, we have
\begin{equation}\label{ineqs:N(D)C}
\frac{k}{2} \ge -2 + h(\cF,C) + k + D\cdot C \ge 0.
\end{equation}
Since $h(\cF,C)\ge 1$, $k\ge 0$ and $D\cdot C\ge -1/4$, \eqref{ineqs:N(D)C} gives the following possibilities:
\[
(k, h(\cF,C)) = (0,1), \ (0,2), \ (1,1), \ (2,1).
\]

\noindent{\bf Case $(0,1)$.} Then $k=0$, $K_{\cF}\cdot C=-1$, and $D\cdot C=1$, corresponding to case~(B).
In this case, $C^2\leq-2$ due to condition (2a) of Theorem~\ref{mainthm:<1/4}.

\smallskip

\noindent{\bf Case $(0,2)$.} Then $k=0$, $K_{\cF}\cdot C=D\cdot C=0$, and $C$ is disjoint from $N(D)$, corresponding to case~(C).
In this case, $C^2\leq-2$ due to condition (2a) of Theorem~\ref{mainthm:<1/4}.

\smallskip

\noindent{\bf Case $(1,1)$.} Then $k=1$, $K_{\cF}\cdot C=0$, and $0\leq D\cdot C\le 1/2$.  
By condition (2a) of Theorem~\ref{mainthm:<1/4}, $C^2 \le -2$.  
Moreover,  $\Theta_1+C$ is an $\cF$-chain and $M(D,\Theta_1)\cdot C=N(D)\cdot C=D\cdot C$,
corresponding to case~(D).

\smallskip

\noindent{\bf Case $(2,1)$.} Then $k=2$, $K_{\cF}\cdot C=1$, and $-1/4 \le D\cdot C \le 0$.  
Thus,
\begin{equation}\label{equ:1+DC<=}
\frac{3}{4}\leq 1 + D\cdot C = N(D)\cdot C = \sum_{i=1}^2 M(D,\Theta_i)\cdot C \le \frac{1}{n_1} + \frac{1}{n_2},
\end{equation}
where $\Theta_1$ and $\Theta_2$ are maximal $(D,\cF)$-chains meeting $C$, using the notation from the proof of Theorem~\ref{mainthm:<1/4}.

If $D\cdot C=0$, then $N(D)\cdot C=1$ and $M(D,\Theta_1)\cdot C=M(D,\Theta_2)\cdot C=1/2$.  
It follows that $\Theta_i=\Gamma_i$, where $\Gamma_i^2=-2$ and $D\cdot \Gamma_i=0$ for $i=1,2$, corresponding to case~(E).
In this case, we must have $C^2\le -2$; otherwise, the intersection matrix of $C+\Gamma_1+\Gamma_2$ is not negative definite, a contradiction.

If $-\tfrac{1}{4} \le D\cdot C < 0$, then $\epsilon>0$ and $C$ is a $(-1)$-curve.  
By \eqref{equ:1+DC<=}, we have $(n_1,n_2)=(2,2), (2,3)$, or $(2,4)$.  
As in the last part of the proof of Theorem~\ref{mainthm:<1/4}, the case $(n_1,n_2)=(2,2)$ cannot occur, 
while the case $(n_1,n_2)=(2,3)$ occurs only if $D\cdot C = -\epsilon = -\tfrac{1}{4}$, which corresponds to case~(F). 

If $(n_1,n_2)=(2,4)$, then $\Theta_1=\Gamma_{11}$ with $\Gamma_{11}^2=-2$, and $\Theta_2=\sum_{j=1}^{r(2)}\Gamma_{2j}$.  
Since the intersection matrix of $C+\Gamma_{11}+\Gamma_{2,r(2)}$ is negative definite, we have $\Gamma_{2,r(2)}^2\le -3$.  
Combining this with $n_2=4$, we obtain $r(2)=1$ and $\Gamma_{21}^2=-4$.
By \eqref{equ:1+DC<=}, we obtain
\[
\frac{1}{4}
= M(D,\Theta_2)\cdot C
= \gamma_{21}
= \frac{(K_{\cF}+D)\cdot \Gamma_{21}}{-\Gamma_{21}^2}
= \frac{1 - D\cdot \Gamma_{21}}{4}.
\]
Thus $D\cdot \Gamma_{21}=0$, hence $K_X\cdot \Gamma_{21}=0$, which forces $\Gamma_{21}^2=-2$, a contradiction.
\end{proof}

\begin{proof}[Proof of Theorem \ref{thm:Null}]
By Theorem~\ref{mainthm:<1/4} and Lemmas~\ref{lem:FnoninvCsuchthatP(Delta)C=0} 
and~\ref{lem:P(D)C=0-C-Finv}, $Z$ consists of maximal $(D,\cF)$-chains and $\cF$-invariant curves of type {\rm(A)}--{\rm(E)}. 
The desired cases follow, while all other possibilities are excluded both by the negative definiteness of the intersection matrix of $Z$ and by the separatrix theorem (cf.~Theorem~\ref{thm:separatrix}).
\end{proof}

\section{Effective behavior of the linear system \texorpdfstring{$|m(K_{\cF}+D)|$}{|m(KF+D)|}}
Now assume that $K_{\cF}+D$ is big. 
We consider the contraction morphism associated with ${\rm Null}\,P(D)$:
\begin{equation}
\sigma:(X,\cF,D)\to(Y,\cG,D_Y),
\end{equation}
where $\sigma_*\cO_X=\cO_Y$, $K_{\cG}=\sigma_*K_{\cF}$ and $D_Y=\sigma_*D$.
Then $K_{\cG}+D_Y$ is \emph{numerically ample}, i.e., $(K_{\cG}+D_Y)^2>0$ and $(K_{\cG}+D_Y)\cdot C>0$ for every irreducible curve $C$ on $Y$.

In this section, we study the boundedness of the adjoint divisor $K_{\cG}+D_Y$, 
using Tan's results on the effective behavior of multiple linear systems on surfaces \cite{Tan04}.

\subsection{Tan's result and its corollaries}
For two divisors $A$ and $T$, set
\begin{equation}
\mathfrak{M}(A,T):=\frac{\left((K_X-T)\cdot A+2\right)^2}{4A^2}-\frac{(K_X-T)^2}{4}.
\end{equation}
\begin{theorem}[Tan]\label{them:Tan}
Let $A$ be a nef and big divisor on a smooth projective surface $X$, let $T$ be any fixed divisor, and let $k$ be a nonnegative integer. 
Assume that either $n>k +\mathfrak{M}(A, T)$, or $n\geq\mathfrak{M}(A, T)$ when $k=0$ and $T\sim K_X +\lambda A$ for some $\lambda\in\bQ$.
Suppose $|nA + T|$ is not $(k-1)$-very ample, i.e., there is a zero dimensional subscheme $\Delta$ on $X$ with minimal degree
$\deg \Delta\leq k$ such that it does not give independent conditions on $|nA + T|$. 
Then there is an effective divisor $D\neq0$ containing $\Delta$ such that
\begin{equation}
\begin{cases}
D^2+K_X\cdot D-T\cdot D\geq -k,\\
D\cdot A=0.
\end{cases}
\end{equation}
(See \cite[Theorem~1.2]{Tan04}.)
\end{theorem}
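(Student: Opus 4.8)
The plan is to prove this by a Reider-type argument built on the Serre construction and Bogomolov instability, with $\mathfrak{M}(A,T)$ serving precisely to close the arithmetic of the last step. Put $L:=nA+T-K_X$, so that $|nA+T|=|K_X+L|$, and let $\Delta$ be the given zero-dimensional subscheme of minimal degree $d:=\deg\Delta\le k$ (hence $d\ge1$) failing to impose independent conditions on $|K_X+L|$. First I would check that $\Delta$ satisfies the Cayley--Bacharach property with respect to $|K_X+L|$: by minimality every colength-one $\Delta'\subset\Delta$ imposes independent conditions, and comparing $h^0(I_{\Delta'}(K_X+L))$ with $h^0(I_\Delta(K_X+L))$ forces these to be equal, which is exactly that property. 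After the standard reduction to the case where $\Delta$ is curvilinear (a local complete intersection), the Serre construction yields a rank-two vector bundle $E$ with
\begin{equation}
0\to\cO_X\to E\to I_\Delta\otimes L\to0,\qquad c_1(E)=L,\quad c_2(E)=d.\notag
\end{equation}

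Next I would invoke Bogomolov instability. Applying Lemma \ref{lem:Hodgeindex} to $A$ and $K_X-T$ (legitimate since $A^2>0$) gives $A^2(K_X-T)^2\le\big((K_X-T)\cdot A\big)^2$, and feeding this into a completion of the square shows that the hypothesis $n>k+\mathfrak{M}(A,T)$ already forces $c_1(E)^2-4c_2(E)=L^2-4d>0$: indeed $4A^2\big(k+\mathfrak{M}(A,T)\big)$ exceeds $4A^2$ times the larger root of $L^2=4k$ by $(\sqrt{u}-2)^2\ge0$, where $u=\big((K_X-T)A\big)^2-A^2(K_X-T)^2+4kA^2$. Hence $E$ is unstable: there is a saturated $\cO_X(M)\hookrightarrow E$ with $\xi:=2M-L$ satisfying $\xi^2>0$ and $\xi\cdot H>0$ for ample $H$. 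Since $A$ is a limit of ample classes, $\xi\cdot A\ge0$, and $\xi\cdot A=0$ is impossible by Lemma \ref{lem:Hodgeindex} (it would give $A^2\xi^2\le0$), so $\xi\cdot A>0$.

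The composite $\cO_X(M)\hookrightarrow E\twoheadrightarrow I_\Delta\otimes L$ cannot vanish: otherwise $\cO_X(M)\hookrightarrow\cO_X$, forcing $M\cdot A\le0$ and $\xi\cdot A=2M\cdot A-L\cdot A\le-L\cdot A<0$ (here $L\cdot A=nA^2+(T-K_X)\cdot A>0$ for $n>\mathfrak{M}(A,T)$, again by Lemma \ref{lem:Hodgeindex}), contradicting $\xi\cdot A>0$. So the composite is injective, i.e. a nonzero section of $I_\Delta\otimes\cO_X(L-M)$; its divisor of zeros $D$ is effective, nonzero (as $\Delta\ne\emptyset$), and contains $\Delta$, and $M\equiv L-D$. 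By the Whitney formula $d=M\cdot D+\deg W$ for an effective zero-cycle $W$, so $D\cdot(L-D)=M\cdot D\le d\le k$ and $D^2=L\cdot D-d+\deg W\ge L\cdot D-k$; rewriting via $L=nA+T-K_X$ this becomes $D^2+K_X\cdot D-T\cdot D\ge n(A\cdot D)-k$, which is the first claimed inequality as soon as $A\cdot D=0$.

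The step I expect to be the real obstacle is proving $A\cdot D=0$, and this is where the exact shape of $\mathfrak{M}(A,T)$ is indispensable. Write $s=A^2$, $p=(K_X-T)\cdot A$, $q=(K_X-T)^2$, $a=A\cdot D\ge0$, and suppose $a\ge1$ for contradiction. From $\xi^2=(L-2D)^2=L^2-4\,D\cdot(L-D)\ge L^2-4k$, Lemma \ref{lem:Hodgeindex} applied to $A$ and $\xi$ gives $A^2\xi^2\le(A\cdot\xi)^2$, i.e. $s(n^2s-2np+q-4k)\le(ns-p-2a)^2$; expanding and cancelling the $n^2s^2$ and $2nps$ terms leaves
\begin{equation}
na\le\frac{(p+2a)^2-sq}{4s}+k.\notag
\end{equation}
If $a=1$ the right-hand side is exactly $k+\mathfrak{M}(A,T)$, contradicting $n>k+\mathfrak{M}(A,T)$. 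If $a\ge2$, combining this bound with the hypothesis and using $\tfrac{(p+2a)^2-(p+2)^2}{4s}=\tfrac{(a-1)(p+a+1)}{s}$ yields $n(a-1)<\tfrac{(a-1)(p+a+1)}{s}$, hence $ns<p+a+1$; on the other hand $\xi\cdot A>0$ reads $ns-p-2a>0$, i.e. $ns>p+2a$, and together $p+a+1>ns>p+2a$ forces $a<1$, a contradiction. Therefore $a=A\cdot D=0$, which gives both $DA=0$ and the first inequality. The special case $k=0$ with $T\sim K_X+\lambda A$ is treated directly by Riemann--Roch: the weaker threshold $n>\mathfrak{M}(A,T)$ ensures $H^2(nA+T)=0$, so $h^0(nA+T)=\chi(nA+T)>0$ and the conclusion is reached (or seen to be vacuous) without the $-k$ term.
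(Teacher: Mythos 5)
The paper does not prove this theorem; it is imported verbatim from Tan's paper \cite{Tan04}, so there is no internal proof to compare against. Your argument for $k\ge 1$ is the standard Reider-type proof (Cayley--Bacharach from minimality, Serre construction, Bogomolov instability, Hodge index) and, as far as I can check, the arithmetic is right: $k+\mathfrak{M}(A,T)$ does dominate the larger root of $L^2=4k$ by $(\sqrt{u}-2)^2/(4A^2)$, the identity $na\le\frac{(p+2a)^2-sq}{4s}+k$ is correct, the case $a=1$ hits the threshold exactly, and the case $a\ge2$ is correctly excluded by playing $ns<p+a+1$ against $\xi\cdot A>0$, i.e.\ $ns>p+2a$. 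This is essentially the route Tan himself takes, and the exact shape of $\mathfrak{M}(A,T)$ is indeed what makes the $a=1$ step close.

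The genuine gap is the case $k=0$, which the paper actually uses (Proposition \ref{prop:nA-2Z1bpf}(1) and Proposition \ref{prop:H1vanishing} invoke the theorem precisely to get $(-1)$-very ampleness, i.e.\ $H^1=0$). Here $\Delta=\emptyset$, your main argument does not start (there is no nonempty subscheme to run the Serre construction on), and your closing remark is a non sequitur: $n>\mathfrak{M}(A,T)$ plus $H^2(nA+T)=0$ gives only $h^0\ge\chi$, not $h^0=\chi$, and in any case the theorem's conclusion in this case is the existence of an effective $D\ne0$ with $DA=0$ and $D^2+K_XD-TD\ge0$ under the hypothesis $H^1(nA+T)\ne0$ --- nothing about $h^0$. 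The correct treatment is parallel to your main case: $H^1(K_X+L)\ne0$ gives, by Serre duality, a non-split extension $0\to\cO_X\to E\to \cO_X(L)\to0$ with $c_1(E)=L$ and $c_2(E)=0$; the threshold gives $L^2>0$, so $E$ is Bogomolov unstable, the destabilizing map $\cO_X(M)\to\cO_X(L)$ is nonzero by your same computation, its zero divisor $D$ is nonzero because the extension does not split, and the rest of your Hodge-index argument goes through with $k=0$. As stated in the paper the two alternative hypotheses coincide when $k=0$, so this case cannot be dodged; you need to supply this variant of the construction.
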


In Theorem~\ref{them:Tan}, we use the convention that a divisor $D$ (or the linear system $|D|$) is $(-1)$-very ample if $H^1(\cO_X(D))=0$, 
$0$-very ample if it is base-point-free, and $1$-very ample if it is very ample.

Denote by ${\rm Null}(A)=\{C\mid C\cdot A=0\}$ the exceptional locus of $A$.
Its connected components are contracted to normal surface singularities $Q_1,\dots,Q_s$. 
Let $E_{Q_i}$ be the exceptional locus over $Q_i$; then ${\rm Null}(A)=\bigcup_i E_{Q_i}$. 
Let $Z_{Q_i}$ be the \emph{(numerical) fundamental cycle} of $E_{Q_i}$, 
i.e., the minimal effective divisor supported on $E_{Q_i}$ such that 
$Z_{Q_i}\cdot C \le 0$ for every irreducible component $C$ of $E_{Q_i}$ (cf.~\cite[p.~84]{Reid}).

\medskip

Assume that each $Q_i$ is either a rational singularity or an elliptic Gorenstein singularity.
Set 
$p_a(Q):=p_a(Z_Q)$ and 
\begin{equation}
  \cZ_i:=\sum_{p_a(Q)=i}Z_Q,\quad i=0,1.
\end{equation}

The following lemma can be found in \cite[Sections~4.10, 4.12, and 4.21]{Reid}.
\begin{lemma}\label{lem:Reid}
Let $Q$ be a normal surface singularity.
\begin{itemize}
  \item[\rm(i)] If $Q$ is a rational singularity, then $p_a(Z_Q)=0$ and $p_a(D)\leq 0$ for every nonzero effective divisor $D$ supported on $E_Q$.
  
  \item[\rm(ii)] If $Q$ is an elliptic Gorenstein singularity, then $p_a(Z_Q)=1$ and $p_a(D)\leq 0$ for every nonzero effective divisor
$D\neq Z_Q$ supported on $E_Q$, and $\omega_{Z_Q}=\cO_{Z_Q}$.
\end{itemize}
\end{lemma}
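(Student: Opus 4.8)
The plan is to work on a good resolution $\pi\colon\widetilde X\to X$ of a neighbourhood of $Q$, with reduced exceptional divisor $E_Q=\bigcup_i E_i$; throughout I would use the adjunction formula $p_a(D)=1+\tfrac12\bigl(D^2+K_{\widetilde X}\cdot D\bigr)$ for effective $D$ supported on $E_Q$ (equivalently $\chi(\cO_D)=1-p_a(D)$), the identity $p_a(D+E_i)=p_a(D)+p_a(E_i)+D\cdot E_i-1$, and the characterisation of the fundamental cycle $Z_Q$ as the smallest effective $Z>0$ with $Z\cdot E_i\le 0$ for all $i$. Part (i) I would prove by a direct vanishing argument; part (ii) by invoking Laufer's description of minimally elliptic (= elliptic Gorenstein) singularities, the final assertion of (ii) then being formal.

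For (i): I would first show $H^1(\cO_D)=0$ for every effective $D$ supported on $E_Q$. Any such $D$ is dominated by some $nE_Q$, and $H^1(\cO_{nE_Q})\twoheadrightarrow H^1(\cO_D)$ because the kernel of $\cO_{nE_Q}\to\cO_D$ is supported on the one-dimensional $E_Q$ and so has vanishing $H^2$; hence it suffices to treat $D=nE_Q$. The conormal sequences $0\to I^n/I^{n+1}\to\cO_{(n+1)E_Q}\to\cO_{nE_Q}\to 0$, with $I$ the ideal of $E_Q$, show that the transition maps of the inverse system $\{H^1(\cO_{nE_Q})\}_n$ are surjective, while the theorem on formal functions identifies its inverse limit with the completion of $(R^1\pi_*\cO_{\widetilde X})_Q$, which vanishes because $Q$ is rational; an inverse system with surjective transitions and zero limit has all terms zero. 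Then $\chi(\cO_D)=h^0(\cO_D)\ge 1$, so $p_a(D)\le 0$. Applying this at $D=Z_Q$ and combining with the general lower bound $p_a(Z_Q)\ge 0$ — which follows from the monotonicity of $p_a$ along a Laufer computation sequence $0<E_{i_1}<E_{i_1}+E_{i_2}<\cdots<Z_Q$, using the displayed identity together with $p_a(E_i)\ge 0$ and $Z_j\cdot E_{i_{j+1}}\ge 1$ — gives $p_a(Z_Q)=0$.

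For (ii): an elliptic Gorenstein singularity is a minimally elliptic singularity in Laufer's sense, and I would take from his theory the facts that $p_a(Z_Q)=1$, that $Z_Q$ — a local complete intersection curve in $\widetilde X$, hence Gorenstein — satisfies $h^0(\cO_{Z_Q})=1$, and that $Z_Q$ is the minimal effective cycle supported on $E_Q$ with $p_a\ge 1$. The last sentence of the lemma is then immediate: $\omega_{Z_Q}=\cO_{\widetilde X}(K_{\widetilde X}+Z_Q)|_{Z_Q}$ is invertible of degree $2p_a(Z_Q)-2=0$ on $Z_Q$, Serre duality on this Gorenstein curve gives $h^0(\omega_{Z_Q})=h^1(\cO_{Z_Q})=p_a(Z_Q)-h^0(\cO_{Z_Q})+1=1$, and a degree-zero invertible sheaf carrying a nowhere-vanishing section is trivial. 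For the inequality $p_a(D)\le 0$ when $D\ne Z_Q$: if $Z_Q\not\le D$, then $p_a(D)\le 0$ by the minimality of the elliptic cycle; if $D\ge Z_Q$ with $D\ne Z_Q$, write $D=Z_Q+R$ with $R>0$ supported on $E_Q$, and from $0\to\cO_R(-Z_Q)\to\cO_D\to\cO_{Z_Q}\to 0$ with $\chi(\cO_{Z_Q})=1-p_a(Z_Q)=0$ deduce $\chi(\cO_D)=\chi(\cO_R(-Z_Q))=1-p_a(R)-Z_Q\cdot R$; since $Z_Q\cdot E_i\le 0$ for all $i$, $Z_Q^2\le -1$ by negative-definiteness, and $p_a(R)\le p_a(Z_Q)=1$ by the universal bound, a short induction peeling off at each stage a component of $R$ met least by the rest yields $\chi(\cO_D)\ge 1$, i.e. $p_a(D)\le 0$.

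The substantive inputs I would cite rather than reprove are the two combinatorial cornerstones underlying both parts: Artin's bound $p_a(D)\le p_a(Z_Q)$ for every effective $D$ supported on $E_Q$, proved via computation sequences, and Laufer's classification of the exceptional configurations of minimally elliptic singularities — this is what rigidifies the locus $\{\,D:p_a(D)=1\,\}$ down to the single cycle $Z_Q$ and makes the case split in (ii) close. Everything else is bookkeeping with adjunction and the three-term exact sequences above.
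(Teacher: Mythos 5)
Your argument is correct, but it is worth noting that the paper does not actually prove this lemma: it simply cites \cite[Sections 4.10, 4.12, 4.21]{Reid}, so what you have written is essentially a reconstruction of the material being outsourced. Your part (i) is Artin's characterization of rational singularities (formal functions plus surjectivity of the transition maps gives $H^1(\cO_D)=0$, hence $p_a(D)\le 0$; the computation-sequence monotonicity gives $p_a(Z_Q)\ge 0$), and your part (ii) is Laufer's theory of minimally elliptic singularities; both match the cited reference, so the ``different route'' is really just ``supplying the proof the paper omits.'' Two small points deserve tightening. First, in the triviality of $\omega_{Z_Q}$ you pass from ``degree zero with $h^0=1$'' to ``carries a nowhere-vanishing section,'' which presupposes the content of the standard lemma you need (a numerically trivial invertible sheaf on the fundamental cycle with a nonzero section is trivial --- this is \cite[Proposition 4.14]{Reid}, which the paper itself invokes elsewhere); a section of a line bundle on a non-reduced, reducible cycle can a priori vanish along components, and ruling this out uses negative definiteness, so you should cite or prove that lemma rather than assert nonvanishing. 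Second, in the case $D=Z_Q+R$ with $R>0$, no induction is needed: from $\chi(\cO_D)=1-p_a(R)-Z_Q\cdot R$ you are done immediately when $p_a(R)\le 0$ (since $Z_Q\cdot R\le 0$), and when $p_a(R)=1$ the minimality of the elliptic cycle forces $R\ge Z_Q$, whence $Z_Q\cdot R\le Z_Q^2\le -1$ and again $\chi(\cO_D)\ge 1$; stating this two-case check explicitly is cleaner than the vague ``peeling off'' induction. With those clarifications the proof is complete and consistent with what the paper relies on.
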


\begin{proposition}\label{prop:nA-2Z1bpf}
Let $A$ be a nef and big divisor on a smooth projective surface $X$. 
Write ${\rm Null}(A)=\bigcup_{i=1}^s E_{Q_i}$, where each $Q_i$ is either a rational singularity or an elliptic Gorenstein singularity.
\begin{itemize}
\item[(1)] If $n>\mathfrak{M}(A,0)$, then $|nA-2\cZ_1|$ is $(-1)$-very ample, i.e.,
\[
H^1(X,\cO(nA-2\cZ_1))=0.
\]

\item[(2)] If $n>1+\mathfrak{M}(A,0)$, then $|nA-2\cZ_1|$ is  $0$-very ample, i.e., base-point-free. 
In particular, it defines a morphism
\[
\Phi_n=\Phi_{|nA-2\cZ_1|}:X\longrightarrow \bP^N.
\]
\end{itemize}
\end{proposition}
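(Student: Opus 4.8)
The plan is to apply Tan's Theorem~\ref{them:Tan} to the nef and big divisor $A$ with the auxiliary choice $T := -2\cZ_1$, after first observing that this $T$ leaves the controlling quantity $\mathfrak{M}$ unchanged. The first step is to record the identities $\cZ_1\cdot A = 0$ and $K_X\cdot\cZ_1 + \cZ_1^2 = 0$. The former holds because each numerical fundamental cycle $Z_{Q_i}$ is supported on ${\rm Null}(A)$; the latter because the cycles $Z_Q$ with $p_a(Q)=1$ have pairwise disjoint supports (they sit over distinct connected components of ${\rm Null}(A)$) and each satisfies $K_X Z_Q + Z_Q^2 = 2p_a(Z_Q)-2 = 0$. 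Hence $(K_X+2\cZ_1)\cdot A = K_X\cdot A$ and $(K_X+2\cZ_1)^2 = K_X^2$, so $\mathfrak{M}(A,-2\cZ_1) = \mathfrak{M}(A,0)$, which is why the hypotheses of (1) and (2) are precisely the ones Tan requires.

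The core of the argument is the estimate: \emph{for every nonzero effective divisor $D$ with $D\cdot A = 0$ one has $D^2 + K_X D + 2\cZ_1 D \leq -2$.} To see this, write $D = \sum_i D_i$ with $D_i$ supported on the connected component $E_{Q_i}$ of ${\rm Null}(A)$; since these components are pairwise disjoint, $D^2 + K_X D + 2\cZ_1 D = \sum_i\bigl(D_i^2 + K_X D_i + 2 Z_{Q_i} D_i\bigr)$, where we set $Z_{Q_i}:=0$ if $Q_i$ is a rational singularity. It therefore suffices to bound each nonzero summand by $-2$. If $Q_i$ is rational, Lemma~\ref{lem:Reid}(i) gives $p_a(D_i)\leq 0$, so $D_i^2 + K_X D_i = 2p_a(D_i)-2 \leq -2$ and the $Z_{Q_i}$-term is absent. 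If $Q_i$ is elliptic Gorenstein and $D_i\neq Z_{Q_i}$, then Lemma~\ref{lem:Reid}(ii) gives $p_a(D_i)\leq 0$, hence $D_i^2 + K_X D_i \leq -2$, while $Z_{Q_i}\cdot D_i \leq 0$ by the defining property of the numerical fundamental cycle, so the sum is $\leq -2$. Finally, if $D_i = Z_{Q_i}$ with $Q_i$ elliptic Gorenstein, then $D_i^2 + K_X D_i = 2p_a(Z_{Q_i})-2 = 0$ while $2 Z_{Q_i} D_i = 2 Z_{Q_i}^2 \leq -2$ by negative definiteness of the intersection form on $E_{Q_i}$. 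This proves the estimate.

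Both assertions then follow by contradiction. For (1), apply Theorem~\ref{them:Tan} with $k=0$ and $T=-2\cZ_1$: under $n > \mathfrak{M}(A,0) = \mathfrak{M}(A,-2\cZ_1)$, if $|nA-2\cZ_1|$ failed to be $(-1)$-very ample we would obtain a nonzero effective $D$ with $D\cdot A = 0$ and $D^2 + K_X D + 2\cZ_1 D \geq 0$, contradicting the estimate; hence $H^1(X,\cO_X(nA-2\cZ_1)) = 0$. For (2), apply the theorem with $k=1$ and the same $T$: under $n > 1+\mathfrak{M}(A,0)$, if $|nA-2\cZ_1|$ were not $0$-very ample --- in particular if it were empty or had a base point --- Tan would yield a nonzero effective $D$ with $D\cdot A = 0$ and $D^2 + K_X D + 2\cZ_1 D \geq -1$, again contradicting the estimate. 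Thus $|nA-2\cZ_1|$ is base-point-free and defines the morphism $\Phi_n$.

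The only point requiring care is the elliptic Gorenstein case of the estimate, where the term $2 Z_{Q_i} D_i$ could a priori be positive. What makes it harmless is the combination of three facts: $Z_Q\cdot\Gamma\leq 0$ on every component $\Gamma$ of $E_Q$ (so $Z_Q\cdot D_i\leq 0$ for any effective $D_i$ supported on $E_Q$), that $D_i\neq Z_{Q_i}$ forces $p_a(D_i)\leq 0$ via Lemma~\ref{lem:Reid}(ii), and $Z_{Q_i}^2<0$ by negative definiteness; everything else reduces to adjunction and the disjointness of the components of ${\rm Null}(A)$.
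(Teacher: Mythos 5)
Your proof is correct and follows essentially the same route as the paper's: apply Tan's theorem with $T=-2\cZ_1$, decompose a putative obstruction $D=\sum_i D_i$ over the connected components of ${\rm Null}(A)$, and use Lemma \ref{lem:Reid} to show $D_i^2+K_XD_i+2\cZ_1D_i\leq-2$ in each case. Your additional verification that $\mathfrak{M}(A,-2\cZ_1)=\mathfrak{M}(A,0)$ (via $\cZ_1\cdot A=0$ and $K_X\cZ_1+\cZ_1^2=0$) is a detail the paper asserts without proof, and it checks out.
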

\begin{proof}
With the notation of Theorem~\ref{them:Tan}, write $D=\sum_i D_i>0$, where each $D_i$ is an effective divisor supported on $E_{Q_i}$. 
By Lemma~\ref{lem:Reid}, for each $D_i\neq0$, we have
\[
\begin{cases}
D_i^2+K_X\cdot D_i=0,\quad \cZ_1\cdot D_i=Z_{Q_i}^2\leq -1, 
& \text{if } D_i=Z_{Q_i} \text{ and } p_a(Q_i)=1,\\
D_i^2+K_X\cdot D_i\leq -2,\quad \cZ_1\cdot D_i\leq 0, 
& \text{otherwise}.
\end{cases}
\]
In particular, in all cases we obtain
\begin{equation}\label{ineq:D2-KXD+2ZD}
D_i^2+K_X\cdot D_i+2\cZ_1\cdot D_i\leq -2.
\end{equation}
Since $D=\sum_i D_i\neq 0$ and $D_i\cdot D_j=0$ for $i\neq j$, summing \eqref{ineq:D2-KXD+2ZD} gives
\[
D^2+K_X\cdot D+2\cZ_1\cdot D
= \sum_i \big(D_i^2+K_X\cdot D_i+2\cZ_1\cdot D_i\big)
\leq -2.
\]
Therefore, by Theorem~\ref{them:Tan}, the linear system $|nA-2\cZ_1|$ is $(-1)$-very ample if
\[
n>\mathfrak{M}(A,-2\cZ_1)=\mathfrak{M}(A,0),
\]
and it is $0$-very ample if $n>1+\mathfrak{M}(A,0)$.
\end{proof}

The following proposition summarizes properties of $\Phi_n$, with statements (1)--(4) analogous to \cite[Theorem~4.8]{Tan04}.
\begin{proposition}\label{prop:nA-2Z1}
Under the assumption of Proposition \ref{prop:nA-2Z1bpf}, if  
\[
n>2+\mathfrak{M}(A,0),
\]
then 
\begin{itemize}
  \item[(1)] $\Phi_n$ is a birational morphism onto a projective surface $\Sigma_n=\Phi_{n}(X)$.
  \item[(2)] On $X\setminus {\rm Null}(A)$, $\Phi_n$ is an isomorphism.
  \item[(3)] $\Phi_n$ contracts each curve $E_Q$ with $p_a(Q)=0$ to a (singular) point of $\Sigma_n$.
  \item[(4)] 
  $\Phi_n(E_{Q_{\alpha}})\neq\Phi_n(E_{Q_{\beta}})$ if $p_a(Q_{\alpha})=p_a(Q_{\beta})=0$ and $\alpha\neq\beta$.
  \item[(5)] On $X\setminus \cZ_1$, $\Phi_n$ is just the contraction morphism of the set $\bigcup_{p_a(Q)=0}E_Q$.
\end{itemize}
\end{proposition}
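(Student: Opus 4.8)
The plan is to feed Tan's theorem (Theorem \ref{them:Tan}) with the fixed divisor $T=-2\cZ_1$ and separation order $k\le 2$, exactly as in \cite[Theorem 4.8]{Tan04} for items (1)--(4), and then to extract the behaviour near $\cZ_1$ needed for (5) from the case-by-case estimate already established in the proof of Proposition \ref{prop:nA-2Z1bpf}. Since $n>2+\mathfrak{M}(A,0)>1+\mathfrak{M}(A,0)$, Proposition \ref{prop:nA-2Z1bpf}(2) makes $L:=nA-2\cZ_1$ base-point-free, so $\Phi_n=\Phi_{|L|}$ is a genuine morphism; put $\Sigma_n=\Phi_n(X)$. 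Recall that $\cZ_1(K_X+\cZ_1)=0$ by adjunction (the $Z_Q$'s are disjoint and each has $p_a=1$), hence $\mathfrak{M}(A,-2\cZ_1)=\mathfrak{M}(A,0)$, so the hypothesis $n>k+\mathfrak{M}(A,-2\cZ_1)$ of Theorem \ref{them:Tan} holds for every $k\le 2$.

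The engine of the proof is the following local statement: \emph{if $\Delta\subset X$ is a $0$-dimensional subscheme of degree $\le 2$ that does not impose independent conditions on $|L|$, then $\mathrm{Supp}\,\Delta$ lies in a single connected component $E_{Q_i}$ of $\mathrm{Null}(A)$, and moreover either $p_a(Q_i)=0$ or $\mathrm{Supp}\,\Delta\subseteq\mathrm{Supp}\,\cZ_1$.} Indeed, Theorem \ref{them:Tan} produces a nonzero effective $D\supseteq\Delta$ with $DA=0$ and $D^2+K_XD+2\cZ_1D\ge-2$; since $A$ is nef and big, $DA=0$ forces $\mathrm{Supp}\,D\subseteq\mathrm{Null}(A)$, so $D=\sum_iD_i$ with $D_i$ supported on the pairwise disjoint $E_{Q_i}$, and inequality \eqref{ineq:D2-KXD+2ZD} gives $D_i^2+K_XD_i+2\cZ_1D_i\le-2$ for each nonzero summand. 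Summing over $i$ then forces exactly one $D_i\ne 0$, attaining equality. Reading off the equality case of \eqref{ineq:D2-KXD+2ZD} via Lemma \ref{lem:Reid} leaves precisely: $p_a(Q_i)=0$; or $p_a(Q_i)=1$ with $D_i=Z_{Q_i}$ and $Z_{Q_i}^2=-1$; or $p_a(Q_i)=1$ with $D_i\subsetneq E_{Q_i}$, $p_a(D_i)=0$ and $\cZ_1D_i=0$. In the latter two subcases $\mathrm{Supp}\,\Delta\subseteq\mathrm{Supp}\,D_i\subseteq E_{Q_i}=\mathrm{Supp}\,Z_{Q_i}\subseteq\mathrm{Supp}\,\cZ_1$, which proves the claim.

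Items (1)--(4) then follow as in \cite{Tan04}. No $D$ as above can contain a point of $X\setminus\mathrm{Null}(A)$, so $\Phi_n$ separates every pair of points and every tangent vector of $X\setminus\mathrm{Null}(A)$ and glues no such point to anything; thus $\Phi_n$ restricts to a proper, injective, unramified morphism on $X\setminus\mathrm{Null}(A)$, i.e. an isomorphism onto the open set $\Sigma_n\setminus\Phi_n(\mathrm{Null}(A))$, which is (2), and being generically injective on a surface it is birational, which is (1). If $C$ is a component of $E_Q$ with $p_a(Q)=0$, then $L\cdot C=nAC-2\cZ_1C=0$, since $AC=0$ and $E_Q$, as a connected component of $\mathrm{Null}(A)$ disjoint from the $E_{Q'}$ with $p_a(Q')=1$, satisfies $\cZ_1C=0$; hence $\Phi_n$ contracts the connected curve $E_Q$ to a point, giving (3). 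Finally, an identity $\Phi_n(E_{Q_\alpha})=\Phi_n(E_{Q_\beta})$ with $p_a(Q_\alpha)=p_a(Q_\beta)=0$ and $\alpha\ne\beta$ would, from a two-point subscheme $\Delta$, produce a $D$ with two distinct nonzero components $D_\alpha,D_\beta$, contradicting uniqueness; this is (4).

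Item (5) is the preceding analysis restricted to the open set $V:=X\setminus\mathrm{Supp}\,\cZ_1$, which contains all $E_Q$ with $p_a(Q)=0$. By (3) each such $E_Q$ is contracted; conversely, if a length-$\le 2$ subscheme $\Delta\subseteq V$ is non-separated, the local claim forces $\mathrm{Supp}\,\Delta$ onto a single $E_{Q_i}$ with $p_a(Q_i)=0$ (the two other subcases would put $\mathrm{Supp}\,\Delta$ inside $\mathrm{Supp}\,\cZ_1$, impossible in $V$), so $\Delta$ already lies on a contracted rational configuration. Likewise, any point of $\mathrm{Null}(A)\cap V$ lies on some $E_{Q_i}$ with $p_a(Q_i)=0$, so $\Phi_n$ identifies no point of $V\setminus\bigcup_{p_a(Q)=0}E_Q$ with any other point and is unramified there. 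Hence on $V$ the morphism $\Phi_n$ is injective and an immersion off $\bigcup_{p_a(Q)=0}E_Q$ and contracts those disjoint rational configurations to distinct points by (3)--(4); it is therefore the contraction morphism of $\bigcup_{p_a(Q)=0}E_Q$, as claimed. The hard part will be the equality analysis of \eqref{ineq:D2-KXD+2ZD} for elliptic Gorenstein components: identifying the subcases $D_i=Z_{Q_i}$ with $Z_{Q_i}^2=-1$ and $D_i\subsetneq E_{Q_i}$ with $p_a(D_i)=0$, $\cZ_1D_i=0$, and checking that in each the offending subscheme is forced into $\mathrm{Supp}\,\cZ_1$ because $\mathrm{Supp}\,Z_{Q_i}=E_{Q_i}\subseteq\mathrm{Supp}\,\cZ_1$ — once this is secured, (1)--(4) reproduce \cite{Tan04} verbatim.
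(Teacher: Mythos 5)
Your treatment of (1)--(4) is essentially sound. For (4) you take a more direct route than the paper: you apply Theorem \ref{them:Tan} with $k=2$ to a two-point subscheme having one point on each of $E_{Q_{\alpha}}$ and $E_{Q_{\beta}}$, and note that the resulting divisor $D$ would need two nonzero components, while the inequality \eqref{ineq:D2-KXD+2ZD} for each nonzero $D_i$ together with $D^2+K_XD+2\cZ_1D\geq-2$ permits only one. The paper instead separates $E_{Q_{\alpha}}$ from $E_{Q_{\beta}}$ cohomologically, proving $H^1(nA-2\cZ_1-Z_{Q_{\alpha}}-Z_{Q_{\beta}})=0$ and using the restriction sequence onto $\cO_{Z_{Q_{\alpha}}}\oplus\cO_{Z_{Q_{\beta}}}$; both arguments are valid and yours is arguably shorter, provided one reads Tan's theorem as applying to the chosen degree-$2$ subscheme (base-point-freeness rules out non-separated subschemes of degree $1$, so degree $2$ is indeed minimal).

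The genuine gap is in (5). You conclude that $\Phi_n|_{X\setminus\cZ_1}$ ``is therefore the contraction morphism'' from the facts that it is injective and unramified off $\bigcup_{p_a(Q)=0}E_Q$ and contracts each such $E_Q$ to a distinct point. That is only the set-theoretic statement. The contraction morphism $\sigma$ of a negative-definite configuration is characterized by $\sigma_*\cO_X=\cO_Y$ with $Y$ normal, and a morphism can be a bijection onto its image without inducing isomorphisms of local rings at the images of the contracted curves: a priori $\Sigma_n$ may fail to be normal at $q=\Phi_n(E_Q)$, i.e.\ $(\cO_{\Sigma_n})_q$ may be a proper subring of $\big((\Phi_n)_*\cO_X\big)_q$, in which case $\Phi_n$ is the contraction followed by a finite bijective, non-isomorphic map. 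Your separation analysis cannot detect this: a length-$2$ subscheme supported at a single point of $E_Q$ with $p_a(Q)=0$ is permitted to be non-separated by your local claim, so no control on $\mathfrak{m}_q/\mathfrak{m}_q^2$ is obtained. The paper closes exactly this point: it uses $H^1(nA-2\cZ_1-2Z_Q)=0$ to get a surjection $H^0\big(\cO(nA-2\cZ_1)\otimes\cI_{Z_Q}\big)\twoheadrightarrow H^0(\cI_{Z_Q}/\cI_{Z_Q}^2)$, identifies $H^0(\cI_{Z_Q}/\cI_{Z_Q}^2)$ with the Zariski cotangent space $\mathfrak{m}'/(\mathfrak{m}')^2$ of the rational singularity obtained by contracting $E_Q$, and concludes via Nakayama that the local ring of $\Sigma_n$ at $q$ equals the full pushforward. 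Without this (or an equivalent) argument, (5) is not proved; and the ``hard part'' you flag at the end --- the equality analysis of \eqref{ineq:D2-KXD+2ZD} on elliptic components --- is not where the difficulty lies.
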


\begin{proof}
By Theorem~\ref{them:Tan} and Proposition~\ref{prop:nA-2Z1bpf}, the linear system $|nA-2\cZ_1|$ is base-point-free, and the locus where it fails to be very ample is contained in ${\rm Null}(A)$. 
Hence (1) and (2) follow.  
Since $(nA-2\cZ_1)\cdot Z_{Q_i}=0$ for $p_a(Q_i)=0$, (3) also holds.

\medskip

\noindent(4) Let $\cE=Z_{Q_{\alpha}}+Z_{Q_{\beta}}$, where $p_a(Q_{\alpha})=p_a(Q_{\beta})=0$ and $\alpha\neq\beta$. 
For any $D_i\neq 0$, we have $\cE\cdot D_i\le 0$, so by \eqref{ineq:D2-KXD+2ZD},
\[
D_i^2+K_X\cdot D_i+(2\cZ_1+\cE)\cdot D_i\le -2.
\]
Summing over $i$, we obtain
\[
D^2+K_X\cdot D+(2\cZ_1+\cE)\cdot D\le -2,
\]
where $D=\sum_i D_i>0$ as above. 
By Theorem~\ref{them:Tan}, $|nA-2\cZ_1-\cE|$ is $(-1)$-very ample if
\[
n>\mathfrak{M}(A,-2\cZ_1-\cE)
=2+\mathfrak{M}(A,0)-\frac{-Z_{Q_{\alpha}}^2-Z_{Q_{\beta}}^2}{4}.
\]
Under the assumption $n>2+\mathfrak{M}(A,0)$, this holds, hence $H^1(nA-2\cZ_1-\cE)=0$.

Consider the exact sequence
\begin{equation}\label{seq:nA-E-2Z}
0\to\cO(nA-2\cZ_1-\cE)\to\cO(nA-2\cZ_1)\to \cO_{\cE}(nA-2\cZ_1)\to0.
\end{equation}
Since $|nA-2\cZ_1|$ is base-point-free and $(nA-2\cZ_1)\cdot E=0$ for every component $E$ of $\bigcup_{p_a(Q)=0}E_Q$, 
a general member $G\in|nA-2\cZ_1|$ does not contain any such $E$, hence is disjoint from $\cE$. 
Thus
\[
\cO_{\cE}(nA-2\cZ_1)=\cO_{\cE}(G)=\cO_{\cE}
=\cO_{Z_{Q_{\alpha}}}\oplus\cO_{Z_{Q_{\beta}}}.
\]
Since $H^1(nA-2\cZ_1-\cE)=0$, \eqref{seq:nA-E-2Z} yields a surjection
\[
H^0(nA-2\cZ_1)\twoheadrightarrow 
H^0(\cO_{Z_{Q_{\alpha}}})\oplus H^0(\cO_{Z_{Q_{\beta}}}).
\]
Hence there exist $s_1,s_2\in H^0(nA-2\cZ_1)$ such that
\[
s_1|_{Z_{Q_\alpha}}\neq 0,\ s_1|_{Z_{Q_\beta}}=0,\quad
s_2|_{Z_{Q_\alpha}}=0,\ s_2|_{Z_{Q_\beta}}\neq 0.
\]
Thus $|nA-2\cZ_1|$ separates $E_{Q_{\alpha}}$ and $E_{Q_{\beta}}$, and hence $\Phi(E_{Q_{\alpha}})\neq\Phi(E_{Q_{\beta}})$.

\medskip

\noindent(5) It suffices to show that $\big((\Phi_n)_*\cO_X\big)_q=(\cO_{\Sigma_n})_q$ for any $q\in \Sigma_n\setminus\Phi_n(\cZ_1)$. 
By (2), it suffices to consider the case that $q=\Phi_n(E_Q)$ with $p_a(Q)=0$. 
Let $B=(\cO_{\Sigma_n})_q$ with maximal ideal $\mathfrak{m}$, and set $B'=\big((\Phi_n)_*\cO_X\big)_q$. 
Then $B\subset B'$ is a ring extension. 
By (4), there is a unique maximal ideal $\mathfrak{m}'$ of $B'$ containing $\mathfrak{m}$, so $(B',\mathfrak{m}')$ is local. 
It remains to show $B=B'$.

Since $H^1(\cO_{Z_Q})=0$ and $(nA-2\cZ_1)\cdot Z_Q=0$, it follows from \cite[Proposition~4.14]{Reid} that $\cO(nA-2\cZ_1)|_{Z_Q}=\cO_{Z_Q}$. 
Thus we have an exact sequence
\[
0\to\cO(nA-2\cZ_1)\otimes\cI_{Z_Q}^2\to \cO(nA-2\cZ_1)\otimes\cI_{Z_Q}\to\cI_{Z_Q}/\cI_{Z_Q}^2\to0.
\]
As in (4), $|nA-2\cZ_1-2Z_Q|$ is $(-1)$-very ample, hence
\[
H^1(\cO(nA-2\cZ_1)\otimes\cI_{Z_Q}^2)=H^1(nA-2\cZ_1-2Z_Q)=0.
\]
This yields a surjection
\[
H^0(\cO_X(nA-2\cZ_1)\otimes\cI_{Z_Q})\twoheadrightarrow H^0(\cI_{Z_Q}/\cI_{Z_Q}^2).
\]
On the other hand, since $Q$ is rational, we have
\[
H^0(\cI_{Z_Q}/\cI_{Z_Q}^2)\cong\mathfrak{m}'/(\mathfrak{m}')^2
\]
by \cite[III, Proposition~(3.8)]{BPV04} or \cite[Theorem~4.17]{Reid}. 
Since the map $H^0(\cO_X(nA-2\cZ_1)\otimes\cI_{Z_Q})\to \mathfrak{m}'/(\mathfrak{m}')^2$  factors through $\mathfrak{m}$, we obtain a surjection
\[
\mathfrak{m}\twoheadrightarrow\mathfrak{m}'/(\mathfrak{m}')^2.
\]
Thus $\mathfrak{m}'=\mathfrak{m}B'$ by \cite[Proposition~2.8]{AM69}, and hence $B'=B+\mathfrak{m}B'$, which implies $B'=B$ by \cite[Corollary~2.7]{AM69}.
\end{proof}

\begin{lemma}\label{lem:H1(O(mA))=0}
Under the assumptions of Proposition~\ref{prop:nA-2Z1bpf}, let $Z=Z_Q$ with $p_a(Q)=1$. 
If, in addition, $A|_Z$ is not torsion on $Z$, then for any $m>0$,
\[
H^0\bigl(Z,\cO_Z(mA)\bigr)=
H^1\bigl(\cO_{2Z}(mA)\bigr)=0.
\]
\end{lemma}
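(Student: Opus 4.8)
The plan is to reduce both vanishings to a Ramanujam‑type statement on the scheme $Z$, using that $Q$ is an elliptic Gorenstein singularity. First I collect the basic data. Since $Z=Z_Q$ is supported on $E_Q\subseteq{\rm Null}(A)$, every irreducible component $\Gamma$ of $Z$ satisfies $A\cdot\Gamma=0$, so $mA|_Z$ and $(-mA)|_Z$ are line bundles of degree $0$ on each component of $Z$. By Lemma \ref{lem:Reid}(ii) we have $p_a(Z)=1$ and $\omega_Z\cong\mathcal{O}_Z$, and since the intersection matrix of $E_Q$ is negative definite, $Z^2<0$; recall also that $Z\cdot\Gamma\le 0$ for every component $\Gamma$, by the definition of the numerical fundamental cycle. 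Finally, as $Z$ is a Cartier divisor on the smooth surface $X$ it is Gorenstein, so Serre duality on $Z$ applies with dualizing sheaf $\omega_Z\cong\mathcal{O}_Z$.

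The single input I will invoke is the standard consequence of numerical connectedness of the fundamental cycle (cf. \cite[\S4]{Reid}): if $L$ is a line bundle on $Z$ with $L\cdot\Gamma\le 0$ for every component $\Gamma$ of $Z$, then $h^0(Z,L)\le 1$, with equality if and only if $L\cong\mathcal{O}_Z$. When $Z$ is reduced --- which is exactly the situation in all applications here, where $Z$ is a cycle of smooth rational curves or a nodal rational curve --- this is elementary: a nonzero section of a line bundle of non‑positive degree on a reduced irreducible component either vanishes identically there or is nowhere zero, and numerical connectedness propagates the conclusion across $Z$.

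Granting this input, I apply it to $L=mA|_Z$: this bundle has degree $0$ on every component and is not isomorphic to $\mathcal{O}_Z$ because $A|_Z$ is not a torsion divisor, so $h^0(\mathcal{O}_Z(mA))=0$, which is the first assertion. For the cohomology of $2Z$ I use the exact sequence of $\mathcal{O}_{2Z}$‑modules
\begin{equation*}
0\longrightarrow \mathcal{O}_X(mA-Z)|_Z\longrightarrow \mathcal{O}_{2Z}(mA)\longrightarrow \mathcal{O}_Z(mA)\longrightarrow 0,
\end{equation*}
coming from $\mathcal{I}_{Z\subset 2Z}\cong\mathcal{O}_X(-Z)|_Z$. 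By Serre duality on $Z$ and $\omega_Z\cong\mathcal{O}_Z$ one has $h^1(\mathcal{O}_Z(mA))=h^0(\mathcal{O}_Z(-mA))$, and applying the input again to $L=(-mA)|_Z$ (degree $0$ on each component, non‑trivial) gives $h^1(\mathcal{O}_Z(mA))=0$. Hence the long exact sequence reduces $H^1(\mathcal{O}_{2Z}(mA))=0$ to $H^1(\mathcal{O}_X(mA-Z)|_Z)=0$; by Serre duality this group is dual to $H^0(\mathcal{O}_X(Z-mA)|_Z)$, and $\mathcal{O}_X(Z-mA)|_Z$ has degree $(Z-mA)\cdot\Gamma=Z\cdot\Gamma\le 0$ on every component $\Gamma$, while it is not isomorphic to $\mathcal{O}_Z$ since $Z^2<0$ forces a strict inequality on some component. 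The input then yields $H^0(\mathcal{O}_X(Z-mA)|_Z)=0$, which finishes the proof.

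The step requiring the most care is the input lemma on anti‑nef line bundles over $Z$: one must know $Z$ is numerically connected (true for fundamental cycles) and handle the possibly non‑reduced case, which is where \cite[\S4]{Reid} is genuinely used; in the reduced situation relevant to this paper the argument is elementary as indicated, and everything else is bookkeeping with Serre duality and the standard exact sequence relating $Z$ and $2Z$.
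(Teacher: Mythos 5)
Your proof is correct and follows essentially the same route as the paper: Serre duality on $Z$ via $\omega_Z\cong\mathcal{O}_Z$, the decomposition sequence $0\to\mathcal{O}_Z(mA-Z)\to\mathcal{O}_{2Z}(mA)\to\mathcal{O}_Z(mA)\to0$, and vanishing of $H^0$ for non-trivial line bundles of non-positive degree on the (numerically connected) fundamental cycle. If anything, you are more careful than the paper at two points: you explicitly justify $h^0(\mathcal{O}_Z(Z-mA))=0$ by the componentwise Ramanujam-type lemma rather than just citing $\deg_Z(Z-mA)=Z^2<0$, and you actually prove the asserted $H^0(\mathcal{O}_Z(mA))=0$ (the paper's written proof only records $h^1(\mathcal{O}_Z(mA))=h^0(\mathcal{O}_Z(-mA))=0$, though the same argument applies).
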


\begin{proof}
Since $\omega_Z \cong \cO_Z$ (Lemma~\ref{lem:Reid}), Serre duality gives
\[
h^1(\cO_Z(mA)) = h^0(\cO_Z(-mA)) = 0,
\]
as $A\cdot Z = 0$ and $A|_Z$ is non-torsion.

From the exact sequence (cf.~\cite[Ch.~II, Sec.~1, (4)]{BPV04})
\[
0 \to \cO_Z(mA-Z) \to \cO_{2Z}(mA) \to \cO_Z(mA) \to 0,
\]
the associated cohomology sequence gives
\[
H^1(\cO_Z(mA-Z)) \to H^1(\cO_{2Z}(mA)) \to H^1(\cO_Z(mA))=0.
\]
Since  $\deg_Z(Z-mA)=Z^2<0$ and  $\omega_Z \cong \cO_Z$, Serre duality yields 
\[
h^1(\cO_Z(mA-Z)) = h^0(\cO_Z(Z-mA)) = 0.
\] 
Thus $H^1(\cO_{2Z}(mA)) = 0$.
\end{proof}

\begin{proposition}\label{prop:H1vanishing}
Under the assumptions of Proposition~\ref{prop:nA-2Z1bpf}, assume in addition that 
$A|_Z$ is not torsion on $Z$ for any $Z=Z_Q$ with $p_a(Q)=1$. 
Then
\[
H^i\bigl(X,\cO_X(mA)\bigr)=0,\qquad i=1,2,
\]
for all $m>\mathfrak{M}(A,0)$.
\end{proposition}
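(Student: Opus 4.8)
The plan is to obtain the case $i=2$ by Serre duality together with a numerical comparison, and the case $i=1$ by combining Proposition~\ref{prop:nA-2Z1bpf}(1) with Lemma~\ref{lem:H1(O(mA))=0}.

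First I would treat $H^2(X,\cO(mA))$. By Serre duality it is dual to $H^0(X,\cO(K_X-mA))$, so it suffices to show that $K_X-mA$ is not effective. If $K_X-mA\sim E$ with $E\geq0$, then intersecting with the nef divisor $A$ gives $K_XA-mA^2=EA\geq0$, hence $m\leq K_XA/A^2$ (note $A^2>0$ since $A$ is nef and big). On the other hand, applying the Hodge index inequality (Lemma~\ref{lem:Hodgeindex}) to $K_X$ and $A$ gives $K_X^2A^2\leq(K_XA)^2$, from which a short computation yields
$$\mathfrak{M}(A,0)=\frac{(K_XA+2)^2}{4A^2}-\frac{K_X^2}{4}\ \geq\ \frac{(K_XA+2)^2-(K_XA)^2}{4A^2}=\frac{K_XA+1}{A^2}\ >\ \frac{K_XA}{A^2}.$$
Since $m>\mathfrak{M}(A,0)$ by hypothesis, this contradicts $m\leq K_XA/A^2$; hence $H^2(X,\cO(mA))=0$.

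For $H^1$, I would use the exact sequence
$$0\to\cO(mA-2\cZ_1)\to\cO(mA)\to\cO_{2\cZ_1}(mA)\to0.$$
By Proposition~\ref{prop:nA-2Z1bpf}(1) we have $H^1(X,\cO(mA-2\cZ_1))=0$ whenever $m>\mathfrak{M}(A,0)$, so the associated long exact sequence produces an injection $H^1(X,\cO(mA))\hookrightarrow H^1(\cO_{2\cZ_1}(mA))$, and it is enough to kill the right-hand group. Now $2\cZ_1=\sum_{p_a(Q)=1}2Z_Q$ is supported on pairwise disjoint connected components of ${\rm Null}(A)$, so $\cO_{2\cZ_1}(mA)\cong\bigoplus_{p_a(Q)=1}\cO_{2Z_Q}(mA)$ and $H^1(\cO_{2\cZ_1}(mA))=\bigoplus_{p_a(Q)=1}H^1(\cO_{2Z_Q}(mA))$. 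For each elliptic Gorenstein singularity $Q$ the hypothesis says $A|_{Z_Q}$ is not a torsion divisor, so Lemma~\ref{lem:H1(O(mA))=0} gives $H^1(\cO_{2Z_Q}(mA))=0$; summing, $H^1(X,\cO(mA))=0$.

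The argument is essentially formal once the earlier results are in place; the only points requiring care are verifying that the hypotheses of Proposition~\ref{prop:nA-2Z1bpf} and Lemma~\ref{lem:H1(O(mA))=0} are inherited (the rational/elliptic Gorenstein condition on the components of ${\rm Null}(A)$, and the non-torsion condition, which is precisely the extra assumption in the statement), and the small Hodge-index estimate comparing $\mathfrak{M}(A,0)$ with $K_XA/A^2$ that drives the $H^2$ vanishing. I do not expect any genuine obstacle beyond this bookkeeping; the disjointness of the $Z_Q$ is exactly what makes the direct-sum decomposition of $H^1(\cO_{2\cZ_1}(mA))$ legitimate.
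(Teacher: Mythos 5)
Your proposal is correct and follows essentially the same route as the paper: the $H^1$ vanishing via the exact sequence $0\to\cO(mA-2\cZ_1)\to\cO(mA)\to\cO_{2\cZ_1}(mA)\to0$ together with Proposition \ref{prop:nA-2Z1bpf}(1) and Lemma \ref{lem:H1(O(mA))=0}, and the $H^2$ vanishing via Serre duality plus the Hodge-index estimate showing $\mathfrak{M}(A,0)>K_XA/A^2$, hence $(K_X-mA)\cdot A<0$. Your explicit remark on the direct-sum decomposition over the disjoint cycles $Z_Q$ is a detail the paper leaves implicit, but the argument is the same.
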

\begin{proof}
Let $m>\mathfrak{M}(A,0)$. 
By Lemma~\ref{lem:H1(O(mA))=0} and Proposition~\ref{prop:nA-2Z1bpf}, we have
\begin{equation}\label{equ:H1=0}
H^1\bigl(X,\cO_X(mA-2\cZ_1)\bigr)
=
H^1\bigl(\cO_{2\cZ_1}(mA)\bigr)
=0.
\end{equation}
Consider the exact sequence
\[
0 \to \cO_X(mA-2\cZ_1)
\to \cO_X(mA)
\to \cO_{2\cZ_1}(mA)
\to 0.
\]
The associated long exact sequence in cohomology yields
\[
H^1\bigl(X,\cO_X(mA-2\cZ_1)\bigr)
\to
H^1\bigl(X,\cO_X(mA)\bigr)
\to
H^1\bigl(\cO_{2\cZ_1}(mA)\bigr),
\]
and hence $H^1\bigl(X,\cO_X(mA)\bigr)=0$ by \eqref{equ:H1=0}.

For $H^2$, by the Hodge index theorem (cf.~Lemma~\ref{lem:Hodgeindex}) we have
\[
\mathfrak{M}(A,0)-\frac{K_X\cdot A}{A^2}
=
\frac{(K_X\cdot A)^2-K_X^2A^2+4}{4A^2}
>0.
\]
Thus, for $m>\mathfrak{M}(A,0)$, we obtain $(K_X-mA)\cdot A<0$, and hence
\[
H^2\bigl(X,\cO_X(mA)\bigr)
=
H^0\bigl(X,\cO_X(K_X-mA)\bigr)
=0.
\]
\end{proof}

\subsection{Effective behavior of \texorpdfstring{$|m (K_{\cG}+D_Y)|$}{KG+DY}}

Set 
\begin{equation}
\alpha(D,\cF):=\left\lfloor\frac{\left(K_X\cdot A+2\right)^2}{4A^2}+\frac{9A^2+6K_X\cdot A}{4}\right\rfloor,
\end{equation}
where $A:=i(D,\cF) P(D)$ and $i(D,\cF)$ is the smallest positive integer $m$ such that $m P(D)$ is integral.

\begin{theorem}\label{them:boundednessofD}
Let $(X,\cF,D=D_{\Delta,\epsilon})$ be as in the setting of Theorem~\ref{thm:Null}, 
and assume that $K_{\cF}+D$ is big. 
Let $(Y,\cG,D_Y)$ be the canonical model of $(X,\cF,D)$.

If $m\ge i(D,\cF)\bigl(\alpha(D,\cF)+3\bigr)$ and $m$ is divisible by $i(D,\cF)$, then 
the linear system $|m(K_{\cG}+D_Y)|$ defines a rational map
\[
\phi_m \colon Y \dashrightarrow \mathbb{P}^N
= \mathbb{P} H^0\bigl(Y, m(K_{\cG}+D_Y)\bigr),
\]
which is an isomorphism onto its image outside the elliptic Gorenstein singularities.
\end{theorem}
\begin{proof}
  Similar to the claim in the proof of Theorem \ref{thm:eKXcanonical}, we can conclude that $3A+K_X$ is nef. 
  Thus, we have
  $$0\leq(3A+K_X)^2=K_X^2+9A^2+6A\cdot K_X,$$
  which implies
  \begin{equation}
  \frac{\left(K_X\cdot A+2\right)^2}{4A^2}+\frac{9A^2+6K_X\cdot A}{4}\geq \frac{\left(K_X\cdot A+2\right)^2}{4A^2}-\frac{K_X^2}{4}=\mathfrak{M}(A,0).\notag
  \end{equation}
  So
  \begin{equation}\label{ineq:M(A,0)}
  \alpha(D,\cF)=\left\lfloor\frac{\left(K_X\cdot A+2\right)^2}{4A^2}+\frac{9A^2+6K_X\cdot A}{4}\right\rfloor> \mathfrak{M}(A,0)-1. 
  \end{equation}

The statement now follows from Theorem~\ref{thm:Null} and Proposition~\ref{prop:nA-2Z1}.
\end{proof}

\begin{corollary}\label{coro:boundednessofeKX}
Let $(X,\cF)$ be a log minimal foliated surface (cf.~Definition~\ref{def:logminfoliation}).
Assume $\epsilon\in(0,\frac{1}{4})\cap\bQ$ and that $K_{\cF}+\epsilon K_X$ is big with Zariski decomposition $P(\epsilon)+N(\epsilon)$.
Let $(Y,\cG,\epsilon K_Y)$ be the canonical model of $(X,\cF,\epsilon K_X)$. 

If, in addition, $K_{\cF}$ is pseudo-effective, then there exists a positive integer $\mathfrak{n}$, depending only on $\epsilon$, $i(\cF)$ and ${\rm Vol}(K_{\cF}+\epsilon K_X)$, such that
$\mathfrak{n} (K_{\cG}+\epsilon K_Y)$ is very ample.
\end{corollary}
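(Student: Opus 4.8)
The plan is to upgrade the explicit--multiple statement of Corollary~\ref{coro:main2eKX} to an \emph{a priori bounded} multiple, by estimating $i(\epsilon K_X,\cF)$ and $\alpha(\epsilon K_X,\cF)$ separately in terms of $\epsilon$, $i(\cF)$ and $V:={\rm Vol}(K_\cF+\epsilon K_X)$.

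\emph{Reduction.} Since $\epsilon\in(0,\tfrac14)$, Corollary~\ref{coro:canmod0<e<1/4} shows that $Y$ has only rational quotient singularities, so with $A:=i(\epsilon K_X,\cF)\,P(\epsilon)$ we have $\cZ_1=0$ and ${\rm Null}(A)={\rm Exc}(\sigma)$; hence Theorem~\ref{them:boundednessofD} (equivalently the ``more precisely'' part of Corollary~\ref{coro:main2eKX}) gives that $L_0:=m\,(K_\cG+\epsilon K_Y)$ is a very ample Cartier divisor, where $m:=i(\epsilon K_X,\cF)\bigl(\alpha(\epsilon K_X,\cF)+3\bigr)$. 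As every positive integral multiple of a very ample Cartier divisor is again very ample, it suffices to produce a positive integer $\mathfrak n$, divisible by $m$ and bounded only in terms of $\epsilon$, $i(\cF)$, $V$; then $\mathfrak n(K_\cG+\epsilon K_Y)=\tfrac{\mathfrak n}{m}L_0$ is very ample.

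\emph{Bounding $i(\epsilon K_X,\cF)$ --- the main point.} Since $(X,\cF)$ is log minimal and $K_\cF$ is pseudo-effective, Theorem~\ref{mainthm:<1/4} with $D=0$ gives $K_\cF=P+N$, $N=\sum_j M(\Theta_j)$ a disjoint union of maximal $\cF$-chains, with $i(\cF)N$ integral and each $n_{\Theta_j}$ (the determinant of the intersection matrix of $\Theta_j$) dividing $i(\cF)$, since the last coefficient of $M(\Theta_j)$ equals $1/n_{\Theta_j}$ by \eqref{eq:gamma_iECA}. Applying Theorem~\ref{mainthm:<1/4} with $D=\epsilon K_X$, every maximal $(\epsilon K_X,\cF)$-chain $\Theta$ is, by the ${\rm Z}$-pattern of an $\cF$-chain, a prefix $\Gamma_1+\cdots+\Gamma_t$ of a unique maximal $\cF$-chain $\Theta_j=\Gamma_1+\cdots+\Gamma_r$. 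The crucial claim is: if $t<r$ then $n_\Theta:=[e_1,\dots,e_t]\le 1/\epsilon$. Indeed, maximality of $\Theta$ forces $\Theta+\Gamma_{t+1}$ not to be an $(\epsilon K_X,\cF)$-chain, so by Proposition~\ref{prop:DF-subchain}.(2) we get $\epsilon K_X\Gamma_{t+1}\ge M(\epsilon K_X,\Theta)\Gamma_{t+1}=\gamma_t>0$, hence $K_X\Gamma_{t+1}=e_{t+1}-2\ge1$; combining with $\gamma_t=\tfrac1{n_\Theta}\bigl(1-\sum_{k=1}^t\mu_k\,\epsilon K_X\Gamma_k\bigr)$ and $\sum_{k=1}^t\mu_k\,\epsilon K_X\Gamma_k<1$ (Propositions~\ref{prop:FchainisDF} and~\ref{prop:DF-subchain}) yields $\epsilon(e_{t+1}-2)\,n_\Theta\ge 1-\sum_k\mu_k\epsilon K_X\Gamma_k>0$, so $n_\Theta\le 1/\epsilon$. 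Now write $\epsilon=p/q$ in lowest terms and $M(\epsilon K_X,\Theta)=M(0,\Theta)-\epsilon\,E(K_X,\Theta)$; since $K_\cF\cdot\Gamma_k,\ K_X\cdot\Gamma_k\in\mathbb{Z}$, both $M(0,\Theta)$ and $E(K_X,\Theta)$ have coefficients with denominators dividing $n_\Theta$, where $n_\Theta\mid i(\cF)$ if $\Theta$ is a full maximal $\cF$-chain and $n_\Theta\le\lfloor1/\epsilon\rfloor$ otherwise. Hence the denominators of $N(\epsilon)=\sum M(\epsilon K_X,\Theta)$ all divide $L:=q\cdot i(\cF)\cdot\operatorname{lcm}(1,\dots,\lfloor1/\epsilon\rfloor)$, and since $P(\epsilon)=P+\epsilon K_X+\bigl(N-N(\epsilon)\bigr)$ with $P,N$ of denominator dividing $i(\cF)\mid L$, we conclude that $L\cdot P(\epsilon)$ is integral, i.e.\ $i(\epsilon K_X,\cF)\mid L=:I$, a bound depending only on $\epsilon$ and $i(\cF)$.

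\emph{Bounding $\alpha(\epsilon K_X,\cF)$, and conclusion.} Now $A^2=i(\epsilon K_X,\cF)^2V$ is a positive integer, so $1\le A^2\le I^2V$. From $P(\epsilon)^2=P(\epsilon)(K_\cF+\epsilon K_X)=P(\epsilon)K_\cF+\epsilon\,P(\epsilon)K_X$ with $P(\epsilon)K_\cF\ge 0$ (a nef class against a pseudo-effective one) we get $K_X\cdot P(\epsilon)\le\tfrac1\epsilon V$, hence $K_XA\le\tfrac I\epsilon V$; and $3A+K_X$ being nef (as in the proof of Theorem~\ref{them:boundednessofD}) gives $K_XA\ge-3A^2\ge-3I^2V$. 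Substituting these bounds, with $A^2\ge1$, into $\alpha(\epsilon K_X,\cF)=\bigl\lfloor\tfrac{(K_XA+2)^2}{4A^2}+\tfrac{9A^2+6K_XA}{4}\bigr\rfloor$ yields an explicit upper bound $\mathcal A=\mathcal A(\epsilon,i(\cF),V)$. Setting $\mathfrak n:=\bigl(I(\mathcal A+3)\bigr)!$, which depends only on $\epsilon$, $i(\cF)$, $V$, we have $m\le I(\mathcal A+3)$, so $m\mid\mathfrak n$, and by the Reduction step $\mathfrak n(K_\cG+\epsilon K_Y)$ is very ample. The one genuinely delicate ingredient is the estimate $n_\Theta\le1/\epsilon$ for proper--prefix $(\epsilon K_X,\cF)$-chains: without it the Hirzebruch--Jung determinants of the $\cF$-chains of $X$, and hence $i(\epsilon K_X,\cF)$, would not be controlled by $\epsilon$ and $i(\cF)$ alone; the remaining steps are routine numerical estimates.
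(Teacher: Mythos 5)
Your overall strategy coincides with the paper's: reduce to Theorem \ref{them:boundednessofD}, bound $K_X\cdot A$ from above by $\tfrac{1}{\epsilon}\,i(\epsilon K_X,\cF)\,V$ using $P(\epsilon)\cdot K_{\cF}\geq 0$, bound it from below via nefness of $3A+K_X$, and control $i(\epsilon K_X,\cF)$ by $\epsilon$ and $i(\cF)$ (a point the paper only asserts, so your attempt to justify it is welcome). However, the step you yourself single out as the crucial one is wrong. From the maximality of the proper prefix $\Theta=\Gamma_1+\cdots+\Gamma_t$ of $\Theta_j=\Gamma_1+\cdots+\Gamma_r$ you correctly derive
\begin{equation}
\epsilon(e_{t+1}-2)\,n_\Theta\;\geq\;1-\sum_{k=1}^t\mu_k\,\epsilon K_X\Gamma_k\;>\;0,\notag
\end{equation}
but this is a \emph{lower} bound on $n_\Theta$, so the conclusion ``$n_\Theta\leq 1/\epsilon$'' is a non sequitur. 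It is also false: take $\epsilon=\tfrac{1}{100}$ and a maximal $\cF$-chain with $r=1001$, $e_1=\cdots=e_{1000}=2$ and $e_{1001}=3$. Then $\sum_{k=1}^{1000}\mu_k\,\epsilon K_X\Gamma_k=0<1$ while $\sum_{k=1}^{1001}\mu_k\,\epsilon K_X\Gamma_k=\mu_{1001}\cdot\epsilon=\tfrac{1001}{100}\geq1$, so by Proposition \ref{prop:FchainisDF} the prefix $\Theta=\Gamma_1+\cdots+\Gamma_{1000}$ is a maximal $(\epsilon K_X,\cF)$-chain with $t<r$, yet $n_\Theta=[2,\dots,2]=1001\gg 1/\epsilon$. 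Consequently your modulus $L=q\cdot i(\cF)\cdot\operatorname{lcm}(1,\dots,\lfloor 1/\epsilon\rfloor)$ need not clear the denominators of $N(\epsilon)$ (arrange the length so that $n_\Theta$ is a prime exceeding $1/\epsilon$ and coprime to $q$ and $n_{\Theta_j}$).

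The gap is repairable, and more simply than what you attempted. Since the $\mu_i$ are strictly increasing along $\Theta_j$, one has $n_\Theta=\mu_{t+1}\leq\mu_{r+1}=n_{\Theta_j}$, and $n_{\Theta_j}$ divides $i(\cF)$ because the coefficient of the last component in $M(\Theta_j)$ equals $1/n_{\Theta_j}$ and $i(\cF)N$ is integral; here one invokes Theorem \ref{mainthm:<1/4} with $D=0$, which applies because $(X,\cF)$ is log minimal and $K_{\cF}$ is pseudo-effective. Hence every denominator occurring in $N(\epsilon)$ divides $q\cdot\operatorname{lcm}\bigl(1,\dots,i(\cF)\bigr)$, and with $P(\epsilon)=P+\epsilon K_X+(N-N(\epsilon))$ you get $i(\epsilon K_X,\cF)\mid q\cdot\operatorname{lcm}\bigl(1,\dots,i(\cF)\bigr)$, a bound depending only on $\epsilon$ and $i(\cF)$. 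With this substitution the remainder of your argument (the numerical bound on $\alpha(\epsilon K_X,\cF)$ and the factorial trick to handle divisibility) is correct and matches the paper's computation.
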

\begin{proof}
Let \(i=i(\epsilon K_X,\cF)\) and \(A=iP(\epsilon)\).
Similar to the claim in the proof of Theorem~\ref{thm:eKXcanonical}, we have \(K_X+3A\) is nef. 
In particular, \((K_X+3A)\cdot A \ge 0\), hence \(K_X\cdot A \ge -3A^2\).
On the other hand, since \(K_{\cF}\) is pseudo-effective and \(A\) is nef, we have \(A\cdot K_{\cF}\ge 0\). Thus
\[
K_X\cdot A
=
\frac{i}{\epsilon}P(\epsilon)\cdot\big(P(\epsilon)+N(\epsilon)-K_{\cF}\big)
=
\frac{i}{\epsilon}\big(P(\epsilon)^2 - P(\epsilon)\cdot K_{\cF}\big)
\le \frac{i}{\epsilon}P(\epsilon)^2
=
\frac{A^2}{\epsilon i}.
\]
Therefore,
\[
-3A^2 \le K_X\cdot A \le \frac{A^2}{\epsilon i}.
\]
It follows that
\[
\mathfrak{n}:=i\cdot\left\lfloor
\frac{(3\epsilon i+1)^2}{4\epsilon^2 i^2}A^2
+\frac{1}{A^2}
+\frac{1}{\epsilon i}
\right\rfloor+3i
\ge i\cdot (\alpha(\epsilon K_X,\cF)+3).
\]

Finally, note that \(i(\epsilon K_X,\cF)\) depends only on \(\epsilon\) and \(i(\cF)\), and
\[
A^2=i^2P(\epsilon)^2=i^2\,{\rm Vol}(K_{\cF}+\epsilon K_X).
\]
Thus \(\mathfrak n\) depends only on \(\epsilon\), \(i(\cF)\) and \({\rm Vol}(K_{\cF}+\epsilon K_X)\). 

The conclusion now follows from Theorem~\ref{them:boundednessofD}.
\end{proof}

Finally, we establish the following vanishing result based on Proposition~\ref{prop:H1vanishing}.

\begin{proposition}\label{prop:P(D)H1}
Let $\cF$ be a canonical foliation on a smooth surface $X$, and let 
$\Delta=\sum_{i=1}^l a_i C_i$, where $a_i\in\bQ\cap[0,1)$ and each $C_i$ is not $\cF$-invariant.
Assume that there exists no $\cF$-invariant $(-1)$-curve $\Gamma$ such that
$K_{\cF}\cdot\Gamma\le 0$ and $K_{\cF}\cdot\Gamma+2\Delta\cdot\Gamma\le 1$.

If $K_{\cF}+\Delta$ is big with Zariski decomposition $K_{\cF}+\Delta=P(\Delta)+N(\Delta)$, 
then for every integer 
$
m\ge i(\Delta,\cF)\bigl(\alpha(\Delta,\cF)+1\bigr)
$
divisible by $i(\Delta,\cF)$, we have
\[
H^i\bigl(X,mP(\Delta)\bigr)=0,\qquad i=1,2.
\]
Moreover,
\[
\dim H^0\bigl(X,m(K_{\cF}+\Delta)\bigr)
=
\frac{m^2}{2}\operatorname{Vol}(K_{\cF}+\Delta)
-\frac{m}{2}K_X\cdot P(\Delta)
+\chi(\cO_X).
\]
\end{proposition}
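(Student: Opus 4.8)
The plan is to reduce both assertions to the cohomology-vanishing result Proposition~\ref{prop:H1vanishing}, applied to the integral divisor $A := i(\Delta,\cF)\cdot P(\Delta)$. First I would check that $A$ is nef and big: $P(\Delta)$ is nef, being the positive part of a Zariski decomposition, and $P(\Delta)^{2} = {\rm Vol}(K_{\cF}+\Delta) > 0$ because $K_{\cF}+\Delta$ is big. Next, with $D := D_{\Delta,0} = \Delta$ the present hypotheses are exactly the $\epsilon = 0$ instance of those of Theorem~\ref{mainthm:<1/4}: conditions (1) and (2b) there become vacuous when $\epsilon = 0$ and the components of $\Delta$ are non-$\cF$-invariant (so that $\Delta\cdot\Gamma\ge 0$ for every $\cF$-invariant $\Gamma$), while (2a) reduces to the stated restriction on $\cF$-invariant $(-1)$-curves. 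Hence Theorem~\ref{thm:Null} applies with $\epsilon = 0$; together with the fact that ${\rm Exc}(\sigma) = {\rm Null}\,P(\Delta)$ when $K_{\cF}+\Delta$ is big, every connected component $Z = Z_{Q}$ of ${\rm Null}(A) = {\rm Null}(P(\Delta))$ is contracted by $\sigma$ to either a rational quotient singularity (Theorem~\ref{thm:Null}(1)--(3)) or an elliptic Gorenstein singularity (Theorem~\ref{thm:Null}(4)), which is the standing hypothesis of Propositions~\ref{prop:nA-2Z1bpf} and~\ref{prop:H1vanishing}.

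The remaining hypothesis of Proposition~\ref{prop:H1vanishing} is that $A|_Z$ is non-torsion for every $Z = Z_Q$ with $p_a(Q)=1$, i.e. for every elliptic Gorenstein leaf (egl) $Z$ in ${\rm Null}(P(\Delta))$. For such a $Z$, case~(4) of Theorem~\ref{thm:Null} shows $Z$ is disjoint from $\Delta$; moreover, being a cycle of rational curves (or a nodal rational curve), $Z$ contains no $(\Delta,\cF)$-chain and is therefore disjoint from $N(\Delta)=\sum_i M(\Delta,\Theta_i)$. Consequently $A|_Z = i(\Delta,\cF)\cdot K_{\cF}|_Z$ in ${\rm Pic}(Z)$, and since $A\cdot Z_j = 0$ for every component $Z_j$ of $Z$, this class lies in ${\rm Pic}^0(Z)\cong\bC^{*}$; a positive power of an element of $\bC^{*}$ is a root of unity only if the element itself is. Thus it suffices to know that $K_{\cF}|_Z$ is non-torsion on the egl $Z$ — equivalently, that $K_{\cG}$ is not $\bQ$-Cartier of index dividing $i(\Delta,\cF)$ at the corresponding elliptic Gorenstein point of $Y$ — which is part of McQuillan's analysis of cusp singularities (cf. \cite[Theorem IV.2.2]{MMcQ08}). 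Invoking this non-torsion statement in the correct form, and justifying the reduction $A|_Z = i(\Delta,\cF)K_{\cF}|_Z$, is the step I expect to be the main obstacle.

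Granting this, Proposition~\ref{prop:H1vanishing} gives $H^{i}(X,\cO_X(nA)) = 0$ for $i = 1,2$ whenever $n > \mathfrak{M}(A,0)$. Set $n := m/i(\Delta,\cF)$, a positive integer by the divisibility assumption; then $m \ge i(\Delta,\cF)\,(\alpha(\Delta,\cF)+1)$ gives $n \ge \alpha(\Delta,\cF)+1$, and inequality~\eqref{ineq:M(A,0)} (established in the proof of Theorem~\ref{them:boundednessofD}) gives $\alpha(\Delta,\cF)+1 > \mathfrak{M}(A,0)$. Hence $n > \mathfrak{M}(A,0)$ and $H^{i}(X,\cO_X(m\cdot P(\Delta))) = H^{i}(X,\cO_X(nA)) = 0$ for $i = 1,2$, which is the first assertion.

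For the dimension formula, recall $N(\Delta) = \sum_i M(\Delta,\Theta_i)$ with $\lfloor M(\Delta,\Theta_i)\rfloor = 0$ (Proposition~\ref{prop:DF-subchain}(1)), and that the supports of $\Delta$ and $N(\Delta)$ are disjoint (the former non-$\cF$-invariant, the latter $\cF$-invariant), so that $m\Delta$, $mN(\Delta)$ and hence $m(K_{\cF}+\Delta)$ are all integral. By the defining property of the Zariski decomposition, $h^{0}(X, m(K_{\cF}+\Delta)) = h^{0}(X, m\cdot P(\Delta))$. Combining this with the vanishing just proved and the Riemann--Roch theorem on the smooth surface $X$,
\[
h^{0}(X, m\cdot P(\Delta)) = \chi(X, m\cdot P(\Delta)) = \chi(\cO_X) + \tfrac12\, m P(\Delta)\cdot\big(m P(\Delta) - K_X\big) = \tfrac{m^{2}}{2}P(\Delta)^{2} - \tfrac{m}{2}K_X\cdot P(\Delta) + \chi(\cO_X),
\]
and substituting $P(\Delta)^{2} = {\rm Vol}(K_{\cF}+\Delta)$ yields the asserted formula.
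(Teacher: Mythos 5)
Your proposal is correct and follows essentially the same route as the paper: apply Theorem \ref{thm:Null} to see that the elliptic Gorenstein leaves in ${\rm Null}\,P(\Delta)$ are disjoint from $\Delta$ (and from $N(\Delta)$), so $P(\Delta)|_{Z_Q}=K_{\cF}|_{Z_Q}$ is non-torsion by McQuillan, then invoke Proposition \ref{prop:H1vanishing} together with inequality \eqref{ineq:M(A,0)} and Riemann--Roch. Your extra details (verifying the hypotheses of Theorem \ref{mainthm:<1/4} at $\epsilon=0$, the torsion argument in ${\rm Pic}^0(Z)$, and the integrality of $m\Delta$ and $mN(\Delta)$ via disjointness of supports) are all points the paper leaves implicit.
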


\begin{proof}
Let $E_Q$ be a connected component of ${\rm Null}(P(\Delta))$ with $p_a(Q)=1$. 
By Theorem~\ref{thm:Null}, $E_Q$ is an elliptic Gorenstein leaf, and 
$\Delta\cdot C=0$ for every irreducible component $C$ of $E_Q$. 
In particular, if $\Delta\neq 0$, then $\Delta$ is disjoint from $E_Q$. Hence
\[
P(\Delta)|_{Z_Q}=(K_{\cF}+\Delta)|_{Z_Q}=K_{\cF}|_{Z_Q},
\]
where $Z_Q$ denotes the fundamental cycle of $E_Q$. 
By \cite[Fact~III.0.4 and Theorem~IV.2.2]{MMcQ08}, this divisor is not torsion on $Z_Q$.

The vanishing $H^i(X,mP(\Delta))=0$ for $i=1,2$ follows from Proposition~\ref{prop:H1vanishing} and \eqref{ineq:M(A,0)}. 
Hence the stated formula for $h^0$ follows from Riemann--Roch, noting that $h^0(m(K_{\cF}+\Delta))=h^0(mP(\Delta))$.
\end{proof}

Similarly, we obtain the following effective version of \cite[Theorem~1.6]{JhL}.

\begin{proposition}\label{Prop:Jihao}
Let $\cF$ be a canonical foliation on a smooth surface $X$, and let 
$\Delta=\sum_{i=1}^l a_i C_i$, where $a_i\in[0,1)\cap\bQ$ and each $C_i$ is not $\cF$-invariant.
Assume that $K_{\cF}+\Delta$ is nef and big. 

Then, for any integer $m\ge i(\Delta,\cF)\bigl(\lfloor\mathfrak{M}(A,0)\rfloor+1\bigr)$ 
such that $m$ is divisible by $i(\Delta,\cF)$, where 
$A=i(\Delta,\cF)(K_{\cF}+\Delta)$, we have
\[
H^i\bigl(X, m(K_{\cF}+\Delta)\bigr)=0,\qquad i=1,2.
\]
Moreover,
\[
\dim H^0\bigl(X, m(K_{\cF}+\Delta)\bigr)
=
\frac{m^2}{2}(K_{\cF}+\Delta)^2
-\frac{m}{2}K_X\cdot (K_{\cF}+\Delta)
+\chi(\cO_X).
\]
\end{proposition}
\begin{proof}
This is a direct consequence of Proposition~\ref{prop:H1vanishing} applied to 
$A=i(\Delta,\cF)(K_{\cF}+\Delta)$, 
since the required assumptions are verified as in the proof of Theorem~\ref{thm:Null} 
(see also \cite[Lemma~5.5]{JhL}).
\end{proof}

\subsection*{Acknowledgements}
The authors would like to thank Professors Shengli Tan, Xiaolei Liu, and Xin Lv for helpful discussions and suggestions. 
They are particularly grateful to Professor Jihao Liu for carefully reading earlier versions of the manuscript and providing valuable comments. 
The authors also thank the referees for their careful reading, for pointing out several errors, and for their many helpful suggestions, which have significantly improved the paper.


\begin{thebibliography}{99}

\bibitem[Art62]{Artin62}
M.~Artin,
\emph{Some numerical criteria for contractability of curves on algebraic surfaces},
Amer. J. Math. \textbf{84} (1962), 485--496.

\bibitem[AM69]{AM69}
M.~F.~Atiyah and I.~G.~Macdonald,
\emph{Introduction to commutative algebra},
Addison--Wesley, 1969.


\bibitem[Bad01]{Luc01}
L.~B\u{a}descu,
\emph{Algebraic surfaces},
Universitext, Springer, New York, 2001.


\bibitem[BHPV04]{BPV04}
W.~P.~Barth, K.~Hulek, C.~A.~M.~Peters and A.~Van~de~Ven,
\emph{Compact complex surfaces},
Springer, Berlin, 2004.

\bibitem[Bru97]{Bru97}
M.~Brunella,
\emph{Some remarks on indices of holomorphic vector fields},
Publ. Mat. \textbf{41} (1997), no.~2, 527--544.

\bibitem[Bru15]{Bru15}
M.~Brunella,
\emph{Birational geometry of foliations},
IMPA Monographs, vol.~1, Springer, Cham, 2015.

\bibitem[Car94]{Car94}
Manuel~M.~Carnicer,
\emph{The Poincar\'e problem in the nondicritical case},
Ann. of Math. \textbf{140} (1994), no.~2, 289--294.

\bibitem[GSV91]{GSV91}
X.~G\'omez-Mont, J.~Seade and A.~Verjovsky,
\emph{The index of a holomorphic flow with an isolated singularity},
Math. Ann. \textbf{291} (1991), 737--751.

\bibitem[Che21]{YAChen21}
Y.-A.~Chen,
\emph{Boundedness of minimal partial du Val resolutions of canonical surface foliations},
Math. Ann. \textbf{381} (2021), no.~1--2, 557--573.

\bibitem[Che23]{YAChen23}
Y.-A.~Chen,
\emph{Log canonical foliation singularities on surfaces},
Math. Nachr. \textbf{296} (2023), 3222--3256.

\bibitem[CF18]{CF18}
P.~Cascini and E.~Floris,
\emph{On invariance of plurigenera for foliations on surfaces},
J. Reine Angew. Math. \textbf{744} (2018), 201--236.

\bibitem[CLN91]{CLN91}
D.~Cerveau and A.~Lins~Neto,
\emph{Holomorphic foliations in $\mathbb{CP}(2)$ having an invariant algebraic curve},
Ann. Inst. Fourier (Grenoble) \textbf{41} (1991), no.~4, 883--903.

\bibitem[CS82]{CS82}
C.~Camacho and P.~Sad,
\emph{Invariant varieties through singularities of holomorphic vector fields},
Ann. of Math. \textbf{115} (1982), no.~3, 579--595.

\bibitem[HL21]{HA21}
C.~D.~Hacon and A.~Langer,
\emph{On birational boundedness of foliated surfaces},
J. Reine Angew. Math. \textbf{770} (2021), 205--229.

\bibitem[LW24]{LW24}
J.~Lu and X.-H.~Wu,
\emph{On the $1$-adjoint canonical divisor of a foliation},
Manuscripta Math. \textbf{175} (2024), 739--752.

\bibitem[LLTX23]{LLTX}
J.~Lu, X.~Lu, S.-L.~Tan and S.~Xu,
\emph{Transcendental foliations whose slope is less than $2$},
preprint.

\bibitem[Liu19]{JhL}
J.~Liu,
\emph{On invariance of plurigenera for foliated surface pairs}, 
arXiv:1707.07092.

\bibitem[KM98]{KM98}
J.~Koll\'ar and S.~Mori,
\emph{Birational geometry of algebraic varieties},
Cambridge Tracts in Math., vol.~134, Cambridge Univ. Press, 1998.

\bibitem[McQ00]{MMcQ00}
M.~McQuillan,
\emph{An introduction to non-commutative Mori theory},
in \emph{European Congress of Mathematics, Vol.~II},
Progr. Math., vol.~202, Birkh\"auser, Basel, 2001, pp.~47--53.

\bibitem[McQ08]{MMcQ08}
M.~McQuillan,
\emph{Canonical models of foliations},
Pure Appl. Math. Q. \textbf{4} (2008), no.~3, 877--1012.

\bibitem[Miy87]{Miya87}
Y.~Miyaoka,
\emph{Deformation of a morphism along a foliation and applications},
in \emph{Algebraic Geometry, Bowdoin 1985},
Proc. Sympos. Pure Math., vol.~46, Amer. Math. Soc., 1987, pp.~245--268.

\bibitem[Pas24]{Pas24}
A.~Passantino,
\emph{Numerical conditions for the boundedness of foliated surfaces},
arXiv:2412.05986.

\bibitem[Rei97]{Reid}
M.~Reid,
\emph{Chapters on algebraic surfaces},
in \emph{Complex algebraic geometry},
IAS/Park City Math. Ser., vol.~3, 1997, pp.~3--159.

\bibitem[Sak84]{Sak84}
F.~Sakai,
\emph{Weil divisors on normal surfaces},
Duke Math. J. \textbf{51} (1984), no.~4, 877--887.

\bibitem[PS19]{PS19}
J.~V.~Pereira and R.~Svaldi,
\emph{Effective algebraic integration in bounded genus},
Algebr. Geom. \textbf{6} (2019), no.~4, 454--485.

\bibitem[Sei68]{Sei68}
A.~Seidenberg,
\emph{Reduction of singularities of the differential equation $A\,dy=B\,dx$},
Amer. J. Math. \textbf{90} (1968), 248--269.

\bibitem[Ser92]{Ser92}
F.~Serrano,
\emph{Fibred surfaces and moduli},
Duke Math. J. \textbf{67} (1992), no.~3, 407--421.

\bibitem[Ser93]{Ser93}
F.~Serrano,
\emph{The sheaf of relative differentials of a fibred surface},
Math. Proc. Cambridge Philos. Soc. \textbf{114} (1993), 461--470.

\bibitem[SS23]{SS23}
C.~Spicer and R.~Svaldi,
\emph{Effective generation for foliated surfaces: results and applications},
J. Reine Angew. Math. \textbf{795} (2023), 45--84.

\bibitem[Tan04]{Tan04}
S.-L.~Tan,
\emph{Effective behavior of multiple linear systems},
Asian J. Math. \textbf{8} (2004), no.~2, 287--304.

\bibitem[Tan24]{Tan24}
S.-L.~Tan,
\emph{Foliated algebraic surfaces with low slope},
preprint.

\end{thebibliography}
\end{document}